\documentclass[11pt]{article}

\input{copula-geometry-second-order-preamble}

\title{\paperdisplaytitle}
\author{Tianle Liu\\[0.35em]
  \small\textit{\authoraffiliation}\\[-0.1em]
  \small\href{mailto:\authoremail}{\texttt{\authoremail}}}
\date{\paperdate}

\begin{document}
\maketitle

\begin{abstract}
Heavy-tailed $p$-value combination tests are attractive under unknown
dependence because their null tails can be first-order robust even when the
exact dependent null distribution is unavailable.  That robustness does
not resolve the calibration problem: a statement of the form
$\Pr\{T>q(\alpha)\}=\alpha+o(\alpha)$ neither quantifies the remaining size
error nor determines whether the test is conservative or anticonservative.
We develop a second-order calibration theory for positive Half-Cauchy and
reciprocal, or harmonic-mean, aggregation.

After exact marginal standardization, extremal dependence is represented by
an index-one exponent measure on the coordinate-face lattice.  Its support
separates axial, full-interior, proper-face, and mixed geometries.  An exact
M\"obius decomposition shows how the noncompact weighted half-space probes
every face, while quantitative face limits determine whether the resulting
correction is integrable, critically amplified, or nonintegrable.  Together
with common-factor inversion and a tail-to-calibration map, this gives a
reusable machinery that applies to dependence structures more broadly than
individual parametric copula families.

The general results include an integrable hidden-face transfer theorem, a
sharp fixed-dimensional local-corner theorem with explicit
higher-face control, a common-heavy-factor theorem, and size and
critical-value expansions relative to independence.  Gaussian,
standard multivariate-$t$, positive Clayton, and max-linear pair-shock models
exhibit distinct power, logarithmic, radial--angular, and proper-face
mechanisms; the Gaussian weighted-half-space transfer is conditional on
explicit face-boundary hypotheses.  The resulting theory shows how
dependence geometry controls the rate, coefficient, and direction of the
calibration error left unresolved by first-order validity.  Asymptotics use
$t\to\infty$, equivalently vanishing significance levels.

\end{abstract}

\tableofcontents

\section{Statistical problem and organizing principle}
\label{sec:introduction}

\subsection{The calibration problem}

Let exact-uniform $p$-values be transformed into nonnegative scores with
right tail proportional to $t^{-1}$ and aggregated with simplex weights.
The two leading examples are the reciprocal score $p^{-1}$, whose weighted
sum is the inverse harmonic mean, and the positive Half-Cauchy score
$\cot(\pi p/2)$.  The former is connected to the harmonic-mean $p$-value of
Wilson~\cite{Wilson2019}; the latter is the positive Half-Cauchy
combination test (HCCT) in the ``heavily right'' program of Liu, Meng and
Pillai~\cite{LiuMengPillai2025}; and the
Cauchy-combination starting point is due to Liu and Xie~\cite{LiuXie2020}.

These methods address a persistent difficulty in combining dependent
evidence.  The joint null law of the aggregate is generally unavailable
without specifying or estimating the full dependence structure.  Heavy right
tails partly bypass that obstacle: under index-one multivariate regular
variation (MRV), linearity and exact marginal normalization make the leading
tail constant independent of the spectral measure.  This projection identity
is classical in MRV and now appears explicitly in the heavy-tailed-combination
literature
\cite{GuiMaoWangWang2026,ChakrabortyGuoSheddenStoev2026}.  It explains
first-order robustness, but it does not provide exact calibration.

Indeed, let $S_{\boldsymbol w}$ denote either the reciprocal aggregate or
the positive Half-Cauchy aggregate, and let
$q_{\Pi,S,\boldsymbol w}(\alpha)$ be its exact upper $\alpha$-quantile under
independent $p$-values.  First-order validity only asserts
\[
 \Pr_C\{S_{\boldsymbol w}>q_{\Pi,S,\boldsymbol w}(\alpha)\}
 =\alpha+o(\alpha),
 \qquad \alpha\downarrow0.
\]
It does not determine how large the error is at a small but finite level,
whether the reference is conservative or anticonservative, or how its
critical value should be corrected.  Those questions are precisely the
second-order calibration problem.  They are statistically consequential
because unavailable exact dependent calibration is the central price paid
for the robustness and simplicity of heavy-tailed aggregation.

\subsection{Relation to existing work}
\label{sec:intro-literature}

The harmonic-mean $p$-value of Wilson~\cite{Wilson2019} aggregates
reciprocal scores, while the heavily right framework of Liu, Meng and
Pillai~\cite{LiuMengPillai2025} studies positive Half-Cauchy scores and
builds on the Cauchy combination literature
\cite{LiuXie2020,LongEtAl2023}.  The two nonnegative rules share their
index-one leading geometry but differ at second order because reciprocal
scores are exactly Pareto whereas the Half-Cauchy map has an intrinsic
analytic remainder.

Recent work establishes first-order validity and angular-measure
calibration for heavy-tailed combination tests under several dependence
regimes \cite{GuiJiangWang2025,GuiMaoWangWang2026,
ChakrabortyGuoSheddenStoev2026}; finite-level harmonic-mean inequalities
provide a complementary perspective \cite{ChenWangWangZhu2026}.  The
classical MRV, hidden-regular-variation, and coordinate-face foundations
are not new \cite{BasrakDavisMikosch2002,HultLindskog2002,
LedfordTawn1996,Resnick2002,Resnick2008,BalkemaEmbrechts2007}.

Second-order aggregation of heavy-tailed risks is also established theory
\cite{GelukEtAl1997,BarbeMcCormick2005,Kortschak2012,DasKratz2020}.
In particular, the unit-index logarithm and related integrability boundary
appear in existing bivariate copula-sum expansions
\cite{YangZhang2023}.  Gaussian hidden tails, Archimedean frailty,
standard-$t$ tail dependence, and max-linear spectral atoms likewise have
substantial prior literatures.  Accordingly, the contribution claimed here
is not those ingredients separately.  It is the common
independence-relative calibration problem for harmonic-mean and positive
Half-Cauchy aggregation, together with a face-lattice calculus that tracks
the rate, coefficient, sign, and critical-value consequence across support
regimes.  An extended literature review and a claim-by-claim novelty audit
are in \LiteratureAuditLocation.

\subsection{Three contributions}

The paper develops one chain of ideas:
\begin{equation}\label{eq:intro-conceptual-chain}
 \begin{aligned}
  \text{dependence geometry}
  &\ \Longrightarrow\ 
  \text{face-specific tail correction}\\
  &\ \Longrightarrow\ 
  \text{aggregate-tail expansion}
  \ \Longrightarrow\ 
  \text{size and critical-value distortion}.
 \end{aligned}
\end{equation}
Its contributions are best understood in three parts.

\begin{enumerate}[leftmargin=2.2em]
 \item \emph{Second-order calibration as statistical theory.}
 We take the exact independence law as the operational reference and derive
 the leading dependence-induced error in its tail probability, rejection
 probability, and critical value.  The output is a rate, coefficient, and
 sign, rather than an unquantified $o(\alpha)$ guarantee.

 \item \emph{A reusable second-order machinery.}
 Exact Pareto standardization, quantitative exponent measures, an exact
 M\"obius decomposition of the weighted half-space, hidden-face transfer,
 local-corner analysis, common-factor inversion, and calibration inversion
 form a common calculus.  This machinery separates perturbations of strata
 already charged at first order from hidden mass on new faces, critical
 accumulation near subfaces, nonintegrable boundary layers, and
 score-transformation effects.

 \item \emph{A geometric organization of dependence.}
 The support of the exponent measure on the coordinate-face lattice gives
 four primary geometries: axes, full interior, nontrivial proper faces, and
 mixtures of these strata.  More importantly, the way probability approaches
 charged and uncharged faces predicts the possible second-order mechanism.
 The resulting organization groups copulas and more general dependence
 structures by their extremal geometry rather than merely by family name.
\end{enumerate}

The general theory yields a sharp local-corner theorem in arbitrary fixed
dimension, with unequal weights and explicit control of higher faces; an
integrable hidden-face transfer theorem; a common-heavy-factor theorem; and a
calculus converting tail coefficients into size and critical-value errors.
The model results are designed to expose distinct parts of the machinery.
Independence is the critical axial benchmark; Gaussian copulas exhibit hidden
pair and boundary phenomena; standard multivariate-$t$ and positive Clayton
copulas exhibit radial--angular interior or mixed behavior; and a max-linear
pair-shock construction gives an exact proper-face example.

The geometry is summarized at three resolutions.
\cref{tab:second-order-mechanisms} gives the four principal measure-transfer
mechanisms, \cref{tab:principal-model-rates} compares the Gaussian,
independence, standard-$t$, and positive Clayton benchmarks, and the full
ledger in \FullLedgerPlacement{} records model-specific support,
second-order rate, mechanism, proof status, and bounded-density diagnostics.
The detailed ledger and its supporting calculations are placed there so that the main development can
emphasize the general geometry.

\subsection{Paper organization}
\label{sec:paper-organization}

The argument links testing, copulas, multivariate extremes, and heavy-tail
aggregation.  We review the needed MRV ingredients, emphasizing exponent
measures, the index-one projection identity, and spectral support on the
coordinate-face lattice.

\cref{sec:mrv-geometry} develops the score transformations and the
exponent-measure geometry.  The lower-tail copula formulation is treated in
the following section.  We then introduce the calibration functional and
the face calculus before turning to the four geometric regimes.  The
quantitative assumption hierarchy and the generic common-factor inversion
theorem are stated in \TechnicalLocation, while their consequences needed
for the main argument are summarized at the relevant points.
Each model analysis follows the same route: identify the
first-order support, locate the finite-threshold mass that can move the
weighted sum across its boundary, determine the corresponding rate and
coefficient, and translate the result into calibration error for both
reciprocal and positive Half-Cauchy aggregation.  Statements in theorem,
proposition, lemma, and corollary environments are claimed under their
displayed hypotheses; conjectures, open problems, and classifications
labelled \emph{expected} are not claimed as proved.
All formal proofs are collected in \ProofLocation.

\section{Heavily right scores and exponent-measure geometry}
\label{sec:mrv-geometry}

\subsection{Harmonic-mean and positive Half-Cauchy scores}
\label{sec:score-equivalence}

Let $m\geq1$ be fixed and let $p_1,\ldots,p_m$ have exact
$\operatorname{Uniform}(0,1)$ null margins, with dependence unrestricted
until stated otherwise.  Put $U_i=p_i$, $Y_i=1/U_i$,
$H_i=\cot(\pi U_i/2)$, and $c=2/\pi$.  For simplex weights, define
\begin{equation}\label{eq:HCCT}
  R_{\boldsymbol w}
   =\sum_{i=1}^m w_iY_i
   =\sum_{i=1}^m\frac{w_i}{p_i},
  \qquad
  T_{\boldsymbol w}
   =\sum_{i=1}^m w_iH_i=\HCCT,
  \qquad
  w_i\geq0,
  \qquad
  \sum_{i=1}^m w_i=1.
\end{equation}
Here $R_{\boldsymbol w}$ is the inverse weighted harmonic mean and
$T_{\boldsymbol w}$ is the positive Half-Cauchy statistic.  Zero-weight
coordinates may be deleted.  Exactly,
$\Pp(Y_i>y)=y^{-1}$ for $y\geq1$, whereas
$\Pp(H_i>x)=2\arctan(x^{-1})/\pi=c/x+O(x^{-3})$.
We write $\FRw$ and $\FHCw$ for the corresponding CDFs under independent
exact-uniform $p_i$ and fixed weights.

\begin{proposition}[Pareto--Half-Cauchy equivalence]
\label{prop:equiv}\label{lem:bounded}
For $h(u)=\cot(\pi u/2)$ and $d(u)=c/u-h(u)$, the function $d$ is
increasing from $0$ to $c$ on $[0,1]$.  Hence
\begin{equation}\label{eq:bounded-difference}
 cY_i-c\leq H_i\leq cY_i,
 \qquad \|\boldsymbol H-c\boldsymbol Y\|_\infty\leq c.
\end{equation}
For fixed $m$, $\boldsymbol Y\in\MRV_1(\mu_Y)$ if and only if
$\boldsymbol H\in\MRV_1(\mu_H)$, with
$\mu_H(A)=\mu_Y(c^{-1}A)$ and
$\mu_H\{\boldsymbol x:x_i>1\}=c$ for every $i$.
\end{proposition}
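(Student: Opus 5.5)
The plan has three parts: a calculus fact about $d$, the resulting pointwise sandwich, and transfer of regular variation through a bounded perturbation.

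\textbf{Monotonicity of $d$.} Write $d(u)=c/u-\cot(\pi u/2)$ and substitute $s=\pi u/2\in(0,\pi/2]$. Since $c/u=1/s$, this gives $d=1/s-\cot s$. The Laurent expansion $\cot s=1/s-s/3-\cdots$ gives $d(0^+)=0$. At the right endpoint, $d(1)=c-\cot(\pi/2)=c$. For the derivative,
\[
 \frac{d}{ds}\bigl(1/s-\cot s\bigr)=\csc^2 s-s^{-2}=\frac{s^2-\sin^2 s}{s^2\sin^2 s}>0,
\]
because $\sin s<s$ for $s>0$. So $d$ is increasing from $0$ to $c$.

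\textbf{The sandwich.} Set $u=U_i$. Then $H_i=cY_i-d(U_i)$, and $0\le d\le c$ gives \eqref{eq:bounded-difference} pointwise, coordinatewise. The sup-norm bound follows.

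\textbf{Transfer of $\MRV_1$.} I will use the vague-convergence definition on $[0,\infty]^m\setminus\{\boldsymbol 0\}$: $t\Pp(\boldsymbol Y/t\in\cdot)\to\mu_Y$.

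First, the scaled vector $c\boldsymbol Y$ is $\MRV_1$ with limit measure $\mu_{cY}(A)=\mu_Y(c^{-1}A)$. This is immediate from the definition, because index one makes the rescaling factor exactly $1$.

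Second, since $\|\boldsymbol H-c\boldsymbol Y\|_\infty\le c$, we have $\|\boldsymbol H/t-c\boldsymbol Y/t\|_\infty\le c/t\to0$ uniformly. I will use the standard perturbation argument, checked on relatively compact sets $A$ bounded away from the origin with $\mu(\partial A)=0$. For each $\epsilon>0$, define the shrunken set $A^{-\epsilon}$ and the enlarged set $A^{\epsilon}$. For large $t$,
\[
 \{c\boldsymbol Y/t\in A^{-\epsilon}\}\subseteq\{\boldsymbol H/t\in A\}\subseteq\{c\boldsymbol Y/t\in A^{\epsilon}\}.
\]
Multiply by $t$, take $\limsup$ and $\liminf$, then let $\epsilon\downarrow0$ using $\mu(\partial A)=0$. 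This yields $t\Pp(\boldsymbol H/t\in A)\to\mu_Y(c^{-1}A)$.

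The converse direction is symmetric, since $c\boldsymbol Y=\boldsymbol H+\boldsymbol d$ with $\boldsymbol d$ bounded.

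The main care point is that the compactification includes points at infinity. The metric there must be chosen so that sets bounded away from $\boldsymbol 0$ have $\epsilon$-neighborhoods still bounded away from $\boldsymbol 0$, and the Euclidean-type sup metric on the finite part suffices. I will cite the standard Portmanteau and Slutsky-type lemma in Resnick~\cite{Resnick2008} to avoid redoing this.

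\textbf{Marginal normalization.} Finally, $\mu_Y\{x_i>1\}=\lim_t t\Pp(Y_i>t)=1$. Hence
\[
 \mu_H\{x_i>1\}=\mu_Y\{x_i>c^{-1}\}=c\cdot\mu_Y\{x_i>1\}=c,
\]
where the middle equality uses the index-one homogeneity of $\mu_Y$. This agrees with the marginal tail $\Pp(H_i>x)=c/x+O(x^{-3})$.
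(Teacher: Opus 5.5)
Your proposal is correct and follows essentially the same route as the paper. The substitution $s=\pi u/2$ and the inequality $\csc^2 s>s^{-2}$ give monotonicity of $d$, and $0\le d\le c$ gives the coordinatewise sandwich. The deterministic bound $\|\boldsymbol H/t-c\boldsymbol Y/t\|_\infty\le c/t$ then transfers vague convergence in both directions.

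Your shrunken- and enlarged-set argument spells out the perturbation step that the paper states in one line. Deriving $\mu_H\{x_i>1\}=c$ from homogeneity of $\mu_Y$ is an equivalent alternative to reading it off the Half-Cauchy survival expansion, as the paper does.
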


Thus the right object to inspect in the original $p$-value copula is the
Pareto vector $(1/p_1,\ldots,1/p_m)$.  The two score vectors therefore have
the same first-order geometry up to scale, but their second-order terms can
differ because only the reciprocal margins are exactly Pareto.  Indeed,
$|T_{\bw}-cR_{\bw}|\leq c$.  Under the mild local tail regularity needed to
control this bounded threshold displacement,
\begin{equation}\label{eq:ehmp-hcct-coefficient-transfer}
 \begin{aligned}
 &\Pp(R_{\bw}>s)=s^{-1}+Bs^{-\beta}L(s)+o\{s^{-\beta}L(s)\},
 \qquad s^{-\beta}L(s)\gg s^{-2},\\[-2pt]
 &\hspace{2em}\Longrightarrow\quad
 \Pp(T_{\bw}>t)=\frac ct+Bc^\beta t^{-\beta}L(t/c)
 +o\{t^{-\beta}L(t)\}.
 \end{aligned}
\end{equation}
This covers the positive-correlation Gaussian term, the independent
$(\log t)/t^2$ term, and the multivariate-$t$ radial term for $\nu>2$.
At order $t^{-2}$ or smaller the bounded displacement can change the
coefficient, so the two aggregates must be expanded separately.

\subsection{Multivariate regular variation and its polar representation}

Put
\[
  \mathbb E_m=[0,\infty]^m\setminus\{\boldsymbol0\}.
\]
A set is bounded away from the origin if it does not intersect some
neighborhood of $\boldsymbol0$.

\begin{definition}[Index-one MRV]\label{def:mrv}
A nonnegative random vector $\boldsymbol X=(X_1,\ldots,X_m)$ is
multivariate regularly varying with index one, written
$\boldsymbol X\in\MRV_1(\mu)$, if there exists a nonzero Radon measure
$\mu$ on $\mathbb E_m$ such that
\begin{equation}\label{eq:mrv-vague}
  t\,\Pp\!\left(\frac{\boldsymbol X}{t}\in A\right)
  \longrightarrow \mu(A)
\end{equation}
for every Borel set $A\subset\mathbb E_m$ that is bounded away from the
origin and satisfies $\mu(\partial A)=0$.
\end{definition}

The measure $\mu$ is called the exponent or tail measure.  It is
homogeneous of order $-1$:
\begin{equation}\label{eq:homogeneity}
  \mu(aA)=a^{-1}\mu(A),\qquad a>0.
\end{equation}
Using $L^1$ polar coordinates
\[
  r=\|\boldsymbol x\|_1,
  \qquad
  \boldsymbol s=\frac{\boldsymbol x}{\|\boldsymbol x\|_1},
  \qquad
  \mathbb S^{m-1}_+=\{\boldsymbol s\geq0:\|\boldsymbol s\|_1=1\},
\]
the tail measure has the representation
\begin{equation}\label{eq:spectral}
  \mu(d r,d\boldsymbol s)=r^{-2}\,d r\,S(d\boldsymbol s),
\end{equation}
where $S$ is a finite measure on the positive simplex.  The location of
$S$ describes extremal dependence:
\begin{itemize}[leftmargin=2em]
  \item mass only on the coordinate axes means asymptotic tail
  independence;
  \item mass away from the axes means that simultaneous extremes have
  first-order probability;
  \item mass on the diagonal is the limiting geometry of perfect positive
  tail dependence.
\end{itemize}

MRV is therefore not synonymous with tail independence.  Both Gaussian
copulas with correlations strictly below one and multivariate-$t$ copulas
are covered, but their spectral measures have different supports.

\paragraph*{Exponent-measure interpretation}
The exponent measure is the intensity measure of the limiting Poisson
cloud of normalized extremes.  Its traditional name comes from the
max-stable representation: exponentiating the negative of the relevant
exceedance mass gives the limiting distribution.  It is unrelated to a
Lie-theoretic exponential.  The full point-process and
max-stability derivation is given in \TechnicalLocation.  What matters for
aggregation is that the measure records both the radial frequency of large
observations and the angular locations at which they occur.

\subsection{Spectral measures and coordinate-face geometry}
\label{sec:exponent-support-geometry}

For unit-Pareto margins, the spectral measure obeys
\begin{equation}\label{eq:spectral-moment-constraints}
 \int_{\mathbb S_+^{m-1}}s_i\,S(d\boldsymbol s)=1,
 \qquad i=1,\ldots,m.
\end{equation}
Consequently $S(\mathbb S_+^{m-1})=m$.  Conversely, any finite positive
measure on the simplex satisfying \cref{eq:spectral-moment-constraints}
defines an index-one exponent measure with unit-Pareto margins.  Radially
the measure is rigid, because \cref{eq:spectral} puts the same
$r^{-2}dr$ intensity along every charged direction.  Angularly it is
flexible: $S$ may be atomic, diffuse, singular, supported on lower faces,
or any mixture of these, subject only to the marginal moment constraints.

For nonempty $I\subseteq[m]$, define the relative interior of the
coordinate face
\begin{equation}\label{eq:coordinate-open-face}
 F_I^\circ
 =\{\boldsymbol s\in\mathbb S_+^{m-1}:
       s_i>0\ (i\in I),\ s_j=0\ (j\notin I)\}.
\end{equation}
It is useful to record the strata carrying positive spectral mass:
\begin{equation}\label{eq:charged-strata}
 \mathcal C(S)=\{I\subseteq[m]:I\ne\varnothing,
                         \ S(F_I^\circ)>0\}.
\end{equation}
This measure-theoretic definition avoids confusing mass on a relative-open
stratum with topological support in its closure.  It gives four mutually
exclusive first-order geometries:
\begin{enumerate}[leftmargin=2.2em]
 \item \emph{axial}: every $I\in\mathcal C(S)$ has $|I|=1$;
 \item \emph{interior}: $\mathcal C(S)=\{[m]\}$, equivalently $S$ assigns
 the simplex boundary zero mass;
 \item \emph{proper-face}: every $I\in\mathcal C(S)$ satisfies
 $2\leq |I|\leq m-1$;
 \item \emph{mixed}: more than one of the preceding three stratum types
 carries positive mass.
\end{enumerate}
The shorthand classifies the exponent measure associated with a copula, not
the support of the copula distribution itself.  Proper faces do not exist
when $m=2$.  A ray is an atom of $S$ at a direction and is not a fifth
category: it is axial, proper-face, or interior according to the relative-open
stratum containing that direction.  With equal unit-Pareto marginal
constants, a spectral measure supported on a single ray must use the full
diagonal; several asymmetric rays can balance one another.

\begin{figure}[t]
 \centering
 \includegraphics[width=\GeometryFigureWidth]{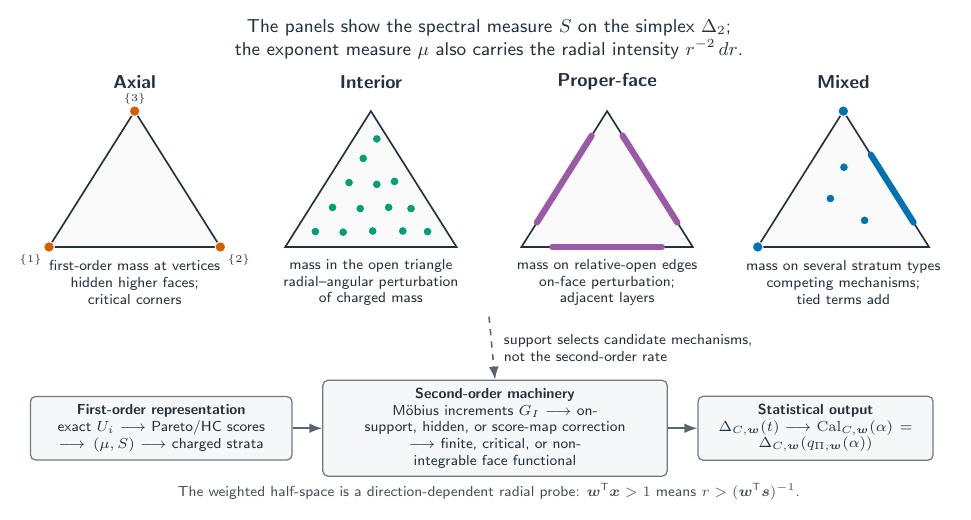}
 \caption{Exponent-measure geometry and the route to calibration in
 dimension three.  The upper panels depict positive mass of the angular
 spectral measure $S$ on relative-open simplex strata; they do not depict
 the radial exponent measure $\mu$ itself.  An interior measure may approach
 the boundary in topological support while assigning it zero mass, and atoms
 or rays may occur within any charged stratum.  The lower strip shows the
 paper's organizing chain.  First-order support selects candidate
 second-order mechanisms, whereas quantitative convergence and integration
 against the signed weighted-half-space increments determine the rate and
 coefficient.  The final target is error relative to exact independence
 calibration, rather than the raw tail's second term.}
 \label{fig:exponent-measure-geometry}
\end{figure}

The four panels of \cref{fig:exponent-measure-geometry} describe where
first-order extremes occur.  They do not determine how fast finite-threshold
probability approaches those strata.  In particular, independence, Gaussian
copulas with correlations below one, exclusion models, and rate-free
constructions may all be axial while having fundamentally different
second-order behavior.  That separation between support and approach rate is
the reason geometry and quantitative face analysis enter as distinct parts of
the theory.

\subsection{Tail index one and universal first-order aggregation}
\label{sec:mrv-index}

For a general tail index $\alpha>0$, MRV is formulated with a scaling
$b(t)\in\operatorname{RV}_{1/\alpha}$:
\begin{equation}\label{eq:general-alpha-mrv}
 t\Pp\{\boldsymbol X/b(t)\in\cdot\}
 \xrightarrow{v}\mu(\cdot),
 \qquad
 \frac{b(tx)}{b(t)}\longrightarrow x^{1/\alpha}.
\end{equation}
The exponent measure is homogeneous: $\mu(aA)=a^{-\alpha}\mu(A)$.
\par\noindent
Its polar form is
$\mu(dr,d\boldsymbol s)=\alpha r^{-\alpha-1}\,dr\,S(d\boldsymbol s)$.
A Pareto-$\alpha$ tail $\Pp(X>x)=x^{-\alpha}$ has maxima of order
$n^{1/\alpha}$.  Smaller $\alpha$ means a heavier tail.

The reciprocal margins have index one exactly, while
\cref{prop:equiv} transfers the same index to the positive Half-Cauchy
scores.  This statement concerns the transformed $p$-value scores: a raw
standard multivariate-$t_\nu$ vector has index $\nu$, and a raw Gaussian
vector is not regularly varying, even though their exact-$p$ reciprocal
scores have index one.

Marginal Pareto standardization can convert continuous margins to index one,
but it changes aggregation on the original scale.  Index one is especially
consequential here.  For a general index $\alpha$, let
$A_{\boldsymbol w}=\{\boldsymbol x\in\mathbb E_m:
\boldsymbol w^{\mathsf T}\boldsymbol x>1\}$.  Radial integration gives
\begin{equation}\label{eq:general-alpha-projection}
 \mu(A_{\boldsymbol w})
 =\int(\boldsymbol w^{\mathsf T}\boldsymbol s)^\alpha
      S(d\boldsymbol s),
\end{equation}
where the marginal tail constants are
$c_i=\int s_i^\alpha S(d\boldsymbol s)$.  Only at $\alpha=1$ does the
projection linearize:
\begin{equation}\label{eq:linear-tail}
 \mu(A_{\boldsymbol w})
 =\int\boldsymbol w^{\mathsf T}\boldsymbol s\,S(d\boldsymbol s)
 =\sum_{i=1}^m w_i c_i.
\end{equation}
This is why the leading constants below depend on the margins and weights,
but not on whether the spectral measure is axial, interior, proper-face, or
mixed.

\begin{theorem}[Universal first-order harmonic-mean and Half-Cauchy tails]
\label{thm:hcct-mrv}
Let $m$ be fixed.  Suppose that the $p_i$ have exact uniform margins, that
$\boldsymbol w$ is a simplex weight vector, and that
$\boldsymbol Y=(1/p_1,\ldots,1/p_m)$ is multivariate regularly varying with
index one.  Equivalently, by \cref{prop:equiv}, $\boldsymbol H$ is
multivariate regularly varying with index one.  Then
\begin{align}
 \Pp(R_{\boldsymbol w}>t)
 &=\frac{1}{t}+o(t^{-1}),
 \label{eq:hmp}\\
 \Pp(T_{\boldsymbol w}>t)
 &=\frac{2}{\pi t}+o(t^{-1}).
 \label{eq:hcct-mrv-tail}
\end{align}
The same leading terms hold under independent exact-uniform $p$-values;
therefore
\begin{equation}\label{eq:hcct-ratios}
 \frac{\Pp(R_{\boldsymbol w}>t)}{1-\FRw(t)}\longrightarrow1,
 \qquad
 \frac{\Pp(T_{\boldsymbol w}>t)}{1-\FHCw(t)}\longrightarrow1.
\end{equation}
\end{theorem}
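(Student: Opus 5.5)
The plan is to apply the MRV definition \cref{eq:mrv-vague} to the single set $A_{\boldsymbol w}=\{\boldsymbol x\in\mathbb E_m:\boldsymbol w^{\mathsf T}\boldsymbol x>1\}$, after first pinning down the margins of $\mu_Y$. Because $\Pp(Y_i>y)=y^{-1}$ exactly for $y\ge1$, the choice $A=\{x_i>1\}$ gives $t\Pp(Y_i>t)=1$ for $t\ge1$. This forces $\mu_Y\{x_i>1\}=1$.

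One point needs care: the vague convergence in \cref{eq:mrv-vague} is stated with scaling $t$ rather than a generic $b(t)$. Since the margins are exactly unit Pareto, the normalization is already the correct one and there is no ambiguity. In the spectral representation \cref{eq:spectral}, this is precisely the moment constraint \cref{eq:spectral-moment-constraints}.

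Next I verify that $A_{\boldsymbol w}$ is an admissible set. After deleting zero-weight coordinates, every $w_i>0$, so $A_{\boldsymbol w}\subseteq\{\|\boldsymbol x\|_\infty>1\}$ is bounded away from the origin. Its boundary lies in the hyperplane $\{\boldsymbol w^{\mathsf T}\boldsymbol x=1\}$, together with possibly points at infinity on the compactified cone, which carry no $\mu$-mass. Homogeneity \cref{eq:homogeneity} implies that the family $\{\boldsymbol w^{\mathsf T}\boldsymbol x=a\}$, $a>0$, consists of disjoint sets with $\mu$-mass $a^{-1}\mu\{\boldsymbol w^{\mathsf T}\boldsymbol x=1\}$. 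Alternatively, the polar form \cref{eq:spectral} shows directly that each level set has radial Lebesgue measure zero along every direction. Either way $\mu(\partial A_{\boldsymbol w})=0$.

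Then \cref{eq:mrv-vague} gives $t\Pp(R_{\boldsymbol w}>t)\to\mu_Y(A_{\boldsymbol w})$. By the radial integration \cref{eq:linear-tail} with $c_i=1$, this limit equals $\sum_i w_i=1$, which proves \cref{eq:hmp}.

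For \cref{eq:hcct-mrv-tail} I would not redo the argument with $\mu_H$. Instead I sandwich using \cref{eq:bounded-difference}. Since $cR_{\boldsymbol w}-c\le T_{\boldsymbol w}\le cR_{\boldsymbol w}$, we have
\[
\Pp(R_{\boldsymbol w}>(t+c)/c)\le\Pp(T_{\boldsymbol w}>t)\le\Pp(R_{\boldsymbol w}>t/c).
\]
Both bounds equal $c/t+o(t^{-1})$ by \cref{eq:hmp}, because $(t+c)/c\sim t/c$. As $c=2/\pi$, this gives \cref{eq:hcct-mrv-tail}. The same result would follow from $\mu_H(A_{\boldsymbol w})=\mu_Y(c^{-1}A_{\boldsymbol w})=c$ via \cref{prop:equiv}.

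Finally, independent exact-uniform $p$-values are a special case of the hypotheses. Under independence, $\boldsymbol Y$ is MRV with the axial exponent measure, because joint exceedances of two coordinates have probability $O(t^{-2})$. So the same limits hold for $1-\FRw$ and $1-\FHCw$, and taking ratios gives \cref{eq:hcct-ratios}.

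The only step needing genuine care is the boundary-null condition $\mu(\partial A_{\boldsymbol w})=0$, including the behaviour at infinity in $\mathbb E_m$. I expect the polar-coordinate argument to dispose of it.
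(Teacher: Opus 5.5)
Your proof is correct and follows the paper's argument. Exact margins fix the marginal constants, the absence of radial atoms in the polar representation makes the half-space a $\mu$-continuity set, the linear projection identity gives $\mu(A_{\boldsymbol w})=\sum_i w_i c_i$, and independence is the axial special case. The only variation is that you get the Half-Cauchy tail from the deterministic sandwich $cR_{\boldsymbol w}-c\le T_{\boldsymbol w}\le cR_{\boldsymbol w}$ rather than by evaluating $\mu_H(A_{\boldsymbol w})=2/\pi$ directly; this is equally valid and slightly more elementary.
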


The corresponding first-order thresholds are
$t_{R,\alpha}=\textstyle\frac1\alpha$ and
$t_{T,\alpha}=\textstyle\frac2{\pi\alpha}$.  Thus
\cref{thm:hcct-mrv} gives
\begin{equation}\label{eq:alpha-size}
 \Pp(R_{\boldsymbol w}>t_{R,\alpha})
 =\alpha+o(\alpha),
 \qquad
 \Pp(T_{\boldsymbol w}>t_{T,\alpha})
 =\alpha+o(\alpha).
\end{equation}
For the Half-Cauchy statistic one may instead use the marginal quantile
$\cot(\pi\alpha/2)$, which is asymptotic to $2/(\pi\alpha)$.  These are
$\alpha\downarrow0$ statements, not claims of exact calibration at a fixed
positive level.  The common first-order limit also does not say that the
scores are independent; it uses nonnegativity, index one, exact marginal
normalization, and simplex weights.

\section{Lower max-domains for p-value copulas}
\label{sec:lower-mda-copulas}

\subsection{Lower extremes and stable-tail dependence}
\label{sec:lower-mda-foundations}

\begin{definition}[Lower max-domain of attraction]
\label{def:lstd}
The copula of $\boldsymbol U$ is in the lower max-domain of attraction if,
for every $\boldsymbol x=(x_1,\ldots,x_m)\in[0,\infty)^m$, the following limit exists:
\begin{equation}\label{eq:ell}
  \ell_L(\boldsymbol x)
  =\lim_{s\downarrow0}\frac{1}{s}
    \Pp\!\left(\bigcup_{i=1}^m\{U_i\leq sx_i\}\right).
\end{equation}
\end{definition}

Exact uniform margins give, for every $s>0$ small enough,
\begin{equation}\label{eq:lower-union-frechet}
  s\max_i x_i
  \leq
  \Pp\!\left(\bigcup_i\{U_i\leq sx_i\}\right)
  \leq
  s\sum_i x_i.
\end{equation}
Thus the union probability is automatically of order $s$; lower MDA asks
whether its coefficient settles to a limit.  The resulting function
$\ell_L$ is convex and homogeneous of degree one and satisfies
\begin{equation}\label{eq:ell-bounds}
  \max_i x_i\leq\ell_L(\boldsymbol x)\leq\sum_{i=1}^m x_i.
\end{equation}

For $m=2$, define the lower-tail copula, when the limit exists, by
\[
  \Lambda_L(x,y)=\lim_{s\downarrow0}\frac{C(sx,sy)}{s}.
\]
Inclusion--exclusion gives
\begin{equation}\label{eq:ell-biv}
  \ell_L(x,y)=x+y-\Lambda_L(x,y).
\end{equation}

For the reciprocal vector $\boldsymbol Y=1/\boldsymbol U$,
\cref{thm:copula-mrv} turns lower MDA into index-one MRV.  If $S_Y$
is its spectral measure, the lower stable-tail function, exponent function,
and spectral representation are the same object in reciprocal coordinates:
\begin{equation}\label{eq:lower-stdf-exponent-identification}
 \ell_L(\boldsymbol x)
 =V_Y(1/x_1,\ldots,1/x_m)
 =\int_{\mathbb S_+^{m-1}}
    \max_i(x_is_i)\,S_Y(d\boldsymbol s),
\end{equation}
with the usual convention for zero coordinates.

In two dimensions, write $\boldsymbol s=(r,1-r)$.  Then $S_Y/2$ is a
probability distribution on $[0,1]$ with mean $1/2$.  Independence has
$S_Y=\delta_0+\delta_1$, perfect dependence has
$S_Y=2\delta_{1/2}$, and
\begin{equation}\label{eq:axis-diagonal-spectral-mixture}
 S_Y=(1-\lambda)(\delta_0+\delta_1)+2\lambda\delta_{1/2}
\end{equation}
interpolates between them.  The lower-tail copula has the spectral form
\begin{equation}\label{eq:joint-tail-spectral-mass}
 \Lambda_L(x,y)
 =\int_0^1\min\{xr,y(1-r)\}\,S_Y(dr).
\end{equation}
Consequently, positive interior spectral mass is equivalent to
$\Lambda_L(1,1)>0$.  In that case,
\begin{equation}\label{eq:interior-mass-joint-tail}
 \Pp(Y_1>t,Y_2>t)\sim\frac{\Lambda_L(1,1)}{t}.
\end{equation}
Axis support makes this constant zero and gives only $o(t^{-1})$; it does
not determine the smaller rate.

The ratio $C(sx,sy)/(sx)$ converges to $\Lambda_L(x,y)/x$.
Thus zero means simultaneous order-$s$ events are negligible relative to
a single small $p$-value, while a positive limit means that joint extremes
already occur at first order.
The union/intersection identities and their order-statistic interpretation
are given in \TechnicalLocation.

For $m>2$, pairwise lower-tail limits do not generally determine the
full multivariate lower-MDA limit, because higher-order intersections may
contain additional information.  A useful sufficient condition for
axis-supported convergence is stated in \cref{prop:pairwise}.

For independent replicates, normalized componentwise minima of
$\boldsymbol U$ converge to a min-stable law with exponent $\ell_L$;
equivalently, normalized componentwise maxima of
$\boldsymbol Y=1/\boldsymbol U$ converge to the reciprocal max-stable law.
This is the origin of the phrase lower max-domain of attraction.  The full
replicate calculation is in \TechnicalLocation.

Hidden regular variation, residual tail orders, and higher-order tail
densities describe the smaller probability of simultaneous extremes when
the first-order exponent measure is axial
\cite{LedfordTawn1996,LedfordTawn1997,Resnick2002,Resnick2008,
HuaJoeLi2014,LiHua2015}.  For Gaussian copulas, reciprocal and positive
Half-Cauchy scores have axial first-order MRV, while smaller-order joint
tails set the calibration rate.

Lower MDA need not imply tail independence.  Its universal first-order
aggregation consequence is stated in \cref{cor:lower-mda-validity}, while
the geometry remains decisive at second order.

\subsection{From lower MDA to MRV and universal aggregation}

\begin{theorem}[Copula criterion for index-one MRV]\label{thm:copula-mrv}
If the limit in \cref{eq:ell} exists for every $\boldsymbol x\geq0$, then
$\boldsymbol Y=(1/U_1,\ldots,1/U_m)$ is multivariate regularly varying with
index one.  Consequently, $\boldsymbol H$ is multivariate regularly varying
with index one.
\end{theorem}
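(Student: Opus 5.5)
The plan is to identify the candidate exponent measure from $\ell_L$ on the class of complements of rectangles, then upgrade to vague convergence on all of $\mathbb E_m$ by a convergence-determining-class argument. The half-Cauchy statement then follows directly from \cref{prop:equiv}.

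\emph{Step 1: rectangle complements.} For $\boldsymbol y\in(0,\infty]^m$ with at least one finite coordinate, put $A_{\boldsymbol y}=\{\boldsymbol x\in\mathbb E_m:\ x_i>y_i\text{ for some }i\}=\mathbb E_m\setminus[\boldsymbol 0,\boldsymbol y]$. Write $s=1/t$ and $x_i=1/y_i$, with $x_i=0$ when $y_i=\infty$. Since $\{Y_i>ty_i\}=\{U_i<sx_i\}$ and the margins are continuous,
\[
 t\,\Pp(\boldsymbol Y/t\in A_{\boldsymbol y})
 =\frac1s\Pp\Bigl(\bigcup_i\{U_i\le sx_i\}\Bigr)\longrightarrow \ell_L(\boldsymbol x)
\]
by \cref{eq:ell}. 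Next I define $V(\boldsymbol y)=\ell_L(1/y_1,\ldots,1/y_m)$. Convexity and degree-one homogeneity of $\ell_L$, together with the bounds \cref{eq:ell-bounds}, make $V$ finite and continuous on $(0,\infty]^m$ minus the point $(\infty,\ldots,\infty)$, and homogeneous of order $-1$. In particular $\ell_L$ is continuous on $[0,\infty)^m$, being convex and finite there with Lipschitz bound $\sum_i|x_i-x_i'|$.

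\emph{Step 2: existence of the limit measure.} The pre-limit measures $\mu_t=t\,\Pp(\boldsymbol Y/t\in\cdot)$ are uniformly bounded on each set bounded away from $\boldsymbol0$. Indeed every such set lies in some $A_{\boldsymbol y}$ with all $y_i=\varepsilon$, and $\mu_t(A_{\boldsymbol y})\le m/\varepsilon$ by the union bound \cref{eq:lower-union-frechet}. Hence $\{\mu_t\}$ is vaguely relatively compact on $\mathbb E_m$ (Resnick 2008, Prop.~3.16). Take any subsequential limit $\mu$. The map $V$ is continuous and $\mu$ has at most countably many atoms in each hyperplane direction, so for $\boldsymbol y$ outside a countable union of hyperplanes we get $\mu(A_{\boldsymbol y})=V(\boldsymbol y)$. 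By continuity of $V$ this extends to all $\boldsymbol y$.

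\emph{Step 3: uniqueness and the conclusion.} By inclusion–exclusion, the sets $A_{\boldsymbol y}$ determine $\mu$ on all sets of the form $\{\boldsymbol x:\boldsymbol x\not\le\boldsymbol y\}$ and on their differences. Those differences generate, as a $\pi$-system, the Borel sets bounded away from $\boldsymbol 0$. So every subsequential limit equals the single measure $\mu_Y$, and the full family converges vaguely. Nonzeroness holds because $\mu_Y\{x_1>1\}=\ell_L(1,0,\ldots,0)=1$. Vague convergence is equivalent to \cref{eq:mrv-vague} for $\mu_Y$-continuity sets, so $\boldsymbol Y\in\MRV_1(\mu_Y)$ in the sense of \cref{def:mrv}. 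Finally, \cref{prop:equiv} transfers this to $\boldsymbol H$, with $\mu_H(A)=\mu_Y(c^{-1}A)$.

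\emph{Main obstacle.} The delicate step is the passage from convergence on the class of $A_{\boldsymbol y}$ to vague convergence, specifically the boundary issue at limit points. First, I need $\mu(\partial A_{\boldsymbol y})=0$ for enough $\boldsymbol y$ to identify $V$. Second, boundaries include faces where some $y_i=\infty$, and there the compactification of $[0,\infty]^m$ matters. I will handle the first through the monotonicity and continuity of $V$, using a sandwich $A_{\boldsymbol y(1+\delta)}\subset A_{\boldsymbol y}\subset A_{\boldsymbol y(1-\delta)}$ together with homogeneity. For the second, I will note that $\mu_t$ puts no mass on infinite coordinates, and that the tightness bound above holds uniformly.
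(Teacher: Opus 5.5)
Your proof is correct, but it reaches the limit measure by a different route from the paper. The paper passes to replicates. It identifies $\exp\{-\ell_L(1/y_1,\ldots,1/y_m)\}$ as the limit law of normalized componentwise maxima of $\boldsymbol Y$, notes that this law is a proper max-stable unit-Fr\'echet distribution, and imports the classical exponent-measure representation of max-stable laws to obtain $\mu_Y$. It then asserts vague convergence on the grounds that the rectangle complements form a convergence-determining class. You instead stay at the level of a single observation. Exact uniform margins give $\mu_t(A_{\varepsilon\boldsymbol 1})\le m/\varepsilon$, hence vague relative compactness. Every subsequential limit is then pinned down on all $A_{\boldsymbol y}$ by continuity of $V$, combined with monotone approximation through points $\boldsymbol y$ that avoid the countably many charged hyperplanes. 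The paper's route gets existence, homogeneity and the spectral form of $\mu_Y$ from the representation theorem. The price is the detour through replicates and max-stability, and the fact that the passage from rectangle complements to vague convergence is asserted rather than proved. Your compactness-plus-uniqueness argument is exactly what makes that passage rigorous, and it needs no max-stable theory. Homogeneity of $\mu_Y$ is not required for the theorem; if wanted, it follows from homogeneity of $V$ together with uniqueness.

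Two small repairs are needed.

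First, continuity of $V$ should rest on the Lipschitz bound $|\ell_L(\boldsymbol x)-\ell_L(\boldsymbol x')|\le\sum_i|x_i-x_i'|$, which is inherited from the pre-limit symmetric-difference estimate. It should not rest on convexity, which is normally derived from the very measure you are constructing.

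Second, in Step 3 the differences $A_{\boldsymbol y}\setminus A_{\boldsymbol y'}$ are not closed under intersection. For example, with $m=2$, $A_{(1,\infty)}\cap A_{(\infty,1)}=\{x_1>1,x_2>1\}$ is not of that form. Use instead the upper orthants $\{\boldsymbol x:x_i>y_i,\ i\in I\}$ with $\varnothing\ne I\subseteq[m]$ and $\boldsymbol y_I\in(0,\infty)^I$.
\begin{itemize}
\item Their masses follow from $V$ by inclusion--exclusion.
\item They form a $\pi$-system generating the Borel sets of $\mathbb E_m$.
\item Apply the $\pi$--$\lambda$ theorem on each finite-measure set $\{x_i>\varepsilon\}$, then let $\varepsilon\downarrow0$.
\end{itemize}
This gives uniqueness of the subsequential limit.
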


\begin{remark}[How close this condition is to necessary]
For a fixed finite dimension and exact uniform margins, existence of the
limits in \cref{eq:ell} is one of the standard equivalent formulations of
index-one MRV of $(1/U_1,\ldots,1/U_m)$.  It is therefore not merely an
ad hoc sufficient condition; it is the natural copula formulation of the
desired tail property.
\end{remark}

\begin{proposition}[An easier condition implying axis-supported MRV]
\label{prop:pairwise}
Suppose that, for every $i\ne j$ and every fixed $x,y>0$,
\begin{equation}\label{eq:pair-comparable}
  \Pp(U_i\leq sx,U_j\leq sy)=o(s),
  \qquad s\downarrow0.
\end{equation}
Then
\begin{equation}\label{eq:ell-sum}
  \ell_L(\boldsymbol x)=\sum_{i=1}^m x_i,
\end{equation}
so $\boldsymbol Y$ and $\boldsymbol H$ are MRV with spectral mass only on
the coordinate axes.
\end{proposition}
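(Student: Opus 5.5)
The plan is a Bonferroni sandwich on the union probability in \cref{eq:ell}, which reduces everything to pairwise intersections. Fix $\boldsymbol x\in[0,\infty)^m$ and put $E_i(s)=\{U_i\leq sx_i\}$. Coordinates with $x_i=0$ satisfy $\Pp\{E_i(s)\}=0$ and can be discarded, so assume every $x_i>0$. Inclusion--exclusion truncated at second order gives
\[
 \sum_{i=1}^m\Pp\{E_i(s)\}-\sum_{i<j}\Pp\{E_i(s)\cap E_j(s)\}
 \leq\Pp\Bigl(\bigcup_{i}E_i(s)\Bigr)
 \leq\sum_{i=1}^m\Pp\{E_i(s)\}.
\]
Exact uniform margins give $\Pp\{E_i(s)\}=sx_i$ once $sx_i\leq1$. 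By hypothesis \cref{eq:pair-comparable}, each of the finitely many pair terms is $o(s)$. Dividing by $s$ and letting $s\downarrow0$, both bounds converge to $\sum_i x_i$. Hence the limit in \cref{eq:ell} exists and equals \cref{eq:ell-sum}.

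Next, the existence of $\ell_L$ for every $\boldsymbol x\geq0$ lets me invoke \cref{thm:copula-mrv}. This gives $\boldsymbol Y\in\MRV_1$, and by \cref{prop:equiv} also $\boldsymbol H\in\MRV_1$.

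To locate the spectral mass, I use the identification \cref{eq:lower-stdf-exponent-identification}:
\[
 \sum_i x_i=\ell_L(\boldsymbol x)=\int\max_i(x_is_i)\,S_Y(d\boldsymbol s).
\]
The moment constraints \cref{eq:spectral-moment-constraints} give $\sum_i x_i=\int\sum_i x_is_i\,S_Y(d\boldsymbol s)$. Subtracting the two expressions with $\boldsymbol x=\mathbf 1$ yields
\[
 \int\Bigl(\sum_i s_i-\max_i s_i\Bigr)S_Y(d\boldsymbol s)=0.
\]
The integrand is nonnegative and vanishes exactly when at most one $s_i$ is positive, which on the simplex means $\boldsymbol s$ is a vertex. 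So $S_Y$ is concentrated on the coordinate axes, i.e.\ every charged stratum has $|I|=1$. For $\boldsymbol H$, $\mu_H$ is a rescaling of $\mu_Y$ and therefore has the same support.

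The only delicate point is the justification of \cref{eq:lower-stdf-exponent-identification}. It relies on the standard correspondence between the lower stable-tail function and the exponent measure of $1/\boldsymbol U$, which I take as given from the preceding subsection. Alternatively, one can argue directly from the MRV limit: for $i\neq j$,
\[
 \mu_Y\{y_i>a,\,y_j>b\}=\lim_{t\to\infty}t\,\Pp(U_i<1/(ta),\,U_j<1/(tb))=0
\]
by \cref{eq:pair-comparable}. A countable union of such sets covers all off-axis points, so $\mu_Y$ charges no point with two positive coordinates.
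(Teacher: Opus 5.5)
Your proof is correct and follows the paper's route: a Bonferroni sandwich with exact uniform margins and the $o(s)$ pair terms gives $\ell_L(\boldsymbol x)=\sum_i x_i$, then \cref{thm:copula-mrv} gives MRV of $\boldsymbol Y$, \cref{prop:equiv} gives it for $\boldsymbol H$, and the moment constraints identify the spectral measure. Your computation $\int(1-\max_i s_i)\,S_Y(d\boldsymbol s)=0$ makes explicit what the paper compresses into the remark that $\ell_L(\boldsymbol x)=\sum_i x_i$ is the unit mass constraint carried separately by the axes.
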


Condition \cref{eq:pair-comparable} is convenient but substantially
stronger than general MRV.  It excludes any copula with nonzero lower tail
dependence.

The one-big-jump analysis of Liu, Meng and
Pillai~\cite{LiuMengPillai2025} uses a stronger asymmetric pairwise
condition.  In the lower--lower form relevant to the two nonnegative score
transformations studied here, there is a sequence $\delta_t\downarrow0$
with $t\delta_t\to\infty$ such that, for every ordered pair of active
coordinates $i\ne j$,
\begin{equation}\label{eq:earlier-pairwise-condition}
 \Pp\!\left(
   U_i<\frac{a_i}{t},
   U_j<\frac{a_j}{\delta_t t}
 \right)=o(t^{-1}),
 \qquad
 a_k=\frac{2mw_k}{\pi}>0.
\end{equation}
This condition implies \cref{eq:pair-comparable}.  Indeed, for fixed
$x,y>0$, put $T=a_i/(sx)$.  The first threshold in
\cref{eq:earlier-pairwise-condition} then equals $sx$, whereas the second
equals
\[
 s\frac{a_jx}{a_i\delta_T},
\]
which eventually exceeds $sy$ because $\delta_T\to0$.  Therefore
\[
 \Pp(U_i\leq sx,U_j\leq sy)
 \leq
 \Pp\!\left(
   U_i<\frac{a_i}{T},
   U_j<\frac{a_j}{\delta_TT}
 \right)
 =o(T^{-1})=o(s).
\]
Thus the earlier condition is a sufficient route to the axis-supported
case of \cref{prop:pairwise}; it is not necessary for universal
first-order aggregation under lower MDA.

For a decisive contrast, if $U_1=\cdots=U_m=U$, simplex weighting gives
\[
 R_{\boldsymbol w}=U^{-1},
 \qquad
 T_{\boldsymbol w}=\cot(\pi U/2).
\]
The pairwise condition fails maximally, but both aggregates reduce exactly
to their one-dimensional reference scores.  General lower MDA therefore
covers first-order extremal dependence that the earlier one-big-jump
condition deliberately excludes.

\begin{corollary}[Universal first-order tails under lower MDA]
\label{cor:lower-mda-validity}
Let $m$ be fixed, let the $p_i$ have exact uniform margins, and let
$\boldsymbol w$ be a simplex weight vector.  If the copula of
$\boldsymbol U$ is in the lower max-domain of attraction in the sense of
\cref{def:lstd}, then
\begin{align}
 \Pp(R_{\boldsymbol w}>t)
 &=\frac{1}{t}+o(t^{-1}),\label{eq:lower-mda-reciprocal-tail}\\
 \Pp(T_{\boldsymbol w}>t)
 &=\frac{2}{\pi t}+o(t^{-1}).\label{eq:lower-mda-hc-tail}
\end{align}
Each tail is asymptotically equivalent to its exact independence counterpart.
\end{corollary}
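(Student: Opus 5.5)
The proof chains the two earlier theorems and adds no new analysis. First, by \cref{thm:copula-mrv}, existence of the lower stable-tail limit \cref{eq:ell} for every $\boldsymbol x\geq0$ makes $\boldsymbol Y=(1/p_1,\ldots,1/p_m)$ multivariate regularly varying with index one. The same theorem, equivalently \cref{prop:equiv}, gives the same conclusion for $\boldsymbol H$. The hypotheses of \cref{thm:hcct-mrv} then hold verbatim: exact uniform margins, simplex weights, and index-one MRV of $\boldsymbol Y$. That theorem yields \cref{eq:lower-mda-reciprocal-tail} and \cref{eq:lower-mda-hc-tail}, together with the ratio limits \cref{eq:hcct-ratios}, which give the asymptotic equivalence to the independence counterparts.

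For a self-contained account of why the constants are universal, I would also sketch the mechanism behind \cref{thm:hcct-mrv}. Consider the set $A_{\boldsymbol w}=\{\boldsymbol w^{\mathsf T}\boldsymbol x>1\}$. It is bounded away from the origin, because $\boldsymbol w^{\mathsf T}\boldsymbol x\le\|\boldsymbol x\|_\infty$, which forces $\|\boldsymbol x\|_\infty>1$ on the set. Its boundary is the hyperplane $\{\boldsymbol w^{\mathsf T}\boldsymbol x=1\}$. By homogeneity \cref{eq:homogeneity}, the $\mu$-mass of the level sets $\{\boldsymbol w^{\mathsf T}\boldsymbol x=a\}$ is constant along rays, so at most countably many level sets can carry positive mass. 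Scaling then shows they all carry zero mass. Hence $\mu(\partial A_{\boldsymbol w})=0$, and \cref{eq:mrv-vague} gives $t\Pp(R_{\boldsymbol w}>t)\to\mu(A_{\boldsymbol w})$. The linear projection identity \cref{eq:linear-tail}, combined with the moment constraints \cref{eq:spectral-moment-constraints}, gives $\mu(A_{\boldsymbol w})=\sum_i w_i=1$.

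For $T_{\boldsymbol w}$, the bound $|T_{\boldsymbol w}-cR_{\boldsymbol w}|\le c$ from \cref{prop:equiv} sandwiches the event $\{T_{\boldsymbol w}>t\}$ between $\{R_{\boldsymbol w}>t/c+1\}$ and $\{R_{\boldsymbol w}>t/c\}$. Both of these have probability $c/t+o(t^{-1})$. Independent exact-uniform $p$-values satisfy lower MDA with $\ell_L(\boldsymbol x)=\sum_i x_i$, for example via \cref{prop:pairwise}. The same constants therefore apply under independence, which gives the ratio statement.

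I expect no genuine obstacle. The only point needing care is the continuity-set verification, and this is already absorbed into \cref{thm:hcct-mrv}. If a self-contained argument were wanted, I would use the homogeneity argument above rather than invoking absolute continuity of $\mu$, since $\mu$ may have atoms on rays.
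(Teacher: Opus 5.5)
Your proposal is correct and matches the paper's argument, which simply applies \cref{thm:copula-mrv} to obtain index-one MRV of $\boldsymbol Y$ (and hence of $\boldsymbol H$) and then invokes \cref{thm:hcct-mrv}. Your supplementary sketch of the continuity-set and Half-Cauchy sandwich steps is also sound; one small correction is that the level-set masses scale like $a^{-1}$ rather than being constant along rays, but your countability argument goes through unchanged.
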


\paragraph*{The lower-MDA assumption is substantive}
Exact uniform margins alone do not force the limit in \cref{eq:ell} to
exist.  \TechnicalLocation{} gives a scale-oscillatory copula diagonal for
which the normalized lower-tail overlap has two subsequential limits.  The
example separates failure of a full tail measure from failure of one
particular aggregate-tail limit.

\section{Second-order calibration and quantitative tail control}
\label{sec:second-order}

\subsection{From tail expansions to second-order calibration}
\label{sec:calibration-functionals}

The word \emph{calibration} will refer to comparison with the exact null
law under independent $p$-values, not merely comparison with the leading
index-one tail $c_0/t$.  This distinction matters because the independent
law itself has a critical logarithmic correction for both aggregates.

Let $S_{C,\bw}$ be a fixed weighted aggregation statistic under copula
$C$, and write
\[
 \overline F_{C,\bw}(t)=\Pp_C(S_{C,\bw}>t).
\]
Let $\Pi$ denote the independence copula and define
\begin{align}
 D_{C,\bw}(t)
  &=\overline F_{C,\bw}(t)-\overline F_{\Pi,\bw}(t),
  \label{eq:tail-calibration-distortion}\\
 R_{C,\bw}(t)
  &=\frac{\overline F_{C,\bw}(t)}
          {\overline F_{\Pi,\bw}(t)}-1.
  \label{eq:relative-tail-calibration-distortion}
\end{align}
For a nominal level $\alpha$, let $q_{\Pi,\bw}(\alpha)$ be the upper
$\alpha$-quantile of the independence law.  Whenever
$\overline F_{\Pi,\bw}\{q_{\Pi,\bw}(\alpha)\}=\alpha$, the actual size
distortion is
\begin{equation}\label{eq:size-calibration-functional}
 \operatorname{Cal}_{C,\bw}(\alpha)
 =\Pp_C\{S_{C,\bw}>q_{\Pi,\bw}(\alpha)\}-\alpha
 =D_{C,\bw}\{q_{\Pi,\bw}(\alpha)\}.
\end{equation}
Positive and negative signs indicate anticonservative and conservative
independence calibration, respectively.

\begin{theorem}[Second-order calibration and quantile inversion]
\label{thm:calibration-inversion}
Let $j\in\{C,\Pi\}$, let $c_0>0$, and suppose that the two survival
functions are eventually continuous and strictly decreasing and satisfy
\begin{equation}\label{eq:common-second-order-tail-expansion}
 \overline F_{j,\bw}(t)
 =\frac{c_0}{t}
  +d_{j,\bw}\,t^{-1-\kappa}L(t)
  +o\{t^{-1-\kappa}L(t)\},
 \qquad \kappa>0,
\end{equation}
where $L$ is positive and slowly varying.  Put $q_0(\alpha)=c_0/\alpha$.
Then
\begin{align}
 q_{j,\bw}(\alpha)
 &=q_0(\alpha)
   +\frac{d_{j,\bw}}{c_0}
      q_0(\alpha)^{1-\kappa}L\{q_0(\alpha)\}
   +o\!\left(q_0(\alpha)^{1-\kappa}
             L\{q_0(\alpha)\}\right),
 \label{eq:second-order-quantile-expansion}\\
 \operatorname{Cal}_{C,\bw}(\alpha)
 &=(d_{C,\bw}-d_{\Pi,\bw})
   c_0^{-1-\kappa}\alpha^{1+\kappa}
   L(c_0/\alpha)
   +o\{\alpha^{1+\kappa}L(1/\alpha)\},
 \label{eq:second-order-size-distortion}\\
 \frac{\operatorname{Cal}_{C,\bw}(\alpha)}{\alpha}
 &=(d_{C,\bw}-d_{\Pi,\bw})
   c_0^{-1-\kappa}\alpha^{\kappa}
   L(c_0/\alpha)
   +o\{\alpha^{\kappa}L(1/\alpha)\}.
 \label{eq:second-order-relative-size-distortion}
\end{align}
Moreover, the dependence-specific correction to the independence critical
value is
\begin{equation}\label{eq:critical-value-calibration-shift}
 q_{C,\bw}(\alpha)-q_{\Pi,\bw}(\alpha)
 =\frac{d_{C,\bw}-d_{\Pi,\bw}}{c_0}
   q_0(\alpha)^{1-\kappa}L\{q_0(\alpha)\}
 +o\!\left(q_0(\alpha)^{1-\kappa}L\{q_0(\alpha)\}\right).
\end{equation}
\end{theorem}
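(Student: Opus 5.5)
The plan is to invert the tail expansion \cref{eq:common-second-order-tail-expansion} by a perturbative ansatz around $q_0(\alpha)=c_0/\alpha$. Then we obtain the size distortion by substituting the independence quantile into the copula tail. Everything reduces to two uses of regular variation: uniform convergence $L(\lambda x)/L(x)\to1$ for $\lambda$ in compacts, and the observation that a relative perturbation of size $o(1)$ in $t$ changes $t^{-1-\kappa}L(t)$ only by a factor $1+o(1)$.

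\textbf{Step 1: quantile expansion \cref{eq:second-order-quantile-expansion}.} Eventual continuity and strict monotonicity make $q_{j,\bw}(\alpha)$ well defined for small $\alpha$, with $\overline F_{j,\bw}\{q_{j,\bw}(\alpha)\}=\alpha$ and $q_{j,\bw}(\alpha)\to\infty$. The first-order term gives $q_{j,\bw}(\alpha)\sim q_0(\alpha)$, since $\alpha q_{j,\bw}(\alpha)=c_0+o(1)$. Write $q=q_{j,\bw}(\alpha)=q_0(1+\varepsilon)$ with $\varepsilon=\varepsilon(\alpha)\to0$. The identity $\alpha=c_0/q+d_{j,\bw}q^{-1-\kappa}L(q)\{1+o(1)\}$ becomes
\[
 \frac{c_0}{q_0}\Bigl(1-\frac{1}{1+\varepsilon}\Bigr)
 =d_{j,\bw}\,q^{-1-\kappa}L(q)+o\{q^{-1-\kappa}L(q)\}.
\]
Because $q/q_0\to1$, uniform convergence of $L$ gives $q^{-1-\kappa}L(q)=q_0^{-1-\kappa}L(q_0)\{1+o(1)\}$. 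The left side equals $(c_0/q_0)\,\varepsilon\{1+O(\varepsilon)\}$. Hence $\varepsilon=(d_{j,\bw}/c_0)\,q_0^{-\kappa}L(q_0)+o\{q_0^{-\kappa}L(q_0)\}$, and multiplying by $q_0$ gives \cref{eq:second-order-quantile-expansion}. Here $\kappa>0$ ensures $q_0^{-\kappa}L(q_0)\to0$, so $\varepsilon\to0$ and the $O(\varepsilon^2)$ term is negligible. Subtracting the expansions for $j=C$ and $j=\Pi$ then yields \cref{eq:critical-value-calibration-shift}.

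\textbf{Step 2: size distortion \cref{eq:second-order-size-distortion}.} Put $q_\Pi=q_{\Pi,\bw}(\alpha)$ and use $\operatorname{Cal}_{C,\bw}(\alpha)=D_{C,\bw}(q_\Pi)$ from \cref{eq:size-calibration-functional}. Subtracting the two tail expansions at the common point $t=q_\Pi$ cancels the $c_0/t$ terms exactly:
\[
 D_{C,\bw}(q_\Pi)=(d_{C,\bw}-d_{\Pi,\bw})\,q_\Pi^{-1-\kappa}L(q_\Pi)+o\{q_\Pi^{-1-\kappa}L(q_\Pi)\}.
\]
Since $q_\Pi\sim c_0/\alpha$, regular variation gives $q_\Pi^{-1-\kappa}L(q_\Pi)\sim c_0^{-1-\kappa}\alpha^{1+\kappa}L(c_0/\alpha)$. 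Moreover $L(c_0/\alpha)\sim L(1/\alpha)$, which lets us rewrite the remainder in terms of $L(1/\alpha)$. Dividing by $\alpha$ gives \cref{eq:second-order-relative-size-distortion}.

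\textbf{Main obstacle.} The only delicate point is the uniformity in Step 1. The $o\{t^{-1-\kappa}L(t)\}$ remainder is stated pointwise in $t$ but must be evaluated at the moving point $t=q$; this is fine because the remainder is a function of $t$ alone and $q\to\infty$. The ratio $L(q)/L(q_0)\to1$ needs $q/q_0\to1$, which the first-order step supplies. The same care handles the case $d_{C,\bw}=d_{\Pi,\bw}$: the conclusions then read as pure $o(\cdot)$ statements and remain valid. Nothing requires a sign condition on the $d$'s, and no term of order $\varepsilon^2$ competes, because $\kappa>0$.
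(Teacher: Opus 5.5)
Your proposal is correct. Your Step~2 and the subtraction giving the critical-value shift are the same as the paper's argument. Your Step~1, however, inverts the tail by a different route.

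\textbf{The paper's route.} The paper never evaluates $\overline F_{j,\bw}$ at the unknown quantile. It builds explicit deterministic test points
$x_{j,\pm}=q_0[1+\{d_{j,\bw}/c_0\pm\varepsilon\}q_0^{-\kappa}L(q_0)]$ and shows $\overline F_{j,\bw}(x_{j,+})<\alpha<\overline F_{j,\bw}(x_{j,-})$ for small $\alpha$. Monotonicity then brackets the quantile, and letting $\varepsilon\downarrow0$ gives the expansion.

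\textbf{Your route.} You substitute the quantile itself into the exact identity $\overline F_{j,\bw}\{q_{j,\bw}(\alpha)\}=\alpha$ and solve for the relative perturbation $\varepsilon(\alpha)$. This needs an a priori localization $q_{j,\bw}(\alpha)\sim q_0(\alpha)$, which you correctly supply from the first-order term. That localization is what justifies $L(q)/L(q_0)\to1$ and the expansion of $1-(1+\varepsilon)^{-1}$.

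\textbf{Comparison.} Your argument is more direct and produces the coefficient in a single pass. The paper's sandwich uses only monotonicity, not the exact equation, and needs no preliminary localization of the quantile. It would therefore also work for generalized-inverse quantiles when the survival function has atoms or flat pieces. Under the theorem's hypotheses (eventual continuity and strict decrease), both arguments are complete.

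One small notational point: writing the second term as $d_{j,\bw}q^{-1-\kappa}L(q)\{1+o(1)\}$ is wrong when $d_{j,\bw}=0$. Your displayed equation uses the correct additive $o\{q^{-1-\kappa}L(q)\}$ form, though, so nothing downstream breaks.
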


\begin{remark}[Different competing rates]
If the dependent and independent corrections have different orders, first
subtract the two survival expansions and retain the slower term.  In
particular, if
\[
 D_{C,\bw}(t)=d_{\Delta,\bw}t^{-1-\kappa}L(t)
  +o\{t^{-1-\kappa}L(t)\},
\]
then \cref{eq:size-calibration-functional} and
$q_{\Pi,\bw}(\alpha)\sim c_0/\alpha$ directly give
\[
 \operatorname{Cal}_{C,\bw}(\alpha)
 =d_{\Delta,\bw}c_0^{-1-\kappa}\alpha^{1+\kappa}
  L(c_0/\alpha)
  +o\{\alpha^{1+\kappa}L(1/\alpha)\}.
\]
This is the form used when, for example, under the face-transfer hypotheses
a positive-correlation Gaussian power correction dominates the independent
logarithm.
\end{remark}

\paragraph*{A hierarchy of quantitative assumptions}
Ordinary lower MDA supplies no rate.  A second-order tail-measure bound on
the weighted half-space does, while shell-probability, score-density, and
bounded-copula-density assumptions are progressively stronger sufficient
conditions near independence.  The precise one-way implications and the
fact that none is necessary are recorded in \TechnicalLocation.

\subsection{Critical examples}
\label{sec:critical-examples}

Two boundary examples explain why a numerical second-order rate is extra
structure and why even the product copula sits at a nontrivial critical
boundary.

Ordinary MRV can converge more slowly than every positive power.
An exact-uniform axial example with a nonregular, arbitrarily slow
diagonal remainder is constructed in \TechnicalLocation; hence no
second-order rate follows from first-order support alone.

\begin{theorem}[Product copula and the critical logarithm]
\label{thm:independent-sharp}
Let $Y_1,\ldots,Y_m$ be independent standard Pareto variables,
$\Pp(Y_i>y)=y^{-1}$ for $y\geq1$, and let $\boldsymbol w$ be a fixed
simplex weight vector.  Then
\begin{equation}\label{eq:independent-pareto-m}
 \Pp\!\left(\sum_{i=1}^mw_iY_i>t\right)
 =\frac1t+2\sum_{i<j}w_iw_j\frac{\log t}{t^2}+O(t^{-2}).
\end{equation}
For $m=2$ and $w_1,w_2>0$, the exact identity, valid for
$t>w_1+w_2$, is
\begin{equation}\label{eq:independent-pareto-exact}
 \Pp(w_1Y_1+w_2Y_2>t)
 =\frac{w_1+w_2}{t}
  +\frac{w_1w_2}{t^2}
   \log\!\frac{(t-w_1)(t-w_2)}{w_1w_2}.
\end{equation}
At a zero-weight endpoint, this reduces to the corresponding
one-dimensional Pareto identity.
If $H_i$ are independent standard positive Half-Cauchy variables and
$c=2/\pi$, then
\begin{equation}\label{eq:independent-hc-m}
 \Pp\!\left(\sum_{i=1}^mw_iH_i>t\right)
 =\frac ct+2c^2\sum_{i<j}w_iw_j\frac{\log t}{t^2}+O(t^{-2}).
\end{equation}
Thus the logarithmic rate cannot generally be improved to $O(t^{-2})$.
\end{theorem}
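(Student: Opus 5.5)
I would begin with the exact two-dimensional identity \cref{eq:independent-pareto-exact} and derive the rest from it.

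\textbf{The exact bivariate identity.} For $w_1,w_2>0$ and $t>w_1+w_2$, I would condition on $Y_1$:
\[
 \Pp(w_1Y_1+w_2Y_2>t)=\int_1^\infty \Pp\{Y_2>(t-w_1y)/w_2\}\,y^{-2}\,dy .
\]
Split at $y=(t-w_2)/w_1$. For $y>(t-w_2)/w_1$ the conditional probability is one. This piece contributes $w_1/(t-w_2)$. For $1\le y<(t-w_2)/w_1$ the conditional probability is $w_2/(t-w_1y)$. The condition $t>w_1+w_2$ guarantees the split point exceeds $1$. The remaining integral $\int w_2\,y^{-2}(t-w_1y)^{-1}dy$ is elementary by partial fractions,
\[
 \frac{1}{y^2(t-w_1y)}=\frac{1}{ty^2}+\frac{w_1}{t^2y}+\frac{w_1^2}{t^2(t-w_1y)} .
\]
Adding the pieces and simplifying, the rational terms combine to $(w_1+w_2)/t$ plus the logarithm shown. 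I would verify this carefully, since it is the source of the coefficient. The zero-weight case is immediate.

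\textbf{General $m$ for Pareto.} Here I would use the critical-boundary decomposition. Write $S=\sum_i w_iY_i$ and $M=\max_i w_iY_i$. By inclusion–exclusion, $\Pp(M>t)=\sum_i w_i/t-\sum_{i<j}w_iw_j/t^2+O(t^{-3})$. It remains to estimate $\Pp(S>t,\,M\le t)$, and I would do this by induction on $m$.

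Set $S'=\sum_{i<m}w_iY_i$, with weights not normalised, so that $\Pp(S'>s)=W'/s+2\sum_{i<j<m}w_iw_j\log s/s^2+O(s^{-2})$ where $W'=\sum_{i<m}w_i$. Condition on $Y_m$:
\[
 \Pp(S>t)=\Pp(w_mY_m>t)+\int_1^{t/w_m}\Pp(S'>t-w_my)\,y^{-2}\,dy .
\]
Insert the inductive expansion. The leading term integrates as in the bivariate computation, with $w_1$ replaced by $W'$, giving $(W'+w_m)/t+W'w_m\log t/t^2\cdot 2+O(t^{-2})$.

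The factor $2$ needs care. In the bivariate identity the logarithm is $\log\{(t-w_1)(t-w_2)/(w_1w_2)\}\sim 2\log t$, so the coefficient of $\log t/t^2$ is $2w_1w_2$. This matches $2\sum_{i<j}w_iw_j$, and I would track the same $2$ in the induction.

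The second-order term of $S'$ integrated against $y^{-2}$ stays $\sum_{i<j<m}$ of order $\log t/t^2$. The contribution near $y\approx t/w_m$, where $t-w_my$ is small, needs the crude bound $\Pp(S'>s)\le 1\wedge C/s$. That region contributes $O(t^{-2}\log t)$, so I must check that it in fact contributes exactly the cross term and not an extra one. This is the main obstacle.

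To handle it I would instead split the integral at $y\le t/(2w_m)$ and $y>t/(2w_m)$, expanding in $y/t$ on the first piece and using the crude bound on the second. This reproduces the bivariate computation in which $Y_2$ is replaced by $S'/W'$. A cleaner alternative is to write $S'=W'Z$, where $Z$ has tail $1/z+O(\log z/z^2)$, and reuse the exact bivariate formula with a bounded-error perturbation.

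\textbf{Half-Cauchy.} I would not invoke \cref{eq:ehmp-hcct-coefficient-transfer}, since $\log t/t^2$ is near the $t^{-2}$ boundary. Instead I would repeat the induction with $H_i$. The density of $H_i$ is $c\,x^{-2}\{1+O(x^{-2})\}$ on $[0,\infty)$ and bounded, so each integral changes only by $O(t^{-2})$ relative to the Pareto computation with scale $c$. An alternative is the sandwich $cY_i-c\le H_i\le cY_i$: it gives $\Pp(c\sum w_iY_i>t+c)\le\Pp(T>t)\le\Pp(c\sum w_iY_i>t)$. Both bounds equal $c/t+2c^2\sum w_iw_j\log t/t^2+O(t^{-2})$, because shifting $t$ by $c$ changes $c/t$ by only $O(t^{-2})$. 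This sandwich is the efficient route, and it yields \cref{eq:independent-hc-m} directly from \cref{eq:independent-pareto-m}. Sharpness follows because the coefficient is nonzero whenever two weights are positive.
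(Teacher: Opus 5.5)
Your bivariate identity (conditioning on $Y_1$, splitting at $(t-w_2)/w_1$, partial fractions) and your Half-Cauchy sandwich $\Pp(cR_{\bw}>t+c)\le\Pp(T_{\bw}>t)\le\Pp(cR_{\bw}>t)$ are exactly the paper's arguments. For general $m$ you take a different route.

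The paper uses the exact M\"obius decomposition of \cref{lem:face-decomposition}. The singleton terms sum exactly to $1/t$. Each pair increment is the bivariate identity minus its two singletons, i.e.\ exactly $w_iw_jt^{-2}\log\{(t-w_i)(t-w_j)/(w_iw_j)\}=2w_iw_j(\log t)/t^2+O(t^{-2})$. Every face with $|I|\ge3$ is $O(t^{-2})$ by the product-corner envelope \cref{lem:product-corner-face-envelope} at cutoff $\varepsilon\asymp1/t$.

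Your induction convolves the expansion of $S'=\sum_{i<m}w_iY_i$ against the density $y^{-2}$ of $Y_m$. It is valid and more elementary, since it needs no higher-face envelope. What it gives up is structure. It relies essentially on independence of $Y_m$ and $S'$, and it does not isolate the contribution of individual faces. The paper's face-by-face bookkeeping is what carries over to the dependent setting of \cref{thm:sharp-local-corner}. There each pair increment is perturbed through its corner density limit, and higher faces need separate control.

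One step of your plan needs correcting. On the piece $y>t/(2w_m)$ you cannot replace $\Pp(S'>s)$, with $s=t-w_my$, by the crude bound $1\wedge C/s$. That piece carries the partial fraction $W'w_m^2/\{t^2(t-w_my)\}$. The leading term $W'/s$ integrated there gives $W'w_m(\log t)/t^2+O(t^{-2})$, which is half of the new cross term $2W'w_m(\log t)/t^2$; the other half comes from $y\le t/(2w_m)$. A crude bound on the full tail there gives only an $O\{(\log t)/t^2\}$ upper bound and loses the coefficient.

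The right bookkeeping is the one you started with when you inserted the inductive expansion. Fix $s_0$ beyond which that expansion holds and use it for every $s\ge s_0$:
\begin{itemize}
\item On $y>t/(2w_m)$, the remainders $2\sum_{i<j<m}w_iw_j(\log s)/s^2+O(s^{-2})$ have bounded $ds$-integral, and $y^{-2}\asymp t^{-2}$ there, so they contribute $O(t^{-2})$.
\item The strip $s<s_0$ has $y$-length $O(1)$ and contributes $O(t^{-2})$.
\item On $y\le t/(2w_m)$, the term $2\sum_{i<j<m}w_iw_j(\log s)/s^2$ reproduces the old pairs' $(\log t)/t^2$ coefficient.
\end{itemize}
For the same reason, in your ``cleaner alternative'' the deviation of the tail of $Z=S'/W'$ from $1/z$ is not a negligible perturbation. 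It is exactly what carries the pairs $i<j<m$.

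Finally, the induction never uses the inclusion--exclusion expansion for $\max_iw_iY_i$, so you can drop it.
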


The product example reveals the main geometric subtlety.  Rectangle
probabilities have the exact scale
$C_\Pi(sx,sy)=s^2xy$, yet the noncompact weighted half-space reaches both
axes.  The hidden product measure is only critically integrable there, and
the Pareto-proxy support cutoff at distance $1/t$ creates the additional
$\log t$.  Rectangle rates alone therefore do not determine a sum-tail
rate without boundary control.

\paragraph*{Concrete near-independence criteria}
Joint shell bounds imply the aggregate rate
$c_X/t+O\{(\log t)/t^2\}$ for both reciprocal and positive Half-Cauchy
scores.  Product-type pair densities imply those shell bounds, and bounded
lower-tail copula densities are a still stronger sufficient condition.
Formal statements and higher-dimensional qualifications appear in
\TechnicalLocation.

\paragraph*{Quantitative half-space projection}
If the exponent-measure approximation on
$A_{\boldsymbol w}=\{\boldsymbol x:\boldsymbol w^{\mathsf T}\boldsymbol x>1\}$
has relative error $O\{a(t)\}$, the aggregate has absolute tail error
$O\{a(t)/t\}$.  This elementary projection is useful only when the
noncompact half-space is an admissible continuity set; rectangle rates
alone do not provide the required boundary control.  Formal details are in
\TechnicalLocation.

\section{Face calculus for higher-order aggregation}
\label{sec:face-calculus}

The first-order exponent measure may live on axes, on higher-dimensional
faces, or in the full orthant.  A second-order sum expansion must therefore
identify which face first changes the weighted half-space probability.  The
following exact decomposition is the bookkeeping device used below.
M\"obius inversion for multivariate excess and face measures is part of the
existing geometric extreme-value toolkit; see Balkema and
Embrechts~\cite{BalkemaEmbrechts2007}.  The particular object needed here
is the M\"obius increment of a weighted half-space.  For a nonempty set
$I\subseteq[m]$, put
\[
 g_I(\boldsymbol x_I)
 =\mathbf 1\!\left\{\sum_{i\in I}w_ix_i>1\right\},
 \qquad g_\varnothing=0,
\]
and define its face increment by M\"obius inversion,
\begin{equation}\label{eq:face-increment}
 G_I(\boldsymbol x_I)
 =\sum_{J\subseteq I}(-1)^{|I|-|J|}g_J(\boldsymbol x_J).
\end{equation}

\begin{lemma}[Exact face decomposition]\label{lem:face-decomposition}
For every $\boldsymbol x\in[0,\infty)^m$,
\begin{equation}\label{eq:face-decomposition}
 \mathbf 1\{\boldsymbol w^{\mathsf T}\boldsymbol x>1\}
 =\sum_{\varnothing\ne I\subseteq[m]}G_I(\boldsymbol x_I).
\end{equation}
Consequently, the tail of a weighted sum is the sum of its one-coordinate
contributions and of the interaction increments on all higher faces.
\end{lemma}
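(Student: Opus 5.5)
The identity is pure M\"obius inversion on the Boolean lattice of subsets of $[m]$, so the plan is to prove it pointwise for each fixed $\boldsymbol x\in[0,\infty)^m$. No probability is involved until the final sentence. Fix $\boldsymbol x$ and regard $J\mapsto g_J(\boldsymbol x_J)$ as a set function $f(J)$ on subsets of $[m]$, with $f(\varnothing)=g_\varnothing=0$. Then $G_I(\boldsymbol x_I)=\sum_{J\subseteq I}(-1)^{|I|-|J|}f(J)$ is exactly the M\"obius transform of $f$. The classical inversion formula asserts $f(K)=\sum_{I\subseteq K}G_I$ for every $K$. Taking $K=[m]$ gives $g_{[m]}(\boldsymbol x)=\mathbf 1\{\boldsymbol w^{\mathsf T}\boldsymbol x>1\}$ on the left. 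The $I=\varnothing$ term on the right is $G_\varnothing=f(\varnothing)=0$, so the sum may be restricted to nonempty $I$, which is \cref{eq:face-decomposition}.

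To keep the argument self-contained, I would verify the inversion step directly. Substituting the definition of $G_I$ and exchanging the order of summation gives
\[
 \sum_{I\subseteq[m]}G_I
 =\sum_{J\subseteq[m]}f(J)\sum_{J\subseteq I\subseteq[m]}(-1)^{|I|-|J|}.
\]
The inner sum runs over subsets $I\setminus J$ of $[m]\setminus J$, so it equals $(1-1)^{m-|J|}$. This is $1$ when $J=[m]$ and $0$ otherwise, so the whole expression collapses to $f([m])$.

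The only point needing care is the convention that $g_J$ depends on $\boldsymbol x$ only through $\boldsymbol x_J$. Since $\boldsymbol x$ is finite, the identity holds pointwise for every $\boldsymbol x\in[0,\infty)^m$ with no measurability or integrability issue. The "consequently" clause then follows by substituting $\boldsymbol x=\boldsymbol Y/t$ (or $\boldsymbol H/t$) and taking expectations. Each $G_I$ is a bounded finite signed combination of indicators, with $|G_I|\le 2^{|I|}$, so linearity of expectation applies. The tail $\Pp(\boldsymbol w^{\mathsf T}\boldsymbol Y>t)$ thus equals $\sum_i\Pp(w_iY_i>t)$ plus $\sum_{|I|\ge2}\mathbb E\,G_I(\boldsymbol Y_I/t)$. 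Here the singleton terms are the one-coordinate contributions, because $G_{\{i\}}=g_{\{i\}}-g_\varnothing=g_{\{i\}}$, and the terms with $|I|\ge2$ are the interaction increments.

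I expect no genuine obstacle. The sole subtlety is bookkeeping: making sure the empty set is handled consistently, so that the restriction to $I\ne\varnothing$ is exact rather than approximate.
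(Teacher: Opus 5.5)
Your proposal is correct and takes the same route as the paper, whose proof simply cites M\"obius inversion on the subset lattice applied to $g_{[m]}$. Your explicit check via the alternating sum $(1-1)^{m-|J|}$, together with $G_\varnothing=g_\varnothing=0$, supplies the details the paper leaves implicit.
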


Vague convergence on an open face does not by itself imply
\cref{eq:hidden-face-transfer-result}: $G_I$ is neither compactly supported
nor bounded away from proper subfaces.  The next theorem gives the needed
condition.

\begin{theorem}[Integrable hidden-face transfer]
\label{thm:hidden-face-transfer}
Let $I\subseteq[m]$, $|I|\ge2$, and let $r_I(t)\downarrow0$.  Suppose
\begin{equation}\label{eq:hidden-face-vague}
 r_I(t)^{-1}\Pp(\boldsymbol X_I/t\in\cdot)
 \xrightarrow{v}\nu_I(\cdot)
 \quad\hbox{on }(0,\infty]^{I}.
\end{equation}
Assume that the discontinuity set of $G_I$ is $\nu_I$-null,
$\int|G_I|\,d\nu_I<\infty$, and that for
\[
 K_{\varepsilon,M}
 =\{\boldsymbol x_I:\varepsilon\le x_i\le M, i\in I\}
\]
one has
\begin{equation}\label{eq:hidden-face-ui}
 \lim_{\varepsilon\downarrow0,\,M\uparrow\infty}
 \limsup_{t\to\infty}r_I(t)^{-1}
 \E\!\left[|G_I(\boldsymbol X_I/t)|
       \mathbf1\{\boldsymbol X_I/t\notin K_{\varepsilon,M}\}\right]=0.
\end{equation}
Then
\begin{equation}\label{eq:hidden-face-transfer-result}
 \E G_I(\boldsymbol X_I/t)
 =r_I(t)\int G_I\,d\nu_I+o\{r_I(t)\}.
\end{equation}
If the hypotheses hold on every retained face and every omitted face
satisfies
\begin{equation}\label{eq:hidden-face-omitted-bound}
 \E|G_I(\boldsymbol X_I/t)|=o\{r(t)\},
\end{equation}
then \cref{lem:face-decomposition} gives the expansion at target rate
$r(t)$ by adding the retained face integrals.  One sufficient route to
\cref{eq:hidden-face-omitted-bound} is the same transfer and uniform-
integrability hypotheses on that face together with $r_I(t)=o\{r(t)\}$;
the nominal vague scale alone is not sufficient.
\end{theorem}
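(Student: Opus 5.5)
The proof is a truncation argument: the compact-core part is handled by vague convergence, and the two tails are handled by the uniform-integrability hypothesis and the integrability of $G_I$ against $\nu_I$. Fix $0<\varepsilon<M<\infty$ and split
\[
 r_I(t)^{-1}\E G_I(\boldsymbol X_I/t)
 =r_I(t)^{-1}\E\bigl[G_I(\boldsymbol X_I/t)\mathbf1\{\boldsymbol X_I/t\in K_{\varepsilon,M}\}\bigr]
 +r_I(t)^{-1}\E\bigl[G_I(\boldsymbol X_I/t)\mathbf1\{\boldsymbol X_I/t\notin K_{\varepsilon,M}\}\bigr].
\]
The second term is bounded in absolute value by the quantity in \cref{eq:hidden-face-ui}. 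Its double limit therefore vanishes.

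For the first term, note that $K_{\varepsilon,M}$ is compact in $(0,\infty]^I$, being bounded away from every proper subface. The function $G_I\mathbf1_{K_{\varepsilon,M}}$ is bounded, since $|G_I|\le 2^{|I|}$ from \cref{eq:face-increment}. Its discontinuities lie in the discontinuity set of $G_I$ together with $\partial K_{\varepsilon,M}$.

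The first of these is $\nu_I$-null by hypothesis. For the boundary, the faces $\{x_i=\varepsilon\}$ and $\{x_i=M\}$ are $\nu_I$-null for all but countably many $\varepsilon,M$, because $\nu_I$ is Radon and these level sets are disjoint across values. Restricting to such continuity values, the standard extension of vague convergence to bounded, compactly supported, $\nu_I$-a.e.\ continuous integrands (a portmanteau/Riemann-type approximation by continuous compactly supported functions) gives
\[
 r_I(t)^{-1}\E\bigl[G_I(\boldsymbol X_I/t)\mathbf1_{K_{\varepsilon,M}}\bigr]
 \to\int_{K_{\varepsilon,M}}G_I\,d\nu_I .
\]

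Next let $\varepsilon\downarrow0$ and $M\uparrow\infty$ along continuity values. By dominated convergence with $\int|G_I|\,d\nu_I<\infty$, we get $\int_{K_{\varepsilon,M}}G_I\,d\nu_I\to\int G_I\,d\nu_I$, where the integral is over $(0,\infty]^I$. Combining the three pieces,
\[
 \limsup_t\Bigl|r_I(t)^{-1}\E G_I(\boldsymbol X_I/t)-\int G_I\,d\nu_I\Bigr|
\]
is at most the tail of the integral plus the uniform-integrability remainder. Both vanish in the double limit, which proves \cref{eq:hidden-face-transfer-result}.

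The main obstacle is the step above that extends vague convergence to the indicator-type integrand. A cleaner route is a sandwich: approximate $G_I\mathbf1_K$ from above and below by continuous compactly supported functions whose $\nu_I$-integrals differ by at most $\eta$. This uses outer and inner regularity of $\nu_I$ on the compact set $K$ and the fact that the discontinuity set is $\nu_I$-null. Mass on points with some coordinate equal to $+\infty$ is handled the same way, since $(0,\infty]^I$ is the ambient space. One also has to check that $\mathbf1\{\boldsymbol X_I/t\in(0,\infty]^I\}$ captures everything: on the event that some $X_i=0$, the function $G_I$ is evaluated on a proper subface, and such points fall under ``$\notin K_{\varepsilon,M}$'' and so are controlled by \cref{eq:hidden-face-ui}.

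For the global statement, take expectations in \cref{lem:face-decomposition} at $\boldsymbol X/t$, which is legitimate because there are finitely many bounded terms. Each retained face contributes $r_I(t)\int G_I\,d\nu_I+o\{r_I(t)\}$, and $r_I=O(r)$ is implicit in being retained at target rate $r$. Each omitted face contributes $o\{r(t)\}$ by \cref{eq:hidden-face-omitted-bound}. Summing the finitely many errors gives the expansion.

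For the sufficient route, the first part applied to an omitted face gives $|\E G_I|\le \E|G_I|$. Running the same truncation argument on $|G_I|$ gives $\E|G_I(\boldsymbol X_I/t)|=r_I(t)\int|G_I|\,d\nu_I+o\{r_I(t)\}$, and this is $O\{r_I(t)\}=o\{r(t)\}$. Vague scale alone fails because, without \cref{eq:hidden-face-ui}, mass escaping to subfaces or to infinity is not controlled.
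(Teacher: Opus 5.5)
Your proposal is correct and follows essentially the same route as the paper's proof. Both arguments truncate to rectangles $K_{\varepsilon,M}$ with $\nu_I$-null boundaries (only countably many levels are exceptional), apply the portmanteau theorem to the bounded, compactly supported, $\nu_I$-a.e.\ continuous integrand $G_I\mathbf 1_{K_{\varepsilon,M}}$, remove the truncation with \cref{eq:hidden-face-ui}, and sum the finite M\"obius decomposition. You also spell out two steps the paper leaves implicit: the limit of $\int_{K_{\varepsilon,M}}G_I\,d\nu_I$ and the verification of the sufficient route for omitted faces. One small correction is needed. Dominated convergence along $K_{\varepsilon,M}$ reaches $(0,\infty)^I$, not $(0,\infty]^I$. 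You should therefore add that points with an infinite coordinate carry no $|G_I|\,d\nu_I$-mass. This follows from \cref{eq:hidden-face-ui} by the same portmanteau argument applied on the open sets where every coordinate exceeds $\varepsilon$ and some coordinate exceeds $M$.
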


Product-power hidden measures, including Gaussian ones, permit an explicit
integrability test.

\begin{proposition}[Product-power pair functional]
\label{prop:product-power-face}
Suppose $I=\{i,j\}$, $0<a_i,a_j<1$, and
\begin{equation}\label{eq:product-power-measure}
 \nu_I(d\boldsymbol x_I)
 =K_Ia_ia_jx_i^{-a_i-1}x_j^{-a_j-1}\,dx_i\,dx_j.
\end{equation}
Then $G_I\in L^1(\nu_I)$ and, with continuous interpretation when
$a_i+a_j=1$,
\begin{equation}\label{eq:product-power-functional}
 \int G_I\,d\nu_I
 =-K_Iw_i^{a_i}w_j^{a_j}
 \frac{\Gamma(1-a_i)\Gamma(1-a_j)}
      {\Gamma(1-a_i-a_j)}.
\end{equation}
In particular, a noncritical hidden face gives a pure power correction.
For faces of dimension at least three, individual conditions $a_i<1$ do
not suffice: partial sums on proper subfaces control integrability.
Equality at one can produce a logarithm; a larger sum can produce
polynomial divergence.
\end{proposition}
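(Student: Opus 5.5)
The plan is to reduce to unit weights by a change of variables, write $G_I$ out explicitly for a pair, and evaluate the resulting one-dimensional integral by analytic continuation from a range of exponents where it splits into Beta integrals.

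\emph{Explicit increment.} For $I=\{i,j\}$ the M\"obius formula \cref{eq:face-increment} gives
\[
 G_I(x_i,x_j)=\mathbf 1\{w_ix_i+w_jx_j>1\}-\mathbf 1\{w_ix_i>1\}-\mathbf 1\{w_jx_j>1\}.
\]
This equals $+1$ on $\{w_ix_i\le1,\ w_jx_j\le1,\ w_ix_i+w_jx_j>1\}$. It equals $-1$ on $\{w_ix_i>1,\ w_jx_j>1\}$, and it vanishes elsewhere. Substitute $u=w_ix_i$ and $v=w_jx_j$. Since $a\,x^{-a-1}dx=w^{a}\,a\,u^{-a-1}du$, one gets
\[
 \int G_I\,d\nu_I=K_Iw_i^{a_i}w_j^{a_j}\int G_0\,d\nu_0 .
\]
Here $G_0$ is the unit-weight increment and $\nu_0(du\,dv)=a_ia_ju^{-a_i-1}v^{-a_j-1}du\,dv$. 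The discontinuity set of $G_0$ consists of lines and is Lebesgue-null. It is therefore $\nu_0$-null, which is the hypothesis needed later in \cref{thm:hidden-face-transfer}.

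\emph{Integrability and reduction.} The negative part has $\nu_0$-mass $\nu_0\{(1,\infty)^2\}=1$. The positive part is the triangle $\{u,v\le1,\ u+v>1\}$. On it, if $u$ is small then $v>1-u$ stays near $1$, away from the axis. Integrating $v$ out gives
\[
 P(a_i,a_j)=\int_0^1a_iu^{-a_i-1}\bigl\{(1-u)^{-a_j}-1\bigr\}\,du .
\]
Near $u=0$ the integrand is $O(u^{-a_i})$, which is integrable because $a_i<1$. Near $u=1$ it is $O\{(1-u)^{-a_j}\}$, which is integrable because $a_j<1$. Hence $G_I\in L^1(\nu_I)$, and
\[
 \int G_0\,d\nu_0=P(a_i,a_j)-1 .
\]

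\emph{Evaluation.} The target identity is
\[
 P-1=-\frac{\Gamma(1-a_i)\Gamma(1-a_j)}{\Gamma(1-a_i-a_j)} .
\]
I would prove it by analytic continuation in $a_i$. The map $a_i\mapsto P$ is holomorphic on $\{\operatorname{Re}a_i<1\}$: dominated convergence applies locally uniformly, using the bounds above. For $\operatorname{Re}a_i<0$ the two pieces of $P$ separate:
\[
 \int_0^1a_iu^{-a_i-1}(1-u)^{-a_j}\,du=a_iB(-a_i,1-a_j),\qquad \int_0^1a_iu^{-a_i-1}\,du=-1 .
\]
Therefore
\[
 P-1=a_i\frac{\Gamma(-a_i)\Gamma(1-a_j)}{\Gamma(1-a_i-a_j)}
 =-\frac{\Gamma(1-a_i)\Gamma(1-a_j)}{\Gamma(1-a_i-a_j)},
\]
using $a_i\Gamma(-a_i)=-\Gamma(1-a_i)$. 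The right-hand side is holomorphic in $a_i$ as well, with $1/\Gamma$ entire. The identity theorem then extends the equality to $0<a_i<1$. When $a_i+a_j=1$ the reciprocal $1/\Gamma(0)=0$, so the value is $0$. This is the continuous interpretation in the statement, and it makes the noncritical correction a pure power.

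The main obstacle is justifying this continuation, specifically the holomorphy of $P$ up to $\operatorname{Re}a_i<1$. It needs a single integrable dominating function, locally uniform in $a_i$, near both endpoints; the two-sided bounds above supply it.

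\emph{Higher faces.} The remark about faces of dimension at least three would be illustrated rather than fully proved. Near a subface where the coordinates in $J\subsetneq I$ tend to $0$ and the others stay of order one, $G_I$ is nonzero on a boundary layer of width comparable to $\sum_{k\in J}u_k$. Integrating the product density over that layer gives a factor $\int\bigl(\sum_{k\in J}u_k\bigr)\prod_{k\in J}u_k^{-a_k-1}du_k$. This is finite exactly when $\sum_{k\in J}a_k<1$. It is logarithmically divergent when the sum equals $1$, and polynomially divergent when it exceeds $1$. The same computation at $|J|=1$ recovers the pair condition $a_i<1$.
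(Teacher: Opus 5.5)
Your pair computation is correct and follows the paper's route. You use the same substitution $u=w_ix_i$, $v=w_jx_j$, the same $\pm1$ regions, the same observation that integrating out the far coordinate leaves a factor $O(u)$ near each axis, and a Beta evaluation. Your continuation in $a_i$ from $\operatorname{Re}a_i<0$, holding the constant $-1$ from the negative quadrant fixed, is a rigorous version of the paper's ``successive integration using Euler's beta identity, continued''. Because $1/\Gamma$ is entire, it gives the value $0$ at $a_i+a_j=1$ without a separate limit argument. One small misattribution: the ``pure power'' conclusion comes from finiteness of $\int|G_I|\,d\nu_I$, which by \cref{thm:hidden-face-transfer} transfers the scale $r_I(t)$ without a logarithm. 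It does not come from the value at $a_i+a_j=1$, where the coefficient actually vanishes.

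The genuine error is in your illustration of the higher-face remark, where the weight $\sum_{k\in J}u_k$ is wrong.

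\emph{Why the sum weight fails.} For $|J|\ge2$, each summand $u_\ell\prod_{k\in J}u_k^{-a_k-1}$ contains a factor $u_k^{-a_k-1}$ with $k\ne\ell$, and $\int_0 u_k^{-a_k-1}\,du_k=\infty$. So your integral diverges for every choice of positive exponents. It cannot be ``finite exactly when $\sum_{k\in J}a_k<1$''.

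\emph{What actually happens.} The support of $G_I$ does lie in a layer of width $\sum_{k\in J}u_k$, but the M\"obius increment cancels inside that layer. Take $I=\{1,2,3\}$, $J=\{2,3\}$ and small $u_2\le u_3$. Then $G_I=\mathbf 1\{u_1+u_2+u_3>1\}-\mathbf 1\{u_1+u_2>1\}-\mathbf 1\{u_1+u_3>1\}+\mathbf 1\{u_1>1\}$. As a function of $u_1$, it equals $+1$ on $(1-u_2-u_3,1-u_3]$, $0$ on $(1-u_3,1-u_2]$, and $-1$ on $(1-u_2,1]$. Hence $\int|G_I|\,du_1=2\min(u_2,u_3)$, not a multiple of $u_2+u_3$.

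\emph{The correct bound.} When $I\setminus J=\{r\}$ and $\sum_{k\in J}u_k<1$, the difference-operator argument in the proof of \cref{lem:product-corner-face-envelope} gives $\int|G_I|\,du_r\le 2^{|J|-1}\min_{k\in J}u_k$.

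\emph{Recovering the trichotomy.} Replace the sum by the minimum and split according to the smallest coordinate $s$. The integral becomes comparable to $\int_\varepsilon^1 s^{-\sum_{k\in J}a_k}\,ds$. This is finite, logarithmic, or polynomially divergent according as $\sum_{k\in J}a_k$ is below, equal to, or above one, which are exactly the regimes the proposition asserts.
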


\begin{remark}[Critical-face aggregation]
This phrase means that the first non-negligible face correction lies at the
boundary of integrability.  It is the geometric reason for logarithms such
as the independent $(\log t)/t^2$ term.  If the hidden density is integrable
against $G_I$, one obtains a power correction.  At the critical boundary a
truncated integral grows logarithmically; beyond it, the proper subfaces
must be subtracted and analyzed first.  Closely related bivariate
integrable/nonintegrable edge classifications and the independent critical
logarithm are already developed for heavy-tailed copula sums by Yang and
Zhang~\cite{YangZhang2023}.  The purpose here is to embed that phenomenon
in the full coordinate-face lattice and translate it to harmonic-mean and
positive Half-Cauchy calibration.
\end{remark}

\subsection{Critical local-corner calibration near independence}
\label{sec:sharp-local-corner}

The general face calculus in the preceding section becomes operational
when the pairwise copula densities have finite limits at the lower corner.
This subsection isolates that near-independence regime and turns the local
corner limits into explicit logarithmic tail and calibration coefficients.
The independent logarithm is one instance of the broader critical-pair
phenomenon.  In dimension at least three, however, a theorem must also
control higher M\"obius faces: pairwise margins alone cannot exclude a
singular multivariate component.

\begin{lemma}[Product-corner envelope for higher face increments]
\label{lem:product-corner-face-envelope}
For $k\ge2$, put
\[
 \Gamma_k(\boldsymbol z)
 =\sum_{J\subseteq[k]}(-1)^{k-|J|}
  \mathbf1\!\left\{\sum_{j\in J}z_j>1\right\},
 \qquad \boldsymbol z\in(0,\infty)^k.
\]
For $0<\varepsilon<1/k$, there is a constant $K_k<\infty$ such that
\begin{equation}\label{eq:product-corner-face-envelope}
 \int_{[\varepsilon,\infty)^k}
 |\Gamma_k(\boldsymbol z)|
 \prod_{j=1}^kz_j^{-2}\,d\boldsymbol z
 \le
 \begin{cases}
  K_2\{1+\log(1/\varepsilon)\},&k=2,\\[3pt]
  K_k\varepsilon^{-(k-2)},&k\ge3.
 \end{cases}
\end{equation}
Consequently, a bounded product-type density can produce logarithmic
pair-face divergence, whereas each fixed face of dimension at least three
contributes at most $O(t^{-2})$ after nominal $t^{-k}$ scaling.
\end{lemma}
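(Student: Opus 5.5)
The plan is to reduce the integral to the coordinates that are at most one, and then extract one extra factor of the smallest coordinate from the cancellation inside $\Gamma_k$. Throughout, $\Gamma_k$ is a signed sum of $2^k$ indicators, so $|\Gamma_k|\le 2^{k-1}$ (or simply $2^k$), and only the support of $\Gamma_k$ matters.

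\emph{Step 1: splitting off large coordinates.} If $z_1>1$, every $J\ni1$ has indicator one. Pairing $J'$ with $J'\cup\{1\}$ gives
\[
\Gamma_k(\boldsymbol z)=\sum_{J'\subseteq[k]\setminus\{1\}}(-1)^{k-1-|J'|}\bigl(1-\mathbf 1\{z_{J'}>1\}\bigr)=-\Gamma_{k-1}(\boldsymbol z_{-1}),
\]
since $\sum_{J'}(-1)^{k-1-|J'|}=0$ for $k\ge2$. Iterating this, let $B=\{j:z_j>1\}$ and $S=[k]\setminus B$. Then $|\Gamma_k(\boldsymbol z)|=|\Gamma_{|S|}(\boldsymbol z_S)|$, with the conventions that $\Gamma_1(z)=\mathbf 1\{z>1\}$ and that $\Gamma_0$ is a bounded constant. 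Because $\int_1^\infty z^{-2}\,dz=1$, the integral over $[\varepsilon,\infty)^k$ is bounded by a finite sum over $S\subseteq[k]$ of
\[
I_s(\varepsilon)=\int_{[\varepsilon,1]^s}|\Gamma_s|\prod z_j^{-2}.
\]
The terms with $s\le1$ vanish or are bounded. It therefore suffices to show $I_2\lesssim1+\log(1/\varepsilon)$ and $I_s\lesssim\varepsilon^{-(s-2)}$ for $s\ge3$; since $s\le k$, this yields the claimed bound.

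\emph{Step 2: cancellation in the smallest coordinate.} Fix $\boldsymbol z\in[\varepsilon,1]^s$ and let $z_j=u$ be its minimum. The Möbius sum is a finite difference in $z_j$:
\[
\Gamma_s(\boldsymbol z)=\sum_{J\subseteq[s]\setminus\{j\}}(-1)^{s-1-|J|}\bigl(\mathbf 1\{z_J>1-u\}-\mathbf 1\{z_J>1\}\bigr).
\]
Hence $\Gamma_s\neq0$ forces some nonempty $J\subseteq[s]\setminus\{j\}$ with $z_J\in(1-u,1]$. Since every coordinate is at most one and $J=\varnothing$ cannot fall in the window, we need $|J|\ge1$. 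For a fixed $J$, its largest coordinate $z_i$ satisfies $z_i\ge(1-u)/|J|\ge 1/(2s)$, so its weight satisfies $z_i^{-2}\le 4s^2$. Conditioning on the remaining coordinates, $z_i$ lies in an interval of length $u$. Each of the other $s-2$ coordinates ranges over $[u,1]$ with mass at most $\int_u^1 v^{-2}\,dv\le u^{-1}$. Summing over the finitely many choices of $j$, $J$ and $i$ gives
\[
I_s(\varepsilon)\lesssim\int_\varepsilon^1 u^{-2}\cdot u\cdot u^{-(s-2)}\,du=\int_\varepsilon^1 u^{-(s-1)}\,du.
\]
This equals $\log(1/\varepsilon)$ for $s=2$ and is $O(\varepsilon^{-(s-2)})$ for $s\ge3$, which is exactly the dichotomy in \cref{eq:product-corner-face-envelope}. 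The restriction $\varepsilon<1/k$ only ensures that these regions are nonempty and that the constants are uniform.

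\emph{Step 3: the consequence.} A bounded product-type density on a face of dimension $k$ contributes nominally $t^{-k}$ times a rescaled integral of $|\Gamma_k|\prod z_j^{-2}$. The Pareto support cutoff makes the lower limit $\varepsilon\asymp1/t$. By Steps 1--2 the face contributes $O(t^{-k}\cdot t^{k-2})=O(t^{-2})$ for $k\ge3$, and $O(t^{-2}\log t)$ for $k=2$; the latter is the source of the logarithm seen in \cref{thm:independent-sharp}.

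I expect Step 2 to be the main obstacle. The crude estimate, which only uses the fact that the total sum exceeds one, loses a full power of $\varepsilon$. The gain must come from the finite-difference window, and one has to check that this window can always be charged to a coordinate bounded away from zero, so that its $z^{-2}$ weight stays bounded. If the choice ``largest element of $J$'' turns out not to interact cleanly with conditioning on $u$ being the minimum, I would instead integrate $u$ last over the ordered region $u\le\min_{l\ne j}z_l$, which only improves the bound.
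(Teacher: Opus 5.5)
Your proof is correct, but it takes a different decomposition from the paper's.

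\textbf{The paper's route.} The paper splits at the level $1/k$. Regions with at least two coordinates above $1/k$ are bounded crudely using $|\Gamma_k|\le2^k$: two coordinates carry bounded weight, and each of the other $k-2$ contributes a factor $\varepsilon^{-1}$. On the region with exactly one large coordinate $z_r$, the paper writes $\Gamma_k=\prod_{j\ne r}\Delta_{z_j}h(z_r)$. It then uses $\|\Delta_af\|_{L^1}\le2\|f\|_{L^1}$ and $\|\Delta_ah\|_{L^1}=a$ to get $\int|\Gamma_k|\,dz_r\le2^{k-2}\min_{j\ne r}z_j$. Finally it integrates $\min_jz_j\prod_jz_j^{-2}$ by ordering on the smallest coordinate.

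\textbf{Your route.} You split at level one instead. There the exact identity $\Gamma_k=-\Gamma_{k-1}(\boldsymbol z_{-1})$ strips off every coordinate above one at unit cost. On the cube $[\varepsilon,1]^s$ you take a single difference in the minimal coordinate and charge the resulting window of width $u$ to a coordinate bounded away from zero.

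\textbf{What each buys.} Your version needs no $L^1$ lemma for iterated differences. It also handles configurations with several moderately large coordinates inside the cube by the same estimate, rather than by a separate crude bound. The paper's version avoids the union over triples $(j,J,i)$. Both arrive at the same one-dimensional integral $\int u^{-(s-1)}\,du$ over $u\ge\varepsilon$.

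\textbf{One small repair.} The chain $z_i\ge(1-u)/|J|\ge1/(2s)$ is false when $u$ is close to one. However, $z_i\ge u$ also holds, so $z_i\ge\max\{u,(1-u)/(s-1)\}\ge1/s$, which is all you need.

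\textbf{Your ordering worry.} The concern you raise at the end does not arise. Fix all coordinates other than $z_i$. The $z_i$-section of the region $\{z_j=\min,\ z_J\in(1-u,1],\ z_i=\max_{l\in J}z_l\}$ then lies in an interval of length at most $u$ inside $[1/s,1]$. So Fubini gives the factor $s^2u$ directly.
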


\begin{theorem}[Sharp critical-pair calibration]
\label{thm:sharp-local-corner}
Let $\bU=(U_1,\ldots,U_m)$ have exact uniform margins, let $\bw$ be a
fixed simplex weight vector with active weights strictly positive, and put
$Y_i=U_i^{-1}$.

For every $I\subseteq[m]$ with $|I|\ge2$, suppose the marginal copula of
$\bU_I$ has a density $c_I$.  Assume that, for some $u_0>0$ and constants
$M_I<\infty$,
\begin{equation}\label{eq:lower-strip-all-face-density}
 \mathop{\rm ess\,sup}_{\substack{\boldsymbol u_I\in(0,1)^I\\
                         \min_{i\in I}u_i\le u_0}}
 c_I(\boldsymbol u_I)\le M_I.
\end{equation}
For each pair $i<j$, assume that the finite lower-corner limit
\begin{equation}\label{eq:pair-corner-density-limit}
 \lambda_{ij}
 =\lim_{s\downarrow0}
   \mathop{\rm ess\,sup}_{0<u,v\le s}c_{ij}(u,v)
 =\lim_{s\downarrow0}
   \mathop{\rm ess\,inf}_{0<u,v\le s}c_{ij}(u,v)
\end{equation}
exists.  Define
\begin{equation}\label{eq:weighted-corner-coefficient}
 L_{\bw}^{(0)}=\sum_{i<j}\lambda_{ij}w_iw_j.
\end{equation}
Then
\begin{equation}\label{eq:pareto-local-corner-expansion}
 \Pp\!\left(\sum_{i=1}^mw_iY_i>t\right)
 =\frac1t+2L_{\bw}^{(0)}\frac{\log t}{t^2}
  +o\!\left(\frac{\log t}{t^2}\right).
\end{equation}

More precisely, let
\[
 \omega_{ij}(s)
 =\mathop{\rm ess\,sup}_{0<u,v\le s}
   |c_{ij}(u,v)-\lambda_{ij}|.
\]
If every corner modulus is Dini integrable,
\begin{equation}\label{eq:corner-dini}
 \int_0^{u_0}\frac{\omega_{ij}(s)}s\,ds<\infty,
 \qquad i<j,
\end{equation}
then
\begin{equation}\label{eq:pareto-local-corner-sharp}
 \Pp\!\left(\sum_{i=1}^mw_iY_i>t\right)
 =\frac1t+2L_{\bw}^{(0)}\frac{\log t}{t^2}+O(t^{-2}).
\end{equation}
The Dini condition holds, in particular, if
$c_{ij}(u,v)-\lambda_{ij}=O\{(u+v)^\eta\}$ for some $\eta>0$.

Let
\[
 H_i=\cot\!\left(\frac{\pi U_i}{2}\right),
 \qquad c=\frac2\pi.
\]
Under the same assumptions,
\begin{equation}\label{eq:hc-local-corner-expansion}
 \Pp\!\left(\sum_{i=1}^mw_iH_i>t\right)
 =\frac ct+2c^2L_{\bw}^{(0)}\frac{\log t}{t^2}
  +o\!\left(\frac{\log t}{t^2}\right),
\end{equation}
and the final remainder is $O(t^{-2})$ under
\cref{eq:corner-dini}.
\end{theorem}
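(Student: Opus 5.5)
We write $\mathbf 1\{\bw^{\mathsf T}\boldsymbol Y>t\}$ through the exact face decomposition of \cref{lem:face-decomposition}, applied to $\boldsymbol x=\boldsymbol Y/t$:
\[
 \Pp\Bigl(\sum_i w_iY_i>t\Bigr)=\sum_{\varnothing\ne I\subseteq[m]}\E\,G_I(\boldsymbol Y_I/t).
\]
The one-coordinate terms are handled exactly. Exact uniform margins give $\E G_{\{i\}}=\Pp(Y_i>t/w_i)=w_i/t$ for $t\ge w_i$, and these sum to $1/t$.

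The pair terms carry the logarithm. For $I=\{i,j\}$ we change to the $p$-value scale, $u=w_i/(tz_i)$ and $v=w_j/(tz_j)$. Then
\[
 \E G_I(\boldsymbol Y_I/t)=\int_{(0,1)^2}\Gamma_2\Bigl(\tfrac{w_i}{tu},\tfrac{w_j}{tv}\Bigr)c_{ij}(u,v)\,du\,dv,
\]
and $\Gamma_2(z_1,z_2)=\mathbf 1\{z_1+z_2>1\}-\mathbf 1\{z_1>1\}-\mathbf 1\{z_2>1\}$ is supported where $z_1,z_2\le 1$ and $z_1+z_2>1$, where it equals $+1$. On the $z$-scale the measure is $(w_iw_j/t^2)z_i^{-2}z_j^{-2}dz$.

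We split this integral into two regions:
\begin{enumerate}[leftmargin=2.2em]
 \item The corner region $u,v\le s$, i.e.\ $z\ge \varepsilon_t$ with $\varepsilon_t\asymp 1/(ts)$. Here we replace $c_{ij}$ by $\lambda_{ij}$ at cost $\omega_{ij}(s)$.
 \item The complement. There one coordinate is of order one and the other $\gtrsim 1/t$ on the support, so the bounded density \cref{eq:lower-strip-all-face-density} gives a contribution of order $t^{-2}\cdot O(1)$.
\end{enumerate}

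The constant-density integral is exactly the independent computation. By \cref{thm:independent-sharp} (or the exact identity \cref{eq:independent-pareto-exact} with a product density),
\[
 \frac{w_iw_j}{t^2}\int_{[\varepsilon,\infty)^2}\mathbf 1\{z_1+z_2>1,\ z_1,z_2\le1\}z_1^{-2}z_2^{-2}\,dz=\frac{w_iw_j}{t^2}\bigl\{2\log(1/\varepsilon)+O(1)\bigr\}.
\]
With $\varepsilon=\varepsilon_t$ this gives $2\lambda_{ij}w_iw_j\log t/t^2+O(t^{-2})$. Summing over pairs yields $2L^{(0)}_{\bw}\log t/t^2$.

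For the error we do not use a single cutoff. Instead we integrate the modulus dyadically: the slab where $\max(u,v)\asymp 2^{-k}$ has $\Gamma_2$-weighted $z$-mass $O(t^{-2})$ per dyadic scale down to scale $1/t$. Hence the error is $O\bigl(t^{-2}\sum_{2^{-k}\gtrsim 1/t}\omega_{ij}(2^{-k})\bigr)$. This is $o(\log t/t^2)$ because $\omega_{ij}\to0$ (Cesàro), and it is $O(t^{-2})$ under the Dini condition \cref{eq:corner-dini}, since $\sum_k\omega(2^{-k})\lesssim\int_0\omega(s)\,ds/s$. This dyadic accounting, with its care near the edges $u\approx w_i/t$, is where we expect the main work.

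The faces with $|I|\ge3$ contribute $O(t^{-2})$, and this step is the obstacle: it requires that no $\varepsilon$-divergence of the kind in \cref{lem:product-corner-face-envelope} appears. On the support of $\Gamma_k$ all $z_j\le 1$ (or at least the relevant configuration has $u_j\gtrsim 1/t$), so $\min_j u_j\le u_0$ eventually. Therefore $c_I\le M_I$ applies, and
\[
 |\E G_I|\le M_I\prod_j w_j\,t^{-k}\int_{[\varepsilon,\infty)^k}|\Gamma_k|\prod_j z_j^{-2}\,dz
\]
with $\varepsilon\asymp 1/t$, coming from $u\le1$. The lemma bounds the integral by $K_k t^{k-2}$, giving $O(t^{-2})$. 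Coordinates with $u_j$ not small form the part that $u\le1$ truncates; we absorb it by taking the region $\min u\le u_0$ and bounding the rest trivially by $O(t^{-k})\cdot$poly, using only $c_I\le M_I$ on the strip.

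For the Half-Cauchy statement we apply \cref{eq:bounded-difference}. Since $cY_i-c\le H_i\le cY_i$, we have
\[
 \Pp(R_{\bw}>t/c+1)\le\Pp(T_{\bw}>t)\le\Pp(R_{\bw}>t/c).
\]
Shifting the threshold by $O(1)$ changes $1/s$ by $O(s^{-2})$ and changes the logarithmic term negligibly. This gives $c/t+2c^2L^{(0)}_{\bw}\log t/t^2$ with the same remainders.
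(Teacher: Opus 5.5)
Your proposal follows the paper's proof essentially step by step. The shared steps are: exact singleton terms from the M\"obius decomposition; the pair increment rewritten in corner coordinates, with $c_{ij}$ replaced by $\lambda_{ij}$ and compared against the exact independent logarithm; the replacement error controlled by integrating $\omega_{ij}$ over logarithmic scales; higher faces bounded via \cref{lem:product-corner-face-envelope} with $\varepsilon\asymp 1/t$; and the Half-Cauchy sandwich. Your dyadic sum is just the discrete form of the paper's $\int_{K/t}^{u_0}\omega_{ij}(s)s^{-1}\,ds$ bound.

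Two local statements are wrong and should be corrected, though neither affects the conclusion.

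First, $\Gamma_2$ is not supported only on the cooperative region: it equals $-1$ on $\{z_1>1,z_2>1\}$. This double-exceedance term is $-\Pp(U_i<w_i/t,\,U_j<w_j/t)=O(t^{-2})$ by the strip bound, so it is absorbed in the remainder.

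Second, in the higher-face step your reason for $\min_j u_j\le u_0$ fails. $\Gamma_k$ is not supported in $\{z_j\le 1\ \forall j\}$: if $z_r>1$ then $\Gamma_k(\boldsymbol z)=-\Gamma_{k-1}(\boldsymbol z_{-r})$, which is generally nonzero. Moreover, upper bounds on the $z_j$ only give lower bounds on the $u_j$, so they cannot yield $\min_j u_j\le u_0$. The correct reason, which is the paper's, is that $\Gamma_k\ne0$ forces some subset sum, and hence $\sum_j z_j$, to exceed one. So some $z_j>1/k$, i.e.\ $u_j<kw_j/t\le u_0$ for large $t$. Consequently there is no leftover region off the strip to bound ``trivially''. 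If there were one, no hypothesis would control $c_I$ there.
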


\begin{remark}[What is really required of higher faces]
The density condition \cref{eq:lower-strip-all-face-density} for
$|I|\ge3$ is a convenient sufficient condition, not a necessary one.  The
proof only requires
\[
 \sum_{|I|\ge3}|\E G_I(\bY_I/t)|=O(t^{-2}).
\]
Thus it may be replaced by model-specific control of higher M\"obius
increments.  The condition is vacuous in the bivariate case, but pairwise
density bounds do not control higher-dimensional singular components.
\end{remark}

\begin{corollary}[Critical-corner calibration relative to independence]
\label{cor:critical-corner-calibration}
For $S\in\{R,T\}$, put $c_R=1$, $c_T=c=2/\pi$.  Let
$\overline F^S_{C,\bw}$ and $\overline F^S_{\Pi,\bw}$ denote the tails of
$S_{\bw}$ under copula $C$ and under independence, respectively, and let
$q^S_{C,\bw}$ and $q^S_{\Pi,\bw}$ be their upper quantiles.  Define
$\operatorname{Cal}^{S}_{C,\bw}$ using the independence quantile as in
\cref{eq:size-calibration-functional}.  Under the hypotheses of
\cref{thm:sharp-local-corner}, for both the reciprocal/harmonic-mean and
positive Half-Cauchy aggregates,
\begin{align}
 \overline F^S_{C,\bw}(t)-\overline F^S_{\Pi,\bw}(t)
 &=2c_S^2\sum_{i<j}(\lambda_{ij}-1)w_iw_j
   \frac{\log t}{t^2}
   +o\!\left(\frac{\log t}{t^2}\right),
 \label{eq:critical-corner-tail-calibration}\\
 \operatorname{Cal}^{S}_{C,\bw}(\alpha)
 &=2\sum_{i<j}(\lambda_{ij}-1)w_iw_j\,
   \alpha^2\log(1/\alpha)
   +o\{\alpha^2\log(1/\alpha)\}.
 \label{eq:critical-corner-size-calibration}
\end{align}
Under the Dini condition, the two final remainders are respectively
$O(t^{-2})$ and $O(\alpha^2)$, and
\begin{equation}\label{eq:critical-corner-quantile-calibration}
 q^S_{C,\bw}(\alpha)-q^S_{\Pi,\bw}(\alpha)
 =2c_S\sum_{i<j}(\lambda_{ij}-1)w_iw_j\log(1/\alpha)+O(1).
\end{equation}
\end{corollary}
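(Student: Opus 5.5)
The plan is to combine the two tail expansions already available. \cref{thm:sharp-local-corner} gives the dependent tail, and \cref{thm:independent-sharp} gives the independence tail. Then \cref{thm:calibration-inversion}, or the remark on competing rates, converts their difference into size and critical-value statements.

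\textbf{Step 1: the tail difference.} For $S=R$, subtract \cref{eq:independent-pareto-m} from \cref{eq:pareto-local-corner-expansion}. The leading terms $1/t$ cancel, and the $(\log t)/t^2$ coefficients differ by
\[
2\sum_{i<j}(\lambda_{ij}-1)w_iw_j .
\]
The remainder is $o\{(\log t)/t^2\}$ in general. Under the Dini condition it is $O(t^{-2})$, since the independence expansion already has an $O(t^{-2})$ remainder. For $S=T$, use \cref{eq:hc-local-corner-expansion} together with \cref{eq:independent-hc-m} in the same way. This gives the factor $c_S^2$ and proves \cref{eq:critical-corner-tail-calibration}. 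Independence satisfies the hypotheses with $\lambda_{ij}=1$, so the comparison is consistent.

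\textbf{Step 2: size distortion.} The independence tail is continuous and strictly decreasing, so $\overline F_{\Pi,\bw}\{q_{\Pi,\bw}(\alpha)\}=\alpha$ and \cref{eq:size-calibration-functional} applies. From $\overline F^S_{\Pi,\bw}(t)=c_S/t+O\{(\log t)/t^2\}$ we get
\[
q^S_{\Pi,\bw}(\alpha)=\frac{c_S}{\alpha}+O\{\log(1/\alpha)\}.
\]
Substituting $t=q^S_{\Pi,\bw}(\alpha)$ into the difference from Step 1 gives
\[
\frac{\log t}{t^2}=\frac{\alpha^2}{c_S^2}\log(1/\alpha)\,\{1+o(1)\}.
\]
The factor $c_S^2$ cancels, which yields \cref{eq:critical-corner-size-calibration}. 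This is exactly the competing-rates remark with $\kappa=1$ and $L=\log$.

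For the sharp remainder, I will check that substituting $t$ does not cost more than $O(\alpha^2)$. Two effects matter. First, $\log q=\log(c_S/\alpha)+o(1)$, and the constant $\log c_S$ shifts the $\log(1/\alpha)$ coefficient only by $O(1)$. Second, $q^{-2}=(\alpha/c_S)^2\{1+O(\alpha\log(1/\alpha))\}$. Both contribute $O(\alpha^2)$ after multiplication.

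\textbf{Step 3: critical values.} I will invert each survival function separately, using \cref{eq:second-order-quantile-expansion} with $\kappa=1$, $L=\log$ and $c_0=c_S$. To obtain the $O(1)$ remainder, I will redo the inversion by hand. Write
\[
q=\frac{c_S}{\alpha}+\delta .
\]
The equation $\overline F(q)=\alpha$ then gives
\[
\frac{c_S}{q}+\frac{d\log q}{q^2}+O(q^{-2})=\alpha .
\]
Since $c_S/q=\alpha-\alpha^2\delta/c_S+O(\alpha^3\delta^2)$, this forces
\[
\delta=\frac{d}{c_S}\log(1/\alpha)+O(1).
\]
Taking the difference of the two $\delta$'s gives
\[
\frac{d_C-d_\Pi}{c_S}\log(1/\alpha)=2c_S\sum_{i<j}(\lambda_{ij}-1)w_iw_j\log(1/\alpha).
\]
This is \cref{eq:critical-corner-quantile-calibration}.

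The main obstacle is the Step 3 bookkeeping. I need to check that $O(t^{-2})$ tail remainders really give $O(1)$ quantile remainders, which requires controlling the $\log(c_S/\alpha)$ versus $\log(1/\alpha)$ shift and the quadratic term in $\delta$. I also need eventual strict monotonicity of the dependent tail. That is not automatic, but it holds because the copula densities are bounded near the corner, so the tail is continuous there. For the quantile statement I will define $q_C$ as a generalized inverse and note that the expansion still pins it down to within $O(1)$.
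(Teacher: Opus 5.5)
Your proposal is correct and follows the paper's own route. You subtract the independence expansions of \cref{thm:independent-sharp} from those of \cref{thm:sharp-local-corner} and convert the difference with \cref{thm:calibration-inversion} at $\kappa=1$, $L=\log$. Your explicit inversion with $q=c_S/\alpha+\delta$ is needed rather than a detour: that theorem alone gives only $o\{\log(1/\alpha)\}$ and $o\{\alpha^2\log(1/\alpha)\}$ remainders, so it does not deliver the $O(1)$ and $O(\alpha^2)$ Dini refinements. One small correction: strict monotonicity is unnecessary. Continuity of the dependent tail, and hence $\overline F(q)=\alpha$ at the generalized-inverse quantile, already follows from the density hypothesis applied to $I=[m]$, not from boundedness near the corner.
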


For common bivariate copulas with an interior parameter, the theorem
gives the following lower-corner multipliers:
\begin{table}[!ht]
\makeatletter
\long\def\@makecaption#1#2{%
  \vskip\abovecaptionskip
  \centering\footnotesize #1: #2\par
  \vskip\belowcaptionskip}
\makeatother
\caption{Lower-corner density multipliers for common bivariate copulas.}
\label{tab:lower-corner-multipliers}
\centering
\small
\setlength{\tabcolsep}{4pt}
\begin{tabular}{@{}lc@{\hspace{3em}}lc@{}}
\toprule
Copula & $\lambda=c_C(0,0)$ & Copula & $\lambda=c_C(0,0)$\\
\midrule
product & $1$ & Frank with $\theta\ne0$ & $\theta/(1-e^{-\theta})$\\
FGM with parameter $\theta$ & $1+\theta$
  & Plackett with $\theta>0$ & $\theta$\\
Ali--Mikhail--Haq with $\theta<1$ & $(1-\theta)^{-1}$ & & \\
\bottomrule
\end{tabular}
\end{table}
The corresponding densities are analytic or polynomial at the lower corner for fixed
interior parameters, so the Dini condition holds.  Finite Bernstein and
checkerboard copulas are also covered whenever their lower-corner cell
density has a well-defined limit.  These specializations should be viewed
against the existing second-order risk-aggregation literature
\cite{Kortschak2012,Coqueret2014}; the theorem's intended contribution is
the face-calculus formulation and the harmonic-mean/positive-Half-Cauchy
calibration coefficient,
not a priority claim for every bivariate special case.

\section{The second-order landscape inside index-one MRV}
\label{sec:second-order-landscape}

It is useful to standardize terminology before treating the models.  If
$S_{\bw}$ has leading tail constant $c_0$, write
\begin{equation}\label{eq:relative-calibration-error}
 t\Pp(S_{\bw}>t)=c_0+E_{\bw}(t),
 \qquad
 E_{\bw}(t)\sim K_{\bw}t^{-\kappa}L(t).
\end{equation}
Here $L$ is \emph{slowly varying}:
\begin{equation}\label{eq:slow-variation}
 \frac{L(tx)}{L(t)}\longrightarrow1
 \quad\text{for every fixed }x>0.
\end{equation}
Constants, powers of $\log t$, and powers of $\log\log t$ are typical
examples.  The absolute second tail term in
\cref{eq:relative-calibration-error} is
$K_{\bw}t^{-1-\kappa}L(t)$.  A smaller positive $\kappa$ means slower
convergence.  At the same $\kappa$, a growing $L$ is slower and a decaying
$L$ is faster than a pure power.

Support and asymptotic order answer different questions.  The support of
$\mu$ records where first-order extremes occur.  We use the axial, interior,
proper-face, and mixed categories defined by the charged strata in
\cref{eq:charged-strata} and illustrated in
\cref{fig:exponent-measure-geometry}.  The word ``face-supported'' is
sometimes used broadly enough to include axes; here it always means a
nontrivial proper coordinate face unless stated otherwise.

A support category does not determine a rate.  Within each category below
we therefore distinguish perturbations of strata already charged by
$\mu$, hidden mass on uncharged strata, the intrinsic score-map scale, and
rate-free convergence.  Every hidden contribution is integrated against
its M\"obius half-space increment and classified as finite, critically
divergent, or polynomially divergent.  The slowest valid absolute rate
wins, tied terms add, and score-map terms are compared afterward.  This is
an audit framework rather than a completed recursive theorem for all
nested faces.

\subsection{Axis-supported copulas}
\label{sec:axis-supported-category}

Axis support means that ordinary MRV sees only one-coordinate extremes.
It says merely that every comparable multi-coordinate extreme is
$o(t^{-1})$; it does not determine its smaller rate.  In the present exact
$p$-value setting, the first possible ordinary axis mechanism can be
largely removed before studying the missing faces.

\paragraph*{What exact marginal normalization removes}

The reciprocal and Half-Cauchy scores must be distinguished at second
order.  Exact reciprocal scores have no marginal tail correction at all,
whereas the Half-Cauchy map has an intrinsic third-order tail term.

Exact Pareto standardization forces every marginal
projection of an ordinary signed second-order measure to vanish; if that
measure is supported only on the axes, it is zero.  In contrast, exact
Half-Cauchy margins have
\[
 t\Pp(H_i>tx)=\frac{c}{x}-\frac{c}{3x^3}t^{-2}+O(t^{-4}),
 \qquad c=\frac2\pi,
\]
so the score transformation itself contributes at absolute order $t^{-3}$.
The formal signed-measure statement is in \TechnicalLocation.

Thus a nonzero second term for the exact reciprocal sum cannot be a purely
marginal radial correction on the axes.  It must involve dependence,
angular redistribution, a hidden or boundary face, or a failure of a
regular expansion.  This does \emph{not} mean that every quantitative
main-cone correction vanishes: a signed measure can have zero marginal
projections and still be nonzero on a weighted half-space.  For
$T_{\bw}$ the $t^{-3}$ score-map correction is always a candidate, whereas
$R_{\bw}$ has no marginal score-map correction; both are often dominated,
for example, by the independent $(\log t)/t^2$ interaction term.

\paragraph*{The face integral and the
integrable--critical--nonintegrable trichotomy}

For two positive weights, the pair increment is
\begin{equation}\label{eq:bivariate-face-increment-explicit}
 G_{12}(x_1,x_2)
 =\mathbf1\{w_1x_1+w_2x_2>1\}
  -\mathbf1\{w_1x_1>1\}
  -\mathbf1\{w_2x_2>1\}.
\end{equation}
The exact identity
\begin{align}
 \Pp(w_1X_1+w_2X_2>t)
 ={}&\Pp(w_1X_1>t)+\Pp(w_2X_2>t)\notag\\
 &+\E G_{12}(X_1/t,X_2/t)
 \label{eq:bivariate-face-tail-identity}
\end{align}
shows that $G_{12}$ counts cooperative exceedances positively and removes
double-counted simultaneous marginal exceedances negatively.  If a hidden
measure $\nu_{12}$ appears at absolute scale $r_{12}(t)$, the number
\begin{equation}\label{eq:face-integral-definition}
 \int G_{12}\,d\nu_{12}
\end{equation}
is the \emph{face integral}.  When it is absolutely integrable and the
uniform-integrability condition of
\cref{thm:hidden-face-transfer} holds, it is the coefficient that transfers
the hidden probability scale to the sum tail.  If that absolute scale is
$r_I(t)=t^{-q_I}L_I(t)$, then its relative calibration exponent is
$q_I-1$.  Integrability transfers the scale as it stands; it does not
itself create a logarithm.

The difficulty is that the open face $(0,\infty]^2$ is not separated from
its axes.  Values such as $x_2\downarrow0$ represent an original score
$X_2=o(t)$ that may nevertheless range through all intermediate scales.

For a symmetric product-power face density with
exponent $a$, truncation at distance $\varepsilon$ from the axes gives
\[
 \int |G_{12}|\,d\nu_a=
 \begin{cases}
 O(1),&0<a<1,\\
 \Theta\{\log(1/\varepsilon)\},&a=1,\\
 \Theta(\varepsilon^{1-a}),&a>1.
 \end{cases}
\]
Thus the three regimes are integrable, critical, and polynomially
nonintegrable.  A formal proposition is given in \TechnicalLocation.

This proposition places independence and Gaussian copulas on one
continuous boundary diagram.  For independent unit-Pareto margins, the
hidden pair density is $x_1^{-2}x_2^{-2}$, so $a=1$, the nominal hidden
scale is $t^{-2}$, and the effective cutoff $\varepsilon\asymp t^{-1}$
produces $t^{-2}\log t$.  For a one-sided Gaussian copula with
$0<\rho<1$,
\begin{equation}\label{eq:gaussian-a-q-beta-taxonomy}
 a_\rho=\frac1{1+\rho}<1,
 \qquad q_\rho=2a_\rho=\frac2{1+\rho},
 \qquad \beta_\rho=1-a_\rho=\frac{\rho}{1+\rho}.
\end{equation}
Its hidden probability scale is
\begin{equation}\label{eq:gaussian-hidden-scale-taxonomy}
 r_\rho(t)
 =t^{-q_\rho}(\log t)^{-\beta_\rho}.
\end{equation}
Here $q_\rho$ is the power exponent, $\beta_\rho$ is the exponent of the
slowly varying Gaussian-quantile factor, and $a_\rho$ controls spatial
integrability near a proper face.  The logarithmic factor in
\cref{eq:gaussian-hidden-scale-taxonomy} is already part of the Gaussian
joint-tail scale; it is not created by the face integral.  Since
$a_\rho<1$, the face integral is finite; when the boundary-transfer
condition holds, the sum correction has the same order as $r_\rho(t)$.
Its relative exponent is
\begin{equation}\label{eq:gaussian-relative-exponent-preview}
 \kappa_G=q_\rho-1=\frac{1-\rho}{1+\rho}.
\end{equation}

At $\rho=0$, $a_\rho=1$ and the face integral becomes critical, producing
the growing independent logarithm.  Here ``critical'' means failure at the
boundary of integrability, not ``logarithmic by definition''; nested
boundaries can generate other slowly varying amplifications.  For
$-1<\rho<0$, $a_\rho>1$ and the
formal lower--lower hidden face is nonintegrable.  This is a concrete
nonintegrable-face example, but not a completed aggregation theorem:
proper edge regions must first be resolved, and the one-sided nonpositive
Gaussian expansion with at least one strictly negative correlation remains
open.  The all-zero boundary is independence and is already proved.  The
standard-$t$ angular boundary at
$\nu=1$ is another critical example, while $0<\nu<1$ is nonintegrable;
those cases also remain open.

\paragraph*{Singular exclusion geometry}
Hidden positive mass is not the only possible axis-supported mechanism.
For the countermonotone copula $(U_1,U_2)=(U,1-U)$, comparable lower
extremes are exactly excluded.  With weights $(w,1-w)$, direct solution of
the quadratic boundary gives
\begin{equation}\label{eq:axis-counter-reciprocal}
 \Pp(R_{\bw}>t)
 =1-\frac{\sqrt{t^2-2t+(1-2w)^2}}{t}
 =\frac1t+\frac{2w(1-w)}{t^2}+O(t^{-3}).
\end{equation}
For the positive Half-Cauchy scores, writing
$H=\cot(\pi U/2)$ gives the other score as $H^{-1}$ and
\[
 T_{\bw}=wH+(1-w)H^{-1}.
\]
The exact calculation in \CounterTailSource{} yields
\begin{equation}\label{eq:axis-singular-exclusion}
 \Pp(T_{\bw}>t)=\frac{2}{\pi t}
 +\frac2\pi\left\{2w(1-w)-\frac13\right\}t^{-3}
 +O(t^{-5}).
\end{equation}
Thus the independent critical logarithm disappears for both aggregates.
The reciprocal statistic retains an order-$t^{-2}$ dependence correction,
whereas for the positive Half-Cauchy statistic that coefficient cancels
and the $t^{-3}$ transformation scale becomes visible.  This is a
singular-support cancellation/exclusion mechanism, not an integrable
hidden-pair correction.

\paragraph*{Rate-free and other possibilities}
Ordinary MRV alone supplies no numerical rate.
\TechnicalLocation{} gives an exact-uniform copula with axial lower MDA
whose measure-level convergence is slower than every positive power and
has no nonzero regularly varying diagonal normalization.  Rate-free
convergence can occur under any support category, although it is especially
important here because axial first-order support otherwise tempts one to
infer a hidden power rate.  Signed cancellations, several hidden faces with
tied rates, or a copula singularity near an opposite corner can likewise
determine the first omitted term.  Axis support alone does not choose among
these possibilities.

\subsection{Interior-supported copulas}
\label{sec:interior-supported-category}

Interior support means that the ordinary exponent measure already sees
simultaneous extremes.  Consequently the standard hidden-regular-variation
question ``at what smaller scale do several coordinates become large
together?'' does not arise.

\begin{proposition}[No hidden subset joint-extreme cone under interior
support]
\label{prop:interior-no-hidden-subset}
Suppose $S$ is a nonzero spectral measure concentrated on
$F_{[m]}^\circ$.  For every nonempty $I\subseteq[m]$,
\begin{equation}\label{eq:interior-subset-mass}
 \mu\{\boldsymbol x:x_i>1\text{ for every }i\in I\}
 =\int_{F_{[m]}^\circ}\min_{i\in I}s_i\,S(d\boldsymbol s)>0.
\end{equation}
Hence every fixed subset joint exceedance has first-order probability:
\begin{equation}\label{eq:interior-subset-tail}
 \Pp(X_i>t\text{ for every }i\in I)
 \sim \frac{1}{t}
 \int\min_{i\in I}s_i\,S(d\boldsymbol s).
\end{equation}
\end{proposition}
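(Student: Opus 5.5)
The plan is to compute the exponent measure of the orthant set directly in the polar representation \cref{eq:spectral}, and then transfer it to the probability using the MRV definition.

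\emph{Step 1: reduce to a radial integral.} Fix a nonempty $I\subseteq[m]$ and put $A_I=\{\boldsymbol x:x_i>1\text{ for every }i\in I\}$. Writing $\boldsymbol x=r\boldsymbol s$, membership $\boldsymbol x\in A_I$ is equivalent to $r s_i>1$ for all $i\in I$, that is, $r>1/\min_{i\in I}s_i$ when $\min_{i\in I}s_i>0$. If $\min_{i\in I}s_i=0$, no $r$ works. By Tonelli and \cref{eq:spectral},
\[
 \mu(A_I)=\int\!\int_{1/\min_I s_i}^{\infty}r^{-2}\,dr\,S(d\boldsymbol s)
 =\int\min_{i\in I}s_i\,S(d\boldsymbol s),
\]
with the convention that the integrand is zero where $\min_I s_i=0$. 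Since $S$ is concentrated on $F_{[m]}^\circ$, the integral may be restricted to that stratum, which gives the identity in \cref{eq:interior-subset-mass}.

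\emph{Step 2: positivity.} On $F_{[m]}^\circ$ every coordinate is strictly positive, so $\min_{i\in I}s_i>0$ pointwise there. A strictly positive measurable function integrated against a nonzero measure gives a strictly positive integral. The integral is also finite, since $\min_I s_i\le1$ and $S$ is finite.

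\emph{Step 3: transfer to probability.} The set $A_I$ is bounded away from the origin. Its boundary is contained in $\bigcup_{i\in I}\{x_i=1\}$, suitably intersected with $\{x_j\ge1,\ j\in I\}$. By the computation in Step 1, each hyperplane $\{x_i=1\}$ has $\mu$-measure zero: for fixed $\boldsymbol s$ with $s_i>0$, the ray meets it at a single $r$, a Lebesgue-null set for $r^{-2}dr$, and for $s_i=0$ it does not meet it at all. Hence $\mu(\partial A_I)=0$, and \cref{eq:mrv-vague} yields
\[
 t\Pp(\boldsymbol X/t\in A_I)\to\mu(A_I),
\]
which is exactly \cref{eq:interior-subset-tail}.

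The only mildly delicate step is this continuity-set verification, which the radial Fubini argument handles. I would also note that the zero-coordinate convention in $\min_I s_i$ is never needed, because $S$ charges only the open interior.
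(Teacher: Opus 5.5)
Your proof is correct and follows the same route as the paper's: integrate $r^{-2}\,dr$ along each direction $\boldsymbol s$ beyond $1/\min_{i\in I}s_i$, get positivity because $S$ is concentrated on the open interior, then apply MRV. Your explicit check that $\mu(\partial A_I)=0$, via the single crossing of each ray with the hyperplane $\{x_i=1\}$, fills in a continuity-set step the paper leaves implicit.
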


A smaller-scale limit concentrated near an exact boundary face may still
describe an angular boundary layer, but it is not automatically an
additive second term.  Neighborhoods of that face have already been
integrated as part of the ordinary interior measure.  One must first
subtract the full first-order contribution and prove a quantitative
boundary expansion.  Thus the core second-order problem for an
interior-supported copula is ordinarily a perturbation of mass already
present---radial, angular, or both---rather than the first appearance of a
joint-extreme face.  Boundary integrability can still fail after this
subtraction, as in the standard-$t$ cases $\nu\le1$.

This is the canonical setting for ordinary second-order MRV: a signed
perturbation of the main-cone measure is evaluated on the weighted
half-space through the quantitative projection in \TechnicalLocation.
Exact Pareto margins force its one-coordinate projections to vanish, but
they do not force its joint half-space coefficient to vanish.

The common-factor models in this paper make the dominant mechanism
explicit.  The following elementary power transformation explains why
their score-scale radial variable has index one.

\begin{lemma}[Homogeneous power converts the common radial index to one]
\label{lem:common-radial-index-one}
Suppose
\begin{equation}\label{eq:raw-common-radial-alpha}
 \Pp(R>r)=a r^{-\alpha}
 \{1-br^{-\beta}+o(r^{-\beta})\},
 \qquad \alpha,\beta>0.
\end{equation}
Put $Q=R^\alpha$.  Then
\begin{equation}\label{eq:powered-common-radial-one}
 \Pp(Q>q)=a q^{-1}
 \{1-bq^{-\beta/\alpha}+o(q^{-\beta/\alpha})\}.
\end{equation}
If the leading score map is homogeneous of degree $\alpha$ in $R$, it can
therefore be written at leading order as $Q A(\Theta)$ with an index-one
common radial factor.
\end{lemma}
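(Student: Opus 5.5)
The plan is a direct change of variables in the survival function. For $q>0$ the map $r\mapsto r^\alpha$ is strictly increasing on $(0,\infty)$, so $\{Q>q\}=\{R>q^{1/\alpha}\}$ exactly. Substituting $r=q^{1/\alpha}$ into \cref{eq:raw-common-radial-alpha} then gives
\[
 \Pp(Q>q)=\Pp(R>q^{1/\alpha})
 =a\,q^{-1}\{1-b\,q^{-\beta/\alpha}+o(q^{-\beta/\alpha})\},
\]
because $(q^{1/\alpha})^{-\alpha}=q^{-1}$ and $(q^{1/\alpha})^{-\beta}=q^{-\beta/\alpha}$. The only thing to check is that the remainder transfers. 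Since $r=q^{1/\alpha}\to\infty$ exactly when $q\to\infty$, a term that is $o(r^{-\beta})$ as $r\to\infty$ is $o(q^{-\beta/\alpha})$ as $q\to\infty$. The composition with a monotone bijection of $(0,\infty)$ onto itself that tends to infinity preserves little-$o$ statements. This proves \cref{eq:powered-common-radial-one}.

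For the interpretive consequence, suppose the leading score map has the form $\phi(R,\Theta)=R^\alpha A(\Theta)$, meaning it is homogeneous of degree $\alpha$ in the radial variable. Then we can rewrite it identically as $\phi=QA(\Theta)$ with $Q=R^\alpha$. By the display just proved, $Q$ has an index-one tail $aq^{-1}$. Its relative second-order exponent is $\beta/\alpha$ and its coefficient is unchanged at $-b$. Hence the score-scale radial factor is index one, with its own second-order term in the form needed later by \cref{thm:calibration-inversion}.

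No step is a genuine obstacle. The only point needing care is the remainder transfer, and it is immediate from the monotone reparametrization. Two side remarks are worth recording. First, the parameter $\beta/\alpha$ is exactly the $\kappa$ that later feeds \cref{thm:calibration-inversion}. Second, if $R$ can take nonpositive values, we restrict to the event $R>0$. This restriction does not affect the tail for large $q$.
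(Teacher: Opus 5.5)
Your proof is correct and is exactly the paper's argument: the paper's proof is the single substitution $r=q^{1/\alpha}$ in the radial tail expansion, and your monotone-reparametrization check of the remainder simply spells that step out. One small precision on your side remark: $\beta/\alpha$ plays the role of the radial exponent $\rho$ in \cref{thm:root-inversion}, and it becomes the calibration exponent $\kappa$ of \cref{thm:calibration-inversion} only when it is the smallest of the competing exponents.
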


The support-agnostic common-factor inversion theorem in
\TechnicalLocation{} converts a second-order radial tail, an angular score
expansion, and the intrinsic score-map expansion into an aggregate-tail
coefficient.  Its central rule is that the smallest relative exponent
wins and tied radial, angular, and score-map terms add.  The theorem also
states the dominated-inversion conditions needed near angular subfaces.

For the standard multivariate-$t_\nu$ model, the transformed radial factor
is $Q=R_0^\nu$ and its relative second-order exponent is $2/\nu$.  Two-sided scores have a
strictly positive angular vector and hence interior support; one-sided
scores are mixed-supported because their positive-part angular vector can
lie on lower-dimensional faces.  The same inversion theorem applies to
both orientations.  For $\nu>1$ it yields the ordinary radial correction
at absolute order $t^{-1-2/\nu}$, with the additional one-sided reciprocal
angular term described in \cref{thm:t-second-order-formal}.

For positive Clayton, $Q=V^{-1/\theta}$ already has index one, its relative
second-order exponent is $\theta$, and the exponential angular coordinates are strictly
positive.  The model is therefore interior-supported.  The reciprocal
correction has absolute order $t^{-1-\theta}$, while the positive
Half-Cauchy correction meets its score-map scale at $\theta=2$ and is
governed by that scale for $\theta>2$; see
\cref{thm:clayton-second-order,thm:clayton-hc-phase}.  The formal
standard-$t$ collision occurs at $\nu=1$, where angular integrability also
becomes critical.

\subsection{Proper-face-supported copulas}
\label{sec:face-supported-copulas}

Genuine proper faces first appear when $m\ge3$.  Marshall--Olkin
and max-linear common-shock copulas can place spectral atoms on selected
proper-face rays, while Tawn asymmetric logistic models can place diffuse
mass on selected faces \cite{MarshallOlkin1967,Tawn1990}.  The required
lower-tail orientation must be stated explicitly.  A concrete
three-dimensional max-linear construction and its exact-uniform
transformation are recorded in \TechnicalLocation.

\paragraph*{A sharp proper-face pair-shock theorem}
The max-linear construction in
\TechnicalLocation{} admits an exact
shock representation on the $p$-value scale.  This is a spectrally
discrete max-linear model in the sense discussed, for example, by Wang and
Stoev~\cite{WangStoev2011}.  Let $E_{12},E_{13},E_{23}$ be independent
$\operatorname{Exp}(1)$ variables and put
\[
 T_1=\min(E_{12},E_{13}),\qquad
 T_2=\min(E_{12},E_{23}),\qquad
 T_3=\min(E_{13},E_{23}),
\]
together with
\begin{equation}\label{eq:pair-shock-uniforms}
 U_i=1-\exp(-2T_i),\qquad i=1,2,3.
\end{equation}
Since each $T_i$ is $\operatorname{Exp}(2)$, every $U_i$ is exactly
uniform.  Moreover,
\begin{equation}\label{eq:pair-shock-survival-copula}
 \Pp(U_1>u_1,U_2>u_2,U_3>u_3)
 =\prod_{i<j}\{1-\max(u_i,u_j)\}^{1/2}.
\end{equation}
Thus this is the lower-tail, or reflected, orientation of a
Marshall--Olkin pair-shock copula.  For a simplex weight vector define
\begin{equation}\label{eq:sigma-two-weights}
 \sigma_2(\bw)=\sum_{i<j}w_iw_j.
\end{equation}

The auxiliary three-edge critical-integral lemma in
\TechnicalLocation{} shows that two independent edge shocks create a
$t^{-2}\log t$ correction with an explicit coefficient.

\begin{theorem}[Proper-face pair-shock aggregation]
\label{thm:pair-shock-second-order}
Let $\bU$ be given by \cref{eq:pair-shock-uniforms} and suppose $w_i>0$
with $\sum_iw_i=1$.  Set
\[
 Y_i=U_i^{-1},
 \qquad
 H_i=\cot(\pi U_i/2),
 \qquad
 c=\frac2\pi.
\]
Then the spectral measure of $\bY$ is
\begin{equation}\label{eq:pair-shock-spectral-sharp}
 S=\delta_{(1/2,1/2,0)}
   +\delta_{(1/2,0,1/2)}
   +\delta_{(0,1/2,1/2)}.
\end{equation}
In particular, it is supported purely on nontrivial proper coordinate
faces.  Furthermore,
\begin{align}
 \Pp\!\left(\sum_{i=1}^3w_iY_i>t\right)
 &=\frac1t
   +\sigma_2(\bw)\frac{\log t}{t^2}
   +O(t^{-2}),
 \label{eq:pair-shock-pareto-second-order}\\
 \Pp\!\left(\sum_{i=1}^3w_iH_i>t\right)
 &=\frac ct
   +c^2\sigma_2(\bw)\frac{\log t}{t^2}
   +O(t^{-2}).
 \label{eq:pair-shock-hc-second-order}
\end{align}
\end{theorem}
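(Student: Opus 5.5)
The plan is to reduce everything to three independent heavy-tailed edge shocks and then run the pair-face calculus on the shocks rather than on the scores. Put $V_{ij}=1-\exp(-2E_{ij})$ and $Z_{ij}=V_{ij}^{-1}$. Since $u\mapsto1-e^{-2u}$ is increasing, \cref{eq:pair-shock-uniforms} gives
\[
 U_i=\min(V_{ij},V_{ik}),
 \qquad
 Y_i=\max(Z_{ij},Z_{ik}).
\]
The $Z_{ij}$ are i.i.d.\ with
\[
 \Pp(Z>z)=1-(1-1/z)^{1/2}=\tfrac1{2z}+\tfrac1{8z^2}+O(z^{-3}).
\]
Formula \cref{eq:pair-shock-survival-copula} is then a one-line check. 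For the spectral measure \cref{eq:pair-shock-spectral-sharp}, I use the one-big-jump principle for the independent vector $(Z_{12},Z_{13},Z_{23})$: its exponent measure sits on the three shock axes with mass $1/2$ each. Under the linear-in-max map $Z\mapsto Y$, the axis of $Z_{12}$ is sent to the ray $(1,1,0)$, and the other axes likewise. Normalizing to the $L^1$ simplex gives direction $(1/2,1/2,0)$ and angular mass $2\cdot\tfrac12\cdot\ldots$ equal to $1$ per ray. This is consistent with \cref{eq:spectral-moment-constraints}, since each coordinate lies on two rays with $s_i=1/2$.

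For the tail expansion, write $S=\sum_iw_i\max(Z_{ij},Z_{ik})$ as a function $\Phi(Z_{12},Z_{13},Z_{23})$ and apply the M\"obius decomposition of \cref{lem:face-decomposition} over the shock coordinates, not the score coordinates.

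\emph{Single-shock terms.} When only $Z_{12}$ is large, $S=(w_1+w_2)Z_{12}+O(\text{others})$. With the others bounded this contributes $\tfrac{w_1+w_2}{2t}+O(t^{-2})$, because the $1/(8z^2)$ marginal correction is order $t^{-2}$. Summing over the three shocks gives $1/t$.

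\emph{Two-shock terms.} These are the critical faces. For a pair such as $(Z_{12},Z_{13})$ the hidden product density is $\tfrac14x^{-2}y^{-2}$, which is exactly the critical case $a=1$ of the trichotomy. The logarithm comes only from the two boundary layers near the axes, with the cutoff at distance $\asymp1/t$ as in \cref{thm:independent-sharp}. Near the $Z_{12}$-axis (with $Z_{13}=o(t)$, $Z_{13}<Z_{12}$), the effective weights are $w_1+w_2$ on $Z_{12}$ and $w_3$ on $Z_{13}$. Mirroring the split of $2w_1w_2$ into one $w_1w_2$ per axis in the independent case, each axis layer contributes $\tfrac14(\text{big weight})(\text{small exclusive weight})\,\log t/t^2$.

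\emph{Bookkeeping of the coefficient.} Summing over big shock $e$ and small shock $f\neq e$: when $e=12$, both $f=13$ and $f=23$ add coordinate $3$, giving $2(w_1+w_2)w_3$. Adding the three choices of $e$,
\[
 2\{(w_1+w_2)w_3+(w_1+w_3)w_2+(w_2+w_3)w_1\}=4\sigma_2(\bw).
\]
Multiplying by $\tfrac14$ gives the coefficient $\sigma_2(\bw)$ in \cref{eq:pair-shock-pareto-second-order}. I would package this as the three-edge critical-integral lemma from \TechnicalLocation: split each pair face into an axis layer, where I Taylor-expand around the one-shock sum, and a compact core, which contributes $O(t^{-2})$.

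\emph{Remaining terms.} The triple-shock face contributes $O(t^{-2})$ by \cref{lem:product-corner-face-envelope} with $k=3$ after $t^{-3}$ scaling. The non-product refinements, namely the $O(z^{-2})$ density corrections of $Z$ and the $\max$ replacing a sum when both shocks are comparable, change only integrable parts and so also contribute $O(t^{-2})$.

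\emph{Half-Cauchy statement.} This follows from $|T_{\bw}-cR_{\bw}|\le c$ as in \cref{eq:ehmp-hcct-coefficient-transfer}. Since $\log t/t^2\gg t^{-2}$, the bounded threshold displacement moves the tail by $O(t^{-2})$. This needs only a local bound $\Pp(cR_{\bw}\in(t-c,t+c])=O(t^{-2})$, which holds because $R_{\bw}$ has a bounded-shape density of order $t^{-2}$. Rescaling $t\mapsto t/c$ then gives $c^2\sigma_2(\bw)$ in \cref{eq:pair-shock-hc-second-order}.

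The main obstacle I expect is making the boundary-layer computation rigorous with an $O(t^{-2})$ remainder rather than $o(\log t/t^2)$. The $\max$ couples each pair of shocks through a shared coordinate, so the increment is not a clean weighted-sum $G_I$. I would handle it by conditioning on the largest shock and using the exact independent-Pareto identity \cref{eq:independent-pareto-exact} as a comparison on each layer.
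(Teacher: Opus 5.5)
Your proof is correct, and its core computation is the paper's. The paper proves \cref{lem:three-edge-critical-integral} by conditioning on the largest edge shock $R_e=x$. Then $G_{\bw}=b_ex+d_eN_e\le x$, with $b_e=1-w_k$, $d_e=w_k$ and $N_e=\max(R_f:f\ne e)$, so $P_e(t)$ is an exact one-dimensional integral. After the substitution $h=(t-b_ex)/d_e$, its edge layer gives $2a^2b_ed_e(\log t)/t^2$. Summing with $\sum_eb_ed_e=2\sigma_2(\bw)$ and $a=1/2$ reproduces your (big shock, small shock) bookkeeping exactly.

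The M\"obius-over-shocks layer you add on top is valid but heavier, and two of its citations do not apply literally. \cref{lem:face-decomposition,lem:product-corner-face-envelope} are stated for the linear half-space, whereas your $\Phi$ is max-linear. You therefore need plain inclusion--exclusion with zero baseline and a separate triple-face bound. That bound does hold. If $z_e=x$ is the only large shock and $y,u$ are the other two, the triple increment equals $-\mathbf 1\{t-w_k\min(y,u)<b_ex\le t\}$. This is a window of width proportional to $\min(y,u)$, and $\int_1^t\int_1^t\min(y,u)\,y^{-2}u^{-2}\,dy\,du=O(1)$ gives $O(t^{-2})$. In the paper this term never appears separately, because $1-F_R(h)^2=2a/h+O(h^{-2})$ handles both competing small shocks at once. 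In the spectral step, the clean count is $\|\bY\|_1=2Z_e+O(1)$ and $t\Pp(2Z_e>t)\to1$, so each ray carries unit mass.

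For the Half-Cauchy statement the routes genuinely differ. The paper writes $H_i=\max(K_e,K_f)$ with $K_e=\cot(\pi W_e/2)$. It verifies $\Pp(K_e>x)=c/(2x)+O(x^{-2})$ together with the matching density expansion, and reapplies the lemma with $a=c/2$. Both score maps thus become instances of one shock lemma.

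Your sandwich $cR_{\bw}-c\le T_{\bw}\le cR_{\bw}$ from \cref{prop:equiv} is lighter and suffices. However, drop the ``bounded-shape density'' claim, which is both unproved and unnecessary. Evaluating the reciprocal expansion, with its $O(t^{-2})$ remainder, at $t/c$ and at $(t+c)/c$ already gives both bounds as $c/t+c^2\sigma_2(\bw)(\log t)/t^2+O(t^{-2})$.

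Your route shows that the coefficient transfer is automatic once the reciprocal remainder is $O(t^{-2})$. The paper's route avoids any comparison and obtains the Half-Cauchy remainder directly from the lemma.
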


\begin{corollary}[Proper-face calibration errors]
\label{cor:pair-shock-calibration}
For $S\in\{R,T\}$, set $c_R=1$, $c_T=c$, let
$F_{S,\boldsymbol w}^{\Pi}$ be the corresponding independent-reference
distribution with the same weights, and let
$q_{\Pi,S,\boldsymbol w}(\alpha)$ be its upper $\alpha$-quantile.  Under the
pair-shock model,
\begin{equation}\label{eq:pair-shock-independent-difference}
 \Pp(S_{\boldsymbol w}>t)
 -\{1-F_{S,\boldsymbol w}^{\Pi}(t)\}
 =-c_S^2\sigma_2(\bw)\frac{\log t}{t^2}+O(t^{-2}).
\end{equation}
Consequently, as $\alpha\downarrow0$,
\begin{equation}\label{eq:pair-shock-size-distortion}
 \Pp\{S_{\boldsymbol w}>q_{\Pi,S,\boldsymbol w}(\alpha)\}
 =\alpha-\sigma_2(\bw)\alpha^2\log(1/\alpha)+O(\alpha^2),
\end{equation}
whereas at the simpler first-order threshold $c_S/\alpha$,
\begin{equation}\label{eq:pair-shock-first-order-threshold}
 \Pp(S_{\boldsymbol w}>c_S/\alpha)
 =\alpha+\sigma_2(\bw)\alpha^2\log(1/\alpha)+O(\alpha^2).
\end{equation}
\end{corollary}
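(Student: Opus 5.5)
The plan is to obtain the corollary by subtracting the two tail expansions and then inverting the independence quantile. No new probabilistic estimate is needed.

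\textbf{Step 1: tail difference.} \cref{thm:pair-shock-second-order} gives
$\Pp(S_{\bw}>t)=c_S/t+c_S^2\sigma_2(\bw)(\log t)/t^2+O(t^{-2})$
for $S\in\{R,T\}$. Note that $\sum_{i<j}w_iw_j=\sigma_2(\bw)$. Hence \cref{thm:independent-sharp} gives
$1-F^\Pi_{S,\bw}(t)=c_S/t+2c_S^2\sigma_2(\bw)(\log t)/t^2+O(t^{-2})$.
For $T$ this is \cref{eq:independent-hc-m} with $c^2$; for $R$ it is \cref{eq:independent-pareto-m}. Subtracting yields \cref{eq:pair-shock-independent-difference} directly, with coefficient $-c_S^2\sigma_2(\bw)$.

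\textbf{Step 2: the first-order threshold.} Evaluate the pair-shock expansion at $t=c_S/\alpha$. We have $\log t=\log(1/\alpha)+O(1)$ and $c_S^2/t^2=\alpha^2$. The main term becomes $\alpha+\sigma_2(\bw)\alpha^2\{\log(1/\alpha)+O(1)\}+O(\alpha^2)$, which is \cref{eq:pair-shock-first-order-threshold}.

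\textbf{Step 3: size at the independence quantile.} The independence law of $R_{\bw}$ or $T_{\bw}$ is continuous with eventually strictly decreasing tail. Therefore $1-F^\Pi\{q_\Pi(\alpha)\}=\alpha$, and by \cref{eq:size-calibration-functional} the size distortion equals $D\{q_\Pi(\alpha)\}$. I need the location of $q_\Pi(\alpha)$ only to relative accuracy $o(1)$, indeed $O(\alpha\log(1/\alpha))$. Inverting $\alpha=c_S/q+O\{(\log q)/q^2\}$ gives $q_\Pi(\alpha)=(c_S/\alpha)\{1+O(\alpha\log(1/\alpha))\}$. This is the same one-line inversion used in \cref{thm:calibration-inversion}, or in the remark on competing rates with $\kappa=1$ and $L=\log$.

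Plugging into Step 1 gives $D\{q_\Pi\}=-c_S^2\sigma_2(\bw)(\log q_\Pi)/q_\Pi^2+O(q_\Pi^{-2})$. Here $(\log q_\Pi)/q_\Pi^2=\alpha^2c_S^{-2}\{\log(1/\alpha)+O(1)\}\{1+o(1)\}$. The $o(1)$ factor must in fact be $O(\alpha\log(1/\alpha))$, so that multiplying it by $\alpha^2\log(1/\alpha)$ stays $O(\alpha^2)$.

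\textbf{Main obstacle.} The only delicate point is this remainder bookkeeping. It requires the relative error of $q_\Pi(\alpha)$ against $c_S/\alpha$ to be quantitative, not merely $o(1)$. Otherwise one gets only $o\{\alpha^2\log(1/\alpha)\}$ instead of $O(\alpha^2)$.

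The $O(t^{-2})$ remainder in \cref{thm:independent-sharp} suffices. Write $q_\Pi=c_S/\alpha+\delta$. Then $c_S/q_\Pi=\alpha-\alpha^2\delta/c_S+O(\alpha^3\delta^2)$. Matching against the expansion forces $\delta=O(\log(1/\alpha))$. Consequently $\log q_\Pi=\log(1/\alpha)+O(1)$ and $q_\Pi^{-2}=\alpha^2c_S^{-2}\{1+O(\alpha\log(1/\alpha))\}$. This gives \cref{eq:pair-shock-size-distortion} with remainder $O(\alpha^2)$.
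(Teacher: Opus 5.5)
Your proposal is correct and follows the paper's argument. You subtract the independent expansions from the pair-shock expansions, substitute $t=c_S/\alpha$ directly, and evaluate the tail difference at the independence quantile. Your explicit inversion $q_{\Pi,S,\bw}(\alpha)=c_S/\alpha+O\{\log(1/\alpha)\}$ is more careful than the paper's proof: the paper invokes only $q_{\Pi,S,\bw}(\alpha)\sim c_S/\alpha$, which by itself yields an $o\{\alpha^2\log(1/\alpha)\}$ remainder rather than the claimed $O(\alpha^2)$.
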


The interpretation is recursive.  One large shock supplies the
proper-face first-order exponent measure.  Two independent edge shocks
supply hidden full-interior mass of order $t^{-2}$, and its accumulation
near the already-charged edge rays is critical, producing the logarithm.
The coefficient is exactly half the independent-coordinate coefficient.
This is a sharp theorem for the symmetric three-dimensional pair-shock
model, not a general second-order theorem for all proper-face-supported
copulas.

The second-order problem in this category can combine three directions of
movement: quantitative perturbation on the already-charged proper faces,
hidden mass on larger faces not charged at first order, and nonuniform
approach to smaller subfaces.  The weighted half-space reaches all of them.
This is the natural setting for recursive M\"obius subtraction.  Nested
critical boundaries can create higher powers of logarithms or other slowly
varying amplifications; polynomially divergent face integrals require the
adjacent smaller faces to be subtracted and analyzed first.  No general
theorem yet covers all such nested rates.

\subsection{Mixed-support copulas}
\label{sec:mixed-supported-category}

Mixed support is common once $m\ge3$; representative examples are:
\begin{enumerate}[leftmargin=2.2em]
 \item \emph{One-sided standard multivariate-$t$ scores.}
 Their leading angular vector is proportional to
 \begin{equation}\label{eq:one-sided-t-mixed-direction}
  ((G_1^+)^\nu,\ldots,(G_m^+)^\nu).
 \end{equation}
 For a positive-definite correlation matrix every Gaussian sign orthant
 has positive probability.  Hence different sign patterns generate axes,
 every available proper coordinate face, and the full interior.  This is
 a mixed-support model, whereas the two-sided version is
 interior-supported.

 \item \emph{Extremal-$t$ copulas} \cite{Opitz2013}.
 Their positive-part Gaussian spectral representation yields the same
 sign-pattern mechanism and hence a natural mixture of coordinate strata.

 \item \emph{Marshall--Olkin/max-linear and Tawn asymmetric logistic
 models.}
 Including shocks or logistic components indexed by subsets of different
 sizes produces a mixture of axis atoms, proper-face components, and
 possibly full-interior mass.

 \item \emph{Convex mixtures of copulas in a common lower MDA.}
 If exact-uniform copulas $C_k$ are mixed with fixed probabilities
 $\pi_k$, their exponent measures mix as
 \begin{equation}\label{eq:mixture-exponent-measures}
  \mu=\sum_k\pi_k\mu_k.
 \end{equation}
 Mixing, for example, an axis-supported Gaussian copula with an
 interior-supported Clayton copula gives a direct mixed-support model.
\end{enumerate}

Mixed support need not always be analyzed one face at a time.  For
$\nu>1$, the common-radial representation gives the one-sided standard-$t$
expansions for both $R_{\bw}$ and $T_{\bw}$ by integrating over the whole
Gaussian angular law; their second terms differ when the bounded
one-sided angular contribution becomes visible.  Face recursion
becomes indispensable when charged strata have different quantitative
rates, absent strata appear at hidden scales, or angular integrals fail
near proper subfaces.  A general second-order mixed-support theorem is not
claimed here.

\paragraph*{Why axes and full interior remain the canonical poles}

Axis support and full-interior support are not exhaustive in dimension at
least three, but they are defensible organizing poles for this problem.
Axis-supported models represent asymptotic independence and include
independence, Gaussian copulas, and many smooth lower-tail-independent
families.  Their central second-order question is how dependence first
reappears on hidden faces.  Full-interior models represent asymptotic
dependence and include positive Clayton and two-sided standard-$t$ scores.
Their central question is how already-visible joint mass approaches its
radial-angular limit.

Proper-face and mixed-support models describe clustered or blockwise
extremes and cannot be dismissed as pathological.  They form the natural
intermediate theory.  The four subsections above are therefore an
organizing principle, not a claim that the axial and full-interior poles
exhaust multivariate copulas.

The four support categories classify where the first-order exponent measure
lives.  The mechanisms in \cref{tab:second-order-mechanisms} instead classify
how the next contribution reaches the weighted half-space.  These are crossed
classifications: first-order support alone does not determine the second-order
mechanism.  Singular exclusion, a pure score-map correction, and rate-free
convergence are additional possibilities beyond the four principal
measure-transfer mechanisms in the table.

\begingroup
\small
\setlength{\tabcolsep}{4pt}
\begin{longtable}{@{}p{0.23\textwidth}p{0.40\textwidth}p{0.29\textwidth}@{}}
\caption{Four principal second-order measure-transfer mechanisms.}
\label{tab:second-order-mechanisms}\\
\toprule
Mechanism & Explanation & Examples \\
\midrule
\endfirsthead
\toprule
Mechanism & Explanation & Examples \\
\midrule
\endhead
\bottomrule
\endlastfoot

Ordinary second-order MRV
& On the main cone,
$t\Pp(\boldsymbol X/t\in\cdot)
=\mu(\cdot)+A(t)\nu_2(\cdot)+o\{A(t)\}$.
When the signed perturbation is integrable on the weighted half-space, it
produces an absolute tail correction of order $t^{-1}A(t)$.  This mechanism
perturbs mass already visible at first order.
& Positive Clayton and standard multivariate-$t_\nu$, $\nu>1$, through the
proved common-factor expansions; one-sided reciprocal $t$ aggregation also
has an angular competition. \\[3pt]

Integrable hidden face
& The first-order measure gives no mass to a face $I$, but a hidden measure
appears at an absolute scale $r_I(t)=o(t^{-1})$.  If
$\int |G_I|\,d\nu_I<\infty$, together with the required uniform
integrability, the aggregate inherits the same scale $r_I(t)$; the face
integration creates no additional logarithm.
& One-sided Gaussian copulas with an active positive correlation and
two-sided Gaussian copulas with a nonzero active correlation.  Their
compact hidden-face limits are known; the weighted-half-space coefficient is
conditional on the explicit boundary hypotheses in
\TechnicalLocation. \\[3pt]

Critical hidden face
& The hidden-face integral fails exactly at its integrability boundary.  A
finite-threshold cutoff against adjacent subfaces then produces a slowly
varying amplification.  One simple critical boundary gives $\log t$,
although nested boundaries can yield other slowly varying factors.
& Product copula and the three-edge pair-shock model; their
$t^{-2}\log t$ terms are proved. \\[3pt]

Nonintegrable hidden face
& The truncated face integral diverges polynomially.  The nominal hidden
rate cannot be transferred directly to the sum tail: adjacent
lower-dimensional strata must first be subtracted and analyzed through the
recursive M\"obius increments.
& The formal lower--lower hidden face of a one-sided Gaussian copula with
negative correlation; the diagnosis is established, but its leading
aggregation expansion remains open. \\
\end{longtable}
\endgroup

\cref{tab:second-order-mechanisms} is the conceptual map.  The full
model-by-model ledger in \FullLedgerPlacement{} records first-order support,
absolute second-order rate, mechanism, proof status, and bounded-density
diagnostics.

\section{Gaussian copulas}

\subsection{One-sided \texorpdfstring{$p$}{p}-values}

Let $\boldsymbol Z=(Z_1,\ldots,Z_m)$ be centered Gaussian with unit
variances and correlation matrix $R=(\rho_{ij})$.  Assume
\begin{equation}\label{eq:rho-max}
  \rho_{\max}=\max_{i\ne j}|\rho_{ij}|<1.
\end{equation}
Define the one-sided $p$-values
\[
  U_i=1-\Phi(Z_i).
\]

\begin{theorem}[Gaussian-copula score vector]\label{thm:gaussian}
Under \cref{eq:rho-max}, the reciprocal scores
$Y_i=\{1-\Phi(Z_i)\}^{-1}$ and positive Half-Cauchy scores
$H_i=\cot[\pi\{1-\Phi(Z_i)\}/2]$ are index-one MRV with axis-supported tail
measure.  Hence \cref{thm:hcct-mrv} applies to both $R_{\bw}$ and $T_{\bw}$.
\end{theorem}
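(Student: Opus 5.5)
The plan is to reduce the statement to \cref{prop:pairwise} and then invoke \cref{thm:copula-mrv} and \cref{prop:equiv}. The margins $U_i=1-\Phi(Z_i)$ are exactly uniform, so \cref{prop:pairwise} yields $\ell_L(\boldsymbol x)=\sum_i x_i$. Hence $\boldsymbol Y$ is index-one MRV with spectral mass only on the axes. By \cref{prop:equiv}, $\boldsymbol H$ inherits the same property with $\mu_H(A)=\mu_Y(c^{-1}A)$, which is still axial. \Cref{thm:hcct-mrv}, or equivalently \cref{cor:lower-mda-validity}, then applies to both $R_{\bw}$ and $T_{\bw}$. The whole proof therefore comes down to verifying the pairwise condition \cref{eq:pair-comparable} for every pair $i\ne j$ and all fixed $x,y>0$.

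For a fixed pair, I would first bound $\Pp(U_i\le sx,U_j\le sy)\le \Pp(U_i\le s',U_j\le s')$ with $s'=s\max(x,y)$. This reduces the problem to the diagonal and shows that it is enough to prove $\Pp(Z_i>u,Z_j>u)=o\{\bar\Phi(u)\}$ as $u\to\infty$, where $u=\bar\Phi^{-1}(s')$. Put $\rho=\rho_{ij}$, so $|\rho|<1$ by \cref{eq:rho-max}.

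\begin{itemize}
\item If $\rho\le0$, Slepian's inequality gives the bound $\bar\Phi(u)^2=o\{\bar\Phi(u)\}$.
\item If $0<\rho<1$, write $Z_j=\rho Z_i+\sqrt{1-\rho^2}\,W$ with $W$ independent of $Z_i$. For a fixed $\eta\in(0,1-\rho)$, split the event according to whether $Z_i>(1+\eta')u$ or not.
\begin{itemize}
\item On $Z_i\le(1+\eta')u$ we need $W\ge u(1-\rho(1+\eta'))/\sqrt{1-\rho^2}\to\infty$. This piece is at most $\bar\Phi(u)\,\bar\Phi\{cu\}=o\{\bar\Phi(u)\}$ for a constant $c>0$, once $\eta'$ is small enough that $\rho(1+\eta')<1$.
\item On $Z_i>(1+\eta')u$, the probability is $\bar\Phi\{(1+\eta')u\}$, and this is $o\{\bar\Phi(u)\}$ by the Mills-ratio asymptotics $\bar\Phi(u)\sim\phi(u)/u$.
\end{itemize}
\end{itemize}

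Alternatively, I could cite the standard fact that the bivariate Gaussian copula with $|\rho|<1$ has lower tail dependence coefficient zero, with $C(s,s)\asymp s^{2/(1+\rho)}$ times a slowly varying factor. The explicit split above avoids that citation.

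The only mildly delicate step is the positive-correlation case, where one has to choose the cutoff $(1+\eta')u$ so that both pieces are negligible relative to $\bar\Phi(u)$. The ordering matters: fix $\eta'$ first, then let $u\to\infty$. The bound is uniform over the finitely many pairs because $\rho_{\max}<1$. That uniformity is the only place the hypothesis \cref{eq:rho-max} is used, and it excludes the degenerate case $\rho=1$, where \cref{prop:pairwise} fails.
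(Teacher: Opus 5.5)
Your proposal is correct. Its outer structure matches the paper's: verify \cref{eq:pair-comparable} for each pair, then invoke \cref{prop:pairwise} and \cref{prop:equiv}. Where you differ is the Gaussian estimate itself.

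The paper uses a single projection, $\{Z_i\ge a_s,Z_j\ge b_s\}\subseteq\{Z_i+Z_j\ge a_s+b_s\}$ with $a_s=\Phi^{-1}(1-sx)$ and $b_s=\Phi^{-1}(1-sy)$. Since $\operatorname{var}(Z_i+Z_j)=2(1+\rho_{ij})\le2(1+|\rho_{ij}|)$, a univariate tail bound gives $s^{2/(1+|\rho_{ij}|)+o(1)}=o(s)$. This argument has several advantages:
\begin{itemize}
\item it handles unequal thresholds directly, with no reduction to the diagonal;
\item it needs no sign split and no comparison inequality;
\item it gives an explicit power rate that depends on the pair only through $|\rho_{ij}|\le\rho_{\max}$;
\item for $\rho_{ij}>0$ its exponent $2/(1+\rho_{ij})$ is the true hidden-tail exponent up to $s^{o(1)}$, and it is the form that matches $\gamma_0=(1-\rho_{\max})/(1+\rho_{\max})$ in the later growing-dimensional discussion.
\end{itemize}

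Your route uses a monotone reduction to the diagonal, Slepian for $\rho\le0$, and the regression split $Z_j=\rho Z_i+\sqrt{1-\rho^2}\,W$ with cutoff $(1+\eta')u$ for $\rho>0$. It is equally rigorous and makes the mechanism transparent: either $Z_i$ overshoots or the independent innovation is large. As written, however, it yields only $o(s)$. Optimizing $\eta'$ gives a power rate, but one strictly below $2/(1+\rho)$, because bounding the first piece by $\overline\Phi(u)\overline\Phi(cu)$ discards the integration over $Z_i$.

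One small correction to your last paragraph. The hypothesis \cref{eq:rho-max} is not needed for uniformity over pairs: finitely many pairs require none, and \cref{prop:pairwise} is a pairwise condition. It is used inside each pair, to guarantee $\rho_{ij}<1$ so that your constant $\{1-\rho(1+\eta')\}/\sqrt{1-\rho^2}$ can be made positive. Also, you introduce $\eta$ and then switch to $\eta'$.
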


This proof avoids applying an equal-threshold bivariate-normal asymptotic to
unequal thresholds.  It also makes clear why strict inequality in
\cref{eq:rho-max} is required: correlation one can produce first-order
simultaneous extremes, although that case may still be MRV with a
non-axis-supported measure.

\subsection{Two-sided \texorpdfstring{$p$}{p}-values}

For
\[
  U_i=2\{1-\Phi(|Z_i|)\},
\]
the pair event is the union of four sign-quadrant events.  For signs
$\varepsilon_i,\varepsilon_j\in\{-1,1\}$, the pair
$(\varepsilon_i Z_i,\varepsilon_j Z_j)$ is Gaussian with correlation
$\varepsilon_i\varepsilon_j\rho_{ij}$, whose absolute value is at most
$\rho_{\max}$.  Applying the pair bound from \cref{thm:gaussian} to all four quadrants and
summing proves \cref{eq:pair-comparable}.  Thus the two-sided Gaussian
reciprocal and positive Half-Cauchy score vectors are also index-one MRV
with axis-supported tail measure.

\begin{remark}[What is regularly varying]
The Gaussian vector $\boldsymbol Z$ itself is not regularly varying; its
tails are too light.  The transformed vector $(1/U_1,\ldots,1/U_m)$, and
hence the Half-Cauchy score vector, is regularly varying.  Here MRV concerns
the transformed $p$-values, not the Gaussian observations.
\end{remark}

\paragraph*{Hidden-pair scale and the independence boundary}
For a one-sided Gaussian pair with $0<\rho<1$, simultaneous reciprocal
scores have scale
\[
 t^{-q_\rho}(\log t)^{-\beta_\rho},\qquad
 q_\rho=\frac{2}{1+\rho},\quad
 \beta_\rho=\frac{\rho}{1+\rho}.
\]
The hidden pair density has spatial exponent
$a_\rho=(1+\rho)^{-1}<1$, so its pair-face integral is finite for fixed
$\rho>0$.  At $\rho=0$, $a_\rho=1$ and the face becomes critical,
producing the independent $(\log t)/t^2$ term.  The nonuniform crossover
occurs at $\rho\log t=O(1)$.  The detailed bivariate calculation and the
negative-correlation diagnosis are in \TechnicalLocation.

\paragraph*{Conditional fixed-dimensional expansion}
Let $\rho_*$ be the largest active positive correlation for one-sided
$p$-values, or the largest active absolute correlation for two-sided
$p$-values.  Under the explicit face-uniform-integrability and
higher-face remainder conditions stated in \TechnicalLocation, the
reciprocal and positive Half-Cauchy tails have the respective forms
\begin{align*}
 \Pp(R_{\bw}>t)
 &=t^{-1}+D_{G,\bw}
 t^{-q_{\rho_*}}(\log t)^{-\beta_{\rho_*}}
 +o\{t^{-q_{\rho_*}}(\log t)^{-\beta_{\rho_*}}\},\\
 \Pp(T_{\bw}>t)
 &=ct^{-1}+c^{q_{\rho_*}}D_{G,\bw}
 t^{-q_{\rho_*}}(\log t)^{-\beta_{\rho_*}}
 +o\{t^{-q_{\rho_*}}(\log t)^{-\beta_{\rho_*}}\},
 \qquad c=2/\pi.
\end{align*}
The coefficient is an explicit sum over the maximally correlated pairs.
The bivariate hidden vague limit is proved, but the required truncation at
adjacent axes and the higher-face remainder are not yet verified in full
dimension.  This is therefore a conditional weighted-half-space result,
not an unconditional Gaussian theorem; it is also nonuniform at both the
independence and perfect-correlation boundaries.  The one-sided problem
with nonpositive correlations remains open.

\section{Common-factor copulas: multivariate-\texorpdfstring{$t$}{t} and
positive Clayton}

This section specializes the common-factor machinery of
\cref{sec:interior-supported-category} first to the standard
multivariate-$t$ copula and then to positive Clayton copulas.

For the standard common-scale multivariate-$t_\nu$ copula, both
one- and two-sided reciprocal and positive Half-Cauchy score vectors are
index-one MRV.  Two-sided scores have interior spectral mass; the
one-sided positive-part angular vector produces mixed support.  The
first-order proof, the common-scale representation, and the usual
tail-dependence formula are collected in \TechnicalLocation.

\paragraph*{Radial source of the second-order rate}
The transformed radial factor has index one and relative second-order
exponent $2/\nu$, yielding candidate aggregate order $t^{-1-2/\nu}$.
For one-sided reciprocal scores it competes with a bounded angular term at
$\nu=2$; see \TechnicalLocation.

\subsection{Second-order theorem for the standard
multivariate-\texorpdfstring{$t$}{t} copula}

The preceding qualification is resolved here for the standard common-scale
model.  For $\boldsymbol G\sim N_m(0,R)$, define
\begin{align}
 a_\nu&=\frac{\nu^{\nu/2}}
 {2^{\nu/2}\Gamma(\nu/2+1)},
 &b_\nu&=\frac{\nu^2}{2(\nu+2)},\label{eq:t-ab}\\
 k_\nu&=\frac{\Gamma((\nu+1)/2)\nu^{\nu/2-1}}
 {\sqrt\pi\,\Gamma(\nu/2)},
 &d_\nu&=\frac{\nu^2(\nu+1)}{2(\nu+2)}.
 \label{eq:t-kd}\\[-2pt]
 A_+&=k_\nu^{-1}\sum_iw_i(G_i^+)^\nu,
 &C_-&=\sum_iw_i\mathbf1\{G_i<0\},
 \label{eq:t-Aplus}\\[-2pt]
 B_+&=d_\nu k_\nu^{-1}\sum_iw_i
       \mathbf1\{G_i>0\}G_i^{\nu-2},\label{eq:t-Bplus}\\
 A_{\lvert\cdot\rvert}&=(2k_\nu)^{-1}\sum_iw_i|G_i|^\nu,
 &B_{\lvert\cdot\rvert}&=d_\nu(2k_\nu)^{-1}
       \sum_iw_i|G_i|^{\nu-2}.
 \label{eq:t-AabsBabs}\\[-2pt]
 D_{\nu,\boldsymbol w}^{s}
 &=a_\nu\E\!\left[(A_s)^{2/\nu}(B_s-b_\nu A_s)\right],
 \qquad s\in\{+,|\cdot|\}.
 \label{eq:t-D}
\end{align}

\begin{theorem}[Second-order standard multivariate-$t$ aggregation]
\label{thm:t-second-order-formal}
Fix $m$, a correlation matrix $R$, simplex weights $\boldsymbol w$, and
$\nu>1$.  Let $\boldsymbol Z$ have the standard multivariate-$t_\nu$
distribution with correlation matrix $R$, and let $T_\nu$ denote the
univariate standard-$t_\nu$ CDF.

For one-sided reciprocal scores $Y_i=\{1-T_\nu(Z_i)\}^{-1}$,
\begin{equation}\label{eq:t-one-sided-pareto-formal}
 \Pp\!\left(\sum_iw_iY_i>t\right)
 =\begin{cases}
 t^{-1}+D_{\nu,\boldsymbol w}^{+}t^{-1-2/\nu}
       +o(t^{-1-2/\nu}),&\nu>2,\\[3pt]
 t^{-1}+\{D_{2,\boldsymbol w}^{+}
       +a_2\E(A_+C_-)\}t^{-2}+o(t^{-2}),&\nu=2,\\[3pt]
 t^{-1}+a_\nu\E(A_+C_-)t^{-2}+o(t^{-2}),&1<\nu<2.
 \end{cases}
\end{equation}
For two-sided reciprocal scores
$Y_i=[2\{1-T_\nu(|Z_i|)\}]^{-1}$,
\begin{equation}\label{eq:t-two-sided-pareto-formal}
 \Pp\!\left(\sum_iw_iY_i>t\right)
 =t^{-1}+D_{\nu,\boldsymbol w}^{|\cdot|}t^{-1-2/\nu}
 +o(t^{-1-2/\nu}).
\end{equation}
For either one- or two-sided Half-Cauchy scores, with the corresponding
choice of $s$,
\begin{equation}\label{eq:t-hc-formal}
 \Pp\!\left(\sum_iw_iH_i>t\right)
 =\frac ct+c^{1+2/\nu}D_{\nu,\boldsymbol w}^{s}
 t^{-1-2/\nu}+o(t^{-1-2/\nu}),
 \qquad c=2/\pi.
\end{equation}
The coefficient is zero for a single active coordinate, as exact marginal
uniformity requires.
\end{theorem}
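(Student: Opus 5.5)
The plan is to prove \cref{thm:t-second-order-formal} by applying the support-agnostic common-factor inversion theorem of \TechnicalLocation{} to the stochastic representation $\boldsymbol Z=R_0\boldsymbol G$. Here $R_0=\sqrt{\nu/W}$ with $W\sim\chi^2_\nu$ independent of $\boldsymbol G\sim N_m(0,R)$.

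\emph{Radial factor.} First I treat the radial factor. From the small-ball expansion of the $\chi^2_\nu$ distribution function,
\[
 \Pp(W\le w)=\frac{w^{\nu/2}}{2^{\nu/2}\Gamma(\nu/2+1)}\Bigl\{1-\frac{\nu w}{2(\nu+2)}+O(w^2)\Bigr\},
\]
evaluated at $w=\nu q^{-2/\nu}$, the variable $Q=R_0^\nu$ satisfies
\[
 \Pp(Q>q)=a_\nu q^{-1}\{1-b_\nu q^{-2/\nu}+o(q^{-2/\nu})\}.
\]
This is exactly the input form of \cref{lem:common-radial-index-one}, with the constants \cref{eq:t-ab}.

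\emph{Angular score expansion.} Next I expand the scores given the angle. The univariate tail is
\[
 1-T_\nu(z)=k_\nu z^{-\nu}\{1-d_\nu' z^{-2}+O(z^{-4})\},\qquad z\to\infty .
\]
For $G_i>0$ this gives
\[
 Y_i=k_\nu^{-1}Q(G_i)^\nu+d_\nu k_\nu^{-1}Q^{1-2/\nu}G_i^{\nu-2}+O(Q^{1-4/\nu}+1),
\]
with the constant $d_\nu$ of \cref{eq:t-kd} absorbing the inversion. For $G_i<0$ the score $Y_i=\{1-T_\nu(R_0G_i)\}^{-1}$ tends to $1$ as $R_0\to\infty$, with error $O(Q^{-1}|G_i|^{-\nu})$. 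Hence, in the one-sided case,
\[
 R_{\bw}=QA_++Q^{1-2/\nu}B_++C_-+\text{remainder}.
\]
In the two-sided case the scores become $(2k_\nu)^{-1}Q|G_i|^\nu+\dots$, which gives $A_{|\cdot|}$ and $B_{|\cdot|}$, and there is no bounded term.

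\emph{Conditional inversion.} Conditionally on $\boldsymbol G$ I invert the threshold. The event $\{R_{\bw}>t\}$ becomes
\[
 Q>\frac tA-\frac BA\Bigl(\frac tA\Bigr)^{1-2/\nu}-\frac{C}{A}+\dots .
\]
Substituting this into the radial expansion gives a conditional tail
\[
 a_\nu A\,t^{-1}+a_\nu A^{2/\nu}(B-b_\nu A)\,t^{-1-2/\nu}+a_\nu AC\,t^{-2}+\dots .
\]
Taking expectations, and using $a_\nu\E A_+=1$ from exact marginal uniformity, produces the three regimes in \cref{eq:t-one-sided-pareto-formal}. The radial/angular term $t^{-1-2/\nu}$ and the bounded term $t^{-2}$ tie at $\nu=2$ and are added there, by the "tied terms add" rule. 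In the two-sided case the bounded term is absent, which gives \cref{eq:t-two-sided-pareto-formal}.

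\emph{Half-Cauchy scores.} For Half-Cauchy scores, negative coordinates give $H_i\to0$, so no $C$-term appears. The map $\cot(\pi u/2)=c/u-d(u)$ of \cref{prop:equiv} shifts the threshold only at relative order $t^{-2}$ and contributes at absolute order $t^{-3}$. This is negligible against $t^{-1-2/\nu}$ for every $\nu>1$, and rescaling by $c$ as in \cref{eq:ehmp-hcct-coefficient-transfer} gives \cref{eq:t-hc-formal}. For a single active coordinate, $B-b_\nu A$ integrates to zero by exact uniformity, which gives the final remark of the theorem.

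\emph{Main obstacle.} The hard step is justifying the interchange of the expectation over $\boldsymbol G$ with the asymptotic expansion, i.e.\ the dominated-inversion conditions near angular subfaces. When $A_+$ is small, that is when $\boldsymbol G$ is near a face where the active positive parts vanish, the effective radial threshold $t/A_+$ is huge but the relative corrections $(B/A)(t/A)^{-2/\nu}$ and $C/t$ are not uniformly small. Likewise, $G_i^{\nu-2}$ is singular at $G_i=0$ when $\nu<2$.

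I would first split on $\{A_+\ge\delta\}$, where the expansion is uniform, and on its complement. On $\{A_+<\delta\}$ I would use crude bounds: the radial tail bound $\Pp(Q>q)\le Kq^{-1}$ together with $\E[A_+^{2/\nu}\mathbf1\{A_+<\delta\}]$ and Gaussian small-ball estimates. The aim is to show that its contribution is $o(t^{-1-2/\nu})$, or $o(t^{-2})$ when $\nu<2$, as $\delta\downarrow0$. Integrability of $|G|^{\nu-2}$ for $\nu>1$ is what makes $B$ integrable, and that is where the restriction $\nu>1$ enters.
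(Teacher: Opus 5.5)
Your architecture matches the paper's proof. You take $Q=R_0^\nu$ with the $\chi^2_\nu$ small-ball constants and expand the scores conditionally into $A_s$, $B_s$ and the bounded term $C_-$. You then invert the threshold with tied terms adding at $\nu=2$, drop the $C$-term for Half-Cauchy scores, and use $\nu>1$ for integrability of $|G_i|^{\nu-2}$. The paper feeds the same expansions into \cref{thm:root-inversion} through \cref{cor:common-reciprocal-phase,cor:common-hc-phase} rather than inverting by hand.

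\textbf{The gap: error bookkeeping for $1<\nu<2$.} In this range the corrections claimed in \cref{eq:t-two-sided-pareto-formal,eq:t-hc-formal} are of order $t^{-1-2/\nu}=o(t^{-2})$. Any $O(1)$ displacement of the statistic, or any error known only to be $o(t^{-2})$, swamps them. Your positive-coordinate remainder $O(Q^{1-4/\nu}+1)$ and your stated target ``$o(t^{-2})$ when $\nu<2$'' therefore prove only the one-sided reciprocal line in that range. Citing \cref{eq:ehmp-hcct-coefficient-transfer} does not help either, since it needs a correction $\gg t^{-2}$, i.e.\ $\nu>2$. Two things are needed:
\begin{enumerate}
 \item There is no constant term at all. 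The $t_\nu$ tail expands in powers $z^{-\nu-2j}$, so on an angular truncation the two-sided statistic is $qA_{|\cdot|}+q^{1-2/\nu}B_{|\cdot|}+O(q^{1-4/\nu})$. The Half-Cauchy statistic is $c$ times its $A_s,B_s$ part plus $O(q^{1-4/\nu}+q^{-1})$.
 \item Removing the truncation must cost $o(t^{-1-2/\nu})$, not $o(t^{-2})$.
\end{enumerate}
This is exactly the paper's ``sharper expansion'' and its use of the zero-$C$ refinement at $\delta_0=2/\nu>1$.

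\textbf{Your truncation targets the wrong set.} Small $A_+$ is harmless on its own.
\begin{itemize}
 \item It only enlarges $q_t\approx t/A_+$.
 \item $B_+A_+^{2/\nu-1}$ is homogeneous of degree zero in $\boldsymbol G^+$, so a common shrinking of the positive coordinates does not affect it.
 \item $C_-/t\le1/t$ uniformly.
\end{itemize}
The crude bound $\Pp(Q>q)\le Kq^{-1}$ on a fixed region $\{A_+<\delta\}$ gives only $O\{t^{-1}\E(A_+\mathbf1\{A_+<\delta\})\}$, which is a first-order quantity. You need to bound the \emph{expansion error} there, for instance via $\overline T_\nu(z)\le k_\nu z^{-\nu}$, which gives $R_{\bw}\ge QA_+$.

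The genuine nonuniformity sits in thin layers $\{|G_j|\le MR_0^{-1}\}$ of a non-dominant coordinate. These layers meet $\{A_+\ge\delta\}$, so the expansion is not uniform there either. They have Gaussian probability $O(t^{-1/\nu})$. A displacement $\epsilon$ of the statistic moves the conditional tail by about $a_\nu A_s\epsilon\,t^{-2}$. Hence the following all cost $O(t^{-2-1/\nu})$:
\begin{itemize}
 \item the bounded discrepancies the layers carry: the exact score near $z_j=0$, the Half-Cauchy defect $d(U_j)\le c$, and the negative coordinates near $G_j=0^-$;
 \item the layers' share of $\E(A_s^{2/\nu}B_s)$.
\end{itemize}
This is $o(t^{-1-2/\nu})$ precisely when $\nu>1$. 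Your ``absolute order $t^{-3}$'' estimate for the score map is too optimistic near the hyperplanes, because the relevant inverse moment is infinite.
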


The displayed standard-$t$ coefficients are dimension-free for
fixed $\nu>1$, but this does not control the fixed-dimensional remainders.
The coefficient proposition and the unresolved $\nu\le1$ angular boundary
are stated in \TechnicalLocation.

\subsection{Positive Clayton copulas: common-frailty aggregation}
\label{sec:clayton-formal}

For $\theta>0$, the $m$-variate Clayton copula is
\begin{equation}\label{eq:clayton-m}
 C_{\theta,m}(u_1,\ldots,u_m)
 =\left(\sum_{i=1}^mu_i^{-\theta}-m+1\right)^{-1/\theta}.
\end{equation}
It is lower-tail dependent, but it is not a $t$ copula.  Its Archimedean
frailty representation supplies a particularly transparent common-factor
calculation.  Let $V\sim\operatorname{Gamma}(1/\theta,1)$ and, independently,
$E_i\stackrel{\mathrm{iid}}\sim\operatorname{Exp}(1)$; set
$U_i=(1+E_i/V)^{-1/\theta}$.
Then $\boldsymbol U$ has copula \cref{eq:clayton-m} and uniform margins.
Write $p=1/\theta$ and $R=V^{-1/\theta}$.
The exact reciprocal representation and radial expansion are
\begin{align}
 Y_i&=R(E_i+R^{-\theta})^p,
 \label{eq:clayton-Y-representation}\\
 \Pp(R>r)&=a_\theta r^{-1}
 \{1-b_\theta r^{-\theta}+O(r^{-2\theta})\},
 \label{eq:clayton-R-tail}\\
 a_\theta&=\Gamma(1+1/\theta)^{-1},
 \qquad b_\theta=(1+\theta)^{-1}.
\end{align}
For simplex weights put $A_{\boldsymbol w}=\sum_iw_iE_i^p$.
Write $B_{\boldsymbol w}=p\sum_iw_iE_i^{p-1}$, and let
$D_{\theta,\boldsymbol w}=a_\theta\E\bigl[A_{\boldsymbol w}^{\theta}
(B_{\boldsymbol w}-b_\theta A_{\boldsymbol w})\bigr]$.

\begin{theorem}[Reciprocal/harmonic-mean aggregation under positive Clayton]
\label{thm:clayton-second-order}
For every fixed $m$, $\theta>0$, and simplex weight vector,
\begin{equation}\label{eq:clayton-pareto-second}
 \Pp\!\left(\sum_iw_iY_i>t\right)
 =\frac1t+D_{\theta,\boldsymbol w}t^{-1-\theta}
 +o(t^{-1-\theta}).
\end{equation}
Moreover,
\begin{equation}\label{eq:clayton-D-alternative}
 D_{\theta,\boldsymbol w}
 =\frac{a_\theta}{\theta+1}
 \E\!\left[(m-1)A_{\boldsymbol w}^{\theta+1}
       -\sum_{i=1}^m(A_{\boldsymbol w}-w_iE_i^p)^{\theta+1}\right].
\end{equation}
It is strictly positive when at least two weights are positive and is zero
for a single active coordinate.
\end{theorem}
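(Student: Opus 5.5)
The plan is to condition on the exponential vector $\boldsymbol E=(E_1,\ldots,E_m)$ and invert the common radial factor $R$ directly, as in the common-factor inversion of \cref{sec:interior-supported-category}.

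\emph{Step 1: expand the aggregate given $\boldsymbol E$.} By \cref{eq:clayton-Y-representation}, with $\varepsilon=R^{-\theta}$ and $p=1/\theta$,
\[
\sum_iw_iY_i=R\sum_iw_i(E_i+\varepsilon)^p
 =R\{A_{\boldsymbol w}+\varepsilon B_{\boldsymbol w}+\Delta(\varepsilon)\},
\qquad
\Delta(\varepsilon)=\sum_iw_i\{(E_i+\varepsilon)^p-E_i^p-pE_i^{p-1}\varepsilon\}.
\]
The map $r\mapsto r\sum_iw_i(E_i+r^{-\theta})^p$ is increasing in $r$ for $r$ large. Solving $RA_{\boldsymbol w}(1+B_{\boldsymbol w}A_{\boldsymbol w}^{-1}R^{-\theta})>t$ to second order gives the threshold
\[
R>\frac t{A_{\boldsymbol w}}
 \Bigl\{1-\frac{B_{\boldsymbol w}}{A_{\boldsymbol w}}\Bigl(\frac{A_{\boldsymbol w}}t\Bigr)^{\theta}+\cdots\Bigr\}.
\]
Inserting this threshold into \cref{eq:clayton-R-tail} yields the conditional tail
\[
a_\theta\frac{A_{\boldsymbol w}}t
 +a_\theta A_{\boldsymbol w}^{\theta}\bigl(B_{\boldsymbol w}-b_\theta A_{\boldsymbol w}\bigr)t^{-1-\theta}
 +o(t^{-1-\theta}),
\]
with the remainder pointwise in $\boldsymbol E$.

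\emph{Step 2: take expectations.} Since $\E E_i^p=\Gamma(1+p)$ and $a_\theta=\Gamma(1+p)^{-1}$, the first-order term integrates to exactly $t^{-1}$. The second term integrates to $D_{\theta,\boldsymbol w}t^{-1-\theta}$.

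\emph{Step 3: main obstacle, interchanging limit and expectation.} This is the step I expect to be hardest. When $\theta>1$ we have $p<1$, so $E_i^{p-1}$ is unbounded near $0$, and the Taylor expansion is not uniform on $\{E_i\lesssim R^{-\theta}\}$. I would use the elementary bound
\[
|\Delta(\varepsilon)|\le C\sum_iw_i\min\{\varepsilon^p,\ E_i^{p-2}\varepsilon^2\},
\]
which gives $\E$-integrable control of size $o(\varepsilon)$ after integrating over $E_i$: the region $E_i<\varepsilon$ has probability $O(\varepsilon)$ and contributes $O(\varepsilon^{p})\cdot O(\varepsilon)$ relative terms. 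I would also split on $\{A_{\boldsymbol w}\le\delta\}$ and $\{A_{\boldsymbol w}>\delta\}$, together with the crude bound $\Pp(\sum_i w_iY_i>t\mid\boldsymbol E)\le \Pp\bigl(R(A_{\boldsymbol w}+\sum_iw_iR^{-1})>t\mid \boldsymbol E\bigr)$ to control small radial values. The $O(r^{-2\theta})$ radial remainder is handled by the uniform version of \cref{eq:clayton-R-tail}. Moments such as $\E A_{\boldsymbol w}^{\theta+1}<\infty$ and $\E A_{\boldsymbol w}^\theta E_i^{p-1}<\infty$ are finite because $p-1>-1$. With these bounds, dominated convergence applied to $t^{1+\theta}\times$(error) gives \cref{eq:clayton-pareto-second}.

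\emph{Step 4: alternative form and sign.} For \cref{eq:clayton-D-alternative}, integrate by parts in $E_i$ against $e^{-E_i}$. Since $\partial_{E_i}A_{\boldsymbol w}^{\theta+1}/(\theta+1)=A_{\boldsymbol w}^{\theta}w_ipE_i^{p-1}$,
\[
\E\bigl[A_{\boldsymbol w}^\theta w_ipE_i^{p-1}\bigr]
 =\frac1{\theta+1}\E\Bigl[A_{\boldsymbol w}^{\theta+1}-(A_{\boldsymbol w}-w_iE_i^p)^{\theta+1}\Bigr].
\]
Here the boundary term at $E_i=0$ supplies the subtracted piece. Summing over $i$ and subtracting $b_\theta\E A_{\boldsymbol w}^{\theta+1}=\E A_{\boldsymbol w}^{\theta+1}/(\theta+1)$ gives the displayed formula with coefficient $m-1$.

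For positivity, put $x_i=A_{\boldsymbol w}-w_iE_i^p\in[0,A_{\boldsymbol w}]$, so that $\sum_ix_i=(m-1)A_{\boldsymbol w}$. Then
\[
\sum_ix_i^{\theta+1}\le A_{\boldsymbol w}^{\theta}\sum_ix_i=(m-1)A_{\boldsymbol w}^{\theta+1}.
\]
The inequality is strict whenever some $x_i\in(0,A_{\boldsymbol w})$, which happens almost surely when at least two weights are positive. With one active coordinate, every $x_i\in\{0,A_{\boldsymbol w}\}$ and the bracket vanishes.
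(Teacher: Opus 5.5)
Your proposal is correct and is essentially the paper's proof. Conditioning on $\boldsymbol E$ and inverting the common radial factor is exactly the zero-$C$ case of \cref{cor:common-reciprocal-phase} (with $Q=R$ and $\rho=\kappa=\theta$), which the paper applies after checking the dominated-inversion conditions by truncation and exponential moment bounds; your integration by parts and the inequality $x_i^{\theta+1}\le A_{\boldsymbol w}^{\theta}x_i$ on $[0,A_{\boldsymbol w}]$ are the paper's identity and positivity argument with $x_i$ relabelled.

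One slip in Step~3 needs fixing. For $p<1$ the pointwise bound $|\Delta(\varepsilon)|\le C\sum_iw_i\min\{\varepsilon^p,E_i^{p-2}\varepsilon^2\}$ fails on $\{E_i<\varepsilon\}$, because the subtracted term $pE_i^{p-1}\varepsilon$ is unbounded there. Use $C\sum_iw_i\min\{E_i^{p-1}\varepsilon,E_i^{p-2}\varepsilon^2\}$ instead. It still integrates to $O(\varepsilon^{1+p})=o(\varepsilon)$, and together with the lower bound $(t-1)/A_{\boldsymbol w}$ on the exact radial threshold it gives an integrable dominating function of the form $C\sum_iw_iE_i^{p-1}A_{\boldsymbol w}^{\theta}$.
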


The Half-Cauchy map has a second intrinsic scale:
\begin{equation}\label{eq:hc-third-expansion}
 \cot\!\left(\frac{\pi}{2Y}\right)
 =cY-dY^{-1}+O(Y^{-3}),
 \qquad c=\frac2\pi,\quad d=\frac\pi6.
\end{equation}
This competes with the Clayton radial correction.

\begin{theorem}[Positive Half-Cauchy phase transition under Clayton]
\label{thm:clayton-hc-phase}
Under the conditions of \cref{thm:clayton-second-order}, put
$Q_{\boldsymbol w}=\sum_iw_iE_i^{-1/\theta}$.  Then
\begin{equation}\label{eq:clayton-hc-phase}
 \Pp\!\left(\sum_iw_iH_i>t\right)=\frac ct+
 \begin{cases}
 c^{1+\theta}D_{\theta,\boldsymbol w}t^{-1-\theta}
   +o(t^{-1-\theta}),&0<\theta<2,\\
 \{c^3D_{2,\boldsymbol w}-a_2dc^2
   \E(A_{\boldsymbol w}^2Q_{\boldsymbol w})\}t^{-3}+o(t^{-3}),
   &\theta=2,\\
 -a_\theta dc^2\E(A_{\boldsymbol w}^2Q_{\boldsymbol w})t^{-3}
   +o(t^{-3}),&\theta>2.
 \end{cases}
\end{equation}
The inverse moment in the last two cases is finite.  For one active coordinate,
the formula reduces to the exact univariate expansion
$\Pp(H>t)=c/t-c/(3t^3)+O(t^{-5})$.
\end{theorem}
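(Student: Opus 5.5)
The plan is to repeat the common-frailty inversion behind \cref{thm:clayton-second-order}, now carrying the intrinsic Half-Cauchy term from \cref{eq:hc-third-expansion}. Condition on $\boldsymbol E=(E_1,\ldots,E_m)$, so that all randomness sits in the common radial factor $R=V^{-1/\theta}$, whose tail is given by \cref{eq:clayton-R-tail}. By \cref{eq:clayton-Y-representation}, $Y_i=R(E_i+R^{-\theta})^p$. Expanding in $R^{-\theta}$ gives
\[
 \sum_iw_iY_i=R\{A_{\boldsymbol w}+B_{\boldsymbol w}R^{-\theta}+O(R^{-2\theta})\}.
\]
Using \cref{eq:hc-third-expansion} together with $\sum_iw_iY_i^{-1}=R^{-1}Q_{\boldsymbol w}\{1+O(R^{-\theta})\}$, I obtain the conditional representation
\[
 \sum_iw_iH_i
 =cRA_{\boldsymbol w}+cB_{\boldsymbol w}R^{1-\theta}
 -dQ_{\boldsymbol w}R^{-1}
 +\text{(smaller terms)}.
\]
For fixed $\boldsymbol E$ in a compact subset of $(0,\infty)^m$, the right side is eventually strictly increasing in $R$. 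Hence $\{\sum_iw_iH_i>t\}=\{R>r_*(t,\boldsymbol E)\}$, and I invert to second order:
\[
 r_*=\frac{t}{cA_{\boldsymbol w}}
 -\frac{B_{\boldsymbol w}}{A_{\boldsymbol w}}\Bigl(\frac{t}{cA_{\boldsymbol w}}\Bigr)^{1-\theta}
 +\frac{dQ_{\boldsymbol w}}{t}+\cdots .
\]

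Next I insert $r_*$ into $a_\theta r^{-1}\{1-b_\theta r^{-\theta}+O(r^{-2\theta})\}$ and read off the conditional tail:
\[
 \frac{a_\theta cA_{\boldsymbol w}}{t}
 +a_\theta c^{1+\theta}A_{\boldsymbol w}^{\theta}(B_{\boldsymbol w}-b_\theta A_{\boldsymbol w})t^{-1-\theta}
 -a_\theta dc^2A_{\boldsymbol w}^2Q_{\boldsymbol w}t^{-3}
 +o(t^{-1-\theta}+t^{-3}).
\]
I then take expectations over $\boldsymbol E$. The leading term integrates to $c/t$, because exact marginal uniformity gives $a_\theta\E A_{\boldsymbol w}=1$. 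The second term integrates to $c^{1+\theta}D_{\theta,\boldsymbol w}t^{-1-\theta}$, which is the same coefficient as in \cref{thm:clayton-second-order} rescaled exactly as in \cref{eq:ehmp-hcct-coefficient-transfer}. The third term gives the score-map contribution. Comparing the exponents $1+\theta$ and $3$ yields the three cases: for $\theta<2$ the radial term dominates, at $\theta=2$ the two terms tie and add, and for $\theta>2$ the score-map term dominates. All remaining error terms, such as $R^{1-2\theta}$, $R^{-1-\theta}$ and the $O(Y^{-3})$ remainder, are of strictly smaller order than the retained terms in every case.

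Finiteness of $\E(A_{\boldsymbol w}^2Q_{\boldsymbol w})$ for $\theta\ge2$ comes from the integrability of $E_i^{-1/\theta}$ and $E_i^{2/\theta-1/\theta}$ near zero, since $1/\theta\le1/2<1$. The cross terms $E_j^{2p}E_i^{-p}$ factor by independence. For a single coordinate, $A=E^p$, $Q=E^{-p}$, $B=pE^{p-1}$. Because $D$ vanishes, only the $t^{-3}$ term remains, and $a_\theta\E(E^{p})=1$ reproduces $-c/(3t^3)$ since $dc^2=c/3$. This serves as a check.

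The main obstacle is uniformity, which is needed to justify exchanging the conditional expansion with $\E_{\boldsymbol E}$. The expansion of $(E_i+R^{-\theta})^p$ degenerates when some $E_i\lesssim R^{-\theta}$. The term $dQ_{\boldsymbol w}/R$ is large when some $E_i$ is tiny, and the monotone inversion fails for $\boldsymbol E$ outside compacts. I would first split $\boldsymbol E$ into a good set $\{E_i\ge t^{-\gamma}\ \forall i,\ \max_iE_i\le\log^2t\}$ and its complement. On the good set the expansions hold uniformly with explicit remainders, and dominated convergence applies because $A_{\boldsymbol w}^{\theta+1}$ and $A_{\boldsymbol w}^2Q_{\boldsymbol w}$ are integrable. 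On the complement I would bound the tail crudely by $\Pp(R\gtrsim t/\max_iE_i^p)\lesssim t^{-1}\max_iE_i^p$ and integrate over the small set. This is the dominated-inversion condition of the common-factor inversion theorem in \TechnicalLocation, and I would invoke that theorem directly if its near-subface hypotheses can be checked. Choosing $\gamma$ so that the excised region contributes $o(t^{-1-\theta}\wedge t^{-3})$ is the delicate calculation, particularly for large $\theta$, where the target $t^{-3}$ is fixed.
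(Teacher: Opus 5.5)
There is a genuine gap in the range $0<\theta\le1$, which sits inside the first case. Your dominated-convergence step assumes that $A_{\boldsymbol w}^2Q_{\boldsymbol w}$ is integrable. Take $p=1/\theta\ge1$ and at least two active weights. Then $\E(A_{\boldsymbol w}^2Q_{\boldsymbol w})$ contains the terms $w_j^2w_i\,\E(E_j^{2p})\,\E(E_i^{-p})$ with $i\ne j$, and $\E(E_i^{-p})=\Gamma(1-p)=\infty$.

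So your conditional $t^{-3}$ term has infinite expectation, and comparing the exponents $1+\theta$ and $3$ proves nothing. The divergence is an artifact of replacing $\sum_iw_iY_i^{-1}$ by $R^{-1}Q_{\boldsymbol w}$, which is not uniform as $E_i\downarrow0$; the true per-coordinate gap $cY_i-H_i$ is at most $c$.

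The paper uses your route only for $\theta>1$, via \cref{cor:common-hc-phase}, where $p<1$ makes the inverse moment finite. For smaller $\theta$ it handles the two subranges differently:
\begin{itemize}
\item For $0<\theta<1$ it avoids the score-map expansion. \cref{lem:bounded} gives $cR_{\boldsymbol w}-c\le T_{\boldsymbol w}\le cR_{\boldsymbol w}$. A bounded threshold shift costs $O(t^{-2})=o(t^{-1-\theta})$, so \cref{thm:clayton-second-order} at $t/c$ transfers.
\item At $\theta=1$ that $O(t^{-2})$ cost ties with the target. The paper therefore bounds the discrepancy event $\{cR_{\boldsymbol w}>t\ge T_{\boldsymbol w}\}$ by $O(t^{-3}\log t)$. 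It uses $0\le cy-\cot\{\pi/(2y)\}\le C/y$, the radial density bound $r^{-2}$, and a logarithmic estimate of $\E\{A_{\boldsymbol w}^2/(E_i+A_{\boldsymbol w}/t)\}$. That last quantity is exactly a truncation of your divergent moment.
\end{itemize}

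There is a second problem: your excision plan cannot close as stated when $\theta\le2$, even with a sharper bound on the excised set. For the expansion of $(E_i+R^{-\theta})^p$ at $R\asymp t$ to hold uniformly on the good set, you need $E_i\gg t^{-\theta}$, i.e.\ $\gamma<\theta$. But with at least two active weights, the set $\{\min_iE_i<t^{-\gamma}\}$ has probability $\asymp t^{-\gamma}$. On it the conditional tail is still $\asymp A_{\boldsymbol w}/t$, because the other coordinates are typical. So that region truly contributes $\asymp t^{-1-\gamma}\gg t^{-1-\theta}$; at $\theta=2$ you would need $\gamma>2$ to reach $o(t^{-3})$, contradicting $\gamma<2$.

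The fix is to keep the leading conditional term $a_\theta cA_{\boldsymbol w}/t$ on the excised set too, and to control only the integrated inversion error there. This is what the $L^1$ condition \cref{eq:common-inverse-L1} formalizes. Only for $\theta>2$ does a choice $\gamma\in(2,\theta)$ make your crude bound sufficient.
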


\subsection{Parametric benchmarks}

Positive Clayton and standard multivariate-$t$ are both common-factor
models after reciprocal-score transformation, but they have different
angular laws and second-order exponents: $\theta$ for Clayton and $2/\nu$
for standard $t$.  The generic phase diagram, the coefficient bounds, and
the explanation of the angularly critical $t$ boundary are given in
\TechnicalLocation.

The principal parametric benchmarks can now be compared directly.  In
\cref{tab:principal-model-rates}, the absolute second-order rate is the
order of the first nonzero aggregate-tail term after $1/t$ for
$R_{\bw}$, or after $c/t$, $c=2/\pi$, for $T_{\bw}$.  Every rate marked
proved is fixed-dimensional.  For the Gaussian row, $\rho_\star$ denotes
the largest active positive correlation for one-sided $p$-values and the
largest active absolute correlation for two-sided $p$-values; put
$q_\star=\textstyle\frac2{1+\rho_\star}$ and
$\beta_\star=\textstyle\frac{\rho_\star}{1+\rho_\star}$.

\begingroup
\footnotesize
\setlength{\tabcolsep}{3.5pt}
\begin{longtable}{@{}p{0.13\textwidth}p{0.15\textwidth}
                    p{0.38\textwidth}p{0.24\textwidth}@{}}
\caption{Principal support, rate, mechanism, and proof-status landscape.}
\label{tab:principal-model-rates}\\
\toprule
Model & First-order support & Second-order mechanism and current status
& Absolute second-order rate \\
\midrule
\endfirsthead
\toprule
Model & First-order support & Second-order mechanism and current status
& Absolute second-order rate \\
\midrule
\endhead
\bottomrule
\endlastfoot
Gaussian, $\rho_\star>0$
& Axes
& Integrable hidden pair faces; the candidate coefficient is derived, and
the expansion is conditional on the explicit face-transfer conditions in
\TechnicalLocation.
& $t^{-q_\star}(\log t)^{-\beta_\star}$ \\

Product copula
& Axes
& Critical hidden-pair accumulation; the logarithmic coefficients are
proved for $R_{\bw}$ and $T_{\bw}$.
& $t^{-2}\log t$ \\

Standard multivariate-$t_\nu$, $\nu>1$
& Two-sided: interior; one-sided: mixed
& Ordinary common-radial perturbation.  For one-sided $R_{\bw}$, a bounded
angular term dominates when $1<\nu<2$ and ties at $\nu=2$.  Proved in
fixed dimension.
& $T_{\bw}$ and two-sided $R_{\bw}$: $t^{-1-2/\nu}$;
one-sided $R_{\bw}$: $t^{-1-2/\nu}$ for $\nu>2$ and $t^{-2}$ for
$1<\nu<2$, with a tie at $\nu=2$ \\

Positive Clayton, $0<\theta<2$
& Interior
& Ordinary common-frailty perturbation.  Both fixed-$m$ expansions are
proved.
& $t^{-1-\theta}$ for $R_{\bw}$ and $T_{\bw}$ \\

Positive Clayton, $\theta\ge2$
& Interior
& Common frailty for $R_{\bw}$; for $T_{\bw}$, a tie at $\theta=2$ and
score-map dominance above.  Fixed-$m$ result proved.
& $R_{\bw}$: $t^{-1-\theta}$; $T_{\bw}$: $t^{-3}$ \\
\end{longtable}
\endgroup

No rate is assigned here to one-sided Gaussian models having no positive
active correlation and at least one strictly negative correlation, or to
standard multivariate-$t$ models with $0<\nu\le1$: their leading
second-order aggregation problems remain open.  The all-zero Gaussian case
is independence and is already settled.
The broader family-by-family classification, including conditional
mechanism diagnoses and rate-free cases, is given in
\FullLedgerPlacement.

\section{Discussion}
\label{sec:main-novelty-boundary}

The practical motivation is the absence of exact null calibration for
heavily right aggregation under dependence.  Harmonic-mean and positive
Half-Cauchy scores have universal index-one first-order tails, but
$c/t+o(t^{-1})$ does not quantify error at testing levels.  Relative to the
exact independence law, the next term determines size distortion and the
dependence-specific critical-value shift.  Its rate, coefficient, and sign
therefore govern how quickly first-order validity becomes statistically
useful.

The central geometric conclusion is that the weighted half-space reaches
every coordinate face.  Its correction depends both on where the
first-order exponent measure lives and on how finite-threshold mass
approaches the face lattice.  Ordinary signed perturbations change mass
already visible at first order; integrable hidden faces transfer their own
scale; critical faces create slowly varying amplification; and
nonintegrable faces require recursive subtraction.  Support and mechanism
are therefore complementary classifications.

The machinery makes this geometry operational.  Exact Pareto
standardization removes marginal artifacts, quantitative exponent measures
identify the scale, M\"obius increments decompose the noncompact
half-space, and the hidden-face and common-factor theorems turn face or
radial--angular expansions into tail coefficients.  Calibration inversion
then converts those coefficients into rejection-probability and
critical-value errors.  This is a reusable route from dependence geometry
to statistical calibration, rather than a collection of unrelated copula
calculations.

The examples show why both layers are needed.  Product and Gaussian
copulas are axial but lie in critical and integrable hidden-face regimes;
positive Clayton and two-sided standard-$t$ perturb interior mass;
one-sided standard-$t$ has mixed support; and the pair-shock model gives a
sharp proper-face logarithm.  \cref{tab:principal-model-rates} compares the
principal benchmarks, while the full ledger in \FullLedgerLabel{} records
support, rate, mechanism, and proof status without turning conditional
diagnoses into theorems.

Harmonic-mean and positive Half-Cauchy aggregation share first-order
geometry but separate at genuine transformation boundaries.  The latter
has an intrinsic $t^{-3}$ correction, while one-sided reciprocal $t$
aggregation can have a bounded angular term absent from its Half-Cauchy
analogue.  Treating the rules in parallel reveals which corrections belong
to dependence and which belong to the score map.

The second-order conclusions are fixed-dimensional.  The final block of
\TechnicalLocation{} explains why fixed-dimensional MRV is insufficient
when $m$ grows, gives uniform-MRV projection and radial-kernel criteria,
and proves dimension-uniform first-order validity for fixed-parameter
positive Clayton and standard multivariate-$t$ common-factor arrays.  It
also shows why dimension-free coefficients do not control second-order
remainders.  The main open problems are a recursive theorem for nested
critical and nonintegrable faces, unconditional Gaussian boundary control,
the angularly critical standard-$t$ cases, and uniform control of
active-face multiplicity when $m$ grows.

More broadly, the classification applies to dependence structures rather
than only named copula families.  Organizing them by exponent-measure
support and by how finite-threshold mass crosses the aggregation boundary
turns universal first-order calibration into a quantitative theory of rate,
coefficient, and sign, linking extreme-value geometry to finite-level
heavily right combination tests.

\section*{Acknowledgements}
\addcontentsline{toc}{section}{Acknowledgements}
OpenAI's ChatGPT and Codex assisted with mathematical exploration, language editing, and \LaTeX{} preparation.

\clearpage
\appendix
\AppendixCrefSetup
\section{Technical extensions and supporting results}
\label{app:technical-extensions}

This appendix collects results and extended derivations that support
the main geometric and calibration arguments.  All proof environments
remain in the separate proofs appendix.
Throughout, $U_i$ denotes an exact-uniform $p$-value,
$Y_i=U_i^{-1}$, $H_i=\cot(\pi U_i/2)$,
$R_{\bw}=\sum_iw_iY_i$, $T_{\bw}=\sum_iw_iH_i$, and $c=2/\pi$.

\subsection{Exponent measures, Poisson limits, and max-stability}
\label{sec:exponent-measure-intuition}

The term \emph{exponent measure} is most concrete through an extreme point
process.  If $\boldsymbol X^{(1)},\ldots,\boldsymbol X^{(n)}$ are
independent copies of an index-one MRV vector, then, under the usual point
process formulation,
\begin{equation}\label{eq:extreme-point-process}
 N_n=\sum_{k=1}^n\delta_{\boldsymbol X^{(k)}/n}
 \quad\Longrightarrow\quad N,
\end{equation}
where $N$ is a Poisson point process with intensity $\mu$.  Consequently,
$\mu(A)$ is the limiting expected number, among $n$ observations, whose
normalized extreme vector falls in $A$.  In particular,
\begin{equation}\label{eq:poisson-void}
 \Pp\{N(A)=0\}=\exp\{-\mu(A)\}.
\end{equation}

Let $\boldsymbol M_n$ denote the componentwise maximum of the $n$ copies.
For $\boldsymbol x>\boldsymbol0$, put
\begin{equation}\label{eq:exponent-function-from-measure}
 V(\boldsymbol x)
 =\mu\!\left([0,\infty]^m\setminus[\boldsymbol0,\boldsymbol x]\right)
 =\mu\{\boldsymbol y:y_i>x_i\text{ for some }i\}.
\end{equation}
Then
\begin{align}
 \Pp(\boldsymbol M_n/n\le\boldsymbol x)
 &=\left[1-\Pp\{X_i>nx_i\text{ for some }i\}\right]^n\notag\\
 &\longrightarrow\exp\{-V(\boldsymbol x)\}.
 \label{eq:max-stable-from-exponent}
\end{align}
The exponential therefore comes from the zero-count probability of a
Poisson limit, equivalently from $(1-V/n)^n\to e^{-V}$.  This is the
origin of the word ``exponent''; it is unrelated to a Lie-group
exponential map.

Homogeneity gives $V(a\boldsymbol x)=a^{-1}V(\boldsymbol x)$, and hence
\[
 G(\boldsymbol x)=e^{-V(\boldsymbol x)}
 \quad\Longrightarrow\quad
 G(a\boldsymbol x)^a=G(\boldsymbol x)
\]
for integer $a$, with the usual continuous extension.  Thus the maximum
of $a$ independent vectors having distribution $G$, divided by $a$, has
distribution $G$ again.  This is the meaning of \emph{max-stable} at
index one.

There is a limited Haar-measure analogy.  Positive scalars act on
$\mathbb E_m$ by dilation.  Haar measure on the multiplicative group
$(0,\infty)$ is $dr/r$, whereas the index-$\alpha$ radial exponent measure
is
\begin{equation}\label{eq:haar-character}
 \alpha r^{-\alpha-1}\,dr
 =\alpha r^{-\alpha}\frac{dr}{r}.
\end{equation}
It is therefore Haar measure multiplied by the character $r^{-\alpha}$:
the exponent measure is relatively invariant, not invariant, under
dilations.  In logarithmic radius $u=\log r$, the radial measure is the
exponential tilt $\alpha e^{-\alpha u}du$ of translation-invariant Haar
measure.  This observation explains homogeneity, but not the terminology
``exponent measure.''

The lower-tail replicate interpretation is the same construction after
reciprocation.  For independent copies
$\boldsymbol U^{(1)},\ldots,\boldsymbol U^{(n)}$, set
$M_{n,i}=\min_{1\leq k\leq n}U_i^{(k)}$.  If the lower stable-tail
function $\ell_L$ in \cref{def:lstd} exists, then
\begin{equation}\label{eq:min-limit}
 \Pp(nM_{n,i}>x_i\text{ for every }i)
 =\left[1-\Pp\!\left\{\bigcup_i(U_i\leq x_i/n)\right\}\right]^n
 \longrightarrow \exp\{-\ell_L(\boldsymbol x)\}.
\end{equation}
Thus $\ell_L$ is the exponent of the limiting min-stable law.  Since
\begin{equation}\label{eq:min-max-identity}
 \min_{1\leq k\leq n}U_i^{(k)}
 =\left\{\max_{1\leq k\leq n}Y_i^{(k)}\right\}^{-1},
\end{equation}
the same statement is equivalently a max-domain limit for
$\boldsymbol Y=1/\boldsymbol U$ and index-one MRV of $\boldsymbol Y$.
This also explains the phrase ``lower max-domain of attraction.''  In two
dimensions, the union in \cref{eq:min-limit} describes at least one small
$p$-value, while the intersection probability
$C(sx,sy)$ describes both being small; the limit
$C(sx,sy)/s\to\Lambda_L(x,y)$ is the corresponding first-order overlap.

\subsection{A copula outside the lower max-domain of attraction}
\label{sec:oscillating-copula-counterexample}

The existence requirement in \cref{def:lstd} is substantive.
Uniform margins do not prevent the copula from alternating between
different lower-tail clustering patterns on successively smaller scales.
The following bivariate construction gives an explicit example.

Let $s_0=e^{-2}$ and, for $0<s\leq s_0$, define
\begin{equation}\label{eq:oscillating-diagonal-small}
  \delta(s)
  =s\left[
    \frac12+\frac18\sin\{\log\log(1/s)\}
  \right],
  \qquad \delta(0)=0.
\end{equation}
Extend $\delta$ linearly from $(s_0,\delta(s_0))$ to $(1,1)$:
\begin{equation}\label{eq:oscillating-diagonal-extension}
  \delta(s)
  =\delta(s_0)
   +\frac{1-\delta(s_0)}{1-s_0}(s-s_0),
  \qquad s_0\leq s\leq1.
\end{equation}
On $(0,s_0]$,
\[
  \delta'(s)
  =\frac12+\frac18\sin\{\log\log(1/s)\}
   -\frac{\cos\{\log\log(1/s)\}}{8\log(1/s)},
\]
so $0<\delta'(s)<2$.  The linear extension also has slope in $(0,2)$,
and the construction satisfies
\[
  \delta(1)=1,
  \qquad
  \delta(s)\leq s,
  \qquad
  0\leq\delta(v)-\delta(u)\leq2(v-u)
  \quad(0\leq u\leq v\leq1).
\]
These are the standard conditions for a bivariate copula diagonal.  Hence
there exists a copula $C_\delta$ with
$C_\delta(s,s)=\delta(s)$; diagonal-section characterizations and explicit
constructions are given by Durante and Jaworski~\cite{DuranteJaworski2008}.

For exact uniform margins,
\begin{equation}\label{eq:oscillating-conditional}
  \frac{C_\delta(s,s)}{s}
  =\Pp(U_2\leq s\mid U_1\leq s)
  =\frac12+\frac18\sin\{\log\log(1/s)\},
  \qquad 0<s\leq s_0.
\end{equation}
Along the sequences
\begin{align*}
  s_n^{(+)}
  &=\exp\!\left[-\exp\!\left\{\frac{\pi}{2}+2\pi n\right\}\right],\\
  s_n^{(-)}
  &=\exp\!\left[-\exp\!\left\{\frac{3\pi}{2}+2\pi n\right\}\right],
\end{align*}
the ratio in \cref{eq:oscillating-conditional} equals $5/8$ and $3/8$,
respectively.  Therefore the lower-tail coefficient
$\Lambda_L(1,1)$ does not exist.  Equivalently,
\begin{equation}\label{eq:oscillating-mrv-failure}
  t\Pp(Y_1>t,Y_2>t)
  =tC_\delta(1/t,1/t)
  =\frac{C_\delta(1/t,1/t)}{1/t}
\end{equation}
has different subsequential limits, so $(Y_1,Y_2)$ has no unique MRV tail
measure.

The normalized lower-minimum probability also oscillates:
\begin{equation}\label{eq:oscillating-lower-minimum}
  \frac1s\Pp\{\min(U_1,U_2)\leq s\}
  =2-\frac{C_\delta(s,s)}s,
\end{equation}
with subsequential values $11/8$ and $13/8$.  The copula therefore has no
stable first-order lower-extreme geometry, even though both marginal
$p$-values are exactly uniform.

This example proves failure of lower MDA/MRV, not automatically failure of
either aggregate-tail ratio.  Convergence of an entire multivariate tail
measure is stronger than convergence of one particular linear-functional
tail; a reciprocal or positive Half-Cauchy weighted sum can conceivably
converge even when the full measure does not.  A counterexample to
first-order aggregation validity itself would have to control the oscillating
copula mass relative to the corresponding weighted-sum half-space, not merely
its diagonal.  Standard parametric copulas are usually much more regular;
the example is intentionally scale-oscillatory.

\subsection{Quantitative rates and a hierarchy of assumptions}
\label{sec:quantitative-tail-control}

It is important not to conflate three logically different conditions.

\begin{description}[leftmargin=2.7em,style=nextline]
  \item[Level I: lower max-domain attraction, or ordinary MRV.]
  This is the existence of the first-order limit
  \[
    t\Pp(\boldsymbol X/t\in A)\longrightarrow\mu_X(A),
    \qquad \boldsymbol X\in\{\boldsymbol Y,\boldsymbol H\},
  \]
  on fixed continuity sets.  For the corresponding weighted half-space it
  gives only
  \[
    \Pp\!\left(\sum_iw_iX_i>t\right)
    =\frac{c_X}{t}+o(t^{-1}),
    \qquad c_Y=1,\quad c_H=\frac2\pi.
  \]
  It supplies no numerical rate for the little-$o$ remainder.

  \item[Level II: quantitative second-order MRV.]
  This adds a rate to the convergence of the exponent measures on a class
  containing the weighted-sum half-space.  It is the general condition
  needed for a second-order aggregate-tail conclusion and allows both asymptotic
  independence and asymptotic dependence.

  \item[Level III: product-type pair control.]
  A shell-probability bound is an elementary sufficient condition for the
  desired Level-II rate in the asymptotically independent case.  A direct
  bound on each pairwise score density is stronger and implies the shell
  bound.  A globally bounded bivariate copula density is an even simpler but
  still stricter way to verify the score-density bound.  None of these
  Level-III conditions is necessary for Level II.
\end{description}

These levels form a one-way hierarchy.  The examples and quantitative
projection below clarify its strict steps; the full chain ends this section.

\begin{example}[Axial lower MDA with an arbitrarily slow, nonregular
diagonal remainder]
\label{ex:slow-copula-mda}
For $s$ sufficiently small, put
\begin{equation}\label{eq:slow-copula-diagonal}
 r(s)=\log(1/s),\qquad
 \eta(s)=\frac{2+\sin r(s)}{\log r(s)},\qquad
 \delta(s)=s\eta(s).
\end{equation}
Choose $s_0>0$ small enough and extend $\delta$ linearly from
$(s_0,\delta(s_0))$ to $(1,1)$, with $\delta(0)=0$.  On $(0,s_0]$,
\[
 \delta'(s)
 =\frac{2+\sin r-\cos r}{\log r}
  +\frac{2+\sin r}{r(\log r)^2}.
\]
For sufficiently small $s_0$, this derivative lies in $(0,2)$,
$\delta(s)\leq s$, and the linear extension has the same properties.
Thus $\delta$ is an admissible bivariate copula diagonal, so there is a
copula $C_\delta$ satisfying
$C_\delta(s,s)=\delta(s)$; see Durante and
Jaworski~\cite{DuranteJaworski2008}.

This copula is nevertheless in the axial lower max-domain of attraction.
Indeed, for fixed $x,y>0$ and $M=\max(x,y)$, monotonicity gives
\begin{equation}\label{eq:slow-copula-axial-bound}
 0\leq \frac{C_\delta(sx,sy)}s
 \leq \frac{C_\delta(sM,sM)}s
 =M\eta(sM)\longrightarrow0.
\end{equation}
Hence every pairwise comparable lower-tail event is $o(s)$ and
\cref{prop:pairwise} gives axis-supported index-one MRV.

No regularly varying normalization makes the diagonal remainder converge
to a finite nonzero constant.  Indeed, the quantity
$C_\delta(s,s)/s=\eta(s)$ tends to zero more slowly than $s^\kappa$ for
every $\kappa>0$, while
\[
 \frac{\eta(e^{-\pi}s)}{\eta(s)}
\]
has subsequential limits $1/3$ and $3$.  Thus ordinary lower MDA supplies
neither a positive-power rate nor regular variation of its rectangle
errors.  This diagonal obstruction does not exclude a global
second-order signed measure vanishing on this rectangle or a special
cancellation on one weighted half-space.
\end{example}

\subsection{Concrete conditions near independence}
\label{sec:near-independence-conditions}

The following probability and density assumptions provide such control
for copulas sufficiently close to product behavior in the relevant lower
tail.  The most general elementary version used here is a probability
bound on joint shells; it does not require the pair to possess a joint
density.

\begin{proposition}[Shell-probability alternative]
\label{prop:shell-probability}
Let $\boldsymbol w$ be a fixed simplex weight vector and, after discarding
zero weights, let $m$ be the number of active coordinates.  Let
$\boldsymbol X$ be either $\boldsymbol Y$ or $\boldsymbol H$, put
$c_Y=1$, $c_H=2/\pi$, and set $Z_i=w_iX_i$.
Suppose that, for every ordered pair $i\ne j$, there is $K_{ij}<\infty$ such
that, for all $0\leq a<b<\infty$ and $y\geq0$,
\begin{equation}\label{eq:shell-probability-bound}
  \Pp(a<Z_i\leq b,\ Z_j>y)
  \leq
  \frac{K_{ij}}{1+y}
  \int_a^b\frac{dx}{(1+x)^2}.
\end{equation}
Then
\begin{equation}\label{eq:shell-probability-conclusion}
  \Pp\!\left(\sum_iw_iX_i>t\right)
  =\frac{c_X}{t}
   +O\!\left(\frac{\log t}{t^2}\right).
\end{equation}
\end{proposition}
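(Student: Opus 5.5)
We sandwich the sum tail between one-coordinate terms and pairwise "cooperative" terms, and control the latter with the shell bound \cref{eq:shell-probability-bound}.

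\textbf{Lower bound.} Since all $Z_i\ge0$,
\[
\Pp\Bigl(\sum_iZ_i>t\Bigr)\ge\Pp\Bigl(\max_iZ_i>t\Bigr)\ge\sum_i\Pp(Z_i>t)-\sum_{i<j}\Pp(Z_i>t,Z_j>t).
\]
Exact margins give $\sum_i\Pp(Z_i>t)=c_X/t+O(t^{-3})$. For $X=\bY$ this is exact: $\sum_i w_i/t=1/t$. For $X=\boldsymbol H$ we use $\Pp(H_i>x)=c/x+O(x^{-3})$. The pair term is bounded by taking $a=t$, $b\to\infty$, $y=t$ in \cref{eq:shell-probability-bound}, which gives $\Pp(Z_i>t,Z_j>t)\le K_{ij}(1+t)^{-2}=O(t^{-2})$.

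\textbf{Upper bound.} Let $m$ be the number of active coordinates. On the event $\{\sum_iZ_i>t,\ \max_iZ_i\le t\}$, some coordinate must exceed $t/m$; say $Z_j=\max_i Z_i\in(t/m,t]$. The remaining coordinates then sum to more than $t-Z_j$, so some $i\ne j$ has $Z_i>(t-Z_j)/(m-1)$. Hence
\[
\Pp\Bigl(\sum_iZ_i>t\Bigr)\le\sum_i\Pp(Z_i>t)+\sum_{i\ne j}\Pp\Bigl(t/m<Z_j\le t,\ Z_i>\tfrac{t-Z_j}{m-1}\Bigr).
\]
This reduces the problem to a bivariate estimate of order $(\log t)/t^2$ for each ordered pair.

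\textbf{The main step: dyadic shells.} Write $u=t-Z_j\in[0,t(1-1/m))$ and partition the range of $Z_j$ into shells.
\begin{itemize}
\item \emph{Intermediate shells.} Take $u\in[2^k,2^{k+1})$ for $0\le k\le\log_2 t$, i.e.\ $Z_j\in(t-2^{k+1},t-2^k]$. Apply \cref{eq:shell-probability-bound} (with the ordered pair $(j,i)$, shell variable $Z_j$) with $y=2^k/(m-1)$. The shell integral is at most $2^k/(1+t-2^{k+1})^2$, so each shell contributes at most
\[
\frac{K(m-1)}{1+2^k}\cdot\frac{2^k}{(1+t-2^{k+1})^2}.
\]
While $2^{k+1}\le t/2$ this is $O(t^{-2})$. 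Only $O(\log t)$ such shells exist, so their total is $O((\log t)/t^2)$.
\item \emph{Outer shells.} For larger $k$ (and for the outermost region $u\gtrsim t/2$), $Z_j$ stays above $t/m$. Using $y$ of order $t$ and a shell integral of order $t/t^2$, these shells contribute $O(t^{-2})$.
\item \emph{Innermost shell.} For $u\in[0,1)$, we use $y=0$ and a shell integral of order $t^{-2}$, giving $O(t^{-2})$.
\end{itemize}

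The delicate point is keeping $1+t-Z_j$ bounded below by a multiple of $t$ whenever $y$ is small, so that each shell costs $O(t^{-2})$. This is exactly what produces one factor of $\log t$ and no more, which is the critical logarithm of \cref{thm:independent-sharp}. An alternative to the dyadic sum is a Stieltjes-integration-by-parts argument: majorize $\Pp(Z_i>(t-z)/(m-1)\mid\cdot)$ by the shell density, which gives $\int_{t/m}^{t}(1+t-z)^{-1}(1+z)^{-2}\,dz=O((\log t)/t^2)$. Combining the lower and upper bounds then yields \cref{eq:shell-probability-conclusion} for both $\bY$ and $\boldsymbol H$; the Half-Cauchy marginal error $O(t^{-3})$ is negligible.
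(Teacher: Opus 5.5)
Your proposal is correct and follows essentially the same route as the paper's proof: Bonferroni for the single-big-jump event, with the pair term bounded by the shell inequality at $(a,b,y)=(t,\infty,t)$, then a largest-coordinate decomposition of $\{\sum_i Z_i>t,\ \max_i Z_i\le t\}$ in which each shell of the largest coordinate is paired with the smallest admissible threshold for a partner coordinate. The only difference is cosmetic: you use finitely many dyadic shells in $t-Z_j$, whereas the paper refines a partition of $[t/m,t]$ until it reaches the majorant $\int_{t/m}^{t}(1+x)^{-2}(1+t-x)^{-1}\,dx=O\{(\log t)/t^2\}$, so your version gets the same bound without a limiting step.
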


The next density condition is stronger but often easier to verify.

\begin{theorem}[Product-type pair densities]
\label{thm:pair-density-second-order}
Let $\boldsymbol w$ be a fixed simplex weight vector and, after discarding
zero weights, let $m$ be the number of active coordinates.  Let
$\boldsymbol X$ be either $\boldsymbol Y$ or $\boldsymbol H$, put
$c_Y=1$, $c_H=2/\pi$, and set $Z_i=w_iX_i$.
Suppose every pair $(Z_i,Z_j)$ is absolutely continuous and, for constants
$K_{ij}<\infty$,
\begin{equation}\label{eq:score-density-bound}
  g_{ij}(x,y)
  \leq\frac{K_{ij}}{(1+x)^2(1+y)^2},
  \qquad x,y\geq0.
\end{equation}
Then \cref{eq:shell-probability-conclusion} holds.
\end{theorem}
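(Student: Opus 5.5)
The plan is to reduce \cref{thm:pair-density-second-order} directly to \cref{prop:shell-probability}: the density bound \cref{eq:score-density-bound} should imply the shell-probability bound \cref{eq:shell-probability-bound} with the same constants $K_{ij}$. Once that holds, the conclusion \cref{eq:shell-probability-conclusion} follows verbatim from the proposition, for both $\boldsymbol X=\boldsymbol Y$ and $\boldsymbol X=\boldsymbol H$. No separate aggregation argument is needed.

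Fix an ordered pair $i\ne j$, $0\le a<b<\infty$ and $y\ge0$. Since $(Z_i,Z_j)$ is absolutely continuous with density $g_{ij}$ supported in $[0,\infty)^2$, we have
\[
 \Pp(a<Z_i\le b,\ Z_j>y)
 =\int_a^b\int_y^\infty g_{ij}(x,v)\,dv\,dx
 \le K_{ij}\int_a^b\frac{dx}{(1+x)^2}\int_y^\infty\frac{dv}{(1+v)^2}.
\]
The inner integral equals $(1+y)^{-1}$ exactly, which gives
\[
 \Pp(a<Z_i\le b,\ Z_j>y)\le\frac{K_{ij}}{1+y}\int_a^b\frac{dx}{(1+x)^2}.
\]
This is precisely \cref{eq:shell-probability-bound}. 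Tonelli's theorem justifies the iterated integration because the integrand is nonnegative. The roles of $i$ and $j$ are symmetric, since the hypothesis is imposed on every pair and the bound is symmetric in $(x,y)$, so the ordered-pair requirement is satisfied as well.

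The only points needing care are bookkeeping. First, the density is that of the weighted scores $Z_i=w_iX_i$, which is exactly the object in the proposition, so no change of variables is required. Second, zero weights have already been discarded, so every $Z_i$ is a genuine nondegenerate score. Third, the bound is needed for all $a,b,y$, including $y$ near $0$, and the factor $(1+y)^{-1}$ handles that uniformly. I expect no real obstacle: all the analytic content, namely the Möbius/shell decomposition producing the $(\log t)/t^2$ remainder, lives in \cref{prop:shell-probability}, and this theorem is the observation that a product-type density envelope integrates to a product-type shell envelope.
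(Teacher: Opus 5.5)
Your reduction is correct. Integrating the density envelope over $(a,b]\times(y,\infty)$ gives exactly \cref{eq:shell-probability-bound} with the same constants $K_{ij}$. Since the proof of \cref{prop:shell-probability} does not depend on this theorem, the conclusion follows. The paper makes this same observation in the text right after the theorem, noting that the proposition subsumes it.

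The paper's formal proof, however, does not invoke the proposition; it reruns the argument directly with the density. It first uses Bonferroni with $\Pp(Z_i>t,Z_j>t)=O(t^{-2})$ for the union of big-jump events. Then, on $\{\sum_iZ_i>t,\ \max_iZ_i\le t\}$, a largest coordinate $Z_i=x\in(t/m,t]$ forces some $Z_j>(t-x)/(m-1)$. Integrating $g_{ij}$ over that region gives $C_m\int_{t/m}^t dx/\{(1+x)^2(1+t-x)\}=O\{(\log t)/t^2\}$.

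Your route avoids duplicating the aggregation argument and shows the theorem is literally a special case. Its cost is that it inherits the proposition's partition-and-mesh step: shell bounds only control rectangles, so the triangular region must be approximated by Riemann-type upper sums. With a density, the paper integrates over the non-rectangular region in one step. That also makes the origin of the logarithm visible: the factor $(1+t-x)^{-1}$ near $x=t$, weighted by $(1+x)^{-2}\asymp t^{-2}$. This is the paper's stated reason for keeping the direct proof.
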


Condition \cref{eq:score-density-bound} implies the shell-probability
bound \cref{eq:shell-probability-bound}, because
\[
  \int_y^\infty\frac{dz}{(1+z)^2}=\frac{1}{1+y}.
\]
Thus the shell-probability hypothesis in
\cref{prop:shell-probability} is weaker than the product-density hypothesis
in \cref{thm:pair-density-second-order}; the proposition is therefore the
logically stronger statement and subsumes the theorem.  The density
formulation is retained because its proof makes the source of the
logarithmic integral transparent.

\begin{corollary}[Bounded bivariate copula densities]
\label{cor:bounded-copula-density}
The conclusion of \cref{thm:pair-density-second-order} holds for both score
vectors if each
bivariate $p$-value copula is absolutely continuous and
\begin{equation}\label{eq:global-copula-bound}
  \sup_{0<u,v<1}c_{ij}(u,v)<\infty.
\end{equation}
\end{corollary}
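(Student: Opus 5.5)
The plan is to verify the product-type density bound \cref{eq:score-density-bound} for every active pair and then invoke \cref{thm:pair-density-second-order}. Write $M=\max_{i\ne j}\sup c_{ij}<\infty$. For a pair $(i,j)$, the joint density of $(U_i,U_j)$ is $c_{ij}$. The scores are $Z_i=w_iX_i$, where $X_i=\phi(U_i)$ and $\phi$ is either $u\mapsto u^{-1}$ or $u\mapsto\cot(\pi u/2)$. In both cases $\phi$ is a strictly decreasing $C^1$ bijection onto $(1,\infty)$ or $(0,\infty)$. The change of variables therefore gives
\[
 g_{ij}(x,y)=c_{ij}\{u_i(x),u_j(y)\}\,|u_i'(x)|\,|u_j'(y)|
 \le M\,f_i(x)f_j(y),
\]
where $u_i(x)=\phi^{-1}(x/w_i)$ and $f_i$ is the marginal density of $Z_i$. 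The key point is that a bounded copula density makes the pair density dominated by a constant times the product of the marginal densities.

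It remains to bound each marginal density by $K_i(1+x)^{-2}$ on $[0,\infty)$. In the reciprocal case, $Z_i=w_i/U_i$ has density $w_ix^{-2}$ on $x>w_i$ and zero elsewhere. Since $x>w_i$ there, $(1+x)^2\le(1+w_i^{-1})^2x^2$, so $K_i=w_i(1+w_i^{-1})^2$ works. In the Half-Cauchy case, $\Pp(H>h)=2\arctan(h^{-1})/\pi$ gives $H$ the density $c/(1+h^2)$ on $(0,\infty)$. Hence $Z_i$ has density $(c/w_i)\{1+(x/w_i)^2\}^{-1}$, which is bounded near $0$ and asymptotic to $cw_ix^{-2}$ at infinity. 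Because $1+(x/w_i)^2\ge \min(1,w_i^{-2})(1+x^2)/1$ and $(1+x)^2\le 2(1+x^2)$, this density is at most a constant multiple of $(1+x)^{-2}$.

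Combining the two steps gives $g_{ij}(x,y)\le MK_iK_j(1+x)^{-2}(1+y)^{-2}$, which is exactly \cref{eq:score-density-bound}. The theorem then yields \cref{eq:shell-probability-conclusion} for both score vectors. The only mildly delicate points are bookkeeping. The joint absolute continuity of $(Z_i,Z_j)$ follows because a diffeomorphic image of an absolutely continuous pair is absolutely continuous. For the reciprocal scores, where $Z_i\ge w_i$, the density vanishes below $w_i$, so the bound holds trivially there. Zero-weight coordinates have already been discarded, so every $w_i>0$ and the constants $K_i$ are finite. I do not expect a genuine obstacle; the main care is tracking the weight-dependent constants.
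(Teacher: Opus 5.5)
Your proposal is correct and matches the paper's proof: you write the pair score density as the copula density times the two marginal score densities, bound each weighted Pareto or weighted Half-Cauchy marginal density by a constant multiple of $(1+x)^{-2}$, and invoke \cref{thm:pair-density-second-order}. Your explicit constants, such as $K_i=w_i(1+w_i^{-1})^2$ in the reciprocal case, fill in details the paper leaves implicit.
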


The global bound \cref{eq:global-copula-bound} is stronger than the proof
needs.  Since one selected score in the proof is at least $t/m$, it is
enough that, for some $u_0<1$, the copula densities are bounded on both
lower-tail strips
\begin{equation}\label{eq:strip-copula-bound}
  \sup_{0<u\leq u_0,\ 0<v<1}c_{ij}(u,v)<\infty,
  \qquad
  \sup_{0<v\leq u_0,\ 0<u<1}c_{ij}(u,v)<\infty.
\end{equation}
Thus a density singularity confined to the upper--upper corner need not
affect either aggregate's upper tail.  Conversely, bounded density implies
\[
  C_{ij}(s,s)\leq Ms^2,
\]
so it excludes every copula with positive lower-tail-dependence
coefficient.  This explains why Level III cannot cover all Level-II MRV
models.

\subsection{General quantitative half-space projection}
\label{sec:general-quantitative-mrv}

The preceding examples and sufficient conditions concern sharply different
boundary behaviors.  Their common abstract formulation is quantitative
convergence of the exponent measure on the particular weighted half-space.

\begin{proposition}[Projection of a quantitative tail-measure bound]
\label{thm:second-order-mrv}
Under the assumptions of \cref{thm:hcct-mrv}, let
$\boldsymbol X$ be either $\boldsymbol Y$ or $\boldsymbol H$, put
$c_Y=1$, $c_H=2/\pi$, and let
\[
  A_{\boldsymbol w}
  =\{\boldsymbol x:\boldsymbol w^{\mathsf T}\boldsymbol x>1\}.
\]
Suppose that, for some deterministic $a(t)\downarrow0$,
\begin{equation}\label{eq:quant-mrv}
  t\Pp(\boldsymbol X/t\in A_{\boldsymbol w})
  =\mu_X(A_{\boldsymbol w})+O\{a(t)\}.
\end{equation}
Then
\begin{equation}\label{eq:quant-mrv-conclusion}
  \Pp\!\left(\sum_iw_iX_i>t\right)
  =\frac{c_X}{t}+O\!\left(\frac{a(t)}{t}\right).
\end{equation}
In particular, if
\begin{equation}\label{eq:desired-second-rate}
  a(t)=O\!\left(\frac{\log t}{t}\right),
\end{equation}
then
\begin{equation}\label{eq:log-second-order}
  \boxed{
  \Pp\!\left(\sum_iw_iX_i>t\right)
  =\frac{c_X}{t}
   +O\!\left(\frac{\log t}{t^2}\right).}
\end{equation}
\end{proposition}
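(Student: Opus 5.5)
The statement is essentially a rescaling identity, so the plan is to rewrite the event, identify the constant $\mu_X(A_{\boldsymbol w})$, and divide by $t$.

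\emph{Step 1: rewrite the event.} For every $t>0$,
\[
\Bigl\{\sum_i w_iX_i>t\Bigr\}=\{\boldsymbol X/t\in A_{\boldsymbol w}\}.
\]
Hence
\[
\Pp\!\Bigl(\sum_iw_iX_i>t\Bigr)=t^{-1}\cdot t\Pp(\boldsymbol X/t\in A_{\boldsymbol w}).
\]
Substituting the assumed expansion \cref{eq:quant-mrv} gives
\[
\Pp\!\Bigl(\sum_iw_iX_i>t\Bigr)=\frac{\mu_X(A_{\boldsymbol w})}{t}+O\{a(t)/t\}.
\]
So it only remains to show that $\mu_X(A_{\boldsymbol w})=c_X$.

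\emph{Step 2: identify the constant.} By \cref{prop:equiv}, the marginal masses are
\[
\mu_Y\{x_i>1\}=1,\qquad \mu_H\{x_i>1\}=c.
\]
These come from the exact unit-Pareto margins and from $t\Pp(H_i>t)\to c$, respectively. The projection identity \cref{eq:linear-tail} then yields
\[
\mu_X(A_{\boldsymbol w})=\sum_iw_ic_i=c_X\sum_iw_i=c_X.
\]
This uses the spectral representation \cref{eq:spectral}: integrating radially over $r>1/\boldsymbol w^{\mathsf T}\boldsymbol s$ gives $\int\boldsymbol w^{\mathsf T}\boldsymbol s\,S(d\boldsymbol s)$, and the marginal moment constraints for the spectral measure of $\boldsymbol X$ give $\int s_i\,S(d\boldsymbol s)=c_i$.

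An alternative that avoids the polar formula is to compare with \cref{thm:hcct-mrv}. That theorem gives $t\Pp(\boldsymbol X/t\in A_{\boldsymbol w})\to c_X$. Since $a(t)\to0$, the hypothesis \cref{eq:quant-mrv} forces the same sequence to converge to $\mu_X(A_{\boldsymbol w})$, so the two limits must coincide. This route is cleaner, and I would use it.

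\emph{Step 3: specialize.} Taking $a(t)=O\{(\log t)/t\}$ in the general bound gives the boxed display \cref{eq:log-second-order} immediately.

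\emph{Main obstacle.} The only delicate point is Step 2. Either $A_{\boldsymbol w}$ must be a $\mu_X$-continuity set, or the first-order theorem must be used so that no continuity-set issue arises. Using \cref{thm:hcct-mrv}, whose hypotheses are assumed here, sidesteps the issue entirely, and the remainder of the argument is bookkeeping.
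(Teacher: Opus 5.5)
Your proposal is correct and follows the paper's proof. The paper likewise obtains $\mu_X(A_{\boldsymbol w})=c_X$ from the spectral projection identity \cref{eq:linear-tail} and then divides \cref{eq:quant-mrv} by $t$. Your preferred identification of the constant, via \cref{thm:hcct-mrv} and uniqueness of limits, is equally valid because that theorem rests on the same calculation. The ``obstacle'' you flag is not really one: evaluating $\mu_X(A_{\boldsymbol w})$ through the polar representation needs no continuity-set property, which matters only for the convergence, and that convergence is assumed outright here.
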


Condition \cref{eq:quant-mrv} concerns the exponent measure on the
relevant half-space, not merely the marginal tails.  A conventional
second-order MRV formulation assumes, on a suitable class of continuity
sets,
\[
  t\Pp(\boldsymbol X/t\in\cdot)
  =\mu_X(\cdot)+A(t)\nu(\cdot)+o\{A(t)\}.
\]
\cref{thm:second-order-mrv} follows when
$A(t)=O\{(\log t)/t\}$, the weighted half-space belongs to the class on
which second-order convergence is assumed, and
$|\nu|(\partial A_{\boldsymbol w})=0$.  Finiteness of
$\nu(A_{\boldsymbol w})$ alone is not a signed-measure portmanteau
condition.

In lower-copula notation, a necessary diagnostic for such a uniform
measure-level rate is
\begin{equation}\label{eq:second-order-ell-diagnostic}
  \frac{1}{s}\Pp\!\left(
    \bigcup_i\{p_i\leq sx_i\}
  \right)
  =\ell_L(\boldsymbol x)
   +O\{s\log(1/s)\},
  \qquad s\downarrow0,
\end{equation}
uniformly on the required compact sets.  Condition
\cref{eq:second-order-ell-diagnostic} on rectangle complements alone is not
always sufficient for the weighted-sum half-space; a
convergence-determining class and boundary control are also needed.  It is,
however, useful for ruling out the desired rate: if
\cref{eq:second-order-ell-diagnostic} already fails there, the full standard
second-order exponent-measure condition fails.

The conclusions of this section are more accurately summarized by two
one-way chains:
\begin{equation}\label{eq:condition-hierarchy}
\begin{aligned}
  &\text{globally bounded bivariate copula densities}\\
  &\quad\Longrightarrow
  \text{product-type pairwise score densities}\\
  &\quad\Longrightarrow
  \text{joint shell-probability bounds}\\
  &\quad\Longrightarrow
  \begin{matrix}
  \text{quantitative aggregate-tail control on }A_{\boldsymbol w},\\[-1pt]
  \text{and axis-supported ordinary lower MDA/MRV};
  \end{matrix}\\[4pt]
  &\text{second-order MRV on a convergence-determining class}\\
  &\quad\Longrightarrow
  \begin{matrix}
  \text{ordinary MRV, and quantitative control on }A_{\boldsymbol w}\\[-1pt]
  \text{when that half-space is admissible.}
  \end{matrix}
\end{aligned}
\end{equation}
For the first chain, the shell bound gives pairwise comparable-exceedance
probabilities of order $O(s^2)$ by taking one shell endpoint to infinity;
\cref{prop:pairwise} then gives axial MRV.  One-half-space control does not
imply full MRV, and no reverse implication holds in general.

\subsection{Exact marginal normalization on the axes}

\begin{proposition}[Exact Pareto margins rule out a pure-axis second-order
measure]
\label{prop:exact-margin-axis-second}
Let $\boldsymbol Y$ have exact unit-Pareto margins.  Suppose, for a
nonzero $A(t)\to0$ and a signed Radon measure $\nu_2$, that
\begin{equation}\label{eq:exact-margin-second-mrv}
 t\Pp(\boldsymbol Y/t\in\cdot)
 =\mu(\cdot)+A(t)\nu_2(\cdot)+o\{A(t)\}
\end{equation}
holds on all marginal upper half-spaces
$B_i(x)=\{\boldsymbol y:y_i>x\}$, $x>0$.  Then
\begin{equation}\label{eq:second-measure-zero-margins}
 \nu_2\{\boldsymbol y:y_i>x\}=0
 \qquad(i=1,\ldots,m,\ x>0).
\end{equation}
If $\nu_2$ is supported on the union of the coordinate axes, then
$\nu_2=0$.

For exact standard Half-Cauchy margins, the marginal expansion instead is
\begin{equation}\label{eq:hc-axis-second-measure}
 t\Pp(H_i>tx)
 =\frac{c}{x}-\frac{c}{3x^3}t^{-2}+O(t^{-4}),
 \qquad c=\frac2\pi,
\end{equation}
so a nonzero pure-axis score-transformation correction exists at relative
order $t^{-2}$, equivalently absolute tail order $t^{-3}$.
\end{proposition}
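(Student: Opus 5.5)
The plan is to evaluate the assumed expansion \cref{eq:exact-margin-second-mrv} on each marginal half-space $B_i(x)$. There the left side can be computed exactly. Since $\Pp(Y_i>y)=y^{-1}$ for $y\ge1$, we have $t\Pp(Y_i>tx)=x^{-1}$ as soon as $tx\ge1$. The first-order measure satisfies $\mu\{y_i>x\}=x^{-1}$. This holds either by the marginal moment constraint \cref{eq:spectral-moment-constraints} together with homogeneity, or simply by passing to the limit in the same exact identity. Hence, for every fixed $x>0$ and all large $t$, the difference $t\Pp(\boldsymbol Y/t\in B_i(x))-\mu(B_i(x))$ is identically zero. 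The assumed expansion then reads $0=A(t)\nu_2\{B_i(x)\}+o\{A(t)\}$. Dividing by the nonzero $A(t)$ (nonzero along the relevant sequence $t\to\infty$) and letting $t\to\infty$ gives \cref{eq:second-measure-zero-margins}.

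For the pure-axis claim, write $\nu_2=\sum_i\nu_2^{(i)}$, where $\nu_2^{(i)}$ is the restriction to the open axis $\{y_i>0,\,y_j=0\ (j\ne i)\}$. Because $\nu_2$ is Radon on $\mathbb E_m$, the origin is excluded and these axes are disjoint. On the axis of $i$, the set $B_i(x)$ meets only that axis, and $B_j(x)$ misses it for $j\ne i$. So $\nu_2^{(i)}((x,\infty])=\nu_2\{B_i(x)\}=0$ for every $x>0$. This quantity is finite because $B_i(x)$ is bounded away from the origin, and the Radon property gives local finiteness of the total variation. Differences of such intervals generate the Borel sets of $(0,\infty]$ away from zero as a $\pi$-system. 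The finite signed measures $\nu_2^{(i)}$ restricted to $[x,\infty]$ therefore vanish for each $x$, so $\nu_2^{(i)}=0$ and hence $\nu_2=0$. The only care needed is to apply uniqueness to finite restrictions (via Jordan decomposition), which is the main technical point, though a routine one.

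For the Half-Cauchy display, use $\Pp(H_i>y)=\frac{2}{\pi}\arctan(1/y)$ and the Taylor series $\arctan u=u-u^3/3+O(u^5)$ with $u=1/(tx)$. This gives $t\Pp(H_i>tx)=c/x-c/(3x^3)\,t^{-2}+O(t^{-4})$, uniformly for $x$ in compacts of $(0,\infty)$. The correction $-c/(3x^3)$ is the tail of a nonzero measure on the $i$-th axis (density $c\,y^{-4}dy$ up to sign). It therefore constitutes a nonzero pure-axis second-order measure with $A(t)=t^{-2}$, i.e. absolute order $t^{-3}$, which is consistent with \cref{prop:equiv}.
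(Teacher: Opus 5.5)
Your proposal is correct and follows the paper's own proof. You evaluate the expansion on $B_i(x)$, where exact Pareto margins make $t\Pp(Y_i>tx)-\mu\{y_i>x\}$ vanish identically once $tx\ge1$. The vanishing tail functions then kill each axis restriction of $\nu_2$, and the Half-Cauchy display comes from expanding $c\arctan\{(tx)^{-1}\}$. Your Jordan-decomposition and $\pi$-system remarks just make explicit the uniqueness step that the paper states in one line.
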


\subsection{The product-power face trichotomy}

\begin{proposition}[Symmetric product-power face trichotomy]
\label{prop:product-power-trichotomy}
Let $w_1,w_2>0$ and let an open pair-face measure have density
\begin{equation}\label{eq:symmetric-product-power-density}
 \nu_a(dx_1,dx_2)
 =K a^2x_1^{-a-1}x_2^{-a-1}\,dx_1dx_2,
 \qquad a>0.
\end{equation}
Then the behavior of the absolute face integral near either axis is
\begin{equation}\label{eq:product-power-trichotomy-rate}
 \int_{\min(x_1,x_2)>\varepsilon}|G_{12}|\,d\nu_a
 =
 \begin{cases}
  O(1),&0<a<1,\\
  \Theta\{\log(1/\varepsilon)\},&a=1,\\
  \Theta(\varepsilon^{1-a}),&a>1,
 \end{cases}
 \qquad \varepsilon\downarrow0,
\end{equation}
up to finite contributions away from the axes.  Thus $a<1$, $a=1$, and
$a>1$ yield integrability, criticality, and polynomial nonintegrability,
respectively.
\end{proposition}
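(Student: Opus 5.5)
We compute the integral directly. Write
\[
 G_{12}(x_1,x_2)=\mathbf1\{w_1x_1+w_2x_2>1\}-\mathbf1\{w_1x_1>1\}-\mathbf1\{w_2x_2>1\}.
\]
On $(0,\infty)^2$ this function takes only three values.
\begin{itemize}
 \item It equals $+1$ on the cooperative triangle $\{w_1x_1\le1,\ w_2x_2\le1,\ w_1x_1+w_2x_2>1\}$.
 \item It equals $-1$ on the double-exceedance quadrant $\{w_1x_1>1,\ w_2x_2>1\}$.
 \item It equals $0$ elsewhere.
\end{itemize}
Hence $\int_{\min(x_1,x_2)>\varepsilon}|G_{12}|\,d\nu_a$ is the $\nu_a$-mass of the truncated triangle plus that of the quadrant.

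The quadrant is bounded away from both axes, and its mass is
\[
 K\,w_1^{a}w_2^{a}<\infty .
\]
This is one of the ``finite contributions away from the axes'' in the statement. The triangle meets the axes only near its corners $(1/w_1,0)$ and $(0,1/w_2)$. By symmetry of the argument it suffices to study the corner near the $x_1$-axis, where $x_2\downarrow0$.

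The first key step is to integrate $x_1$ out exactly for fixed small $x_2$. Put
\[
 u=w_1x_1\in\bigl(1-w_2x_2,\,1\bigr].
\]
The $x_1$-integral of $a x_1^{-a-1}$ over this range is
\[
 w_1^{a}\bigl\{(1-w_2x_2)^{-a}-1\bigr\}=a\,w_1^{a}w_2x_2+O(x_2^2).
\]
It is therefore bounded above and below by constant multiples of $x_2$ on $x_2\in(\varepsilon,\delta)$ for a fixed small $\delta$. The remaining contribution near this axis is
\[
 \Theta\!\left(\int_\varepsilon^\delta x_2\cdot x_2^{-a-1}\,dx_2\right)=\Theta\!\left(\int_\varepsilon^\delta x_2^{-a}\,dx_2\right).
\]
This integral is $O(1)$ for $a<1$, $\log(\delta/\varepsilon)$ for $a=1$, and $\{\varepsilon^{1-a}-\delta^{1-a}\}/(a-1)$ for $a>1$.

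The region $x_2\ge\delta$ within the triangle is compact away from the axes and contributes a finite constant. The mirror corner near the $x_2$-axis is handled in the same way. Adding the two corners, the $\delta$-region and the quadrant gives the trichotomy. The lower bounds $\Theta$ rather than $O$ at $a\ge1$ hold because every piece is nonnegative and the corner piece alone already has the stated order.

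The only delicate point is keeping the two-sided bound uniform in the corner, namely that $(1-w_2x_2)^{-a}-1\asymp x_2$ for $x_2\le\delta$. This follows from the mean value theorem with $\delta<1/(2w_2)$.
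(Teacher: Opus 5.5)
Your proposal is correct and takes essentially the same route as the paper's proof. You split $|G_{12}|$ into the cooperative triangle and the double-exceedance quadrant, observe that near an axis the $x_1$-section of the triangle has width proportional to $x_2$, and reduce to $\int_\varepsilon y^{-a}\,dy$. Your exact evaluation $w_1^{a}\{(1-w_2x_2)^{-a}-1\}$ and the explicit quadrant mass $Kw_1^aw_2^a$ just make the paper's ``bounded above and below'' step quantitative.

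One wording slip: the piece that is compact away from the axes is the triangle intersected with $\{x_1\ge\delta,\ x_2\ge\delta\}$, not with $\{x_2\ge\delta\}$ alone. Because you treat the mirror corner separately, the argument is unaffected.
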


\subsection{Support-agnostic common-factor inversion}
\newcommand{\CommonFactorTheory}{%
On this index-one scale, the following support-agnostic theorem gives the
reusable inversion.  Its principal applications have interior mass, but it
also covers the mixed one-sided multivariate-$t$ model.

\begin{theorem}[Common-heavy-factor aggregation]
\label{thm:root-inversion}
Let $Q$ and $\Theta$ be independent.  Suppose, for $a>0$, $\rho>0$,
and $b\in\R$,
\begin{equation}\label{eq:common-radial-tail}
 \Pp(Q>q)=a q^{-1}\{1-bq^{-\rho}+r_Q(q)\},
 \qquad r_Q(q)=o(q^{-\rho}).
\end{equation}
Let $S(q,\Theta)$ be a nonnegative statistic.  For some finite collection
$\lambda_j>0$, suppose its large-$q$ expansion is
\begin{equation}\label{eq:common-stat-expansion}
 S(q,\Theta)
 =qA(\Theta)\left\{1+
       \sum_{j=1}^Jc_j(\Theta)q^{-\lambda_j}
       +r_S(q,\Theta)\right\}.
\end{equation}
Put
\[
 \delta=\min\{\rho,\lambda_1,\ldots,\lambda_J\}.
\]
For large $t$, assume that the upper tail event can be written
\begin{equation}\label{eq:common-threshold-event}
 \{S(Q,\Theta)>t\}=\{Q>q_t(\Theta)\},
\end{equation}
up to an event of probability $o(t^{-1-\delta})$, where
$q_t\to\infty$ almost surely on $\{A>0\}$; set $q_t=\infty$ on
$\{A=0\}$.  Assume the following explicit dominated-inversion conditions:
\begin{align}
 &\E\left|q_t^{-1}-\frac At
   -\sum_{j:\lambda_j=\delta}
       c_jA^{1+\delta}t^{-1-\delta}\right|
       =o(t^{-1-\delta}),
 \label{eq:common-inverse-L1}\\
 &\E(q_t^{-1-\rho})
   =t^{-1-\rho}\E(A^{1+\rho})+o(t^{-1-\delta}),
 \label{eq:common-radial-L1}\\
 &\E\{q_t^{-1}|r_Q(q_t)|\}=o(t^{-1-\delta}).
 \label{eq:common-remainder-L1}
\end{align}
Here and below a product containing $A$ is defined as zero on $\{A=0\}$.
Then
\begin{equation}\label{eq:common-factor-result}
 \Pp\{S(Q,\Theta)>t\}
 =\frac{a\E A}{t}+K_\delta t^{-1-\delta}
  +o(t^{-1-\delta}),
\end{equation}
where
\begin{equation}\label{eq:common-factor-coefficient}
 K_\delta
 =a\sum_{j:\lambda_j=\delta}
       \E(c_jA^{1+\delta})
  -a\mathbf1\{\rho=\delta\}b\E(A^{1+\delta}).
\end{equation}
Thus corrections having the smallest score-scale exponent add, and a
radial correction adds precisely when its exponent ties that minimum.
\end{theorem}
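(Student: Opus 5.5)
The plan is to condition on $\Theta$ and use the independence of $Q$ and $\Theta$. By \cref{eq:common-threshold-event}, up to an event of probability $o(t^{-1-\delta})$,
\[
 \Pp\{S(Q,\Theta)>t\}=\E\,\Pp\{Q>q_t(\Theta)\mid\Theta\}.
\]
Since $Q$ is independent of $\Theta$, the inner probability is the radial tail \cref{eq:common-radial-tail} evaluated at the $\Theta$-measurable level $q_t(\Theta)$. On $\{A=0\}$ we have $q_t=\infty$, so that set contributes zero, consistent with the convention that products containing $A$ vanish there. We only need $q_t$ large enough for \cref{eq:common-radial-tail} to apply. The event where $q_t$ is below a fixed large level has probability that can be absorbed into the $L^1$ hypotheses, because those hypotheses are stated for the full expectation. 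This gives the exact identity
\[
 \Pp\{Q>q_t\}
 =a\,\E(q_t^{-1})-ab\,\E(q_t^{-1-\rho})+a\,\E\{q_t^{-1}r_Q(q_t)\}.
\]

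Each of the three terms is then handled by one of the dominated-inversion conditions.

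\begin{itemize}
\item \textbf{First term.} Condition \cref{eq:common-inverse-L1} gives
\[
 \E(q_t^{-1})=\frac{\E A}{t}+\sum_{j:\lambda_j=\delta}\E(c_jA^{1+\delta})\,t^{-1-\delta}+o(t^{-1-\delta}).
\]
For this step the expectations $\E(c_jA^{1+\delta})$ must be finite. This is implicit in \cref{eq:common-inverse-L1}: the $L^1$ bound forces the random correction terms to be integrable up to $o(t^{-1-\delta})$. I would state this finiteness explicitly in the proof.

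\item \textbf{Second term.} Condition \cref{eq:common-radial-L1} gives $t^{-1-\rho}\E(A^{1+\rho})+o(t^{-1-\delta})$. If $\rho=\delta$, this is exactly the radial contribution $-ab\,\E(A^{1+\delta})t^{-1-\delta}$. If $\rho>\delta$, then $t^{-1-\rho}=o(t^{-1-\delta})$. Finiteness of $\E(A^{1+\rho})$ is again part of the hypothesis.

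\item \textbf{Third term.} This is $o(t^{-1-\delta})$ by \cref{eq:common-remainder-L1}.
\end{itemize}

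Collecting the three terms yields \cref{eq:common-factor-result}, with $K_\delta$ as in \cref{eq:common-factor-coefficient}.

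The main obstacle is justifying the identity for the conditional probability when the threshold is random and possibly not large. For the moderate-threshold region $\{q_t\le q_0\}$, with $q_0$ the level beyond which \cref{eq:common-radial-tail} holds, I would define $r_Q$ on all of $(0,\infty)$ by the same identity. The identity then holds for every $q$, and the whole burden is placed on \cref{eq:common-remainder-L1}, which is stated as a full expectation. With that convention the argument reduces to linearity of expectation plus the three hypotheses. The heuristic expansion \cref{eq:common-stat-expansion} is needed only to motivate \cref{eq:common-inverse-L1}: inverting $q A(1+\sum_j c_jq^{-\lambda_j})=t$ gives $q_t^{-1}\approx A/t+\sum_j c_jA^{1+\lambda_j}t^{-1-\lambda_j}$. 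It is not used logically in the proof.
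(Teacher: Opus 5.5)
Your proposal is correct and is essentially the paper's own proof. You condition on $\Theta$, use independence to write the tail as $\E\{a q_t^{-1}(1-bq_t^{-\rho}+r_Q(q_t))\}$ up to $o(t^{-1-\delta})$, and dispatch the three pieces with \cref{eq:common-inverse-L1}, \cref{eq:common-radial-L1}, and \cref{eq:common-remainder-L1}. Your extra care only makes explicit what the paper leaves tacit: you define $r_Q$ on all of $(0,\infty)$ through the tail identity so that \cref{eq:common-remainder-L1} absorbs the moderate-threshold region, and you note that the needed integrability of $A$, the tied corrections, and $A^{1+\rho}$ is implicit in the $L^1$ hypotheses.
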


\begin{remark}[How to verify the inversion assumptions]
On any angular truncation on which $A$ is bounded away from zero and all
coefficients are bounded, ordinary algebraic inversion of
\cref{eq:common-stat-expansion} gives
\[
 q_t^{-1}=\frac At+
 \sum_{j:\lambda_j=\delta}c_jA^{1+\delta}t^{-1-\delta}
 +o(t^{-1-\delta}).
\]
\cref{eq:common-inverse-L1,eq:common-remainder-L1} are exactly the
uniform-integrability
conditions needed to remove that truncation.  They can be checked using
moments of $A$, $c_jA^{1+\delta}$, and $A^{1+\rho}$ together with a
negligible bound for the tail event outside the truncation.  Exponential
angular tails verify them for positive Clayton; Gaussian moment and local
integrability bounds verify them for standard multivariate $t$ when
$\nu>1$.  This explicit formulation replaces the ambiguous phrase
``assume dominated tail inversion.''
\end{remark}

The theorem immediately separates bounded angular effects in
reciprocal/harmonic-mean scores from the intrinsic third-order expansion
of the positive Half-Cauchy map.

\begin{corollary}[Reciprocal/harmonic-mean common-factor phase diagram]
\label{cor:common-reciprocal-phase}
Suppose \cref{eq:common-radial-tail} holds and, under the dominated
inversion conditions of \cref{thm:root-inversion},
\begin{equation}\label{eq:common-reciprocal-stat}
 S_Y(q,\Theta)=qA+q^{1-\kappa}B+C
  +o(q^{1-\kappa}+1).
\end{equation}
Then, with $\delta=\min(\rho,\kappa,1)$,
\begin{align}
 \Pp\{S_Y(Q,\Theta)>t\}
 &=\frac{a\E A}{t}+K_Yt^{-1-\delta}+o(t^{-1-\delta}),
 \label{eq:common-reciprocal-result}\\
 K_Y
 &=a\mathbf1\{\kappa=\delta\}\E(A^\kappa B)
   +a\mathbf1\{1=\delta\}\E(AC)
   -ab\mathbf1\{\rho=\delta\}\E(A^{1+\rho}).
 \label{eq:common-reciprocal-coefficient}
\end{align}
Terms tie and add.  In particular, when $\rho=\kappa$, the common-factor
correction is $a\E\{A^\kappa(B-bA)\}$ provided
$\rho=\kappa<1$; at $\rho=\kappa=1$ the bounded angular coefficient
$a\E(AC)$ also ties.  A nonzero bounded angular edge dominates the
common-factor terms when $\rho=\kappa>1$ provided its projected coefficient
$\E(AC)$ does not cancel.

If $C\equiv0$ and the sharper expansion
\begin{equation}\label{eq:common-reciprocal-zero-C}
 S_Y(q,\Theta)=qA+q^{1-\kappa}B+o(q^{1-\kappa})
\end{equation}
holds under the dominated-inversion conditions of
\cref{thm:root-inversion} at
$\delta_0=\min(\rho,\kappa)$, then the exponent one must be omitted:
\begin{align}
 \Pp\{S_Y(Q,\Theta)>t\}
 &=\frac{a\E A}{t}+K_0t^{-1-\delta_0}
   +o(t^{-1-\delta_0}),\label{eq:common-reciprocal-zero-C-result}\\
 K_0
 &=a\mathbf1\{\kappa=\delta_0\}\E(A^\kappa B)
   -ab\mathbf1\{\rho=\delta_0\}\E(A^{1+\rho}).
 \label{eq:common-reciprocal-zero-C-coefficient}
\end{align}
In particular, if $\rho=\kappa$ at any positive value, then
$K_0=a\E\{A^\kappa(B-bA)\}$.
\end{corollary}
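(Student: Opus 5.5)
The plan is to read \cref{cor:common-reciprocal-phase} as a direct specialization of \cref{thm:root-inversion}. On $\{A>0\}$ I would factor \cref{eq:common-reciprocal-stat} as
\[
 S_Y(q,\Theta)=qA\Bigl\{1+\frac{B}{A}q^{-\kappa}+\frac{C}{A}q^{-1}
 +r_S(q,\Theta)\Bigr\}.
\]
This has the form \cref{eq:common-stat-expansion} with $J=2$, $\lambda_1=\kappa$, $c_1=B/A$, $\lambda_2=1$ and $c_2=C/A$. If $\kappa=1$, the two terms merge into one with coefficient $(B+C)/A$; the linear formula below still separates them correctly. The remainder is $r_S=A^{-1}o(q^{-\kappa}+q^{-1})$, which is $o(q^{-\delta})$ pointwise on $\{A>0\}$ with $\delta=\min(\rho,\kappa,1)$. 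On $\{A=0\}$ we set $q_t=\infty$ and use the paper's convention that products containing $A$ vanish there. Then $\min\{\rho,\lambda_1,\lambda_2\}=\delta$, matching the corollary.

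Next I would check that the hypotheses of \cref{thm:root-inversion} hold. The corollary assumes the dominated-inversion conditions \cref{eq:common-inverse-L1,eq:common-radial-L1,eq:common-remainder-L1} and the threshold representation \cref{eq:common-threshold-event} outright, so nothing new has to be proved. The only step needing care is that the target in \cref{eq:common-inverse-L1}, namely $\sum_{\lambda_j=\delta}c_jA^{1+\delta}t^{-1-\delta}$, becomes
\[
 \mathbf1\{\kappa=\delta\}BA^{\delta}+\mathbf1\{1=\delta\}CA^{\delta}.
\]
This is an integrable expression free of division by $A$, so the conditions are meaningful even where $A$ is small.

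Substituting into \cref{eq:common-factor-coefficient} gives
\[
 K_\delta=a\mathbf1\{\kappa=\delta\}\E(BA^{\kappa})
 +a\mathbf1\{1=\delta\}\E(CA)
 -ab\mathbf1\{\rho=\delta\}\E(A^{1+\rho}),
\]
which is \cref{eq:common-reciprocal-coefficient}. The special cases follow by reading off which indicators are active:
\begin{itemize}
 \item If $\rho=\kappa<1$, then $\delta=\kappa$, the $C$-term drops out, and $K_Y=a\E\{A^\kappa(B-bA)\}$.
 \item If $\rho=\kappa=1$, all three indicators equal one, so $a\E(AC)$ is added.
 \item If $\rho=\kappa>1$, then $\delta=1$ and only $a\E(AC)$ survives. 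It dominates exactly when $\E(AC)\ne0$.
\end{itemize}

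For the $C\equiv0$ refinement, I would repeat the argument with $J=1$ and $\lambda_1=\kappa$, using the sharper expansion \cref{eq:common-reciprocal-zero-C}. Then $\delta_0=\min(\rho,\kappa)$, and the exponent $1$ is no longer a candidate. Its presence in the general statement came only from $C$: when $C\equiv0$ the remainder is genuinely $o(q^{1-\kappa})$ rather than $o(q^{1-\kappa}+1)$. That is why the sharper hypothesis is needed when $\kappa>1$.

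The main obstacle is bookkeeping rather than analysis. One must make sure that dividing by $A$ does not corrupt the remainder near $\{A=0\}$, where $B/A$ and $C/A$ can be unbounded. I would handle this by stating all quantities in the undivided form $BA^{\delta}$ and $CA^{\delta}$, and by letting the assumed $L^1$ inversion conditions absorb the small-$A$ region. That is exactly what those conditions are designed to do.
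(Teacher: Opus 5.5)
Your proposal is correct and follows the paper's proof essentially verbatim. The paper likewise takes $B/A$ at exponent $\kappa$ and $C/A$ at exponent one as the relative coefficients in \cref{eq:common-stat-expansion}, applies \cref{thm:root-inversion}, and in the $C\equiv0$ case drops the exponent-one coefficient and reapplies the theorem with the single exponent $\kappa$. Your extra remarks are accurate bookkeeping that the paper leaves implicit: the tie at $\kappa=1$, the undivided form $BA^{\delta}$, $CA^{\delta}$ near $\{A=0\}$, and the need for the sharper remainder $o(q^{1-\kappa})$ when $\kappa>1$.
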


\begin{corollary}[Positive Half-Cauchy common-factor phase diagram]
\label{cor:common-hc-phase}
Suppose \cref{eq:common-radial-tail} holds with $\rho=\kappa$ and,
under the dominated inversion conditions of
\cref{thm:root-inversion},
\begin{equation}\label{eq:common-hc-stat}
 S_H(q,\Theta)=cqA+cq^{1-\kappa}B-dq^{-1}D
 +o(q^{1-\kappa}+q^{-1}),
 \qquad c=\frac2\pi,\quad d=\frac\pi6.
\end{equation}
This hypothesis deliberately excludes a nonzero $O(1)$ angular term.  If
such a term is present, it enters at relative exponent one and must be
included by applying \cref{thm:root-inversion} directly.  The applications
below of this corollary have no such term.
Then
\begin{equation}\label{eq:common-hc-result}
 \Pp\{S_H(Q,\Theta)>t\}
 =\frac{ac\E A}{t}+K_Ht^{-1-\min(\kappa,2)}
 +o\{t^{-1-\min(\kappa,2)}\},
\end{equation}
where
\begin{equation}\label{eq:common-hc-coefficient}
K_H=
\begin{cases}
 ac^{1+\kappa}\E\{A^\kappa(B-bA)\},&0<\kappa<2,\\[2pt]
 ac^3\E\{A^2(B-bA)\}-adc^2\E(A^2D),&\kappa=2,\\[2pt]
 -adc^2\E(A^2D),&\kappa>2.
\end{cases}
\end{equation}
Consequently, the dependence/radial perturbation dominates for
$\kappa<2$, collides with the Half-Cauchy transformation at $\kappa=2$,
and is hidden behind the universal $t^{-3}$ transformation scale for
$\kappa>2$.  The conclusion at $\kappa=2$ requires
$\E(A^2|D|)<\infty$; failure of this angular moment is a critical-face
problem rather than a failure of the radial expansion.
\end{corollary}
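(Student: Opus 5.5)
The plan is to reduce this to \cref{thm:root-inversion} applied with $\delta=\min(\kappa,2)$ and $\rho=\kappa$. The score-scale corrections are read off from \cref{eq:common-hc-stat} as follows. Factor out $cqA$ to obtain
\[
 S_H=cqA\bigl\{1+A^{-1}Bq^{-\kappa}-\tfrac{d}{c}A^{-1}Dq^{-2}+r_S\bigr\}.
\]
The leading factor is $cA$, not $A$. We therefore apply the theorem with $A':=cA$, and take correction coefficients $c_1=B/A$ at $\lambda_1=\kappa$ and $c_2=-(d/c)D/A$ at $\lambda_2=2$. The dominated-inversion conditions are assumed in the statement, so the first-order term is immediately $a\E(cA)/t=ac\E A/t$.

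The coefficient then comes from \cref{eq:common-factor-coefficient}, computed with $A'$ in place of $A$. The exponent-$\kappa$ term contributes
\[
 a\E\{(B/A)(cA)^{1+\kappa}\}=ac^{1+\kappa}\E(A^\kappa B).
\]
The radial term, present because $\rho=\kappa$, contributes $-ab\E\{(cA)^{1+\kappa}\}=-abc^{1+\kappa}\E(A^{1+\kappa})$. These two combine into $ac^{1+\kappa}\E\{A^\kappa(B-bA)\}$ whenever $\kappa=\delta$, that is, for $\kappa\le2$. The exponent-$2$ term contributes
\[
 a\E\{-(d/c)(D/A)(cA)^{3}\}=-adc^2\E(A^2D)
\]
whenever $2=\delta$, that is, for $\kappa\ge2$. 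Taking the three regimes $\kappa<2$, $\kappa=2$ and $\kappa>2$ and adding the tied terms gives \cref{eq:common-hc-coefficient}. As a check, for a single coordinate with $A=B=D=1$ and $\kappa>2$, the formula gives $-adc^2=-c/3$ when $a=1$. This matches \cref{eq:hc-axis-second-measure}.

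Two points need care. First, the hypotheses of \cref{thm:root-inversion} are phrased for a collection $\lambda_j$ with $\delta$ the minimum. When $\kappa>2$, the $q^{-\kappa}$ correction is absorbed into the $o$-remainder at relative order $\delta=2$, and the radial correction $bq^{-\kappa}$ is $o(q^{-2})$. In that case we have $\rho\ne\delta$, so the indicator in \cref{eq:common-factor-coefficient} kills the radial term, and condition \cref{eq:common-radial-L1} is only needed in its weak form $\E q_t^{-1-\rho}=o(t^{-1-\delta})$. Second, the products containing $1/A$ must be interpreted as zero on $\{A=0\}$, as the theorem stipulates. This is consistent because $S_H$ is then $O(q^{1-\kappa}+q^{-1})$, which stays bounded in probability relative to $t$, so $q_t=\infty$ there up to a negligible event.

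The main obstacle is the moment requirement at the collision $\kappa=2$ (and likewise for $\kappa>2$). The dominated-inversion condition \cref{eq:common-inverse-L1} forces $\E\{A^2|D|\}<\infty$, because $c_2(A')^{3}$ must be integrable. This is exactly the extra hypothesis recorded in the statement. I would verify that this finiteness, together with the assumed dominated inversion, makes each coefficient expectation absolutely convergent, so that the tied terms may legitimately be added. Beyond that, the argument is bookkeeping of the scale factor $c$ through $A'=cA$.
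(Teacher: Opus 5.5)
Your proposal is correct and is essentially the paper's proof. The paper also takes relative score coefficients $B/A$ at exponent $\kappa$ and $-dD/(cA)$ at exponent two, and applies \cref{thm:root-inversion} with leading angular term $cA$; the factors $c^{1+\kappa}$ and $c^2$ then arise exactly as in your computation with $A'=cA$, and your remarks on the untied radial term for $\kappa>2$, the convention on $\{A=0\}$, and the moment $\E(A^2|D|)<\infty$ just spell out hypotheses that theorem already contains.
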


}

\CommonFactorTheory

\subsection{Standard proper-face constructions and an exact max-linear example}

Genuine proper faces first appear when $m\ge3$.  In two dimensions the
simplex has only its two axis vertices and its open interior, so there is
no intermediate coordinate-face dimension.  Several standard
multivariate extreme-value constructions can be parameterized to produce
pure proper-face support:
\begin{enumerate}[leftmargin=2.2em]
 \item \emph{Marshall--Olkin and max-linear common-shock copulas}
 \cite{MarshallOlkin1967}.
 Independent shocks indexed by nonempty subsets $J\subseteq[m]$ generate
 spectral atoms on rays contained in $F_J^\circ$.  Shocks affecting pairs
 or larger proper blocks can therefore concentrate the exponent measure on
 selected proper faces.  For small-$p$ analysis one uses the
 survival/reflected orientation that maps large max-stable variables to
 small uniforms.

 \item \emph{Tawn asymmetric logistic extreme-value copulas}
 \cite{Tawn1990}.
 Their subset components can allocate diffuse spectral mass to selected
 coordinate faces.  Retaining only components indexed by proper subsets
 produces a proper-face-supported model; adding axes or a full-set
 component moves the model into the mixed category below.
\end{enumerate}
These examples concern support in the tail orientation relevant to the
chosen extremes.  Reflecting a copula can move upper-tail face structure
to the lower tail of the $p$-values and must be stated explicitly.

A concrete three-dimensional max-linear example makes the geometry exact.
Let $Z_{12},Z_{13},Z_{23}$ be independent unit-Fr\'echet variables and set
\begin{equation}\label{eq:pair-shock-max-linear}
 X_1=\max(Z_{12}/2,Z_{13}/2),\quad
 X_2=\max(Z_{12}/2,Z_{23}/2),\quad
 X_3=\max(Z_{13}/2,Z_{23}/2).
\end{equation}
Every margin is unit Fr\'echet and
\begin{equation}\label{eq:pair-shock-spectral-measure}
 S=\delta_{(1/2,1/2,0)}
  +\delta_{(1/2,0,1/2)}
  +\delta_{(0,1/2,1/2)}.
\end{equation}
Thus the first-order exponent measure is concentrated exactly on the three
pair-edge rays.  Pairwise extremes have order $t^{-1}$, while a three-way
extreme requires at least two independent shocks and is of smaller order.
The exact-uniform transformation
\begin{equation}\label{eq:max-linear-to-p-copula}
 U_i=1-\exp(-1/X_i)
\end{equation}
makes small $U_i$ correspond to large $X_i$, and $1/U_i\sim X_i$; hence
the reciprocal score vector retains the same edge-supported exponent
geometry.  By \cref{prop:equiv}, the positive Half-Cauchy score vector has
the identical support after the deterministic radial rescaling by $c$.

\subsection{Three-edge critical integral}

\begin{lemma}[Three-edge critical integral]
\label{lem:three-edge-critical-integral}
Let $R_{12},R_{13},R_{23}$ be independent copies of a nonnegative
continuous random variable with distribution function $F_R$ and density
$f_R$.  Suppose, for some $a>0$,
\begin{equation}\label{eq:edge-shock-tail-assumption}
 1-F_R(x)=\frac{a}{x}+O(x^{-2}),
 \qquad
 f_R(x)=\frac{a}{x^2}+O(x^{-3}).
\end{equation}
For fixed positive weights satisfying $\sum_iw_i=1$, put
\[
 G_{\bw}
 =w_1\max(R_{12},R_{13})
  +w_2\max(R_{12},R_{23})
  +w_3\max(R_{13},R_{23}).
\]
Then
\begin{equation}\label{eq:three-edge-critical-expansion}
 \Pp(G_{\bw}>t)
 =\frac{2a}{t}
  +4a^2\sigma_2(\bw)\frac{\log t}{t^2}
  +O(t^{-2}).
\end{equation}
\end{lemma}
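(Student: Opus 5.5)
The key observation is that the edge variables enter $G_{\bw}$ only through their order statistics. With probability one the three values $R_{12},R_{13},R_{23}$ are distinct. Let $e=\{i,j\}$ be the edge carrying the largest value and $k$ the opposite vertex. Coordinates $i$ and $j$ both see $R_e$, while coordinate $k$ sees the larger of the two remaining edges. So, on the event that $e$ is the argmax,
\[
 G_{\bw}=(1-w_k)X_e+w_kZ_e,\qquad X_e=R_e,\quad Z_e=\max_{e'\ne e}R_{e'},
\]
where $X_e$ and $Z_e$ are independent and $\Pp(Z_e\le z)=F_R(z)^2$. Hence
\[
 \Pp(G_{\bw}>t)=\sum_{e}\Pp\{X_e>Z_e,\ (1-w_k)X_e+w_kZ_e>t\},
\]
and I will expand each of the three terms separately, with $w=w_k$ fixed.

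For a single edge, note that $(1-w)X+wZ>t$ together with $Z<X$ forces $X>t$. I therefore write the edge term as
\[
 \Pp(X>t)-\Pp(X>t,\,Z\ge X)-\Pp\{t<X<t/(1-w),\ Z\le (t-(1-w)X)/w\}.
\]
The middle term is at most $\Pp(X>t)\Pp(Z>t)=O(t^{-2})$. On the event $X\ge t/(1-w)$ the constraint $Z\le (t-(1-w)X)/w$ is vacuous or empty, so the last term may be rewritten as
\[
 \Pp\{X\in(t,t/(1-w))\}-\int_t^{t/(1-w)} f_R(x)\,\Pp\{Z>z(x)\}\,dx,\qquad z(x)=\frac{t-(1-w)x}{w}.
\]
Since $\Pp(X>t)-\Pp\{X\in(t,t/(1-w))\}=\Pp\{X>t/(1-w)\}=a(1-w)/t+O(t^{-2})$, each edge contributes
\[
 \frac{a(1-w)}{t}+J_t+O(t^{-2}),\qquad J_t=\int_t^{t/(1-w)} f_R(x)\,\Pp\{Z>z(x)\}\,dx .
\]

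The main step, and the one I expect to be the obstacle, is the sharp evaluation of $J_t$. The difficulty is that $z(x)$ sweeps from $t$ down to $0$, whereas the tail expansion $\Pp(Z>z)=1-F_R(z)^2=2a/z+O(z^{-2})$ is valid only for large $z$. I will split the range at a fixed $z_0$.

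\begin{itemize}
 \item On $\{z(x)\le z_0\}$, an $x$-interval of length $O(1)$ on which $f_R=O(t^{-2})$, the integrand is bounded by $f_R$, giving $O(t^{-2})$.
 \item On the remaining range, I substitute $f_R(x)=a/x^2+O(x^{-3})$ and $\Pp(Z>z)=2a/z+O(z^{-2})$. The error terms give $O(t^{-2})$, because $\int_{z_0}^{t}z^{-2}\,dz$ is bounded and the Jacobian $dx=w\,dz/(1-w)$ is bounded.
 \item The main term becomes
 \[
 \int_{z_0}^{t}\frac{a}{x(z)^2}\,\frac{2a}{z}\,\frac{w}{1-w}\,dz .
 \]
 I will replace $x(z)^{-2}$ by its endpoint value $(1-w)^2/t^2$. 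The resulting error is $O(z/t^3)$ for the integrand factor, which integrates against $1/z$ to $O(t^{-2})$. The leading term is then
 \[
 \frac{2a^2w(1-w)}{t^2}\log\frac{t}{z_0}=2a^2w(1-w)\frac{\log t}{t^2}+O(t^{-2}).
 \]
\end{itemize}

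This local analysis at the corner $x\approx t/(1-w)$ is where the critical accumulation near the charged edge ray produces the logarithm. It mirrors the product-copula computation behind \cref{thm:independent-sharp}.

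Finally, I sum over the three edges, writing $w_k$ for the weight of the vertex opposite each edge. The first-order term is
\[
 \sum_k \frac{a(1-w_k)}{t}=\frac{2a}{t},
\]
and the logarithmic coefficient is
\[
 2a^2\sum_k w_k(1-w_k)=2a^2\Bigl(1-\sum_k w_k^2\Bigr)=4a^2\sigma_2(\bw),
\]
using $\sum_k w_k=1$. Together with the $O(t^{-2})$ remainders this yields \cref{eq:three-edge-critical-expansion}.
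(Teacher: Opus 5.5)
Your proposal is correct and is essentially the paper's proof: you condition on the edge carrying the largest shock, note that exceedance forces $x>t$, peel off the range $x>t/(1-w_k)$ to get $a(1-w_k)/t$, and change variables to $z=(t-(1-w_k)x)/w_k$ with a split at a fixed constant to extract $2a^2w_k(1-w_k)(\log t)/t^2$ per edge before summing. Your evaluation of $J_t$ is more explicit than the paper's sketch, and the one cross error you do not mention, $O(x^{-3})\cdot 2a/z$, integrates to only $O(t^{-3}\log t)$, so the $O(t^{-2})$ remainder is unaffected.
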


\subsection{Higher-order Gaussian tails and the independence boundary}
\label{sec:gaussian-higher-order}

Ordinary MRV retains only the axis-supported first-order tail measure.  It
therefore discards the smaller probability that two Gaussian-copula scores
are simultaneously large.  Hidden regular variation recovers this
probability.  The following bivariate calculation identifies its order; see
Fung and Seneta~\cite{FungSeneta2011} and the multivariate heavy-tailed
Gaussian-copula analysis of Das and Fasen-Hartmann~\cite{DasFasen2024}.

Let $(Z_1,Z_2)$ be standard bivariate normal with correlation
$0<\rho<1$, put
\[
  U_i=1-\Phi(Z_i),
  \qquad Y_i=U_i^{-1},
\]
and define
\[
  q_\rho=\frac{2}{1+\rho},
  \qquad
  a_\rho=\frac{1}{1+\rho}.
\]
Then, for an explicit constant $K_\rho>0$,
\begin{equation}\label{eq:gaussian-equal-hidden-tail}
  \Pp(Y_1>t,Y_2>t)
  \sim
  K_\rho\,
  t^{-q_\rho}(\log t)^{-\rho/(1+\rho)}.
\end{equation}
Moreover, for fixed $x_1,x_2>0$, the corresponding hidden-tail ratio has
the form
\begin{equation}\label{eq:gaussian-hidden-ratio}
  \frac{\Pp(Y_1>tx_1,Y_2>tx_2)}
       {\Pp(Y_1>t,Y_2>t)}
  \longrightarrow
  (x_1x_2)^{-a_\rho}.
\end{equation}
Equivalently, when the deterministic scaling is taken to be
$t^{-q_\rho}(\log t)^{-\rho/(1+\rho)}$, the hidden measure
$\nu_{\mathrm{hid}}$ has survival function
$K_\rho(x_1x_2)^{-a_\rho}$ and, in the interior quadrant, density
proportional to
\begin{equation}\label{eq:gaussian-hidden-density}
  a_\rho^2x_1^{-a_\rho-1}x_2^{-a_\rho-1}.
\end{equation}

\cref{eq:gaussian-hidden-ratio,eq:gaussian-hidden-density} identify the
pair hidden measure.  Its
M\"obius half-space increment is finite for every fixed $\rho>0$, because
$a_\rho<1$.  The conditional fixed-dimensional theorem below transfers
this measure and sums all dominant pairs when explicit boundary and
truncation conditions hold; those conditions are not verified here.
Since $q_\rho<2$, the hidden rectangle scale is larger than
$(\log t)/t^2$ and already rules out the conventional Level-II
measure-rate target.  Under the stated transfer conditions it also becomes
the aggregate correction recorded in \cref{tab:copula-hierarchy}.

It is tempting, but incorrect, to set $\rho=0$ in the fixed-$\rho>0$
formula and expect the independent answer.  If $\rho=0$, the
comparable-extremes part of
\cref{eq:gaussian-equal-hidden-tail} is only of order $t^{-2}$.  The larger
independent remainder instead comes from edge regions in which one score is
of order $t$ while the other ranges over all intermediate scales:
\begin{equation}\label{eq:gaussian-independent-edge-log}
  \frac1{t^2}\int_1^t\frac{dy}{y}
  =\frac{\log t}{t^2}.
\end{equation}
The hidden-density calculation makes the nonuniformity explicit.  Near an
axis, the width of the weighted-sum shell in $x_1$ is proportional to
$x_2$.  Combining this width with \cref{eq:gaussian-hidden-density}
produces an integral of the form
\begin{equation}\label{eq:gaussian-axis-integral}
  \int_0^1y^{-a_\rho}\,dy
  =\frac{1}{1-a_\rho}
  =\frac{1+\rho}{\rho},
  \qquad \rho>0.
\end{equation}
It is finite for every fixed positive $\rho$ but diverges as
$\rho\downarrow0$.  Keeping the finite-$t$ cutoff gives the useful
transition factor
\begin{equation}\label{eq:gaussian-transition-factor}
  \int_{1/t}^1y^{-a_\rho}\,dy
  =\frac{1-t^{-(1-a_\rho)}}{1-a_\rho}.
\end{equation}
Consequently,
\begin{equation}\label{eq:gaussian-transition-regimes}
  \frac{1-t^{-(1-a_\rho)}}{1-a_\rho}
  \sim
  \begin{cases}
    (1-a_\rho)^{-1},
      &(1-a_\rho)\log t\longrightarrow\infty,\\[3pt]
    \log t,
      &(1-a_\rho)\log t\longrightarrow0.
  \end{cases}
\end{equation}
Since $1-a_\rho\sim\rho$, the crossover occurs on the scale
$\rho\log t=O(1)$.  In particular,
\[
  \lim_{\rho\downarrow0}\lim_{t\to\infty}
  \quad\hbox{and}\quad
  \lim_{t\to\infty}\lim_{\rho\downarrow0}
\]
are different asymptotic operations.  The fixed-$\rho>0$ expansion is not
uniform at the independence boundary.

For one-sided tests with $-1<\rho<0$, the comparable lower--lower exponent
$2/(1+\rho)$ exceeds two, so that particular joint-extreme term is smaller
than $t^{-2}$.  It does not by itself identify the leading weighted-sum
remainder, which may instead be governed by an edge contribution and needs
a separate calculation.  For two-sided tests, the four sign quadrants
include a dominant positively correlated quadrant, and the corresponding
pairwise exponent uses $|\rho|$:
\[
  q_{|\rho|}=\frac{2}{1+|\rho|}<2
  \qquad(\rho\ne0).
\]

\begin{remark}[Relation to the Gaussian-copula literature]
The bivariate normal-copula regular-variation input is classical; the
general cone geometry is part of the hidden-regular-variation literature
for Gaussian-copula risks.  The calculation here identifies the explicit
face-increment coefficient for the weighted reciprocal/harmonic-mean
half-space, the sum over all maximally correlated pairs, and the
corresponding positive Half-Cauchy transfer under stated boundary
conditions.  This is more specialized than general Gaussian portfolio-tail
theory and does not discover Gaussian hidden regular variation.
\end{remark}

\begin{openproblem}[One-sided nonpositive Gaussian dependence]
Suppose every active one-sided correlation is nonpositive and at least one
is strictly negative.  Then the positive-pair
power term in \cref{thm:gaussian-second-order-m} is absent.  For both
$R_{\bw}$ and $T_{\bw}$ the leading dependence correction can be an edge
term, with independence as a critical logarithmic boundary; $T_{\bw}$
also has its intrinsic score-map scale.  A uniform theorem covering mixed
zero and negative correlations remains to be written.  The all-zero case
is the product copula and is already settled by
\cref{thm:independent-sharp}.
\end{openproblem}

\subsection{Conditional fixed-dimensional Gaussian expansion}

The preceding bivariate calculation identifies the candidate pair
measures and coefficients, but an arbitrary-dimensional weighted-half-space
theorem also needs uniform control near every proper subface.  The next
result states that missing boundary requirement explicitly.  Let
$A=\{i:w_i>0\}$ and, for one-sided tests, define
\[
 \rho_*=\max_{i<j,\ i,j\in A}\rho_{ij},
 \qquad
 E_*=\{(i,j):i<j,\ i,j\in A,\ \rho_{ij}=\rho_*\}.
\]
For $0<\rho<1$, put
\begin{equation}\label{eq:gaussian-constants}
 a_\rho=\frac1{1+\rho},\quad q_\rho=2a_\rho,\quad
 \beta_\rho=1-a_\rho,
\end{equation}
\begin{equation}\label{eq:gaussian-KC}
 K_\rho=\frac{(1+\rho)^2}{\sqrt{1-\rho^2}}
          (4\pi)^{a_\rho-1},
 \qquad
 C(a)=-\frac{\Gamma(1-a)^2}{\Gamma(1-2a)}.
\end{equation}
Notice that $C(a)>0$ for $1/2<a<1$.

\begin{theorem}[Gaussian aggregation under face-transfer conditions]
\label{thm:gaussian-second-order-m}
Let $m<\infty$, let $R=(\rho_{ij})$ be a positive-definite correlation
matrix, and let $Y_i=\{1-\Phi(Z_i)\}^{-1}$ for
$\boldsymbol Z\sim N_m(0,R)$.  Suppose $\rho_*>0$ and put
\[
 r_*(t)=t^{-q_{\rho_*}}(\log t)^{-\beta_{\rho_*}}.
\]
Assume that every $(i,j)\in E_*$ satisfies the uniform-integrability
condition \cref{eq:hidden-face-ui} at scale $r_*(t)$, and that the remaining
M\"obius faces satisfy
\begin{equation}\label{eq:gaussian-face-remainder-condition}
 \sum_{\substack{i<j\\(i,j)\notin E_*}}
   \left|\E G_{\{i,j\}}(Y_i/t,Y_j/t)\right|
 +\sum_{\substack{I\subseteq A\\|I|\geq3}}
   \left|\E G_I(\boldsymbol Y_I/t)\right|
 =o\{r_*(t)\}.
\end{equation}
Then
\begin{align}
 \Pp\!\left(\sum_{i=1}^mw_iY_i>t\right)
 &=\frac1t+D_{G,\boldsymbol w}
 t^{-q_{\rho_*}}(\log t)^{-\beta_{\rho_*}}
 +o\!\left(t^{-q_{\rho_*}}(\log t)^{-\beta_{\rho_*}}\right),
 \label{eq:gaussian-m-pareto}\\
 D_{G,\boldsymbol w}
 &=K_{\rho_*}C(a_{\rho_*})
   \sum_{(i,j)\in E_*}(w_iw_j)^{a_{\rho_*}}.
 \label{eq:gaussian-m-coefficient}
\end{align}
For the Half-Cauchy scores and $c=2/\pi$,
\begin{equation}\label{eq:gaussian-m-hc}
 \Pp\!\left(\sum_iw_iH_i>t\right)
 =\frac ct+c^{q_{\rho_*}}D_{G,\boldsymbol w}
 t^{-q_{\rho_*}}(\log t)^{-\beta_{\rho_*}}
 +o\!\left(t^{-q_{\rho_*}}(\log t)^{-\beta_{\rho_*}}\right).
\end{equation}

For two-sided $p$-values $U_i=2\{1-\Phi(|Z_i|)\}$, replace
$\rho_*$ by
\[
 \rho_*^{\mathrm{abs}}=\max_{i<j,\ i,j\in A}|\rho_{ij}|,
\]
replace $E_*$ by the pairs attaining that maximum, and replace
$K_{\rho_*}$ by $2^{1-q_{\rho_*}}K_{\rho_*}$.  The conclusions hold when
$\rho_*^{\mathrm{abs}}>0$ and the analogous two-sided conditions hold.
\end{theorem}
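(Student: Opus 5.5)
The plan is to apply the exact face decomposition, \cref{lem:face-decomposition}, with $\boldsymbol x=\boldsymbol Y/t$ and take expectations:
\[
 \Pp\Bigl(\sum_iw_iY_i>t\Bigr)=\sum_{i\in A}\Pp(w_iY_i>t)+\sum_{i<j}\E G_{\{i,j\}}(Y_i/t,Y_j/t)+\sum_{|I|\ge3}\E G_I(\boldsymbol Y_I/t).
\]
The one-coordinate terms are $w_i/t$ exactly for $t\ge w_i$, since the margins are exact unit Pareto, so they sum to $1/t$. By hypothesis \cref{eq:gaussian-face-remainder-condition}, the nonmaximal pairs and all faces of size at least three contribute $o\{r_*(t)\}$. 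What remains is to evaluate $\E G_{\{i,j\}}(Y_i/t,Y_j/t)$ for each $(i,j)\in E_*$. For this I would use \cref{thm:hidden-face-transfer} with $r_I=r_*$.

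\textbf{Checking the transfer hypotheses.} For a maximal pair, the bivariate calculation \cref{eq:gaussian-equal-hidden-tail,eq:gaussian-hidden-ratio} gives
\[
 r_*(t)^{-1}\Pp(Y_i>tx_i,Y_j>tx_j)\to K_{\rho_*}(x_ix_j)^{-a_{\rho_*}}.
\]
Here $K_{\rho_*}$ is the explicit constant in \cref{eq:gaussian-KC}, which I would verify by a Mills-ratio computation with $\Phi^{-1}(1-1/t)\sim\sqrt{2\log t}$. Convergence of these upper-orthant survival functions on $(0,\infty]^2$ yields vague convergence to the product-power measure \cref{eq:product-power-measure}, with $a_i=a_j=a_{\rho_*}$ and $K_I=K_{\rho_*}$. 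The discontinuity set of $G_{\{i,j\}}$ is a union of lines, which is null for this absolutely continuous measure. Integrability holds by \cref{prop:product-power-face}, because $a_{\rho_*}<1$. The uniform-integrability condition \cref{eq:hidden-face-ui} is assumed for pairs in $E_*$. The theorem then gives
\[
 \E G_{\{i,j\}}=r_*(t)\int G\,d\nu+o(r_*).
\]

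\textbf{Computing the coefficient.} By \cref{eq:product-power-functional} with $a_i=a_j=a$, the face integral is $-K_{\rho_*}(w_iw_j)^{a}\Gamma(1-a)^2/\Gamma(1-2a)=K_{\rho_*}C(a)(w_iw_j)^a$. Summing over $E_*$ gives $D_{G,\bw}$. The sign of this coefficient is positive, since $1-2a<0$ and $\Gamma(1-2a)<0$ on $(-1,0)$.

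\textbf{Half-Cauchy transfer.} Here \cref{eq:ehmp-hcct-coefficient-transfer} applies with $\beta=q_{\rho_*}<2$ and $L(t)=(\log t)^{-\beta_{\rho_*}}$. Because $q_{\rho_*}<2$, the correction dominates $t^{-2}$, so the bounded displacement $|T_{\bw}-cR_{\bw}|\le c$ does not affect the coefficient. Since $L(t/c)\sim L(t)$, this yields the factor $c^{q_{\rho_*}}$. To make that step rigorous I would sandwich
\[
 \{R_{\bw}>t/c+1\}\supseteq\cdots\subseteq\{R_{\bw}>t/c\}
\]
and observe that shifting the threshold by $O(1)$ changes $\Pp(R_{\bw}>s)$ by $O(s^{-2})=o(r_*)$, using the Pareto-type expansion just proved.

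\textbf{Two-sided case and main obstacle.} For two-sided $p$-values, I would write each pair event as a union over the four sign quadrants. Only the quadrants with correlation $|\rho_{ij}|=\rho_*^{\mathrm{abs}}$ contribute at the leading scale, and there are two of them. In those quadrants $U_i=2\{1-\Phi(|Z_i|)\}$ doubles the one-sided tail probability, so $Y>t$ corresponds to a one-sided threshold $2t$. The hidden tail therefore becomes $2\cdot(2t)^{-q}(\log 2t)^{-\beta}K\sim 2^{1-q}K\,t^{-q}(\log t)^{-\beta}$, which is the stated replacement $2^{1-q_{\rho_*}}K_{\rho_*}$. The step I expect to be the main obstacle is the exact constant $K_\rho$ in the bivariate hidden tail, a careful Laplace/Mills asymptotic. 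Everything else is bookkeeping once the assumed boundary conditions are in force.
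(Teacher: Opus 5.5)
Your proposal is correct and follows the paper's proof essentially step for step. Both arguments use the exact M\"obius decomposition with exact unit-Pareto singletons, and transfer the maximal pair faces through \cref{thm:hidden-face-transfer} using the bivariate hidden limit and \cref{prop:product-power-face}; your Laplace/Mills computation of $K_{\rho}$ does reproduce \cref{eq:gaussian-KC}. Both then use the bounded Half-Cauchy/Pareto sandwich with $q_{\rho_*}<2$, and for the two-sided case the two dominant sign quadrants at the doubled threshold, giving $2^{1-q_{\rho_*}}K_{\rho_*}$.

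Two cosmetic fixes. The sandwich should read $\{R_{\boldsymbol w}>t/c+1\}\subseteq\{T_{\boldsymbol w}>t\}\subseteq\{R_{\boldsymbol w}>t/c\}$. Also, the expansion only shows that a unit threshold shift changes $\Pp(R_{\boldsymbol w}>s)$ by $O(s^{-2})+o\{r_*(s)\}=o\{r_*(s)\}$, not by $O(s^{-2})$; this weaker bound is all you need.
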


\begin{remark}[Status of the Gaussian boundary conditions]
The bivariate normal tail calculation proves the hidden vague limit on
compact subsets of the open pair face.  What is not proved here is the
uniform truncation at the adjacent axes and, in dimension at least three,
the full estimate \cref{eq:gaussian-face-remainder-condition}.  A complete
unconditional fixed-$m$ theorem requires a stratified Gaussian boundary
lemma controlling dominant pair shells, nonpositive pairs, and all higher
M\"obius faces.  The displayed coefficient is therefore a conditional
weighted-half-space conclusion, not an unconditional Gaussian model
theorem.

The expansion is also pointwise in the correlation parameters.  It is not
uniform as $\rho_*\uparrow1$: then $q_{\rho_*}\downarrow1$, and the formal
relative correction approaches $(\log t)^{-1/2}$.  For every fixed
$\rho_*<1$ the relative correction remains polynomial, with exponent
tending to zero as $\rho_*\uparrow1$; only the boundary scale is logarithmic.
\end{remark}

The bivariate hidden regular variation used above is classical.  The
contribution here is the weighted-half-space face coefficient and its
positive Half-Cauchy transfer under explicit boundary hypotheses; the
unresolved boundary conditions are stated explicitly above.

\subsection{First-order multivariate-t copula geometry}

Let $\boldsymbol Z$ have a standard $m$-variate $t$ distribution with
common degrees of freedom $\nu>0$ and correlation matrix $R$.  Use the
stochastic representation
\begin{equation}\label{eq:t-representation}
  \boldsymbol Z=R_0\boldsymbol G,
  \qquad
  R_0=\sqrt{\frac{\nu}{W}},
\end{equation}
where $\boldsymbol G\sim N_m(\boldsymbol0,R)$,
$W\sim\chi^2_\nu$, and $\boldsymbol G$ and $W$ are independent.

\begin{lemma}[Regular variation of the multivariate $t$ vector]
\label{lem:t-mrv}
The random vector $\boldsymbol Z$ in \cref{eq:t-representation} is
multivariate regularly varying with index $\nu$.
\end{lemma}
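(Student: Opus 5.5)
The plan is to use the stochastic representation \cref{eq:t-representation} and the classical Breiman-type argument for a random scale multiplying an independent light-tailed vector. In the first step I would compute the radial tail of $R_0=\sqrt{\nu/W}$ with $W\sim\chi^2_\nu$. The $\chi^2_\nu$ density behaves like $w^{\nu/2-1}/\{2^{\nu/2}\Gamma(\nu/2)\}$ near zero, so
\[
\Pp(R_0>r)=\Pp(W<\nu r^{-2})\sim\frac{(\nu r^{-2})^{\nu/2}}{2^{\nu/2}\Gamma(\nu/2+1)}=a_\nu r^{-\nu},
\]
with $a_\nu$ as in \cref{eq:t-ab}. Hence $R_0$ is regularly varying with index $\nu$, and in fact it has a pure power tail.

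In the second step I would check the moment condition in Breiman's lemma. The Gaussian vector $\boldsymbol G$ is independent of $R_0$ and has finite moments of every order; in particular $\E\|\boldsymbol G\|^{\nu+\varepsilon}<\infty$ for some $\varepsilon>0$. The multivariate Breiman lemma (Basrak, Davis and Mikosch; see also Resnick) then gives, for Borel sets $A$ bounded away from the origin in $[-\infty,\infty]^m\setminus\{\boldsymbol0\}$ with $\mu(\partial A)=0$,
\[
\frac{\Pp(R_0\boldsymbol G\in rA)}{\Pp(R_0>r)}\longrightarrow \E\,\nu_\nu\{y>0:y\boldsymbol G\in A\},
\qquad \nu_\nu(dy)=\nu y^{-\nu-1}dy .
\]
So $\boldsymbol Z$ is MRV with index $\nu$, with scaling $b(t)$ chosen so that $t\Pp(R_0>b(t))\to1$, in the sense of \cref{eq:general-alpha-mrv}. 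The limit measure is $\mu(A)=\E\int_0^\infty\mathbf1\{y\boldsymbol G\in A\}\nu y^{-\nu-1}dy$, which is homogeneous of order $-\nu$. Its spectral measure is the law of $\boldsymbol G/\|\boldsymbol G\|$ weighted by $\|\boldsymbol G\|^\nu$.

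The third step is to show that $\mu$ is nonzero. This follows because $\boldsymbol G\ne\boldsymbol0$ almost surely (the unit variances guarantee it), so the set $\{\|\boldsymbol x\|>1\}$ receives mass $\E\|\boldsymbol G\|^\nu>0$.

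The only real obstacle is rigor in the vague-convergence step, namely passing from the radial tail to general continuity sets. I would handle it by conditioning on $\boldsymbol G$ and using dominated convergence. For fixed $\boldsymbol g$, the ratio $\Pp(R_0\boldsymbol g\in rA)/\Pp(R_0>r)$ converges by the regular variation of $R_0$. For a domination bound, Potter's bounds give a constant multiple of $\|\boldsymbol g\|^{\nu\pm\varepsilon}+1$ whenever $A\subset\{\|\boldsymbol x\|>\delta\}$, and this bound is integrable by the Gaussian moments. The boundary condition $\mu(\partial A)=0$ ensures that almost every ray meets $\partial A$ in a set of $\nu_\nu$-measure zero, which justifies the pointwise limits.

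Since the paper allows citing classical results, it would suffice to cite Breiman's lemma directly. I would still keep the short verification of the radial tail, because the constant $a_\nu$ is used later in the paper.
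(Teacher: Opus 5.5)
Your proposal is correct and takes the same route as the paper: the $\chi^2_\nu$ lower-tail computation gives $\Pp(R_0>r)\sim a_\nu r^{-\nu}$, and the multivariate Breiman lemma, using the Gaussian moments of $\boldsymbol G$, gives MRV with index $\nu$. Your nonzero-limit check and dominated-convergence sketch only fill in details the paper leaves to the cited references; one small wording fix is that $R_0$ has an asymptotic power tail with no slowly varying factor, not an exact pure power tail, since \cref{eq:t-radial-second-order} carries a relative $r^{-2}$ correction.
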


Let $T_\nu$ denote the univariate $t_\nu$ CDF.  Its upper tail satisfies
\begin{equation}\label{eq:t-tail}
  \overline T_\nu(z)=1-T_\nu(z)
  \sim k_\nu z^{-\nu},
  \qquad z\to\infty,
\end{equation}
for a constant $k_\nu>0$.

\begin{theorem}[Multivariate-$t$-copula score vector]
\label{thm:t-copula}
For the one-sided $p$-values
\[
  U_i=1-T_\nu(Z_i),
\]
the vectors $(1/U_1,\ldots,1/U_m)$ and
\[
  \left(\cot(\pi U_1/2),\ldots,\cot(\pi U_m/2)\right)
\]
are multivariate regularly varying with index one.  The same conclusion
holds for the two-sided $p$-values
\[
  U_i=2\{1-T_\nu(|Z_i|)\}.
\]
Consequently, \cref{thm:hcct-mrv} applies to both $R_{\bw}$ and
$T_{\bw}$ in both cases.
\end{theorem}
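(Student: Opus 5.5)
The plan is to verify the lower max-domain condition of \cref{def:lstd} and then invoke \cref{thm:copula-mrv}, which gives index-one MRV of $(1/U_1,\ldots,1/U_m)$ and, through \cref{prop:equiv}, of the Half-Cauchy vector; \cref{thm:hcct-mrv} then applies. Because the copula is invariant under strictly increasing marginal maps, I would work directly with $\boldsymbol Z=R_0\boldsymbol G$, which is MRV with index $\nu$ by \cref{lem:t-mrv}. Write $\mu_Z$ for its tail measure on $[-\infty,\infty]^m\setminus\{\boldsymbol0\}$, normalized so that $\Pp(\boldsymbol Z/z\in\cdot)/\overline T_\nu(z)\to\mu_Z(\cdot)$. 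The one-dimensional input is \cref{eq:t-tail}: the quantile satisfies $\overline T_\nu^{-1}(s)\sim(k_\nu/s)^{1/\nu}$, and $\overline T_\nu$ is regularly varying with index $-\nu$.

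For one-sided $p$-values, $\{U_i\le sx_i\}=\{Z_i\ge z_i(s)\}$ with $z_i(s)=\overline T_\nu^{-1}(sx_i)$. Put $z(s)=\overline T_\nu^{-1}(s)$. Regular variation gives $z_i(s)/z(s)\to x_i^{-1/\nu}$, with the convention that the limit is $+\infty$ when $x_i=0$. The union event then becomes $\{\boldsymbol Z/z(s)\in A_s\}$, where $A_s$ is the complement of a product of half-lines $\prod_i(-\infty,z_i(s)/z(s))$. This set converges to $A=\{\boldsymbol y:y_i>x_i^{-1/\nu}\text{ for some }i\}$, which is bounded away from the origin whenever $\boldsymbol x\neq\boldsymbol0$; the case $\boldsymbol x=\boldsymbol0$ is trivial, since the limit is $0$. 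Since $\overline T_\nu\{z(s)\}=s$, MRV of $\boldsymbol Z$ yields
\[
 s^{-1}\Pp\Bigl(\bigcup_i\{U_i\le sx_i\}\Bigr)\to\mu_Z(A)=:\ell_L(\boldsymbol x),
\]
provided $\mu_Z(\partial A)=0$ and the convergence $A_s\to A$ can be handled.

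The main obstacle is justifying this moving-set limit. I would use sandwiching. For $\eta>0$ and small $s$, we have $A^{(1+\eta)}\subseteq A_s\subseteq A^{(1-\eta)}$, where $A^{(\lambda)}$ replaces each threshold $x_i^{-1/\nu}$ by $\lambda x_i^{-1/\nu}$. Each $A^{(\lambda)}$ is a finite union of coordinate half-spaces. Its boundary lies in coordinate hyperplanes $\{y_i=c\}$ with $c>0$, and these are $\mu_Z$-null because the marginal measures $\mu_Z\{y_i>c\}\propto c^{-\nu}$ are continuous in $c$. Continuity of $\lambda\mapsto\mu_Z(A^{(\lambda)})$ then follows from homogeneity, $\mu_Z(A^{(\lambda)})=\lambda^{-\nu}\mu_Z(A)$, so the limsup and liminf coincide after letting $\eta\downarrow0$. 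If one prefers to avoid the homogeneity identity, a monotone-limit argument on the coordinate thresholds gives the same conclusion.

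For two-sided $p$-values, $\{U_i\le sx_i\}=\{|Z_i|\ge\overline T_\nu^{-1}(sx_i/2)\}$, so the same argument applies to the set $\{|y_i|>\cdot\}$. Alternatively, one can apply the MRV of $|\boldsymbol Z|$, which is the continuous image under $\boldsymbol y\mapsto|\boldsymbol y|$ and hence MRV with index $\nu$, with marginal tails $2\overline T_\nu$. In both cases the limit in \cref{eq:ell} exists for every $\boldsymbol x\ge0$, and the conclusions follow from \cref{thm:copula-mrv} and \cref{thm:hcct-mrv}.
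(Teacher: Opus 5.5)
Your proposal is correct, but it takes a different route from the paper.

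\textbf{What the paper does.} It does not go through \cref{def:lstd} for this theorem; it works on the score scale. It pushes the index-$\nu$ convergence of \cref{lem:t-mrv} through the continuous, degree-$\nu$ homogeneous map $g_i(\boldsymbol z)=(z_i^+)^\nu/k_\nu$ (or $|z_i|^\nu/(2k_\nu)$ in the two-sided case) to obtain an index-one limit. It then uses the locally uniform convergence $r^{-\nu}\{1-T_\nu(rz)\}^{-1}\to(z^+)^\nu/k_\nu$, together with an asymptotically homogeneous mapping theorem, to replace $\boldsymbol g(\boldsymbol Z)$ by the actual scores $1/U_i$.

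\textbf{What you do instead.} You stay at the copula level:
\begin{enumerate}
 \item You write the union of small-$p$ events as $\boldsymbol Z/z(s)$ entering a moving complement of a product of half-lines.
 \item You sandwich that set between the fixed dilates $A^{(1\pm\eta)}$.
 \item You close the argument with homogeneity of $\mu_Z$ and the nullity of coordinate hyperplanes.
 \item You then let \cref{thm:copula-mrv} supply the vague convergence, with \cref{prop:equiv} transferring it to the Half-Cauchy scores.
\end{enumerate}

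\textbf{What your route buys.} Its main advantage is economy of input. You need only the scalar quantile asymptotics $\overline T_\nu^{-1}(sx)/\overline T_\nu^{-1}(s)\to x^{-1/\nu}$ at finitely many fixed levels, plus the portmanteau theorem applied to finite unions of coordinate half-spaces. You therefore avoid appealing to an asymptotically homogeneous mapping theorem, which the paper invokes without a precise statement or reference.

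\textbf{What it costs.} The step from rectangle-complement limits to full vague convergence is outsourced to \cref{thm:copula-mrv} and its max-stable representation. You also obtain the tail measure only implicitly, through $\ell_L(\boldsymbol x)=\mu_Z\{\boldsymbol y:y_i>x_i^{-1/\nu}\ \text{for some}\ i\}$.

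\textbf{What the paper's route buys.} It exhibits the exponent measure of $\boldsymbol Y$ directly as the image of $\mu_Z$ under $\boldsymbol g$. The paper later uses exactly this form in two places:
\begin{itemize}
 \item to read off interior versus mixed support from the angular vector $((G_1^+)^\nu,\ldots,(G_m^+)^\nu)$;
 \item to set up the common-factor expansion of \cref{thm:t-second-order-formal}.
\end{itemize}
That information is recoverable from your $\ell_L$, but less directly.
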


Unlike the Gaussian case, a multivariate-$t$ copula has positive tail
dependence.  In the bivariate case its tail-dependence coefficient is
\begin{equation}\label{eq:t-lambda}
  \lambda_t
  =2T_{\nu+1}\!\left(
      -\sqrt{\frac{(\nu+1)(1-\rho)}{1+\rho}}
    \right)>0,
  \qquad -1<\rho<1;
\end{equation}
see Demarta and McNeil~\cite{DemartaMcNeil2005}.  Its limiting spectral
measure consequently has interior mass.  This is why the multivariate-$t$
case can satisfy the MRV theorem while failing a pairwise
$o(s)$ tail-independence condition.

\begin{remark}[Scope of the $t$ proof]
\cref{thm:t-copula} concerns the standard multivariate-$t$ model with
a common random scale, equivalently the usual multivariate-$t$ copula.  It
does not automatically cover a collection of marginal Student statistics
having different, data-dependent denominators.  The joint copula of those
statistics must be analyzed separately.
\end{remark}

\subsection{Radial source of the standard-\texorpdfstring{$t$}{t}
second-order rate}
\label{sec:t-higher-order}

The standard multivariate-$t$ copula is asymptotically dependent, so its
higher-order mechanism is not Gaussian hidden regular variation.  It comes
from second-order regular variation of the common radial multiplier in
\cref{eq:t-representation}, together with the second-order accuracy of the
marginal score transformation.  General second-order elliptical MRV and the
multivariate-$t$ example are treated by Kim~\cite{Kim2024}.

The lower-tail expansion of the $\chi^2_\nu$ distribution gives, directly,
\begin{equation}\label{eq:t-radial-second-order}
\begin{split}
  \Pp(R_0>r)
  &=\frac{\nu^{\nu/2}}
       {2^{\nu/2}\Gamma(\nu/2+1)}r^{-\nu}
    \left\{
      1-\frac{\nu^2}{2(\nu+2)}r^{-2}
      +O(r^{-4})
    \right\}.
\end{split}
\end{equation}
The leading score map is homogeneous of degree $\nu$ by
\cref{thm:t-copula}.  Therefore a score threshold of order $t$
corresponds to a radial threshold of order $t^{1/\nu}$, and the $r^{-2}$
relative correction in \cref{eq:t-radial-second-order} becomes the
score-scale exponent-measure rate
\begin{equation}\label{eq:t-score-second-order-rate}
  A(t)\asymp t^{-2/\nu}.
\end{equation}
The exact $t$-CDF transformation and the signed Gaussian angles also
contribute.  Exact Pareto margins remove pure one-coordinate corrections,
but they do not remove an interior perturbation of the exponent measure.
The theorem in \cref{thm:t-second-order-formal} performs the required inversion and shows how
the radial scale competes with the bounded angular term at $\nu=2$ and
with the Half-Cauchy score-map scale at $\nu=1$.

\subsection{Coefficient control and the angularly critical standard-t boundary}

\begin{proposition}[Dimension-free coefficient control]
\label{prop:t-coefficient-uniform}
For fixed $\nu>1$, the absolute values of
$D_{\nu,\boldsymbol w}^{+}$ and
$D_{\nu,\boldsymbol w}^{|\cdot|}$ are bounded by a constant depending only
on $\nu$, uniformly over $m$, all correlation matrices, and all simplex
weights.  The additional one-sided angular coefficient
$\E(A_+C_-)$ is bounded uniformly as well.
\end{proposition}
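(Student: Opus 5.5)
The plan is to bound each expectation in \cref{eq:t-D} by a weighted average of one-dimensional Gaussian moments. This works because every $G_i$ is standard normal whatever $m$ and $R$ are, and the simplex weights then remove any dependence on $m$, $R$ and $\boldsymbol w$. Correlations between the $G_i$ never enter, because the only tools are Jensen's inequality applied to the convex combination $\sum_iw_i(\cdot)$ and Hölder's inequality applied coordinate by coordinate. Throughout, $C_\nu$ denotes a constant depending only on $\nu$.

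\emph{Step 1 (moments of $A$).} Both $A_+$ and $A_{\lvert\cdot\rvert}$ have the form $\kappa^{-1}\sum_iw_iX_i$ with $X_i\le|G_i|^\nu$ and $\kappa\in\{k_\nu,2k_\nu\}$. For every $s\ge1$, convexity of $x\mapsto x^s$ gives
\[
\E A^s\le\kappa^{-s}\sum_iw_i\E|G_i|^{\nu s}=\kappa^{-s}\E|N|^{\nu s},
\]
where $N$ is a standard normal variable; this is a constant $C_\nu(s)$. For $0<s<1$ we use $\E A^s\le1+\E A$ instead. Taking $s=1+2/\nu$ bounds the term $b_\nu\E A^{1+2/\nu}$ uniformly.

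\emph{Step 2 (the mixed term $\E[A^{2/\nu}B]$).} Expanding the sum in $B$ gives
\[
\E[A^{2/\nu}B]\le d_\nu\kappa^{-1}\sum_iw_i\,\E\bigl[A^{2/\nu}|G_i|^{\nu-2}\bigr].
\]
If $\nu\ge2$, apply Hölder with exponents $(2,2)$: $\E A^{4/\nu}$ is bounded by Step 1 and $\E|N|^{2\nu-4}$ is finite. If $1<\nu<2$, the factor $|G_i|^{\nu-2}$ is singular at the origin. Here choose a conjugate exponent $p'$ with $1<p'<1/(2-\nu)$, which is possible exactly because $\nu>1$. Then $\E|N|^{(\nu-2)p'}<\infty$, since the Gaussian density is bounded near zero and $(2-\nu)p'<1$. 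The complementary factor $(\E A^{2p/\nu})^{1/p}$ is bounded by Step 1. In both cases each summand is bounded by some $C_\nu$, and summing against $w_i$ preserves that bound. Combining Steps 1 and 2 bounds $|D^{s}_{\nu,\boldsymbol w}|\le a_\nu(\E[A^{2/\nu}B]+b_\nu\E A^{1+2/\nu})$ uniformly.

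\emph{Step 3 (the angular coefficient).} Since $0\le C_-\le1$, we have $0\le\E(A_+C_-)\le\E A_+=k_\nu^{-1}\E(N^+)^\nu$.

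The only delicate point is Step 2 when $1<\nu<2$. The negative power $G_i^{\nu-2}$ must be integrated jointly against a power of $A$, which is itself correlated with $G_i$. Hölder's inequality decouples the two factors, and the admissible exponent window $p'\in(1,1/(2-\nu))$ closes as $\nu\downarrow1$. This matches the angularly critical boundary $\nu\le1$ left open in the paper.
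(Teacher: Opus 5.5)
Your proof is correct, but it treats the mixed term $\E[A^{2/\nu}B]$ differently from the paper, which never uses H\"older's inequality. For $\nu>2$ the paper applies the pointwise power-mean inequality
$(\sum_iw_ix_i^\nu)^{2/\nu}(\sum_iw_ix_i^{\nu-2})\le\sum_iw_ix_i^\nu$,
and uses $S_2S_0\le S_2$ at $\nu=2$, so $A^{2/\nu}B$ is bounded pointwise by a multiple of $A$. For $1<\nu<2$ it bounds $A^{2/\nu}$ pointwise by a multiple of $\sum_iw_ix_i^2$ and expands the resulting double sum. It then evaluates each cross moment exactly by Gaussian regression,
$\E\{|G_i|^2|G_j|^{\nu-2}\}=(1-\rho_{ij}^2)\E|N|^{\nu-2}+\rho_{ij}^2\E|N|^\nu$,
which is bounded uniformly in $|\rho_{ij}|\le1$. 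Your H\"older decoupling with $1<p'<1/(2-\nu)$ replaces that bivariate computation with purely marginal information. It uses only that each $G_i$ is standard normal, so it applies unchanged to any angular vector with these margins, or with margins having enough moments and a bounded density near zero, and one argument covers all $\nu>1$. The paper's route uses the joint Gaussian law to obtain cleaner explicit constants. For $\nu\ge2$ it also gives a sharper pointwise reduction that needs only the $\nu$th Gaussian moment, rather than $\E A^{4/\nu}$ and $\E|N|^{2\nu-4}$. Your treatment of $\E A^{1+2/\nu}$ by weighted Jensen and of $\E(A_+C_-)\le\E A_+$ matches the paper's.
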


\begin{remark}[The unresolved critical and subcritical cases]
At $\nu=1$, $|G_i|^{\nu-2}$ is at the angular integrability boundary and
the radial correction meets the $t^{-3}$ Half-Cauchy term.  For
$0<\nu<1$, proper angular subfaces can dominate.  These cases require the
recursive nested-face theory discussed in
\cref{sec:main-novelty-boundary}; no theorem for them is claimed here.
\end{remark}

\begin{remark}[Fixed-dimensional scope]
The expansions in this subsection are fixed-dimensional and concern
$t\to\infty$, equivalently $\alpha\downarrow0$.
\cref{prop:t-coefficient-uniform} controls their displayed coefficients,
not their remainders, and therefore does not provide the uniformity needed
for growing $m$.
\end{remark}

\subsection{Clayton versus multivariate \texorpdfstring{$t$}{t}: the broader class}

Both models are concrete instances of
\cref{thm:root-inversion} after reciprocal score transformation.
Their angular laws and second-order parameters differ:
\begin{table}[!ht]
\caption{Common-factor comparison of positive Clayton and standard
multivariate-$t$ copulas.}
\label{tab:common-factor-comparison}
\centering
\begin{tabular}{@{}lll@{}}
\toprule
& positive Clayton & standard multivariate $t_\nu$\\
\midrule
score-scale common factor & $V^{-1/\theta}$ & $(\nu/W)^{1/2}$\\
score homogeneity & one after frailty rewrite & degree $\nu$\\
transformed tail index & one & one\\
score-scale second-order $\kappa$ & $\theta$ & $2/\nu$\\
angular variables & positive exponentials & signed Gaussian powers\\
first-order spectral mass & interior & two-sided interior; one-sided mixed\\
angular critical boundary & none for first correction & $\nu=1$\\
\bottomrule
\end{tabular}
\end{table}
Thus \cref{tab:common-factor-comparison} shows that $\theta=2/\nu$ is an
equal-rate surface, not an equality of copula
families.  Across the broader common-factor class, $\kappa<2$ means the
dependence/radial correction dominates the intrinsic $t^{-3}$ Half-Cauchy
term, $\kappa=2$ is the collision point, and $\kappa>2$ makes the
Half-Cauchy transform dominate.  This gives the Clayton transition
$\theta=2$ and the formal $t$ transition $\nu=1$.  The latter is more
singular because the Gaussian angular inverse moments also become critical.
Thus \cref{cor:common-hc-phase} is a genuine shared theorem, while
the verification of its dominated-inversion assumptions remains
model-specific.

The class overlaps with Gaussian hidden regular variation only at the level
of a generic face expansion, not at the first-order geometry.  Gaussian
copulas with fixed correlations strictly below one have axis-supported
first-order MRV; their first dependence correction lives on hidden pair
faces.  Positive Clayton and two-sided $t$ scores are interior-supported.
One-sided $t$ scores are mixed-supported but still have interior mass.  In
both $t$ orientations, simultaneous extremes are already present at first
order, so their next common-factor term perturbs the existing measure
rather than marking the first appearance of an interior face.

\begin{proposition}[Uniform Clayton coefficient bounds]
\label{prop:clayton-coefficient-uniform}
For fixed $\theta>0$,
\[
 0\le D_{\theta,\boldsymbol w}
 \le 1
\]
uniformly over $m$ and all simplex weights.  For fixed $\theta>1$, the same
uniform bound holds for $\E(A_{\boldsymbol w}^2Q_{\boldsymbol w})$.
\end{proposition}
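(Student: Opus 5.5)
The plan is to work directly with the alternative representation \cref{eq:clayton-D-alternative} from \cref{thm:clayton-second-order}, reducing both bounds to pointwise inequalities for a convex power function. Put $p=1/\theta$, $f(x)=x^{\theta+1}$, $x_i=w_iE_i^p\ge0$ and $A=A_{\boldsymbol w}=\sum_ix_i$. The integrand of \cref{eq:clayton-D-alternative} is then
\[
 (m-1)f(A)-\sum_i f(A-x_i)=\sum_i\{f(A)-f(A-x_i)\}-f(A).
\]
The two-sided bound on $D_{\theta,\boldsymbol w}$ will follow from two-sided pointwise bounds on this quantity.

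\emph{Lower bound.} Let $s_k=x_1+\cdots+x_k$. Each increment $f(A)-f(A-x_i)=\int_{A-x_i}^{A}f'$ integrates the nondecreasing function $f'$ over an interval of length $x_i$. That interval lies to the right of $[s_{i-1},s_i]$, because $s_i\le A$. Hence
\[
 f(A)-f(A-x_i)\ge f(s_i)-f(s_{i-1}).
\]
Summing over $i$ telescopes to $f(A)$. The integrand is therefore pointwise nonnegative, and $D_{\theta,\boldsymbol w}\ge0$.

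\emph{Upper bound.} By convexity, $f(A)-f(A-x_i)\le f'(A)x_i=(\theta+1)A^\theta x_i$. Summing over $i$ gives at most $(\theta+1)A^{\theta+1}$, so the integrand is at most $\theta A^{\theta+1}$. Jensen's inequality for the convex map $x\mapsto x^{\theta+1}$, with simplex weights, gives
\[
 \E A^{\theta+1}\le\sum_iw_i\E E_i^{p(\theta+1)}=\Gamma(2+p).
\]
Using $a_\theta=\Gamma(1+p)^{-1}$, this yields
\[
 D_{\theta,\boldsymbol w}\le\frac{\theta}{\theta+1}\,\frac{\Gamma(2+p)}{\Gamma(1+p)}
 =\frac{\theta}{\theta+1}(1+p)=1.
\]
This bound is free of $m$ and of the weights.

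\emph{Inverse moment for $\theta>1$.} Jensen gives $A^2\le\sum_iw_iE_i^{2p}$, so
\[
 \E(A^2Q)\le\sum_{i,j}w_iw_j\E\bigl(E_i^{2p}E_j^{-p}\bigr).
\]
For $i\ne j$, independence makes each term $\Gamma(1+2p)\Gamma(1-p)$, which is finite exactly because $p<1$. For $i=j$, the term is $\E E^{p}=\Gamma(1+p)$. Since $\sum_{i,j}w_iw_j=1$, we get
\[
 \E(A^2Q)\le\max\{\Gamma(1+2p)\Gamma(1-p),\Gamma(1+p)\},
\]
uniformly in $m$ and the weights.

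The only step needing care is the telescoping comparison of intervals in the lower bound. Everything else is Jensen's inequality together with Gamma moments of exponential variables.
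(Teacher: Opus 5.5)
Your proposal is correct and is essentially the paper's proof. The upper bound via the tangent estimate $A^{\theta+1}-(A-x_i)^{\theta+1}\le(\theta+1)A^{\theta}x_i$, weighted Jensen and $\Gamma(2+p)/\Gamma(1+p)=1+p$ is the paper's argument, and so is the inverse-moment bound $\max\{\Gamma(1+p),\Gamma(1+2p)\Gamma(1-p)\}$; like the paper, you rightly read the second claim as a bound uniform in $m$ and the weights rather than literally $\le 1$ (it can exceed $1$, e.g.\ $\E(A^2Q)=5\sqrt{\pi}/8$ for $\theta=2$, $m=2$, equal weights). The only difference is the lower bound: the paper simply sums $(1-x_i/A)^{\theta+1}\le 1-x_i/A$ over $i$, whereas you use a telescoping comparison of convex increments, and both give pointwise nonnegativity of the same integrand.
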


\subsection{Why fixed-dimensional MRV does not solve growing
\texorpdfstring{$m$}{m}}
\label{sec:growing-m}

\cref{def:mrv} is posed on a fixed state space $\mathbb E_m$.
When $m=m(t)\to\infty$, both the state space and the dependence law change
with $t$.  Row-by-row MRV gives a limit only after holding the row dimension
fixed; it gives no uniform control at the diagonal point where the tail
threshold and dimension grow together.

For the existing one-big-jump proof, the relevant remainder has the form
\begin{equation}\label{eq:growing-pair}
  m_t^2R_t,
  \qquad
  R_t=\max_{i\ne j}
  \Pp\!\left(
    p_{i,t}<\frac{2w_{i,t}m_t}{\pi t},
    p_{j,t}<\frac{2w_{j,t}m_t}{\pi\delta_t t}
  \right).
\end{equation}
It must be shown uniformly that
\begin{equation}\label{eq:growing-required}
  m_t^2R_t=o(t^{-1}).
\end{equation}
A pointwise $o(t^{-1})$ statement for each pair cannot be summed over a
growing collection of pairs.  Likewise, for
$\boldsymbol X\in\{\boldsymbol Y,\boldsymbol H\}$, fixed-dimensional
convergence
\[
  t\Pp(\boldsymbol X/t\in A)\to\mu_X(A)
\]
does not provide a convergence rate uniform in $m$ or in the weight vector.

A genuine growing-dimensional MRV theory needs a triangular-array
condition that controls the tail-measure error uniformly over the moving
half-spaces
\[
  \{\boldsymbol x:\boldsymbol w_t^{\mathsf T}\boldsymbol x>1\}.
\]
This is additional information, not a consequence of row-by-row MRV.

Liu, Meng and Pillai~\cite{LiuMengPillai2025} give a rigorous first-order
one-big-jump route for both positive Half-Cauchy and reciprocal aggregation.
Its master assumption is \cref{eq:growing-required}, together with
$\delta_t\downarrow0$ and $t\delta_t\to\infty$.  This proves calibration
against the single Half-Cauchy or Pareto tail.  The further comparison with
the exact finite-$m$ independence reference uses
\[
 \max_iw_{i,t}=O(m_t^{-1}),\qquad m_t=o(\sqrt t).
\]
A polynomial corollary follows if the pair event is
$o\{t^{-(1+\gamma)}\}$ uniformly and
$m_t=O(t^{\gamma/2})$.  For pairwise Gaussian $p$-values with
$|\rho_{ij}|\le\rho_{\max}<1$, put
\[
 \gamma_0=\frac{1-\rho_{\max}}{1+\rho_{\max}}.
\]
The proved general boundary is $m_t=o(t^{\gamma_0/2})$; when
$0<\rho_{\max}<1$, the Gaussian logarithmic factor sharpens this to
$m_t=O(t^{\gamma_0/2})$.  If $\rho_{\max}=0$ and only pairwise Gaussianity
is assumed, the elementary argument retains the strict endpoint
$m_t=o(\sqrt t)$.

Those one-big-jump results are sufficient, not necessary.  They are
first-order and pair-accumulation based; they are naturally adapted to
axis-supported arrays and exclude important interior-supported examples
such as standard multivariate $t$, even though those examples can remain
calibrated for arbitrary dimension by a common-factor argument.

\subsection{A precise uniform-MRV theorem for growing dimension}

Growing-dimensional results require explicit uniformity.  The next theorem
states exactly what is needed.

\begin{theorem}[Triangular-array aggregation under uniform MRV]
\label{thm:uniform-mrv}
Choose $X\in\{Y,H\}$, put $c_Y=1$ and $c_H=2/\pi$, and let
$t_n\to\infty$ and $m_n\geq1$.  For row $n$, let
$\boldsymbol X_n\in[0,\infty)^{m_n}$ be the reciprocal score vector with
exact unit-Pareto margins when $X=Y$, or the positive Half-Cauchy score
vector when $X=H$.  Let $\boldsymbol w_n$ be nonnegative with
$\sum_{i=1}^{m_n}w_{i,n}=1$.  Suppose there is an index-one homogeneous
Radon measure $\mu_n$ on $\mathbb E_{m_n}$ satisfying
\begin{equation}\label{eq:uniform-margins}
  \mu_n\{\boldsymbol x:x_i>1\}=c_X
  \qquad(i=1,\ldots,m_n).
\end{equation}
Define the moving half-space
\[
  A_n=\{\boldsymbol x:\boldsymbol w_n^{\mathsf T}\boldsymbol x>1\}.
\]
If the triangular array satisfies the uniform tail approximation
\begin{equation}\label{eq:uniform-mrv-assumption}
  \left|
    t_n\Pp(\boldsymbol X_n/t_n\in A_n)-\mu_n(A_n)
  \right|\longrightarrow0,
\end{equation}
then
\begin{equation}\label{eq:uniform-mrv-conclusion}
  t_n\Pp(\boldsymbol w_n^{\mathsf T}\boldsymbol X_n>t_n)
  \longrightarrow c_X.
\end{equation}
There is no intrinsic restriction on the numerical growth rate of $m_n$;
the permissible rate is determined by the uniform approximation
\cref{eq:uniform-mrv-assumption}.
\end{theorem}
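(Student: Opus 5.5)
The plan is to combine the uniform approximation \cref{eq:uniform-mrv-assumption} with the index-one projection identity \cref{eq:linear-tail}, applied row by row. The first step is to observe that
\[
 \Pp(\boldsymbol w_n^{\mathsf T}\boldsymbol X_n>t_n)=\Pp(\boldsymbol X_n/t_n\in A_n)
\]
holds exactly, since $A_n$ is defined by the same linear inequality after scaling by $t_n$. Hence
\[
 \bigl|t_n\Pp(\boldsymbol w_n^{\mathsf T}\boldsymbol X_n>t_n)-\mu_n(A_n)\bigr|\to0,
\]
and it remains to show that $\mu_n(A_n)=c_X$ for every $n$, independently of the dimension $m_n$ and the weights.

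The second step computes $\mu_n(A_n)$ by the fixed-dimensional argument, which applies here because each row has its own fixed dimension $m_n$. Since $\mu_n$ is a homogeneous Radon measure of order $-1$ on $\mathbb E_{m_n}$, it admits a polar representation
\[
 \mu_n(dr,d\boldsymbol s)=r^{-2}dr\,S_n(d\boldsymbol s)
\]
with $S_n$ a finite measure on the simplex. Integrating out the radius gives
\[
 \mu_n\{\boldsymbol x:x_i>1\}=\int s_i\,S_n(d\boldsymbol s)
\]
and
\[
 \mu_n(A_n)=\int\boldsymbol w_n^{\mathsf T}\boldsymbol s\,S_n(d\boldsymbol s)=\sum_iw_{i,n}\int s_i\,S_n(d\boldsymbol s).
\]
By \cref{eq:uniform-margins}, each $\int s_i\,dS_n=c_X$, so $\mu_n(A_n)=c_X\sum_iw_{i,n}=c_X$. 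An alternative that avoids the polar decomposition is the layer-cake identity
\[
 \mu_n(A_n)=\int\mathbf 1\{\boldsymbol w_n^{\mathsf T}\boldsymbol x>1\}\,d\mu_n,
\]
combined with homogeneity. Writing the half-space through the radial integral $\int_0^\infty \mathbf 1\{r\,\boldsymbol w_n^{\mathsf T}\boldsymbol s>1\}r^{-2}dr=\boldsymbol w_n^{\mathsf T}\boldsymbol s$ reaches the same conclusion.

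The only delicate point is justifying the polar representation and the finiteness of $S_n$ for a general index-one homogeneous Radon measure on $\mathbb E_{m_n}$. I would cite the standard fact behind \cref{eq:spectral}: for fixed dimension, homogeneity plus the Radon property on sets bounded away from the origin yields the product form. Finiteness of $S_n$ follows because $\{\|\boldsymbol x\|_1>1\}$ is bounded away from the origin. Zero weights cause no difficulty, since the linear identity simply drops those terms.

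Combining the two steps gives $t_n\Pp(\boldsymbol w_n^{\mathsf T}\boldsymbol X_n>t_n)\to c_X$, which is \cref{eq:uniform-mrv-conclusion}. The growth rate of $m_n$ never enters: every dimension-dependent quantity is absorbed into \cref{eq:uniform-mrv-assumption}, which is the stated remark. I expect no real obstacle beyond stating the polar-decomposition step carefully.
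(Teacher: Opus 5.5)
Your proof is correct and is the paper's argument. In both, the exact identity $\{\boldsymbol w_n^{\mathsf T}\boldsymbol X_n>t_n\}=\{\boldsymbol X_n/t_n\in A_n\}$, the row-by-row spectral computation $\mu_n(A_n)=\sum_i w_{i,n}\mu_n\{\boldsymbol x:x_i>1\}=c_X$, and the uniform approximation together give the limit; your remarks on the polar representation, finiteness of $S_n$, and zero weights only make explicit what the paper leaves to the standard spectral representation.
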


\subsection{Metrics and radial kernels that verify uniform MRV}

The half-space assumption in \cref{thm:uniform-mrv} can be checked
either by a measure metric or, more sharply for common-factor models, by a
one-dimensional conditional radial tail.

\begin{theorem}[Bounded-Lipschitz control of moving half-spaces]
\label{thm:bl-moving-halfspace}
Let $\nu_{n,t}=t\Pp(\boldsymbol X_n/t\in\cdot)$ and let $\mu_n$ be an
index-one homogeneous measure with marginal constants $c$.  Suppose that,
for functions supported on $\{\|\boldsymbol x\|_\infty\ge1/2\}$,
\begin{equation}\label{eq:bl-discrepancy}
 \left|\int f\,d(\nu_{n,t}-\mu_n)\right|
 \le\delta_n(t)\{\|f\|_\infty+\operatorname{Lip}(f)\}.
\end{equation}
Then, for $0<\eta\le1/2$,
\begin{equation}\label{eq:bl-halfspace-bound}
 \sup_{\boldsymbol w\in\Delta_{m_n-1}}
 \left|t\Pp(\boldsymbol w^{\mathsf T}\boldsymbol X_n>t)-c\right|
 \le\delta_n(t)(1+\eta^{-1})+\frac{c\eta}{1-\eta}.
\end{equation}
In particular, choosing $\eta=\sqrt{\delta_n(t)}$ proves uniform weighted
projection convergence whenever $\delta_n(t)\to0$.
\end{theorem}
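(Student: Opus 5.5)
We prove \cref{eq:bl-halfspace-bound} by sandwiching the indicator of the moving half-space between two bounded-Lipschitz functions. The measure-metric bound \cref{eq:bl-discrepancy} then controls both sandwich integrals. The homogeneity of $\mu_n$ together with the linear projection identity \cref{eq:linear-tail} turns the resulting smoothing error into the explicit term $c\eta/(1-\eta)$.

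Fix $\boldsymbol w\in\Delta_{m_n-1}$ and $0<\eta\le1/2$, and write $\phi(\boldsymbol x)=\boldsymbol w^{\mathsf T}\boldsymbol x$. Since each $w_i\le1$ and $\sum_i w_i=1$, $\phi$ is $1$-Lipschitz for the $\ell_\infty$ norm, and $\phi(\boldsymbol x)\le\|\boldsymbol x\|_\infty$. We use the piecewise-linear ramps
\[
 f_-(\boldsymbol x)=\min\{1,\max\{0,(\phi(\boldsymbol x)-1)/\eta\}\},
 \qquad
 f_+(\boldsymbol x)=\min\{1,\max\{0,(\phi(\boldsymbol x)-1+\eta)/\eta\}\}.
\]
These functions have the following properties.
\begin{itemize}
 \item They satisfy $f_-\le\mathbf 1\{\phi>1\}\le f_+$.
 \item Each has $\|f_\pm\|_\infty\le1$ and $\operatorname{Lip}(f_\pm)\le\eta^{-1}$.
 \item $f_+$ vanishes unless $\phi>1-\eta\ge1/2$, which forces $\|\boldsymbol x\|_\infty\ge1/2$. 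So both ramps are supported in $\{\|\boldsymbol x\|_\infty\ge1/2\}$, as \cref{eq:bl-discrepancy} requires.
\end{itemize}

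Since $\nu_{n,t}(\{\phi>1\})=t\Pp(\boldsymbol w^{\mathsf T}\boldsymbol X_n>t)$, the sandwich and \cref{eq:bl-discrepancy} give
\[
 \int f_-\,d\mu_n-\delta_n(t)(1+\eta^{-1})
 \le t\Pp(\boldsymbol w^{\mathsf T}\boldsymbol X_n>t)
 \le \int f_+\,d\mu_n+\delta_n(t)(1+\eta^{-1}).
\]
It remains to compare $\int f_\pm\,d\mu_n$ with $c$. Homogeneity of order $-1$ gives $\mu_n\{\phi>s\}=s^{-1}\mu_n\{\phi>1\}$. By \cref{eq:linear-tail} applied to $\mu_n$ with marginal constants $c$, we have $\mu_n\{\phi>1\}=\sum_i w_ic=c$, so $\mu_n\{\phi>s\}=c/s$. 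The upper ramp therefore satisfies
\[
 \int f_+\,d\mu_n\le\mu_n\{\phi>1-\eta\}=\frac{c}{1-\eta}=c+\frac{c\eta}{1-\eta}.
\]
The lower ramp satisfies
\[
 \int f_-\,d\mu_n\ge\mu_n\{\phi>1+\eta\}=\frac{c}{1+\eta}\ge c-c\eta\ge c-\frac{c\eta}{1-\eta}.
\]
Combining these with the sandwich gives \cref{eq:bl-halfspace-bound} for this $\boldsymbol w$. Because the right side does not depend on $\boldsymbol w$, the supremum over the simplex follows. The choice $\eta=\sqrt{\delta_n(t)}$, admissible once $\delta_n(t)\le1/4$, makes the right side $O(\sqrt{\delta_n(t)})$, which gives the final claim.

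The step needing the most care is justifying \cref{eq:linear-tail} for $\mu_n$ in possibly large dimension. This requires $\mu_n$ to be a genuine index-one homogeneous measure whose polar representation has spectral moments $\int s_i\,S_n(d\boldsymbol s)=c$. I would derive this by radial integration, $\mu_n\{\phi>1\}=\int\phi(\boldsymbol s)\,S_n(d\boldsymbol s)$, exactly as in \cref{eq:general-alpha-projection}, and check that no boundary or continuity-set issue arises. No such issue arises because only the scaling $s\mapsto\mu_n\{\phi>s\}$ is used, never weak convergence.
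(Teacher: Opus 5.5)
Your proposal is correct and follows the paper's proof essentially verbatim: you sandwich the half-space indicator between width-$\eta$ linear ramps in $\boldsymbol w^{\mathsf T}\boldsymbol x$ with Lipschitz constant $\eta^{-1}$, use homogeneity to get $\mu_n\{\boldsymbol w^{\mathsf T}\boldsymbol x>s\}=c/s$, and bound the ramp losses by $c\eta/(1+\eta)$ and $c\eta/(1-\eta)$. Your $\ell_\infty$-Lipschitz estimate and your explicit check that $1-\eta\ge1/2$ keeps the ramps inside the admissible support $\{\|\boldsymbol x\|_\infty\ge1/2\}$ are slightly more careful than the paper's one-line Euclidean-norm remark, but the argument is the same.
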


\begin{theorem}[Exact radial-angular error identity]
\label{thm:radial-angular-uniform}
For row $n$, suppose
$\boldsymbol X_n=R_n\boldsymbol\Theta_n$, where
$\boldsymbol\Theta_n$ lies on the positive $\ell_1$ simplex and is
independent of $R_n$.  Assume, for $r$ large,
\[
 \Pp(R_n>r)=\frac{C_n}{r}\{1+e_n(r)\},
 \qquad C_n\E\Theta_{i,n}=c\quad\hbox{for every }i.
\]
Writing $L_{n,\boldsymbol w}=\boldsymbol w^{\mathsf T}
\boldsymbol\Theta_n$, one has, for every sufficiently large $t$, the exact
identity
\begin{equation}\label{eq:radial-angular-error}
 t\Pp(\boldsymbol w^{\mathsf T}\boldsymbol X_n>t)-c
 =C_n\E\!\left[L_{n,\boldsymbol w}
 e_n\{t/L_{n,\boldsymbol w}\}\right].
\end{equation}
The integrand is defined to be zero on
$\{L_{n,\boldsymbol w}=0\}$.
Hence $\sup_{r\ge t}|e_n(r)|\to0$ is sufficient, uniformly in dimension
and weights.

More generally, independence may be replaced by the conditional kernel
\[
 \Pp(R_n>r\mid\boldsymbol\Theta_n=\boldsymbol s)
 =\frac{k_n(\boldsymbol s)}r
 \{1+e_n(r,\boldsymbol s)\},
 \quad \E\{k_n(\boldsymbol\Theta_n)\Theta_{i,n}\}=c.
\]
Then the right side of \cref{eq:radial-angular-error} becomes
\[
 \E\!\left[k_n(\boldsymbol\Theta_n)L_{n,\boldsymbol w}
 e_n\{t/L_{n,\boldsymbol w},\boldsymbol\Theta_n\}\right].
\]
\end{theorem}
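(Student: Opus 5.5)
The plan is to condition on the angular variable and use the fact that, on the positive $\ell_1$ simplex, the weighted projection $L_{n,\bw}=\bw^{\mathsf T}\boldsymbol\Theta_n$ never exceeds one. The whole identity then reduces to integrating the one-dimensional radial tail formula, with no asymptotics taken. Throughout, $e_n(r)$ is read as \emph{defined} by $\Pp(R_n>r)=C_nr^{-1}\{1+e_n(r)\}$ for all $r\ge r_n$, where $r_n$ is the threshold beyond which the displayed form is assumed. With that reading the identity is exact.

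First I would record the deterministic bound. Since $\Theta_{i,n}\ge0$ and $\sum_i\Theta_{i,n}=1$, we have $0\le L_{n,\bw}\le\max_iw_{i,n}\le1$. Hence on $\{L_{n,\bw}>0\}$ the threshold satisfies $t/L_{n,\bw}\ge t$. For every $t\ge r_n$ the radial expansion can therefore be evaluated at $t/L_{n,\bw}$ for every realization, with no truncation of the angular law. On $\{L_{n,\bw}=0\}$ the event $\{R_nL_{n,\bw}>t\}$ is empty, which matches the convention that the integrand is zero there.

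Next I would use independence of $R_n$ and $\boldsymbol\Theta_n$ and apply Fubini:
\[
 \Pp(\bw^{\mathsf T}\boldsymbol X_n>t)
 =\E\!\left[\Pp(R_n>t/L\mid L)\mathbf1\{L>0\}\right]
 =\E\!\left[\frac{C_nL}{t}\{1+e_n(t/L)\}\right].
\]
Here $L=L_{n,\bw}$. The first part of the expectation is handled by linearity: $C_n\E L=\sum_iw_{i,n}C_n\E\Theta_{i,n}=c\sum_iw_{i,n}=c$. Multiplying by $t$ gives the stated identity. Integrability needs a brief justification. On $\{L>0\}$, the integrand $C_nL\{1+e_n(t/L)\}=t\,\Pp(R_n>t/L)$ lies between $0$ and $t$, so its expectation exists. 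Because $C_nL$ is integrable, the error part $C_nL\,e_n(t/L)$ is integrable as well, and splitting the expectation is legitimate.

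For the sufficiency claim I would bound $|C_n\E[Le_n(t/L)]|\le C_n\E L\cdot\sup_{r\ge t}|e_n(r)|=c\sup_{r\ge t}|e_n(r)|$, again using $t/L\ge t$. This bound is free of $m_n$ and of $\bw$. The conditional-kernel generalization follows the same steps, conditioning on $\boldsymbol\Theta_n=\boldsymbol s$ instead of invoking independence. The first part now equals $\E\{k_n(\boldsymbol\Theta_n)L\}=\sum_iw_{i,n}\E\{k_n\Theta_{i,n}\}=c$, and the error part is the displayed kernel-weighted expectation.

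The only delicate step is the uniformity of the threshold $r_n$. The identity requires $t\ge r_n$. If $r_n$ is allowed to depend on the row, this should be stated as ``for $t\ge r_n$'', and in the conditional version as ``for $t\ge\sup_{\boldsymbol s}r_n(\boldsymbol s)$''. I expect that bookkeeping, rather than any analytic estimate, to be the main point requiring care.
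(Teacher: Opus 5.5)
Your proposal is correct and is essentially the paper's proof. Both condition on the angle, use $0\le L_{n,\boldsymbol w}\le1$ so that $t/L_{n,\boldsymbol w}\ge t$ lies in the range of the radial expansion for every realization, and use $C_n\E L_{n,\boldsymbol w}=c$ to leave the exact remainder. Your integrability check and the row-dependent threshold caveat (which the paper folds into ``sufficiently large $t$'') are sound refinements, not a different route.
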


\subsection{Arbitrary-dimensional first-order theorems for the common-factor
families}

The only uniform product-tail input needed below is recorded explicitly.
This avoids inferring uniformity from the ordinary fixed-law form of
Breiman's lemma.

\begin{lemma}[Uniform index-one product tail]
\label{lem:uniform-breiman-one}
Let $Q\geq0$ satisfy
\[
  \Pp(Q>q)=a q^{-1}\{1+o(1)\},
  \qquad q\to\infty,
\]
and let $S_n\geq0$ be independent of $Q$.  If, for some $\delta>0$,
\[
  \sup_n\E S_n^{1+\delta}<\infty,
\]
then, for every sequence $t_n\to\infty$,
\begin{equation}\label{eq:uniform-breiman-one}
  t_n\Pp(QS_n>t_n)-a\E S_n\longrightarrow0.
\end{equation}
The conclusion remains valid for triangular arrays; the law and dimension
of $S_n$ may vary.
\end{lemma}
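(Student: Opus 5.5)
The plan is to condition on $S_n$ and split the expectation at a truncation level proportional to $t_n$.
Independence gives
\[
 t_n\Pp(QS_n>t_n)=\E\bigl[t_n\bar F_Q(t_n/S_n)\bigr],
 \qquad \bar F_Q(q)=\Pp(Q>q),
\]
with the integrand defined as $0$ on $\{S_n=0\}$.
Write $\bar F_Q(q)=aq^{-1}\{1+\epsilon(q)\}$, where $\epsilon(q)\to0$ as $q\to\infty$.
We also record a global bound $\bar F_Q(q)\le Kq^{-1}$ for all $q>0$.
This holds with $K=\max\{2a,q_0\}$, because $\bar F_Q\le1$ handles $q\le q_0$ and $|\epsilon(q)|\le1$ for $q\ge q_0$.

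Fix a small $\eta>0$ and split on $\{S_n\le\eta t_n\}$ and its complement.

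On the event $\{S_n\le\eta t_n\}$ we have $t_n/S_n\ge\eta^{-1}$.
The integrand therefore equals $aS_n\{1+\epsilon(t_n/S_n)\}$, so this part is $a\E[S_n;S_n\le\eta t_n]$ up to an error of at most $a\sup_{q\ge1/\eta}|\epsilon(q)|\cdot\sup_n\E S_n$.
This error tends to $0$ as $\eta\downarrow0$, uniformly in $n$.

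The complement event gives the main term.
Using the global bound $t_n\bar F_Q(t_n/S_n)\le KS_n$, its contribution is at most
\[
 K\E[S_n;S_n>\eta t_n]\le K(\eta t_n)^{-\delta}\sup_n\E S_n^{1+\delta},
\]
which tends to $0$ for each fixed $\eta$.
The same bound shows that $a\E[S_n;S_n>\eta t_n]\to0$.
Hence $a\E[S_n;S_n\le\eta t_n]$ may be replaced by $a\E S_n$.

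Combining the two parts gives
\[
 \limsup_n\bigl|t_n\Pp(QS_n>t_n)-a\E S_n\bigr|\le a\sup_{q\ge1/\eta}|\epsilon(q)|\cdot\sup_m\E S_m.
\]
Letting $\eta\downarrow0$ makes the right side vanish, which proves \cref{eq:uniform-breiman-one}.

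The argument uses only the law of $S_n$ through $\sup_n\E S_n^{1+\delta}$, and $\sup_n\E S_n<\infty$ follows from that moment bound.
It is therefore insensitive to dimension and applies verbatim to triangular arrays.

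The only delicate point is the large-$S_n$ region, where $t_n/S_n$ need not be large and the asymptotic form of $\bar F_Q$ is unavailable.
This is exactly why the crude bound $\bar F_Q(q)\le K/q$ for all $q>0$ is needed, and why the uniform $(1+\delta)$-moment is needed to make that region negligible uniformly in $n$.
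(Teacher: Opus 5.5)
Your proof is correct and is essentially the paper's argument: you split at $S_n=\eta t_n$ (the paper writes $t_n/q_0$), use the asymptotic form of the $Q$-tail on $\{S_n\le\eta t_n\}$ and the uniform $(1+\delta)$-moment with Markov's inequality on the complement, and then let $n\to\infty$ before $\eta\downarrow0$. The only difference is cosmetic: the paper bounds the large-$S_n$ contribution by $t_n\Pp(S_n>\eta t_n)=O(t_n^{-\delta})$ using only $\Pp(Q>q)\le1$, so your global bound $\Pp(Q>q)\le K/q$ is convenient but not needed, contrary to your closing remark.
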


\begin{theorem}[Growing-dimensional standard multivariate-$t$ aggregation]
\label{thm:t-growing-arbitrary}
Fix $\nu>0$.  Let $m_n$ be arbitrary, let $R_n$ be arbitrary
$m_n$-dimensional correlation matrices, let
$\boldsymbol Z_n\sim t_\nu(0,R_n)$ be the standard common-scale
multivariate-$t$ vector, and let $\boldsymbol w_n$ be arbitrary simplex
weights.  For either one- or two-sided exact $t_\nu$ $p$-values and any
$t_n\to\infty$,
\begin{align}
 t_n\Pp\!\left(\sum_iw_{i,n}Y_{i,n}>t_n\right)&\longrightarrow1,
 \label{eq:t-growing-Y}\\
 t_n\Pp\!\left(\sum_iw_{i,n}H_{i,n}>t_n\right)&\longrightarrow\frac2\pi.
 \label{eq:t-growing-H}
\end{align}
There is no restriction on the rate at which $m_n$ grows.
\end{theorem}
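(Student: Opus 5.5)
The plan is to reduce both aggregates to a single product $QS_n$ with one heavy common factor and apply \cref{lem:uniform-breiman-one}. We use the common-scale representation \cref{eq:t-representation}, $\boldsymbol Z_n=R_0\boldsymbol G_n$, where $\boldsymbol G_n\sim N_{m_n}(\boldsymbol0,R_n)$ and $R_0=\sqrt{\nu/W}$ has the same law in every row. Put $Q=R_0^\nu$. By \cref{eq:t-radial-second-order} and \cref{lem:common-radial-index-one}, $\Pp(Q>q)=a_\nu q^{-1}\{1+o(1)\}$, with $Q$ independent of $\boldsymbol G_n$.

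\textbf{Step 1: a deterministic sandwich for the one-sided reciprocal score.} We compare $y(z)=1/\overline T_\nu(z)$ with the pure power $k_\nu^{-1}(z^+)^\nu$ from \cref{eq:t-tail}. Fix $\varepsilon\in(0,1)$. Since $y(z)\sim k_\nu^{-1}z^\nu$ as $z\to\infty$, there is $z_\varepsilon$ such that $(1-\varepsilon)k_\nu^{-1}z^\nu\le y(z)\le(1+\varepsilon)k_\nu^{-1}z^\nu$ for $z\ge z_\varepsilon$. For $z<z_\varepsilon$ we have $1\le y(z)\le M_\varepsilon:=y(z_\varepsilon)$. Decreasing $z_\varepsilon$-dependence into constants, this gives, for all real $z$,
\[
 (1-\varepsilon)k_\nu^{-1}(z^+)^\nu-M'_\varepsilon
 \le y(z)\le (1+\varepsilon)k_\nu^{-1}(z^+)^\nu+M_\varepsilon ,
\]
where $M'_\varepsilon=(1-\varepsilon)k_\nu^{-1}z_\varepsilon^\nu$ absorbs the bounded region; the lower bound also holds with $M'_\varepsilon$ replaced by $0$ once we use $y\ge1$ only where needed. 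Summing with simplex weights, and using that the constants do not depend on $m_n$, $R_n$ or $\boldsymbol w_n$, we get
\[
 (1-\varepsilon)QS_n-M'_\varepsilon\le\sum_iw_{i,n}Y_{i,n}\le(1+\varepsilon)QS_n+M_\varepsilon,
 \qquad
 S_n=k_\nu^{-1}\sum_iw_{i,n}(G_{i,n}^+)^\nu .
\]
For two-sided $p$-values we use instead $y(z)=[2\overline T_\nu(|z|)]^{-1}$ and $S_n=(2k_\nu)^{-1}\sum_iw_{i,n}|G_{i,n}|^\nu$; the argument is otherwise identical.

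\textbf{Step 2: uniform moments and the value of $a_\nu\E S_n$.} Each $G_{i,n}$ is standard normal, so $\E S_n=k_\nu^{-1}\E(G^+)^\nu$ for every row, independently of $m_n$, $R_n$ and the weights. By convexity of $x\mapsto x^{1+\delta}$ and the simplex constraint, $\E S_n^{1+\delta}\le k_\nu^{-1-\delta}\E(G^+)^{\nu(1+\delta)}<\infty$ uniformly, for every $\nu>0$. To identify the constant, apply the sandwich with $m=1$: $Y$ is exactly Pareto, so $t\Pp(Y>t)=1$. The ordinary Breiman lemma then gives $a_\nu k_\nu^{-1}\E(G^+)^\nu=1$, and hence $a_\nu\E S_n=1$. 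In the two-sided case the same computation gives $a_\nu(2k_\nu)^{-1}\E|G|^\nu=1$.

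\textbf{Step 3: apply the uniform product-tail lemma and let $\varepsilon\downarrow0$.} For the upper bound, $t_n\Pp(\sum_iw_{i,n}Y_{i,n}>t_n)\le t_n\Pp\{QS_n>(t_n-M_\varepsilon)/(1+\varepsilon)\}$. By \cref{lem:uniform-breiman-one} applied along the sequence $(t_n-M_\varepsilon)/(1+\varepsilon)\to\infty$, the limsup is at most $(1+\varepsilon)a_\nu\E S_n=1+\varepsilon$. The lower bound is symmetric and gives a liminf of at least $1-\varepsilon$. Letting $\varepsilon\downarrow0$ proves \cref{eq:t-growing-Y}.

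For \cref{eq:t-growing-H}, \cref{prop:equiv} gives $cY_i-c\le H_i\le cY_i$. Hence $\sum_iw_{i,n}H_{i,n}$ lies within $c$ of $c\sum_iw_{i,n}Y_{i,n}$. The threshold shift by $c$ is negligible after scaling by $t_n$, so the limit is $c=2/\pi$.

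The main obstacle is Step 1. We need an additive-plus-multiplicative comparison whose constants are free of dimension and of the correlation matrix, and we must handle coordinates with $G_i\le0$, whose scores stay bounded but are numerous. The additive constant $M_\varepsilon$ controls them uniformly precisely because the weights sum to one.
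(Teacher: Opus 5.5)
Your proposal is correct and follows the paper's proof essentially step for step. Both arguments use the dimension-free envelope $(1\pm\varepsilon)R_0^\nu S_n\mp C_\varepsilon$ with $S_n=A_+$ or $A_{|\cdot|}$, the weighted-Jensen bound on $\sup_n\E S_n^{1+\delta}$, the uniform product-tail lemma followed by $\varepsilon\downarrow0$, and the deterministic Half-Cauchy/Pareto comparison. Your identification of $a_\nu\E S_n=1$ through the exactly Pareto case $m=1$ justifies a normalization the paper simply asserts. Your parenthetical claim that $M'_\varepsilon$ can be replaced by $0$ is unnecessary and does not follow from $y\ge1$ alone; it does happen to hold, because $\overline T_\nu(z)\le k_\nu z^{-\nu}$ for all $z>0$.
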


\begin{theorem}[Growing-dimensional positive Clayton aggregation]
\label{thm:clayton-growing-arbitrary}
Fix $\theta>0$.  For arbitrary $m_n$, arbitrary simplex weights, and
$m_n$-variate Clayton copulas with the common fixed parameter $\theta$,
the conclusions \cref{eq:t-growing-Y,eq:t-growing-H} hold for
every $t_n\to\infty$, again with no restriction on the growth of $m_n$.
\end{theorem}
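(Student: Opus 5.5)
We reduce to the radial--angular identity of \cref{thm:radial-angular-uniform} and the uniform product-tail \cref{lem:uniform-breiman-one}, using the frailty representation. With $V\sim\operatorname{Gamma}(1/\theta,1)$ and $E_i$ i.i.d.\ $\operatorname{Exp}(1)$, put $R=V^{-1/\theta}$ and $p=1/\theta$. By \cref{eq:clayton-Y-representation},
\[
 Y_{i,n}=R\,(E_i+R^{-\theta})^{p}.
\]
Hence $R_{\bw_n}=\sum_iw_{i,n}Y_{i,n}$ is not exactly a product of $R$ with a factor independent of $R$. We therefore sandwich it. Since $p>0$ and $R^{-\theta}=V\ge0$, we have $Y_{i,n}\ge RE_i^{p}$, so that
\[
 R_{\bw_n}\ge R\,A_n,\qquad A_n=\sum_iw_{i,n}E_i^p .
\]
For an upper bound we split the event $\{V\le\eta\}$ from its complement. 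On $\{V\le\eta\}$, $(E_i+V)^p\le(E_i+\eta)^p$, giving $R_{\bw_n}\le RA_n^{\eta}$ with $A_n^{\eta}=\sum_iw_{i,n}(E_i+\eta)^p$. On $\{V>\eta\}$ we have $R<\eta^{-1/\theta}$, and the crude bound $R_{\bw_n}\le \eta^{-1/\theta}\sum_iw_{i,n}(E_i+V)^p$ suffices. Here $V$ has all moments and $(E_i+V)^p$ has an exponential-type tail.

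The lower bound then gives $\Pp(R_{\bw_n}>t_n)\ge\Pp(RA_n>t_n)$. The variable $A_n$ is independent of $R$ and satisfies
\[
 \E A_n=\Gamma(1+p),\qquad \sup_n\E A_n^{2}\le \E E_1^{2p}<\infty,
\]
by convexity and the simplex weights, uniformly in $m_n$ and the weights. By \cref{eq:clayton-R-tail}, $\Pp(R>r)\sim a_\theta r^{-1}$ with $a_\theta=\Gamma(1+1/\theta)^{-1}$. \Cref{lem:uniform-breiman-one} therefore gives
\[
 t_n\Pp(RA_n>t_n)\to a_\theta\Gamma(1+p)=1 .
\]
For the upper bound, the small-$V$ part gives $\limsup t_n\Pp(\cdot)\le a_\theta\sup_n\E A_n^{\eta}$. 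Since $(E+\eta)^p-E^p\to0$ in $L^1$ as $\eta\downarrow0$ for fixed $p$, and this control is independent of $m_n$ and the weights, we have $\E A_n^{\eta}\le\Gamma(1+p)+\epsilon(\eta)$ uniformly. The large-$V$ part has probability at most
\[
 \Pp\Bigl\{\sum_iw_{i,n}(E_i+V)^p>\eta^{1/\theta}t_n\Bigr\}\le \frac{C_\eta}{t_n^{2}}
\]
by Markov's inequality on the second moment, which is again uniform. Multiplied by $t_n$ this tends to zero. Letting $\eta\downarrow0$ after $n\to\infty$ yields \cref{eq:t-growing-Y}. A cleaner equivalent route is to condition on $V$ and apply \cref{thm:radial-angular-uniform} with the conditional kernel; the sandwich seems simpler.

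For the Half-Cauchy aggregate we use \cref{prop:equiv}: $cY_i-c\le H_i\le cY_i$ gives
\[
 cR_{\bw_n}-c\le T_{\bw_n}\le cR_{\bw_n}.
\]
Consequently
\[
 \Pp(R_{\bw_n}>t_n/c)\le\Pp(T_{\bw_n}>t_n)\le\Pp(R_{\bw_n}>t_n/c-1).
\]
The reciprocal result applies along the sequences $t_n/c$ and $t_n/c-1$, both of which tend to infinity, and the ratio $(t_n/c-1)/(t_n/c)\to1$. This gives $t_n\Pp(T_{\bw_n}>t_n)\to c=2/\pi$, which is \cref{eq:t-growing-H}.

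The main obstacle is the uniformity of the upper-bound error in $m_n$ and the weights. The key point is that every moment bound used depends only on a single $E_1$ and on $V$, through Jensen/convexity with simplex weights, and never on $m_n$. If the $\eta$-truncation bookkeeping becomes awkward, I would instead apply \cref{lem:uniform-breiman-one} directly to $R\cdot\sum_iw_{i,n}(E_i+R^{-\theta})^p$. To do so I would verify the lemma's proof tolerates the mild dependence, since the dependence enters only through $V=R^{-\theta}\to0$ on the event $\{R>t_n/K\}$.
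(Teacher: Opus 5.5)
Your reciprocal argument is correct and has the same skeleton as the paper's proof. Both use the frailty representation, the lower envelope $R_{\bw_n}\ge RA_n$ with $A_n=\sum_iw_{i,n}E_i^{1/\theta}$ independent of $R$, weighted-Jensen moment bounds that depend only on $E_1$, and \cref{lem:uniform-breiman-one} applied to the fixed index-one factor $R$.

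The difference is the upper envelope. The paper uses the exact cancellation $R\cdot(R^{-\theta})^{1/\theta}=1$. For $1/\theta\le1$, subadditivity gives the deterministic bound $R_{\bw_n}\le RA_n+1$. For $1/\theta>1$, the convexity inequality $(a+b)^p\le(1+\eta)^{p-1}a^p+(1+\eta^{-1})^{p-1}b^p$ gives $R_{\bw_n}\le(1+\varepsilon)RA_n+C_{\varepsilon,\theta}$. The lemma is then applied at a shifted, rescaled threshold, with no splitting of events.

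Your split on $\{V\le\eta\}$ instead uses monotonicity of $(E_i+V)^{1/\theta}$ in $V$. This replaces the dependent angular factor by the independent $A_n^\eta$, and you dispose of $\{V>\eta\}$ with a second-moment Markov bound. Your route handles all $\theta>0$ at once, without the $1/\theta\le1$ versus $1/\theta>1$ case distinction. It costs an extra limit $\eta\downarrow0$ in $\E(E_1+\eta)^{1/\theta}$. Both routes are uniform in $m_n$ and the weights for the same reason.

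One step needs correcting. From $cR_{\bw_n}-c\le T_{\bw_n}\le cR_{\bw_n}$ the correct sandwich is
\[
 \Pp(R_{\bw_n}>t_n/c+1)\le\Pp(T_{\bw_n}>t_n)\le\Pp(R_{\bw_n}>t_n/c).
\]
Your displayed left inequality $\Pp(R_{\bw_n}>t_n/c)\le\Pp(T_{\bw_n}>t_n)$ goes the wrong way. Since $d(u)>0$ for $u>0$, we have $T_{\bw_n}<cR_{\bw_n}$ almost surely, so it fails in general. Your right inequality is true but does not supply the needed lower bound. With the shifts on the correct sides, the conclusion $t_n\Pp(T_{\bw_n}>t_n)\to c$ follows exactly as you argue, now along $t_n/c$ and $t_n/c+1$.

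Your opening sentence also invokes \cref{thm:radial-angular-uniform}, but the argument you actually give never uses it.
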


These two theorems are MRV-first generalizations beyond a pairwise
tail-independence argument: they allow interior spectral mass and thus
cover genuine tail dependence.  They complement, rather than duplicate,
the axis-oriented one-big-jump theory of Liu, Meng and
Pillai~\cite{LiuMengPillai2025}.

\begin{conjecture}[Uniform second-order common-factor expansions]
\label{conj:uniform-common-factor-second}
For fixed $\theta>0$, the Clayton expansions in
\cref{thm:clayton-second-order,thm:clayton-hc-phase} should hold uniformly
over $m$ and simplex
weights after their remainders are expressed through uniform exponential
truncation bounds.  For fixed $\nu>1$, the same should hold for the
standard multivariate-$t$ expansions uniformly over $m$, weights, and a
suitably nondegenerate class of correlation matrices.
The bounds in
\cref{prop:clayton-coefficient-uniform,prop:t-coefficient-uniform} show
that the displayed coefficients do
not grow with dimension; they do not, by themselves, prove uniformity of
the remainders.
\end{conjecture}

\begin{openproblem}[Growing-dimensional Gaussian hidden faces]
\label{open:gaussian-growing}
\cref{thm:gaussian-second-order-m} is fixed-dimensional.  A growing
version must control uniformly: all pair-tail approximations, accumulation
of triple and higher faces, covariance matrices approaching singularity,
and the independence boundary $\rho\log t=O(1)$.  As a diagnostic only, if
all active maximal correlations equal a fixed $\rho>0$ and the weights are
of order $1/m$, the displayed pair coefficient can be of order
$m^{2\rho/(1+\rho)}$.  Ignoring the still-unproved uniform remainder would
suggest
\[
 m=o\!\left(t^{(1-\rho)/(2\rho)}\sqrt{\log t}\right).
\]
This is not a theorem: higher-face multiplicities and uniform Gaussian
tail errors must first be bounded.
\end{openproblem}

\subsection{Scope of the growing-dimensional results}

The full one-big-jump development of Liu, Meng and
Pillai~\cite{LiuMengPillai2025} answers a different question under a
different sufficient condition.  The concise comparison above is included
to explain why fixed-dimensional MRV cannot simply be diagonalized in $m$.

The two common-factor theorems show that interior spectral mass does not
obstruct growing dimension and that a radial representation can be sharper
than pair counting.  The abstract uniform-MRV, bounded-Lipschitz, and
uniform product-tail results make the required uniformity explicit; they
are supporting tools for these first-order extensions rather than
second-order calibration theorems.

No growing-dimensional second-order theorem is currently proved.  The
dimension-free $t$ and Clayton coefficient bounds do not control their
remainders, and the Gaussian polynomial rate in
\cref{open:gaussian-growing} is only a diagnostic.  Accordingly, growing
$m$ should be presented as a first-order extension and a second-order open
problem, not as part of the main second-order claim.

\newcommand{\CopulaSupportMechanismLedger}{%
\cref{tab:copula-hierarchy} classifies the families discussed in this
paper.  The orientation is always that small $p$-values are the relevant
lower tail.  The second column gives the four support categories; a
diagonal ray lies in the full-simplex interior, whereas ``proper face'' is
reserved for a coordinate face.  The third column gives the absolute
aggregate-tail rate after the universal $t^{-1}$ term.  The fourth
separates proved half-space expansions from mechanism predictions based
only on rectangle or hidden-tail diagnostics; each row states explicitly
whether a sharp coefficient, only a rate bound, or merely a mechanism
diagnosis is available.
Standard formulas for these families can be found in
Nelsen~\cite{Nelsen2006}.  Corner-unbounded densities in several standard
families, and the broader fact that smooth copulas need not have bounded
densities, are discussed by Lalancette and
Zimmerman~\cite{LalancetteZimmerman2022}.

Here ``rate'' means the absolute aggregate-tail correction beyond $c_S/t$,
not the independence-relative distortion.  A candidate rectangle or hidden
scale in a row marked diagnostic or open is not asserted to be a
weighted-half-space rate, and a displayed expansion scale can vanish for
special weights or parameters.

\begingroup
\footnotesize
\setlength{\tabcolsep}{3pt}
\begin{longtable}{@{}p{0.14\textwidth}p{0.16\textwidth}p{0.18\textwidth}p{0.30\textwidth}p{0.14\textwidth}@{}}
\caption{Exponent-measure support, aggregate second-order rate, mechanism,
proof status, and bounded-density diagnostics.}\label{tab:copula-hierarchy}\\
\toprule
Copula and parameters & First-order support & Aggregate second-order rate
& Mechanism and current status & Global bounded density \\
\midrule
\endfirsthead
\toprule
Copula and parameters & First-order support & Aggregate second-order rate
& Mechanism and current status & Global bounded density \\
\midrule
\endhead
\midrule
\multicolumn{5}{r}{Continued on next page}\\
\endfoot
\bottomrule
\endlastfoot

Product (independence)
& Axes.
& $t^{-2}\log t$ for both aggregates; sharp and proved.
& Critical hidden pair faces; the sharp logarithmic coefficient is proved.
& Yes, $c(u,v)=1$. \\

FGM, $-1\leq\theta\leq1$
& Axes.
& $t^{-2}\log t$ for both aggregates when $\theta>-1$; $O(t^{-2})$ at
$\theta=-1$; proved under the stated fixed-dimensional conditions.
& \cref{thm:sharp-local-corner} gives the critical coefficient from
$\lambda=1+\theta$; at $\theta=-1$ the logarithmic coefficient vanishes
and the remainder is $O(t^{-2})$.
& Yes; $c(u,v)=1+\theta(1-2u)(1-2v)$. \\

AMH, $-1\leq\theta<1$
& Axes.
& $t^{-2}\log t$ for both aggregates; sharp bivariately and proved in
higher dimension under the local-corner theorem's higher-face condition.
& The sharp local-corner theorem applies with
$\lambda=(1-\theta)^{-1}$ and gives the logarithmic coefficient.
& Yes for each fixed $\theta<1$; the bound deteriorates as $\theta\uparrow1$.
\\

AMH, $\theta=1$
& Full-interior mass, with
$\Lambda_L(x,y)=xy/(x+y)$.
& $t^{-2}$ for both aggregates; sharp and proved for two positive weights.
& This endpoint is exactly bivariate Clayton with parameter one.  The
ordinary common-frailty/interior perturbation in
\cref{thm:clayton-second-order,thm:clayton-hc-phase} gives coefficients
$w(1-w)$ and $c^2w(1-w)$ for reciprocal and Half-Cauchy aggregation.
& No; the density is unbounded at the lower corner. \\

Frank, finite fixed parameter
& Axes.
& $t^{-2}\log t$ for both aggregates; sharp bivariately and proved in
higher dimension under the local-corner theorem's higher-face condition.
& The sharp local-corner theorem applies with
$\lambda=\theta/(1-e^{-\theta})$, continuously interpreted as one at
$\theta=0$.
& Yes; include independence as the zero-parameter limit. \\

Plackett, $0<\theta<\infty$ fixed
& Axes.
& $t^{-2}\log t$ for both aggregates; sharp bivariately and proved in
higher dimension under the local-corner theorem's higher-face condition.
& The sharp local-corner theorem applies with $\lambda=\theta$ and gives
the critical logarithmic coefficient.
& Yes; the bound need not be uniform as the parameter degenerates. \\

Finite-degree Bernstein, finite checkerboard, and finite mixtures of
bounded densities
& Axes.
& $O(t^{-2}\log t)$ for both aggregates; a sharp coefficient follows when
the local-corner hypotheses hold.
& The sharp local-corner theorem gives the coefficient whenever every pair
density has a Dini-continuous corner limit and higher faces satisfy its
bounded-strip condition; otherwise only the rate is asserted.
& Yes, provided the degree, grid, and component bounds are fixed. \\

One-sided Gaussian, $\rho=0$
& Axes; independence.
& $t^{-2}\log t$ for both aggregates; sharp and proved.
& Critical hidden pair faces; the sharp independent logarithm is proved.
& Yes. \\

One-sided Gaussian, $0<\rho<1$
& Axes.
& $t^{-2/(1+\rho)}$
\newline${}\times(\log t)^{-\rho/(1+\rho)}$ for both aggregates;
the displayed coefficient is conditional on the face-transfer hypotheses
of \cref{thm:gaussian-second-order-m}.
& Integrable hidden pair face with power $2/(1+\rho)<2$; the compact-face
limit is proved, while uniform weighted-half-space transfer remains open.
& No; the density is unbounded at equal-tail corners. \\

One-sided Gaussian, $-1<\rho<0$
& Axes.
& Undetermined.  The nominal lower--lower hidden scale is not directly
transferable because its face functional is nonintegrable.
& The lower--lower hidden face is nonintegrable for the half-space
increment, so an edge-renormalized term is expected.  The leading expansion
is open.
& No globally. \\

Two-sided Gaussian, $\rho\ne0$
& Axes when $|\rho|<1$.
& $t^{-2/(1+|\rho|)}$
\newline${}\times(\log t)^{-|\rho|/(1+|\rho|)}$ for both aggregates;
conditional on the face-transfer hypotheses of
\cref{thm:gaussian-second-order-m}.
& A dominant sign quadrant gives an integrable hidden pair face with power
$2/(1+|\rho|)<2$; the coefficient is identified, but the noncompact
boundary transfer remains open.
& No. \\

Standard multivariate-$t$, $\nu>2$
& Two-sided scores have full-interior support; one-sided scores have mixed
support with interior mass.
& $t^{-1-2/\nu}$ for both aggregates and both sidedness choices; proved for
fixed $m$.
& Ordinary common-radial perturbation of the existing first-order measure;
the $t^{-1-2/\nu}$ fixed-$m$ expansion is proved for both aggregates in
the standard model.
& No; positive tail dependence forces an unbounded lower-tail density. \\

Standard multivariate-$t$, $1<\nu\le2$
& Two-sided scores have full-interior support; one-sided scores have mixed
support with interior mass.
& Two-sided reciprocal and both Half-Cauchy: $t^{-1-2/\nu}$.
One-sided reciprocal: $t^{-2}$ for $1<\nu<2$, with a tie at $\nu=2$;
proved for fixed $m$.
& The same common-radial mechanism competes with an angular edge term.  The
two-sided reciprocal and both positive Half-Cauchy expansions retain the
$t^{-1-2/\nu}$ term; the one-sided reciprocal expansion has an order
$t^{-2}$ angular term below $\nu=2$, with a tie at $\nu=2$.
& No. \\

Standard multivariate-$t$, $0<\nu\le1$
& Two-sided scores have full-interior support; one-sided scores have mixed
with interior mass.
& Undetermined; no aggregate second-order rate is proved.
& Common-radial and angular subface terms meet or become nonintegrable; the
second-order half-space problem is open.
& No. \\

Clayton, $0<\theta<1$
& Full-interior mass.
& $t^{-1-\theta}$ for both aggregates; sharp and proved for fixed $m$.
& Ordinary common-frailty perturbation with a proved
$t^{-1-\theta}$ correction for both aggregates; it is larger than
$t^{-2}\log t$.
& No. \\

Clayton, $\theta\geq1$
& Full-interior mass.
& Reciprocal: $t^{-1-\theta}$.  Half-Cauchy: $t^{-1-\theta}$ for
$1\leq\theta<2$ and $t^{-3}$ for $\theta\geq2$; proved for fixed $m$.
& The reciprocal correction remains $t^{-1-\theta}$.  The positive
Half-Cauchy correction agrees below $\theta=2$, ties its score-map term at
$\theta=2$, and has order $t^{-3}$ above that boundary.
& No. \\

Bivariate Clayton, $-1\leq\theta<0$
& Axes; comparable lower extremes are eventually excluded.
& Open for $-1<\theta<0$.  At $\theta=-1$, reciprocal: $t^{-2}$;
Half-Cauchy: $O(t^{-3})$; proved.
& Singular exclusion/edge geometry.  The countermonotone endpoint is
proved, but the interior negative-parameter family remains open.
& No: unbounded for $-1<\theta<0$; the endpoint $\theta=-1$ is singular. \\

Gumbel, $\theta=1$
& Axes; independence.
& $t^{-2}\log t$ for both aggregates; sharp and proved.
& Critical hidden pair faces; the sharp independent logarithm is proved.
& Yes. \\

Gumbel, $\theta>1$
& Axes.
& Candidate $t^{-2^{1/\theta}}$ for both aggregates; the aggregate rate is
open.
& The joint-tail scaling suggests an integrable hidden pair-face correction
of power $2^{1/\theta}\in(1,2)$; only the rectangle obstruction to the
target rate is proved here.
& No; the density is unbounded at corners. \\

Joe, $\theta=1$
& Axes; independence.
& $t^{-2}\log t$ for both aggregates; sharp and proved.
& Critical hidden pair faces; the sharp independent logarithm is proved.
& Yes. \\

Joe, $\theta>1$ fixed
& Axes, with
$C(sx,sy)=O(s^2)$.
& $t^{-2}\log t$ bivariately for both aggregates; sharp and proved.
For $m>2$ the same rate is conditional on the higher-face hypothesis.
& In the bivariate case the sharp local-corner theorem applies with
$\lambda=\theta$; in higher dimension its explicit higher-face condition
still has to be verified.
& No globally; unbounded at the upper corner. \\

Cuadras--Aug\'e, $\theta=0$
& Axes; independence.
& $t^{-2}\log t$ for both aggregates; sharp and proved.
& Critical hidden pair faces; the sharp independent logarithm is proved.
& Yes. \\

Cuadras--Aug\'e, $0<\theta<1$
& Axes.
& Candidate $t^{-(2-\theta)}$ for both aggregates; the aggregate rate is
open.
& A singular hidden pair-face term of power $2-\theta$ is expected; only
the rectangle rate and singular-component diagnosis are proved.
& No; the copula has a singular component. \\

Cuadras--Aug\'e, $\theta=1$
& Full-interior diagonal ray.
& Reciprocal: no correction.  Half-Cauchy: $t^{-3}$; exact and proved.
& No dependence perturbation remains after normalization: the reciprocal
aggregate is exactly unit Pareto, while the positive Half-Cauchy aggregate
has its exact univariate $O(t^{-3})$ score-map correction.
& No; singular. \\

Marshall--Olkin with both nontrivial shock parameters
& Axes in the bivariate lower-tail orientation used here; reflected or
higher-dimensional shock models can charge proper faces or be mixed.
& Aggregate rate undetermined.  The equal-threshold rectangle scale is
$t^{-\{2-\min(\alpha,\beta)\}}$ only.
& A singular hidden pair-face power is expected axially; the weighted
half-space expansion remains open.
& No; it has a singular component. \\

Three-dimensional reflected Marshall--Olkin/max-linear pair-shock model
& Pure proper pair faces.
& Reciprocal: $\sigma_2(\bw)t^{-2}\log t$; Half-Cauchy:
$c^2\sigma_2(\bw)t^{-2}\log t$, both with $O(t^{-2})$ remainder; proved.
& Critical accumulation of two independent edge shocks near the charged
proper faces; \cref{thm:pair-shock-second-order} gives the sharp expansion.
& No; the model is max-linear and singular. \\

Comonotone copula
& Full-interior diagonal ray.
& Reciprocal: no correction.  Half-Cauchy: $t^{-3}$; exact and proved.
& Exact one-dimensional reduction: reciprocal aggregation is exactly unit
Pareto and the positive Half-Cauchy score-map correction is proved.
& No; singular. \\

Countermonotone bivariate copula
& Axes; lower comparable extremes are excluded.
& Reciprocal: $2w(1-w)t^{-2}$.  Half-Cauchy:
$\frac2\pi\{2w(1-w)-1/3\}t^{-3}+O(t^{-5})$; the $t^{-3}$
coefficient vanishes at $w=(1\pm1/\sqrt3)/2$, leaving $O(t^{-5})$.
& Singular exclusion geometry; the reciprocal correction is order
$t^{-2}$, whereas the positive Half-Cauchy correction is generically order
$t^{-3}$ and can cancel at the displayed weights.
& No; singular. \\

\end{longtable}
\endgroup

The ledger shows why support and mechanism must be recorded separately.
Positive lower-tail dependence rules out global bounded density but does
not rule out a quantitative half-space theorem: Clayton, standard
multivariate-$t$, and perfect dependence all illustrate this distinction.
Conversely, lower-tail independence identifies axis support but not the
next rate: independence is critical, positive Gaussian and Gumbel models
have larger hidden-face corrections, and singular Cuadras--Aug\'e or
Marshall--Olkin models require separate boundary analysis.

Rotations must be classified by which corner maps to small $p$-values.
They preserve global boundedness, but can move a tail-dependent or
corner-singular family's singularity into the relevant lower corner and
alter the Level-II conclusion.
}

\newcommand{\LiteratureAuditAppendix}{%
\section{Literature boundary and proof-status ledger}
\label{sec:story}

This appendix consolidates the extended literature review and the
claim-by-claim novelty audit.  It separates classical foundations and the
closest existing results from the statements proved here and from the
questions that remain open.  Here \emph{second-order calibration} means the
leading nonzero difference between the dependent tail and the exact
independence tail after their common $t^{-1}$ term.  This usage is broader
than classical second-order MRV: the correction may instead come from a
hidden or critical face, a common radial or angular expansion, or the
Half-Cauchy score map.

\subsection{What was already known}

The following inputs belong to the literature and should not be claimed as
new.
\begin{enumerate}[leftmargin=2.2em]
 \item Reciprocal aggregation underlies the harmonic-mean $p$-value of
 Wilson~\cite{Wilson2019}.  Positive Half-Cauchy aggregation belongs to the
 heavily right framework of Liu, Meng and Pillai
 \cite{LiuMengPillai2025}, building on the signed Cauchy combination test of
 Liu and Xie~\cite{LiuXie2020} and its subsequent dependence analysis
 \cite{LongEtAl2023}.  The signed and positive transformations are not the
 same problem: the nonnegative statistics considered here are governed by
 the lower tail of the $p$-value copula, whereas signed scores can involve
 cancellation and opposite-corner conditions.

 \item Homogeneous mapping of MRV vectors, spectral representations, and
 weighted-sum first-order tails are classical; see, for example,
 Hult and Lindskog~\cite{HultLindskog2002} and
 Basrak, Davis and Mikosch~\cite{BasrakDavisMikosch2002}.  Therefore the
 fixed-dimensional statement that index-one MRV implies a universal
 normalized projection constant is a foundation, not the main novelty.
 For heavy-tailed combination tests specifically, first-order fixed-$m$
 validity under quasi-asymptotic independence is given by Gui, Jiang and
 Wang~\cite{GuiJiangWang2025}; the MRV-copula benchmark under asymptotic
 dependence is given by Gui, Mao, Wang and Wang
 \cite{GuiMaoWangWang2026}; and the angular-measure formulation and
 universal Pareto/harmonic-mean calibration result are given by Chakraborty, Guo,
 Shedden and Stoev~\cite{ChakrabortyGuoSheddenStoev2026}.
 The one-big-jump argument of Liu, Meng and Pillai
 \cite{LiuMengPillai2025} is a sufficient route through pairwise tail
 control, but it is not necessary for the index-one projection identity:
 interior-supported models can retain the same first-order constant.

 \item Transfer from multivariate second-order regular variation to risk
 aggregation is studied by Das and Kratz~\cite{DasKratz2020}.  A generic
 assertion that second-order MRV gives a second-order sum expansion is not
 new.  Second-order convolution theory and risk-concentration expansions
 also predate this paper
 \cite{GelukEtAl1997,BarbeMcCormick2005,DegenLambriggerSegers2010}.
 Yang and Zhang~\cite{YangZhang2023} explicitly obtain the bivariate
 unit-Pareto $(\log t)/t^2$ term and the corresponding edge boundary.
 Thus the logarithm itself is not new.  The issue specific to the present
 problem is its arbitrary fixed-dimensional, unequal-weight, exactly
 standardized harmonic-mean/positive-Half-Cauchy calibration coefficient
 with explicit higher-face control.

 \item Hidden regular variation, regular variation on nested cones,
 residual tail orders, and higher-order tail densities are established
 frameworks
 \cite{LedfordTawn1996,LedfordTawn1997,Resnick2002,Resnick2008,
 HuaJoeLi2014,LiHua2015}.  M\"obius inversion of face and excess measures
 is likewise classical \cite{BalkemaEmbrechts2007}.  The possible
 contribution here is the particular weighted-half-space increment and
 its second-order calibration use, not the underlying cone or
 inclusion--exclusion machinery.

 \item Gaussian-copula risks with regularly varying margins and their
 hidden tail geometry have substantial prior theory, including
 Kortschak~\cite{Kortschak2012} and Das and
 Fasen-Hartmann~\cite{DasFasen2024}.  The overlap is substantial at the
 level of bivariate joint-tail powers and Gaussian cone optimization.  The
 present Gaussian calculation should be described only as a conditional
 reciprocal weighted-half-space face integral, its transfer to positive
 Half-Cauchy aggregation under the same boundary conditions, its formal
 arbitrary-$m$ sum over dominant pairs, the two-sided
 quadrant constant, and the critical transition to the independent
 logarithm.  A theorem-by-theorem comparison with
 Kortschak remains necessary before a novelty claim is finalized.

 \item Frailty representations and lower-tail properties of multivariate
 Archimedean copulas are standard; see McNeil and
 Ne\v{s}lehov\'a~\cite{McNeilNeslehova2009} and Charpentier and
 Segers~\cite{CharpentierSegers2009}.  Tail aggregation under Archimedean
 dependence also predates this paper; see Barbe, Foug\`eres and
 Genest~\cite{BarbeFougeresGenest2006} and Embrechts, Ne\v{s}lehov\'a and
 W\"uthrich~\cite{EmbrechtsNeslehovaWuthrich2009}, as well as the explicit
 second-order work of Tong, Wu and Xu~\cite{TongWuXu2012} and Coqueret
 \cite{Coqueret2014}.  Finite-level harmonic-mean inequalities for some
 Clayton copulas are given by Chen, Wang, Wang and Zhu
 \cite{ChenWangWangZhu2026}.  The Clayton claim here is therefore
 the exact $R_{\bw}$ and $T_{\bw}$ coefficients and the score-map phase
 diagram, not
 the discovery of its tail dependence, frailty representation, or
 finite-level validity bounds.

 \item MRV and second-order radial behavior of elliptical and random-scale
 models are known \cite{HultLindskog2002,DemartaMcNeil2005,
 NikoloulopoulosJoeLi2009,Kim2024}.  The standard-$t$
 contribution here is the exact uniform-$p$ score expansion, explicit
 weighted angular coefficient, and distinct one- versus two-sided reciprocal
 and Half-Cauchy corrections.
\end{enumerate}

\small
\begin{longtable}{@{}p{0.20\textwidth}p{0.34\textwidth}p{0.36\textwidth}@{}}
\caption{Closest-result comparison delimiting the paper's contribution.
The last column states a scope boundary, not an unconditional
priority claim.}\label{tab:novelty-comparison}\\
\toprule
Reference & Established overlap & Scope of the present contribution \\
\midrule
\endfirsthead
\toprule
Reference & Established overlap & Scope of the present contribution \\
\midrule
\endhead
\bottomrule
\endfoot

Gui, Jiang and Wang~\cite{GuiJiangWang2025}
& Fixed-$m$ first-order validity and Bonferroni equivalence under
quasi-asymptotic independence.
& Dependence-specific second-order size distortion under axis, interior,
proper-face, and mixed MRV geometry. \\

Gui, Mao, Wang and Wang~\cite{GuiMaoWangWang2026}
& MRV-copula formulation; first-order validity and power under asymptotic
dependence.
& Rates and coefficients after the first-order MRV benchmark. \\

Chakraborty, Guo, Shedden and Stoev
\cite{ChakrabortyGuoSheddenStoev2026}
& Angular-measure formula and universal first-order Pareto/harmonic-mean calibration
within a broad homogeneous class.
& Exact-independence-relative positive Half-Cauchy and harmonic-mean error, including its sign and
critical-value effect. \\

Das and Kratz~\cite{DasKratz2020}
& General second-order MRV transfer and risk-concentration theory.
& Exactly standardized $p$-scores and noncompact weighted-half-space
functionals whose face limits can fail to be integrable. \\

Kortschak~\cite{Kortschak2012}
& Second-order sums for tail-independent risks, including Gaussian and
some Archimedean copulas.
& Only the exact positive Half-Cauchy and harmonic-mean normalization, arbitrary-$m$ weighted coefficient,
and model-specific calibration phases are potentially new; direct theorem
comparison remains required. \\

Yang and Zhang~\cite{YangZhang2023}
& Bivariate copula-sum expansions, the unit-index independence logarithm,
and an integrable-versus-truncated-moment edge boundary.
& The local-corner theorem extends the calculation to arbitrary fixed $m$,
unequal weights, pair-specific kernels, and explicit higher-face control. \\

Tong, Wu and Xu~\cite{TongWuXu2012}; Coqueret~\cite{Coqueret2014}
& Second-order Archimedean risk concentration and Bernstein-copula
aggregation.
& The reciprocal-$p$/HC coefficients and Clayton score-map transition are
specific contributions, conditional on formula-level
comparison. \\

Balkema and Embrechts~\cite{BalkemaEmbrechts2007}
& Geometric face/excess measures and M\"obius inversion.
& The induced weighted-half-space increment and its calibration use, not
generic face-lattice calculus. \\

Marshall and Olkin~\cite{MarshallOlkin1967}; Wang and Stoev
\cite{WangStoev2011}
& Shock models and discrete max-linear spectral support on selected faces.
& The explicit three-edge weighted-sum coefficient and the comparison of
positive Half-Cauchy with harmonic-mean calibration, pending a specialized
aggregation search. \\

Chen, Wang, Wang and Zhu~\cite{ChenWangWangZhu2026}
& Finite-level subuniformity and threshold bounds, including some Clayton
copulas.
& Small-level asymptotic coefficients and phase transitions rather than
finite-level validity inequalities. \\
\end{longtable}
\normalsize

This audit reflects sources located through August 23, 2026.  It supports
the scope statements above, but it does not replace a final formula-level
comparison with the specialized Gaussian, Archimedean, and
Marshall--Olkin aggregation literature before submission.

\subsection{What is proved in this paper}

The formal results now establish the following package.
\begin{enumerate}[leftmargin=2.2em]
 \item Fixed-dimensional first-order validity of both
 reciprocal/harmonic-mean and positive Half-Cauchy aggregation under
 general index-one MRV/lower MDA, including exponent measures with interior
 mass.  This is retained as a first-order benchmark, not a novelty claim.
 \item An independence-relative tail-distortion definition and a rigorous
 conversion of a regularly varying tail difference into size and
 critical-value expansions.
 \item A quantitative MRV projection result, product-shell and density
 criteria, an exact face-lattice decomposition, and an integrable
 hidden-face transfer theorem.
 \item A support-first exponent-measure taxonomy, a proof that
 exact Pareto normalization rules out nonzero pure-axis second-order
 measures, and an explicit product-power trichotomy for integrable,
 critical, and nonintegrable pair-face functionals.  The
 interior category also formally shows that every subset joint extreme is
 already first-order and that a score map homogeneous to the raw radial
 index converts the common radial variable to index one.
 \item A rigorous common-heavy-factor aggregation theorem with explicit
 $L^1$ inversion assumptions, together with reciprocal-edge and
 Half-Cauchy phase diagrams that separate radial, angular, and score-map
 corrections.
 \item The sharp independent coefficients multiplying $(\log t)/t^2$:
 $2\sum_{i<j}w_iw_j$ for $R_{\bw}$ and
 $2c^2\sum_{i<j}w_iw_j$ for $T_{\bw}$.
 \item The local-corner theorem in
 \cref{thm:sharp-local-corner}: arbitrary fixed dimension and unequal
 weights, pair-specific finite corner limits, explicit control of every
 higher face, and the resulting $R_{\bw}$ and $T_{\bw}$ calibration
 coefficients and sign.
 \item For Gaussian copulas, the compact hidden pair limits and their
 explicit M\"obius coefficients, together with a conditional fixed-$m$
 half-space theorem that isolates the still-unproved stratified boundary
 estimates.  No unconditional arbitrary-dimensional Gaussian expansion is
 claimed.
 \item For the standard multivariate-$t$ copula with $\nu>1$, proved
 reciprocal/harmonic-mean and positive Half-Cauchy second-order expansions
 with computable Gaussian angular expectations, including their necessary
 one-sided distinction when $1<\nu\le2$.
 \item For every positive Clayton parameter and arbitrary fixed $m$, an
 exact reciprocal-sum correction with a nonnegative coefficient, strictly
 positive when at least two weights are active and zero for one active
 coordinate, together with the positive Half-Cauchy phase transition at
 $\theta=2$.
 \item The proper-face pair-shock theorem in
 \cref{thm:pair-shock-second-order}: a pure edge-supported exponent
 measure, an explicit $(\log t)/t^2$ coefficient, and its
 independence-relative calibration consequence.  This is a canonical
 model theorem, not a general proper-face theorem.
 \item Uniform projection and radial-kernel theorems, plus unrestricted
 growing-dimensional first-order validity for fixed-parameter positive
 Clayton and standard multivariate-$t$ common-factor arrays.
\end{enumerate}

\subsection{What remains open}

The following items are not proved and are deliberately not hidden inside
the theorem statements.
\begin{enumerate}[leftmargin=2.2em]
 \item A full nested critical-face theorem covering simultaneous critical
 and nonintegrable subfaces.  The local-corner theorem controls the finite
 pair-kernel boundary under explicit higher-face bounds, but it does not
 supply the desired recursive theory for singular or nonintegrable nested
 faces.
 \item A general proper-face or mixed-support second-order theorem beyond
 the explicit three-dimensional pair-shock calculation.
 \item One-sided Gaussian second terms when all active correlations are
 nonpositive with at least one strictly negative, the stratified boundary
 estimate needed for the conditional positive-correlation theorem, and a
 uniform Gaussian theorem across $\rho\log t=O(1)$.  The all-zero case is
 independence and is already proved.
 \item The critical and subcritical standard-$t$ cases $\nu\le1$.
 \item Uniform second-order remainders for growing Clayton and $t$ arrays.
 The coefficient bounds are dimension-free, but coefficient control is not
 remainder control.
 \item A growing-dimensional Gaussian theorem with explicit accumulation
 of every active face.  The polynomial rate following
 \cref{open:gaussian-growing} is only a diagnostic.
 \item Formula-level, theorem-by-theorem comparisons with the closest
 aggregation results for Gaussian, Archimedean, and Marshall--Olkin models
 before any unconditional priority claim is made.
\end{enumerate}

}

\newcommand{\CopulaClassificationAppendix}{%
\begin{landscape}
\section{Systematic copula classification and supporting proofs}
\label{app:copula-classification-proofs}

This appendix contains the full copula ledger and verifies its support,
rectangle-rate, and bounded-density assertions.  Proved aggregate rates are
either established in the main model theorems or derived explicitly below;
the table cross-references those proofs.  Mechanism statements explicitly
marked ``candidate,'' ``diagnostic,'' or ``open'' are classifications
suggested by the available tail geometry, not weighted-half-space theorems.
Except where a
multivariate model is explicitly stated, the calculations are bivariate.
Specifying every bivariate margin of an $m$-dimensional copula does not in
general determine its full lower stable-tail-dependence function when
$m>2$.

The proof-status language in the rate and mechanism columns is used
literally.  When a row says that an aggregate rate is proved, a full
weighted-half-space conclusion has been established at the scale displayed
in that row.  In rows displaying the Level-II target, this means, for
$X\in\{Y,H\}$ with $c_Y=1$ and $c_H=2/\pi$,
\[
  \Pp\!\left(\sum_iw_iX_i>t\right)=\frac{c_X}{t}
  +O\!\left(\frac{\log t}{t^2}\right).
\]
Any different displayed power or slowly varying scale is to be read
literally.  A rectangle obstruction or a statement labelled ``expected''
is only diagnostic and is not promoted to a half-space theorem.

\CopulaSupportMechanismLedger
\end{landscape}

\subsection{Three reusable criteria}

Let $(U,V)$ have exact uniform margins and bivariate copula $C$.  For fixed
$x,y>0$, inclusion--exclusion gives
\begin{equation}\label{eq:appendix-rectangle-identity}
  \frac1s\Pp(U\leq sx\ \text{or}\ V\leq sy)
  =x+y-\frac{C(sx,sy)}s.
\end{equation}
Consequently:
\begin{enumerate}[leftmargin=2.2em]
  \item if $C(sx,sy)/s\to\Lambda_L(x,y)$, the lower-MDA limit exists and
  equals $x+y-\Lambda_L(x,y)$;

  \item the rectangle form of the Level-II target is
  \begin{equation}\label{eq:appendix-rectangle-rate}
    \frac{C(sx,sy)}s-\Lambda_L(x,y)
    =O\{s\log(1/s)\};
  \end{equation}

  \item in the axis-supported case $\Lambda_L=0$, a single diagonal
  calculation
  \[
    C(s,s)\not=O\{s^2\log(1/s)\}
  \]
  disproves the Level-II rectangle rate.
\end{enumerate}

If $C$ has a globally bounded density $c_C\leq M$, then
\begin{equation}\label{eq:appendix-density-rectangle}
  C(u,v)\leq Muv.
\end{equation}
\cref{thm:pair-density-second-order,cor:bounded-copula-density} then give
the full Level-II bound for both reciprocal/harmonic-mean and positive
Half-Cauchy aggregation.  Conversely,
\cref{eq:appendix-density-rectangle} shows that a copula satisfying
$C(s,s)\gg s^2$ cannot have a globally bounded density.
The same argument applies to any other corner rectangle: if
\[
  \Pp(U\leq s,V\geq1-s)\gg s^2
  \quad\text{or}\quad
  \Pp(U\geq1-s,V\geq1-s)\gg s^2,
\]
then global boundedness is impossible.  A copula with a nonzero singular
component is, of course, not absolutely continuous and cannot satisfy the
density condition.

\subsection{Product, FGM, and the smooth bounded-density families}

\paragraph*{Product copula}
For $C(u,v)=uv$,
\[
  C(sx,sy)=s^2xy,
  \qquad
  c_C(u,v)=1.
\]
Thus $C(sx,sy)/s\to0$, the lower-MDA measure is axis-supported, and the
global density bound proves Level II.
\cref{eq:independent-pareto-m,eq:independent-hc-m} show that the
logarithmic second term is generally genuinely present.

\paragraph*{FGM copula}
For $-1\leq\theta\leq1$,
\begin{align*}
  C_\theta(u,v)
  &=uv\{1+\theta(1-u)(1-v)\},\\
  c_\theta(u,v)
  &=1+\theta(1-2u)(1-2v).
\end{align*}
Hence
\[
  C_\theta(sx,sy)=s^2xy\{1+\theta+O(s)\},
  \qquad
  0\leq c_\theta(u,v)\leq2.
\]
The lower-MDA limit is axial and the bounded-density theorem proves the
full Level-II rate.

\paragraph*{AMH with $\theta<1$}
For
\[
  C_\theta(u,v)
  =\frac{uv}{1-\theta(1-u)(1-v)},
  \qquad -1\leq\theta<1,
\]
the denominator is bounded away from zero on $[0,1]^2$ for every fixed
$\theta<1$.  The copula and its two mixed derivatives are rational
functions with that same nonvanishing denominator.  Therefore the density
is continuous and bounded on the compact square.  Also,
\begin{equation}\label{eq:appendix-amh-interior}
  C_\theta(sx,sy)
  =\frac{s^2xy}{1-\theta+O(s)}=O(s^2).
\end{equation}
This proves the axial Level-I limit and the full Level-II result.  The
lower bound on the denominator tends to zero as $\theta\uparrow1$, so this
argument is not uniform at the endpoint.

\paragraph*{Frank copula}
For fixed finite $\theta\ne0$,
\begin{equation}\label{eq:appendix-frank-copula}
  C_\theta(u,v)
  =-\frac1\theta\log\!\left[
    1+\frac{(e^{-\theta u}-1)(e^{-\theta v}-1)}{e^{-\theta}-1}
  \right].
\end{equation}
Taylor expansion at the lower corner yields
\begin{equation}\label{eq:appendix-frank-lower}
  C_\theta(sx,sy)
  =\frac{\theta}{1-e^{-\theta}}s^2xy+O(s^3).
\end{equation}
Its density is
\begin{equation}\label{eq:appendix-frank-density}
  c_\theta(u,v)
  =\frac{\theta(1-e^{-\theta})e^{-\theta(u+v)}}
  {\{1-e^{-\theta}-(1-e^{-\theta u})(1-e^{-\theta v})\}^2}.
\end{equation}
For fixed finite $\theta$, the denominator in
\cref{eq:appendix-frank-density} is bounded away from zero in absolute
value on the compact square.  Thus the density is bounded.  At $\theta=0$
the continuous limit is the product copula.  This proves all three Frank
entries.

\paragraph*{Plackett copula}
For $0<\theta<\infty$, $\theta\ne1$, write
\begin{align*}
  D_\theta(u,v)
  &=\{1+(\theta-1)(u+v)\}^2
    -4\theta(\theta-1)uv,\\
  C_\theta(u,v)
  &=\frac{1+(\theta-1)(u+v)-\sqrt{D_\theta(u,v)}}
          {2(\theta-1)}.
\end{align*}
Direct differentiation gives
\begin{equation}\label{eq:appendix-plackett-density}
  c_\theta(u,v)
  =\frac{\theta\{1+(\theta-1)(u+v-2uv)\}}
        {D_\theta(u,v)^{3/2}}.
\end{equation}
For every fixed $\theta>0$, $D_\theta$ is strictly positive on
$[0,1]^2$; continuity and compactness therefore give a finite density
bound.  Expanding the copula at $(0,0)$ gives
\begin{equation}\label{eq:appendix-plackett-lower}
  C_\theta(sx,sy)=\theta s^2xy+O(s^3).
\end{equation}
The case $\theta=1$ is the product copula.  Hence the lower limit is axial
and Level II follows from bounded density.  The bound is parameterwise and
need not remain uniform as $\theta\downarrow0$ or $\theta\uparrow\infty$.

\paragraph*{Finite Bernstein, checkerboard, and mixture constructions}
A fixed-degree Bernstein copula density is a finite sum of bounded beta
polynomial basis functions.  A finite checkerboard density is piecewise
constant on finitely many cells.  A finite mixture
$c=\sum_{k=1}^K\pi_kc_k$ of bounded component densities satisfies
\[
  \|c\|_\infty
  \leq\sum_{k=1}^K\pi_k\|c_k\|_\infty<\infty.
\]
Each construction therefore satisfies Level III, and hence Levels II and
I, provided the degree, grid, number of components, and their bounds are
fixed.

For the bivariate FGM, AMH with $\theta<1$, Frank, and Plackett families,
the displayed analytic formulas also give
\[
 c_\theta(u,v)=\lambda_\theta+O(u+v)
 \qquad\text{as }(u,v)\to(0,0),
\]
with $\lambda_\theta=1+\theta$, $(1-\theta)^{-1}$,
$\theta/(1-e^{-\theta})$, and $\theta$, respectively.  The Dini condition
in \cref{thm:sharp-local-corner} therefore holds, and its higher-face
condition is vacuous in two dimensions.  This proves the sharp
$t^{-2}\log t$ coefficient recorded in \cref{tab:copula-hierarchy} for
both aggregates.  In dimensions above two the same conclusion is
conditional on the theorem's explicit higher-face bound; bounded pairwise
margins alone do not verify it.

\subsection{The AMH endpoint}

At $\theta=1$,
\[
  C_1(u,v)=\frac{uv}{u+v-uv}.
\]
For $x+y>0$,
\begin{align}
  \frac{C_1(sx,sy)}s
  &=\frac{xy}{x+y-sxy}\notag\\
  &=\frac{xy}{x+y}
    +s\frac{x^2y^2}{(x+y)^2}+O(s^2).
  \label{eq:appendix-amh-endpoint-expansion}
\end{align}
Thus
\[
  \Lambda_L(x,y)=\frac{xy}{x+y},
  \qquad
  \ell_L(x,y)=x+y-\frac{xy}{x+y},
\]
and the rectangle error is $O(s)$, which is compatible with
\cref{eq:appendix-rectangle-rate}.  This rectangle calculation alone proves
only compatibility; the bounded-density theorem is unavailable.  Indeed,
direct differentiation gives
\begin{equation}\label{eq:appendix-amh-endpoint-density}
  c_1(u,v)=\frac{2uv}{(u+v-uv)^3},
\end{equation}
so $c_1(s,s)\sim(4s)^{-1}$ and Level III fails.

The endpoint is not a separate unresolved model.  Algebraically,
\begin{equation}\label{eq:amh-clayton-identity}
 C_{\mathrm{AMH},1}(u,v)
 =\frac{uv}{u+v-uv}
 =(u^{-1}+v^{-1}-1)^{-1}
 =C_{\mathrm{Clayton},1}(u,v).
\end{equation}
Consequently, with $v=1-w$ and $0<w<1$,
\cref{thm:clayton-second-order,thm:clayton-hc-phase} give
\begin{align}
 \Pp(wY_1+vY_2>t)
 &=\frac1t+wv\,t^{-2}+o(t^{-2}),
 \label{eq:amh-endpoint-reciprocal-tail}\\
 \Pp(wH_1+vH_2>t)
 &=\frac ct+c^2wv\,t^{-2}+o(t^{-2}).
 \label{eq:amh-endpoint-hc-tail}
\end{align}
Thus both sharp coefficients are already part of the proved positive
Clayton common-frailty theorem.

\subsection{Clayton copulas}

\paragraph*{Positive parameter}
For $\theta>0$,
\[
  C_\theta(u,v)
  =(u^{-\theta}+v^{-\theta}-1)^{-1/\theta}.
\]
Let $A=x^{-\theta}+y^{-\theta}$.  Then
\begin{align}
  \frac{C_\theta(sx,sy)}s
  &=(A-s^\theta)^{-1/\theta}\notag\\
  &=A^{-1/\theta}
    +\frac1\theta A^{-1/\theta-1}s^\theta
    +O(s^{2\theta}).
  \label{eq:appendix-clayton-positive-expansion}
\end{align}
Hence the lower-MDA limit exists with
\[
  \Lambda_L(x,y)=(x^{-\theta}+y^{-\theta})^{-1/\theta}.
\]
When $0<\theta<1$, the error $s^\theta$ is larger than
$s\log(1/s)$, so the Level-II rectangle diagnostic fails.  When
$\theta\geq1$, the error is $O\{s\log(1/s)\}$, so the rectangle diagnostic
is compatible.  The separate half-space argument is supplied by
\cref{thm:clayton-second-order,thm:clayton-hc-phase}.  For
$R_{\bw}$ the correction has order $t^{-1-\theta}$ for every
$\theta>0$.  For $T_{\bw}$ it has the same order below $\theta=2$,
ties the intrinsic score-map term at $\theta=2$, and has order $t^{-3}$
above that boundary.  Thus Level II holds for both aggregates whenever
$\theta\ge1$.

For every $\theta>0$, $C_\theta(s,s)\sim2^{-1/\theta}s$.  A globally
bounded density would instead force $C_\theta(s,s)=O(s^2)$ by
\cref{eq:appendix-density-rectangle}.  Thus Level III fails for every
positive Clayton parameter.

\paragraph*{Negative parameter}
Write $\theta=-\alpha$ with $0<\alpha\leq1$.  The bivariate copula is
\begin{equation}\label{eq:appendix-negative-clayton}
  C_{-\alpha}(u,v)
  =(u^\alpha+v^\alpha-1)_+^{1/\alpha}.
\end{equation}
For fixed $x,y>0$,
$s^\alpha(x^\alpha+y^\alpha)<1$ eventually, so
\[
  C_{-\alpha}(sx,sy)=0
\]
for all sufficiently small $s$.  The lower-MDA measure is therefore axial,
and the rectangle diagnostic is exact near the lower corner.  This does
not by itself prove the weighted half-space rate because the support
boundary can be encountered at unequal scales.

For $0<\alpha<1$, the density on
$u^\alpha+v^\alpha>1$ is
\begin{equation}\label{eq:appendix-negative-clayton-density}
  c_{-\alpha}(u,v)
  =(1-\alpha)(uv)^{\alpha-1}
   (u^\alpha+v^\alpha-1)^{1/\alpha-2}.
\end{equation}
Take
\[
  v=(1-u^\alpha/2)^{1/\alpha}.
\]
Then $v\to1$, the last parenthesis in
\cref{eq:appendix-negative-clayton-density} equals $u^\alpha/2$, and
\[
  c_{-\alpha}(u,v)\asymp u^{-\alpha}\longrightarrow\infty.
\]
Thus Level III fails throughout $-1<\theta<0$.  At $\theta=-1$ the copula
is the singular countermonotone copula.

\subsection{Gumbel and Joe copulas}

\paragraph*{Gumbel}
For $\theta\geq1$,
\[
  C_\theta(u,v)
  =\exp\!\left[-\{(-\log u)^\theta+(-\log v)^\theta\}^{1/\theta}\right].
\]
At $\theta=1$ this is the product copula.  Suppose $\theta>1$ and let
$L=\log(1/s)$ and $q=2^{1/\theta}$.  For fixed $x,y>0$,
\[
  \{(L-\log x)^\theta+(L-\log y)^\theta\}^{1/\theta}
  =qL-\frac q2(\log x+\log y)+O(L^{-1}).
\]
Consequently,
\begin{equation}\label{eq:appendix-gumbel-axes}
  C_\theta(sx,sy)
  =s^q(xy)^{q/2}\{1+O(L^{-1})\}=o(s),
\end{equation}
so the lower-MDA limit is axial.  On the diagonal the formula is exact:
\begin{equation}\label{eq:appendix-gumbel-diagonal}
  C_\theta(s,s)=s^{2^{1/\theta}},
  \qquad 1<2^{1/\theta}<2.
\end{equation}
Hence $C_\theta(s,s)/s$ converges too slowly for
\cref{eq:appendix-rectangle-rate}, proving Level-II failure.
\cref{eq:appendix-gumbel-diagonal} also contradicts the $O(s^2)$ bound
implied by a globally bounded density, so Level III fails.

The sharper expansion in \cref{eq:appendix-gumbel-axes} has the
product-power hidden survival $(xy)^{-a}$ after reciprocal scoring, with
$a=q/2\in(1/2,1)$.  The corresponding M\"obius half-space increment is
therefore integrable by \cref{prop:product-power-face}.  This supports the
mechanism prediction in \cref{tab:copula-hierarchy}; it does not by itself
verify the uniform truncation needed for a full weighted-sum expansion.
Because $q<2$, any validated reciprocal correction at this scale would
transfer through \cref{lem:bounded} to $T_{\bw}$ with coefficient scaled
by $c^q$.  The missing truncation argument is therefore open for both
aggregates, not only for the reciprocal scores.

\paragraph*{Joe}
For $\theta\geq1$,
\begin{equation}\label{eq:appendix-joe-copula}
  C_\theta(u,v)
  =1-\{(1-u)^\theta+(1-v)^\theta
       -(1-u)^\theta(1-v)^\theta\}^{1/\theta}.
\end{equation}
At $\theta=1$ this is the product copula.  For fixed $x,y>0$ and
$\theta>1$, Taylor expansion at the lower corner gives
\begin{equation}\label{eq:appendix-joe-lower}
  C_\theta(sx,sy)=\theta s^2xy+O(s^3).
\end{equation}
Thus the lower-MDA limit is axial.  To verify the lower-strip condition,
put
\[
  a=(1-u)^\theta,
  \qquad b=(1-v)^\theta,
  \qquad D=a+b-ab.
\]
Differentiation of \cref{eq:appendix-joe-copula} gives
\begin{equation}\label{eq:appendix-joe-density}
\begin{split}
  c_\theta(u,v)
  ={}&(1-u)^{\theta-1}(1-v)^{\theta-1}D^{1/\theta-2}\\
     &\times\{\theta D+(\theta-1)(1-a)(1-b)\}.
\end{split}
\end{equation}
If $u\leq u_0<1$, then $a\geq(1-u_0)^\theta>0$ and $D\geq a$.
Every factor in \cref{eq:appendix-joe-density} is therefore uniformly
bounded for $u\leq u_0$ and $0<v<1$; symmetry handles the other lower
strip.  \cref{thm:pair-density-second-order} with the strip
observation \cref{eq:strip-copula-bound} proves the full Level-II rate.
Moreover, expansion of \cref{eq:appendix-joe-density} at the lower corner
gives $c_\theta(u,v)=\theta+O(u+v)$.  Hence the bivariate Dini condition in
\cref{thm:sharp-local-corner} holds with $\lambda=\theta$, proving the
sharp logarithmic coefficient for both aggregates.  For $m>2$, the same
coefficient remains conditional on the theorem's higher-face hypothesis.

Global boundedness nevertheless fails.  From
\cref{eq:appendix-joe-copula},
\begin{align}
  \Pp(U>1-s,V>1-s)
  &=1-2(1-s)+C_\theta(1-s,1-s)\notag\\
  &=s\{2-(2-s^\theta)^{1/\theta}\}\notag\\
  &\sim \{2-2^{1/\theta}\}s \asymp s.
  \label{eq:appendix-joe-upper-tail}
\end{align}
This is incompatible with the $O(s^2)$ upper-corner probability implied by
a globally bounded density.

\subsection{Cuadras--Aug\'e, Marshall--Olkin, and the Fr\'echet endpoints}

\paragraph*{Cuadras--Aug\'e}
For
\[
  C_\theta(u,v)=\{\min(u,v)\}^\theta(uv)^{1-\theta},
  \qquad0\leq\theta\leq1,
\]
assume without loss that $x\leq y$.  Then
\begin{equation}\label{eq:appendix-ca-scaling}
  C_\theta(sx,sy)
  =s^{2-\theta}x y^{1-\theta}.
\end{equation}
At $\theta=0$ this is the product copula.  For $0<\theta<1$,
$C_\theta(sx,sy)=o(s)$, so the lower-MDA measure is axial, but
\[
  \frac{C_\theta(sx,sy)}s
  =s^{1-\theta}xy^{1-\theta}
\]
is too large for the Level-II rectangle rate.  The copula has a nonzero
singular component for every $\theta>0$, so Level III fails.  At
$\theta=1$, $C_1(u,v)=\min(u,v)$ is the comonotone copula treated below.

\paragraph*{Marshall--Olkin}
For $0\leq\alpha,\beta\leq1$,
\begin{equation}\label{eq:appendix-mo-copula}
  C_{\alpha,\beta}(u,v)
  =\min\!\left\{u^{1-\alpha}v,\,uv^{1-\beta}\right\}.
\end{equation}
For fixed $x,y>0$,
\begin{equation}\label{eq:appendix-mo-scaling}
  C_{\alpha,\beta}(sx,sy)
  =\min\{s^{2-\alpha}x^{1-\alpha}y,
          s^{2-\beta}xy^{1-\beta}\}.
\end{equation}
If $\alpha,\beta>0$ and $(\alpha,\beta)\ne(1,1)$, this has diagonal order
\begin{equation}\label{eq:appendix-mo-diagonal}
  C_{\alpha,\beta}(s,s)
  =s^{2-\min(\alpha,\beta)},
  \qquad 1<2-\min(\alpha,\beta)<2.
\end{equation}
Therefore the lower-MDA limit is axial but the Level-II rectangle rate
fails.  The copula has a singular component on
$u^\alpha=v^\beta$, so Level III fails.  The endpoint
$\alpha=\beta=1$ is comonotone.  If either shock parameter is zero, the
diagonal exponent is two and the failure argument in
\cref{eq:appendix-mo-diagonal} no longer applies; this is why the table
states both parameters to be nontrivial.

\paragraph*{Comonotone copula}
If $U_1=\cdots=U_m=U$, then the copula is comonotone, the lower tail measure
is diagonal, and normalized weights give
\[
 R_{\bw}=U^{-1},\qquad \Pp(R_{\bw}>t)=t^{-1},\quad t\ge1,
\]
whereas
\[
  T_{\bw}=\cot(\pi U/2).
\]
Hence
\begin{align}
  \Pp(T_{\bw}>t)
  &=1-\frac2\pi\arctan t\notag\\
  &=\frac{2}{\pi t}-\frac{2}{3\pi t^3}+O(t^{-5}).
  \label{eq:appendix-comonotone-tail}
\end{align}
Thus reciprocal aggregation is exactly Pareto, while positive Half-Cauchy
aggregation has the sharper $O(t^{-3})$ remainder.
The copula is singular, so Level III fails.  The same calculation proves
the Cuadras--Aug\'e endpoint entry.

\paragraph*{Countermonotone copula}
Let $(U_1,U_2)=(U,1-U)$ and let $0\leq w\leq1$.  Put
\[
  H=\cot(\pi U/2).
\]
Then $H$ is standard Half-Cauchy and
\[
  \cot\{\pi(1-U)/2\}=H^{-1},
  \qquad
  T_{\bw}=wH+(1-w)H^{-1}.
\]
The reciprocal calculation is already given in
\cref{eq:axis-counter-reciprocal}; it has an order-$t^{-2}$ correction.
The positive Half-Cauchy correction is different:
For $0<w<1$ and $t>2\sqrt{w(1-w)}$, define
\begin{equation}\label{eq:appendix-counter-roots}
  h_\pm(t)
  =\frac{t\pm\sqrt{t^2-4w(1-w)}}{2w}.
\end{equation}
The event $\{T_{\bw}>t\}$ is the disjoint union
$\{H<h_-(t)\}\cup\{H>h_+(t)\}$.  Since
$F_H(h)=2\arctan(h)/\pi$,
\begin{equation}\label{eq:appendix-counter-exact-tail}
  \Pp(T_{\bw}>t)
  =\frac2\pi\left\{
      \arctan h_-(t)+\arctan\frac1{h_+(t)}
    \right\}.
\end{equation}
Expansion of \cref{eq:appendix-counter-roots} gives
\[
  h_-(t)=\frac{1-w}{t}
          +\frac{w(1-w)^2}{t^3}+O(t^{-5}),
  \qquad
  h_+(t)^{-1}=\frac{w}{t}
          +\frac{w^2(1-w)}{t^3}+O(t^{-5}).
\]
Using $\arctan x=x-x^3/3+O(x^5)$ therefore gives
\begin{equation}\label{eq:appendix-counter-tail}
  \Pp(T_{\bw}>t)=\frac{2}{\pi t}
  +\frac2\pi\left\{2w(1-w)-\frac13\right\}t^{-3}
  +O(t^{-5}).
\end{equation}
The cases $w=0$ and $w=1$ reduce directly to a standard Half-Cauchy
variable.  Also,
\[
  C_W(sx,sy)=\max\{s(x+y)-1,0\}=0
\]
for all sufficiently small $s$, proving the exact axial lower-MDA limit.
The countermonotone copula is singular, so Level III fails.

\subsection{Gaussian and standard multivariate-\texorpdfstring{$t$}{t}
copulas}

\paragraph*{One-sided Gaussian copula}
\cref{thm:gaussian} already proves Level I for every fixed
$|\rho|<1$.  In the bivariate case, the sharper equal-threshold asymptotic
is
\begin{equation}\label{eq:appendix-gaussian-equal-tail}
  C_\rho(s,s)
  \asymp
  s^{2/(1+\rho)}
  \{\log(1/s)\}^{-\rho/(1+\rho)}.
\end{equation}
For $0<\rho<1$, the power $2/(1+\rho)$ lies strictly between one and two,
so the Level-II rectangle diagnostic fails.  The same display contradicts
the $O(s^2)$ implication of global bounded density.

For $-1<\rho<0$, the power in
\cref{eq:appendix-gaussian-equal-tail} exceeds two.  Thus the
lower--lower rectangle error is $o\{s\log(1/s)\}$ and is compatible with
Level II.  It does not complete the half-space proof, because the Gaussian
copula density is singular at opposite corners.  Indeed, writing
$z_u=\Phi^{-1}(u)$, the density is
\begin{equation}\label{eq:appendix-gaussian-density}
  c_\rho(u,v)
  =\frac1{\sqrt{1-\rho^2}}
   \exp\!\left[
     \frac{2\rho z_uz_v-\rho^2(z_u^2+z_v^2)}
          {2(1-\rho^2)}
   \right].
\end{equation}
For $\rho>0$, take $u=v\downarrow0$; for $\rho<0$, take
$u\downarrow0$ and $v\uparrow1$ with $z_v=-z_u$.  The exponent tends to
$+\infty$ in each case, proving that the global density is unbounded.  At
$\rho=0$, \cref{eq:appendix-gaussian-density} equals one and the copula is
the product copula.

For $0<\rho<1$, the aggregate rate in \cref{tab:copula-hierarchy} is not
deduced from the equal-threshold rectangle alone.  The required unequal-tail
approximation and integrable face functional identify the candidate
coefficient.  Truncation control and the higher-face remainder are explicit
assumptions of \cref{thm:gaussian-second-order-m}, not proved consequences
of the rectangle limit.

\paragraph*{Two-sided Gaussian copula}
For $p_i=2\{1-\Phi(|Z_i|)\}$, the joint small-$p$ event is the union of four
sign quadrants.  A quadrant whose signed Gaussian correlation is
$|\rho|$ gives a lower bound of order
\[
  s^{2/(1+|\rho|)}
  \{\log(1/s)\}^{-|\rho|/(1+|\rho|)},
\]
while the four-quadrant union bound gives the same power as an upper bound.
Thus Level I remains axial for $|\rho|<1$, but the Level-II rectangle rate
fails whenever $\rho\ne0$.  The induced two-sided $p$-value copula cannot
have a globally bounded density, since boundedness would force every
lower-corner square probability to be $O(s^2)$.
The rate and coefficient recorded in \cref{tab:copula-hierarchy} are
conditional on the two-sided face-transfer assumptions of
\cref{thm:gaussian-second-order-m}; they do not follow from this rectangle
diagnostic alone.

\paragraph*{Standard multivariate-$t$ copula}
\cref{thm:t-copula} proves index-one MRV of the reciprocal and
Half-Cauchy score vectors.  Its spectral measure generally has interior
mass.  In the bivariate case,
\[
  \frac{C_{t,\rho,\nu}(s,s)}s
  \longrightarrow
  \lambda_t
  =2T_{\nu+1}\!\left(
    -\sqrt{\frac{(\nu+1)(1-\rho)}{1+\rho}}
  \right)>0.
\]
If the copula density were globally bounded, the numerator would be
$O(s^2)$, a contradiction.  Hence Level III fails.  Ordinary MRV supplies
no numerical second-order rate.  The model-specific calculation in
\cref{thm:t-second-order-formal} supplies the missing mapping and
half-space argument for $\nu>1$.  For $\nu>2$, both $R_{\bw}$ and
$T_{\bw}$ have a radial correction of order $t^{-1-2/\nu}$.  For
one-sided $R_{\bw}$, an angular order-$t^{-2}$ term ties the radial term at
$\nu=2$ and dominates it for $1<\nu<2$; two-sided $R_{\bw}$ and either
orientation of $T_{\bw}$ retain the radial order.  The cases
$0<\nu\le1$ remain open because of angular criticality.

\subsection{Rotations and completion of the table audit}

If an absolutely continuous copula is reflected in one or both
coordinates, its rotated density is obtained from $c_C$ by replacing
$u$ with $1-u$ and/or $v$ with $1-v$.  Thus a global density bound is
preserved exactly under rotations.  Lower-tail behavior is not preserved:
the reflection permutes the lower--lower, lower--upper, upper--lower, and
upper--upper corners.  Consequently, the lower-MDA and Level-II
classification of a rotated tail-dependent or corner-singular family must
be recomputed at the corner mapped to small $p$-values.  This proves the
rotation statement following \cref{tab:copula-hierarchy} and completes
the classification audit.
}

\CopulaClassificationAppendix

\LiteratureAuditAppendix
\section{Proofs of the main results}
\label{app:proofs}

The proofs are collected in the order in which their statements appear in
the main text.

\subsection{Heavily right scores and exponent-measure geometry}

\phantomsection\label{proof:prop:equiv}
\begin{proof}[Proof of \cref{prop:equiv}]
The expansion at zero is
\[
  \cot\!\left(\frac{\pi u}{2}\right)
  =\frac{2}{\pi u}-\frac{\pi u}{6}+O(u^3).
\]
Thus $d(u)\to0$ as $u\downarrow0$.  If $x=\pi u/2$, then
\[
 d'(u)=\frac{\pi}{2}\left\{\csc^2x-x^{-2}\right\}>0,
\]
because $\sin x<x$ for $0<x\leq\pi/2$.  Since the continuous extension
satisfies $d(0)=0$ and $d(1)=c$, one has $0\leq d\leq c$.
Applying the scalar bound coordinatewise proves
\cref{eq:bounded-difference}.  Moreover,
\[
  \frac{\|\boldsymbol H-c\boldsymbol Y\|_\infty}{t}
  \leq\frac{c}{t}\longrightarrow0
\]
deterministically.  Bounded perturbations do not alter vague tail
convergence on sets bounded away from the origin.  This proves the
equivalence and the scaling relation for $\mu_H$.  Its marginal masses
follow directly from the displayed Half-Cauchy survival expansion in
\cref{sec:score-equivalence}.
\end{proof}

\phantomsection\label{proof:thm:hcct-mrv}
\begin{proof}[Proof of \cref{thm:hcct-mrv}]
For $\boldsymbol Y$, exact Pareto margins give $c_i=1$; for
$\boldsymbol H$, \cref{prop:equiv} gives $c_i=2/\pi$.
Substitution into \cref{eq:linear-tail}, together with
$\sum_iw_i=1$, gives the two constants.  The radial measure has no atoms,
so $\mu(\partial A_{\boldsymbol w})=0$ and \cref{def:mrv} applies to the
half-space.  Under independence, both score vectors are MRV with axial
spectral measures, and the same calculation gives the two reference tails.
\end{proof}

\subsection{Lower max-domains for p-value copulas}

\phantomsection\label{proof:thm:copula-mrv}
\begin{proof}[Proof of \cref{thm:copula-mrv}]
For $s>0$ small, write
\[
 L_s(\boldsymbol x)
 =s^{-1}\Pp\!\left(\bigcup_i\{U_i\leq sx_i\}\right).
\]
Exact uniformity and a symmetric-difference bound give
\[
 |L_s(\boldsymbol x)-L_s(\boldsymbol y)|
 \leq\sum_i|x_i-y_i|.
\]
Hence the pointwise limit $\ell_L$ is Lipschitz continuous.  It is also
homogeneous of order one.  By \cref{eq:min-limit},
$\exp\{-\ell_L(\boldsymbol x)\}$ is the continuous pointwise limit of the
joint survival functions of normalized componentwise minima.  After taking
reciprocals,
\[
 G(\boldsymbol y)=\exp\{-\ell_L(1/y_1,\ldots,1/y_m)\}
\]
is a proper, nondegenerate unit-Fr\'echet distribution; the exact marginal
bounds give properness, and homogeneity of $\ell_L$ gives max-stability.
The standard exponent-measure representation of a max-stable law therefore
supplies a unique Radon measure $\mu_Y$ on $\mathbb E_m$; see, for example,
Resnick~\cite{Resnick2008}.  For $\boldsymbol x\geq0$, define
\[
  B_{\boldsymbol x}
  =\left\{\boldsymbol y\in\mathbb E_m:
       y_i>x_i^{-1}\ \text{for at least one }i\right\},
\]
where a coordinate with $x_i=0$ contributes no event.  The representation
satisfies $\mu_Y(B_{\boldsymbol x})=\ell_L(\boldsymbol x)$, and
\begin{align*}
  t\,\Pp(\boldsymbol Y/t\in B_{\boldsymbol x})
  &=t\,\Pp\!\left(
      U_i<\frac{x_i}{t}\ \text{for at least one }i
    \right)\\
  &\longrightarrow\ell_L(\boldsymbol x).
\end{align*}
The sets $B_{\boldsymbol x}$ form a convergence-determining class for
boundedly finite measures on $\mathbb E_m$ and are $\mu_Y$-continuity sets;
the latter also follows directly from the diffuse radial component of an
exponent measure.  Their convergence therefore gives the vague convergence
in \cref{def:mrv}.
\cref{prop:equiv} transfers the convergence to $\boldsymbol H$.
\end{proof}

\phantomsection\label{proof:prop:pairwise}
\begin{proof}[Proof of \cref{prop:pairwise}]
The union bound supplies the upper bound in \cref{eq:ell-sum}.
Bonferroni's lower bound differs from the sum of marginal probabilities by
at most the finite sum of pairwise intersections.  Exact uniformity and
\cref{eq:pair-comparable} give \cref{eq:ell-sum}.  Invoking
\cref{thm:copula-mrv} now proves MRV.  Finally,
\cref{eq:spectral-moment-constraints} and the representation of
$\ell_L$ uniquely identify the spectral measure: the identity
$\ell_L(\boldsymbol x)=\sum_i x_i$ is precisely the unit mass constraint
carried separately by the coordinate axes.  \cref{prop:equiv} transfers
the same support geometry to $\boldsymbol H$.
\end{proof}

\phantomsection\label{proof:cor:lower-mda-validity}
\begin{proof}[Proof of \cref{cor:lower-mda-validity}]
Apply \cref{thm:copula-mrv,thm:hcct-mrv}.
\end{proof}

\subsection{Second-order calibration and quantitative tail control}

\phantomsection\label{proof:thm:calibration-inversion}
\begin{proof}[Proof of \cref{thm:calibration-inversion}]
For $q_0=c_0/\alpha$, set
\[
 x_j=q_0\left[1+\frac{d_{j,\bw}}{c_0}
                    q_0^{-\kappa}L(q_0)\right].
\]
Since $x_j/q_0\to1$, the uniform convergence theorem for slowly varying
functions gives $L(x_j)/L(q_0)\to1$.  Expanding $c_0/x_j$ and the second
term in \cref{eq:common-second-order-tail-expansion} shows that
$\overline F_{j,\bw}(x_j)=\alpha+
  o\{\alpha q_0^{-\kappa}L(q_0)\}$.  For a fixed $\varepsilon>0$, instead
set
\[
 x_{j,\pm}=q_0\left[1+\left\{
   \frac{d_{j,\bw}}{c_0}\pm\varepsilon\right\}
   q_0^{-\kappa}L(q_0)\right].
\]
The same expansion gives
\[
 \overline F_{j,\bw}(x_{j,+})<\alpha
 <\overline F_{j,\bw}(x_{j,-})
\]
for all sufficiently small $\alpha$.  Monotonicity brackets the quantile
between $x_{j,-}$ and $x_{j,+}$; letting $\varepsilon\downarrow0$ proves
\cref{eq:second-order-quantile-expansion}, including when
$d_{j,\bw}=0$.

Subtracting the two instances of
\cref{eq:common-second-order-tail-expansion} and evaluating at
$q_{\Pi,\bw}(\alpha)\sim q_0(\alpha)$ gives
\cref{eq:second-order-size-distortion}.  Division by $\alpha$ gives
\cref{eq:second-order-relative-size-distortion}, and subtraction of the
two quantile expansions gives
\cref{eq:critical-value-calibration-shift}.
\end{proof}

\phantomsection\label{proof:thm:independent-sharp}
\begin{proof}[Proof of \cref{thm:independent-sharp}]
For two coordinates, conditioning on $Y_1$ reduces the cooperative part
to
\[
 \frac{w_1}{t-w_2}
 +w_1w_2\int_{w_1}^{t-w_2}\frac{dz}{z^2(t-z)}.
\]
Partial fractions give \cref{eq:independent-pareto-exact}.  The logarithm
is not a marginal remainder: both Pareto margins are exact.  After scaling
by $t$, the hidden product density is proportional to
$t^{-2}x^{-2}y^{-2}$.  Near, say, $y=0$, the cooperative band cut out by
the weighted-sum boundary has $x$-width proportional to $y$.  Its
truncated face integral is therefore
\[
 t^{-2}\int_{1/t}^1y\,y^{-2}\,dy
 =t^{-2}\log t.
\]
The other axis supplies the second copy.  Equivalently, expanding
\cref{eq:independent-pareto-exact} yields
$2w_1w_2(\log t)/t^2$.  For general fixed $m$, use the exact M\"obius
decomposition in \cref{lem:face-decomposition}.  The singleton increments
sum to $t^{-1}$.  Applying the bivariate identity to each pair increment
gives the displayed logarithmic coefficient, while
\cref{lem:product-corner-face-envelope} bounds every fixed face of
dimension at least three by $O(t^{-2})$.  Summing the finitely many face
increments proves \cref{eq:independent-pareto-m}.

Finally, \cref{lem:bounded} gives a fixed $K$ such that
\[
 \Pp(cR_{\boldsymbol w}>t+K)
 \leq \Pp(T_{\boldsymbol w}>t)
 \leq \Pp(cR_{\boldsymbol w}>t-K).
\]
A fixed threshold shift changes the leading $c/t$ term by $O(t^{-2})$
and leaves the $(\log t)/t^2$ coefficient unchanged, proving
\cref{eq:independent-hc-m}.
\end{proof}

\phantomsection\label{proof:prop:shell-probability}
\begin{proof}[Proof of \cref{prop:shell-probability}]
If $m=1$, the conclusion is the exact one-dimensional marginal expansion,
so suppose $m\geq2$.
Taking $(a,b)=(t,\infty)$ by monotone approximation in
\cref{eq:shell-probability-bound} gives
\[
  \Pp(Z_i>t,Z_j>t)=O(t^{-2}).
\]
Thus fixed-dimensional Bonferroni bounds make the union of individual
big-jump events $c_X/t+O(t^{-2})$.

On the event
\[
  \left\{\sum_iZ_i>t,\ \max_iZ_i\leq t\right\},
\]
choose a largest coordinate $Z_i=x>t/m$.  Some $j\ne i$ then satisfies
$Z_j>(t-x)/(m-1)$.  Partition $[t/m,t]$ into intervals, apply
\cref{eq:shell-probability-bound} on each interval with the smallest
corresponding lower bound for $Z_j$, and let the mesh tend to zero.  This
gives
\begin{align*}
  \Pp\!\left(\sum_iZ_i>t,\ \max_iZ_i\leq t\right)
  &\leq C_m\int_{t/m}^{t}
  \frac{dx}{(1+x)^2(1+t-x)}\\
  &=O\!\left(\frac{\log t}{t^2}\right).
\end{align*}
Combining the two bounds proves the proposition.
\end{proof}

\phantomsection\label{proof:thm:pair-density-second-order}
\begin{proof}[Proof of \cref{thm:pair-density-second-order}]
If $m=1$, the conclusion is the exact one-dimensional marginal expansion,
so suppose $m\geq2$.
Let $A_i=\{Z_i>t\}$.  Exact Pareto and Half-Cauchy margins give,
respectively,
\[
  \Pp(A_i)
  =\frac{c_Xw_i}{t}+O(t^{-2}).
\]
The density bound gives
\[
  \Pp(A_i\cap A_j)=O(t^{-2}),
\]
so fixed-dimensional Bonferroni bounds yield
\begin{equation}\label{eq:big-union-second}
  \Pp\!\left(\bigcup_iA_i\right)
  =\frac{c_X}{t}+O(t^{-2}).
\end{equation}

It remains to bound the event
\[
  R_t=\left\{\sum_iZ_i>t,\ \max_iZ_i\leq t\right\}.
\]
On $R_t$, choose a largest coordinate $Z_i=x$.  Then $x>t/m$, and some
$j\ne i$ satisfies $Z_j>(t-x)/(m-1)$.  Hence
\begin{align*}
  \Pp(R_t)
  &\leq\sum_{i\ne j}
  \int_{t/m}^{t}
  \int_{(t-x)/(m-1)}^\infty g_{ij}(x,y)\,dy\,dx\\
  &\leq C_m
  \int_{t/m}^{t}
    \frac{dx}{(1+x)^2(1+t-x)}
  =O\!\left(\frac{\log t}{t^2}\right).
\end{align*}
Combining this bound with \cref{eq:big-union-second} proves the result.
\end{proof}

\phantomsection\label{proof:cor:bounded-copula-density}
\begin{proof}[Proof of \cref{cor:bounded-copula-density}]
Under monotone marginal transformation, the joint score density is the
copula density multiplied by the two marginal score densities.  Both a
weighted unit-Pareto density and a weighted Half-Cauchy density are bounded
by $C_i(1+x)^{-2}$.  Therefore
\cref{eq:global-copula-bound} implies
\cref{eq:score-density-bound}.
\end{proof}

\phantomsection\label{proof:thm:second-order-mrv}
\begin{proof}[Proof of \cref{thm:second-order-mrv}]
The spectral calculation in \cref{eq:linear-tail} gives
$\mu_X(A_{\boldsymbol w})=c_X$.  Dividing
\cref{eq:quant-mrv} by $t$ proves \cref{eq:quant-mrv-conclusion}.
\end{proof}

\subsection{Face calculus for higher-order aggregation}

\phantomsection\label{proof:lem:face-decomposition}
\begin{proof}[Proof of \cref{lem:face-decomposition}]
\cref{eq:face-decomposition} is the M\"obius inversion identity
on the subset lattice, applied to $g_{[m]}$.
\end{proof}

\phantomsection\label{proof:thm:hidden-face-transfer}
\begin{proof}[Proof of \cref{thm:hidden-face-transfer}]
Because $\nu_I$ is Radon, one may choose
$\varepsilon_n\downarrow0$ and $M_n\uparrow\infty$ so that
$\nu_I(\partial K_{\varepsilon_n,M_n})=0$: on each fixed compact annulus,
the restricted coordinate projections are finite measures and therefore
have at most countably many atoms.  On each such rectangle,
$G_I\mathbf1_{K_{\varepsilon_n,M_n}}$ is bounded, compactly supported, and
$\nu_I$-almost-everywhere continuous, so vague convergence and the
portmanteau theorem give convergence of its integral.  Condition
\cref{eq:hidden-face-ui} removes the truncation uniformly.  Summing the
finite identity in \cref{lem:face-decomposition}, with
\cref{eq:hidden-face-omitted-bound} for omitted faces, proves the last
claim.
\end{proof}

\phantomsection\label{proof:prop:product-power-face}
\begin{proof}[Proof of \cref{prop:product-power-face}]
Put $z_i=w_ix_i$ and $z_j=w_jx_j$.  The pair increment equals $+1$ on
\[
 0<z_i,z_j<1,\qquad z_i+z_j>1,
\]
equals $-1$ on $z_i,z_j>1$, and vanishes elsewhere.  The negative region
has finite mass.  Near $z_i=0$, the positive region confines $z_j$ to an
interval of length $O(z_i)$, leaving an $O(z_i^{-a_i})$ integrand; this is
integrable because $a_i<1$, and the symmetric argument handles the other
axis.  Thus the integral is absolutely convergent.  Successive integration
using Euler's beta identity, continued at $a_i+a_j=1$, gives
\cref{eq:product-power-functional}.
\end{proof}

\phantomsection\label{proof:lem:product-corner-face-envelope}
\begin{proof}[Proof of \cref{lem:product-corner-face-envelope}]
If $\Gamma_k(\boldsymbol z)\ne0$, then $\sum_jz_j>1$, so at least one
coordinate is at least $1/k$.  Partition the integration region according
to the coordinates exceeding $1/k$.

On a region having at least two such coordinates, integrate those
coordinates over $[1/k,\infty)$ and use
$|\Gamma_k|\le2^k$.  Their inverse-square integrals are finite, while each
of the remaining $k-2$ coordinates contributes at most
$\int_\varepsilon^\infty z^{-2}\,dz=\varepsilon^{-1}$.  This part is
therefore $O\{\varepsilon^{-(k-2)}\}$.

It remains to consider a region on which exactly one coordinate, say
$z_r$, is at least $1/k$.  All subset sums not containing $r$ are then
strictly below one.  With $h(s)=\mathbf1\{s>1\}$,
\[
 \Gamma_k(\boldsymbol z)=\prod_{j\ne r}\Delta_{z_j}h(z_r),
 \qquad
 \Delta_af(s)=f(s+a)-f(s).
\]
Commutativity of the difference operators, together with
\[
 \|\Delta_af\|_{L^1(\R)}\le2\|f\|_{L^1(\R)},
 \qquad
 \|\Delta_ah\|_{L^1(\R)}=a,
\]
gives
\[
 \int_\R|\Gamma_k(\boldsymbol z)|\,dz_r
 \le2^{k-2}\min_{j\ne r}z_j.
\]
Since $z_r^{-2}$ is bounded on $[1/k,\infty)$, the remaining integral is
bounded by a constant multiple of
\[
 \int_{[\varepsilon,1/k]^{k-1}}
 \min_jz_j\prod_jz_j^{-2}\,d\boldsymbol z.
\]
Writing $n=k-1$ and partitioning according to the smallest coordinate,
this is bounded by
\[
 n\int_\varepsilon^{1/k}r^{-1}
 \left(\int_r^{1/k}z^{-2}\,dz\right)^{n-1}dr.
\]
It is $O\{\log(1/\varepsilon)\}$ when $n=1$ and
$O\{\varepsilon^{-(n-1)}\}=O\{\varepsilon^{-(k-2)}\}$ when $n\ge2$.
Combining the regions proves \cref{eq:product-corner-face-envelope}.
\end{proof}

\phantomsection\label{proof:thm:sharp-local-corner}
\begin{proof}[Proof of \cref{thm:sharp-local-corner}]
Apply the exact face decomposition in
\cref{lem:face-decomposition}.  The singleton terms are exact:
\[
 \sum_i\Pp(w_iY_i>t)=\frac{\sum_iw_i}{t}=\frac1t.
\]

For a pair $i<j$, write
\[
 \gamma(z_1,z_2)
 =\mathbf1\{z_1+z_2>1\}
  -\mathbf1\{z_1>1\}
  -\mathbf1\{z_2>1\}.
\]
The change of variables
$z_1=w_iY_i/t$ and $z_2=w_jY_j/t$ gives
\begin{align}
 \E G_{ij}(Y_i/t,Y_j/t)
 ={}&\frac{w_iw_j}{t^2}
 \int_{w_i/t}^\infty\int_{w_j/t}^\infty
 \gamma(z_1,z_2)\notag\\
 &\quad\times
 c_{ij}\!\left(\frac{w_i}{tz_1},\frac{w_j}{tz_2}\right)
 z_1^{-2}z_2^{-2}\,dz_1dz_2.
 \label{eq:pair-corner-integral}
\end{align}
If the density in \cref{eq:pair-corner-integral} is replaced by one, the
exact independent calculation gives
\[
 \frac{w_iw_j}{t^2}
 \log\!\frac{(t-w_i)(t-w_j)}{w_iw_j}
 =2w_iw_j\frac{\log t}{t^2}+O(t^{-2}).
\]
Replacing one by $\lambda_{ij}$ therefore produces the proposed
logarithmic coefficient.

To control the replacement error, split the positive cooperative region
of $\gamma$ into a central part and its two edge strips.  The central part
and the negative double-exceedance region give $O(t^{-2})$.  On an edge,
say $z_2\downarrow0$ and $z_1\in(1-z_2,1]$, integration in $z_1$ supplies
a factor proportional to $z_2$.  The substitution $s=w_j/(tz_2)$ then
gives
\begin{align}
 &\left|
 \E G_{ij}(Y_i/t,Y_j/t)
 -\lambda_{ij}\E_\Pi G_{ij}(Y_i/t,Y_j/t)
 \right|\notag\\
 &\qquad\le\frac{K_{ij}}{t^2}
 \left\{1+\int_{K_{ij}/t}^{u_0}
       \frac{\omega_{ij}(s)}s\,ds\right\}.
 \label{eq:pair-corner-error}
\end{align}
The lower-strip bound controls the portion on which the second copula
argument exceeds $u_0$.  Since $\omega_{ij}(s)\to0$, the integral in
\cref{eq:pair-corner-error} is $o(\log t)$ by logarithmic Ces\`aro
averaging.  Under \cref{eq:corner-dini} it is bounded.  This proves the
required pair expansion.

Now let $I$ have cardinality $k\ge3$.  On the support of $G_I$, at least
one $w_iY_i/t$ is at least $1/k$.  Hence at least one corresponding copula
coordinate is $O(t^{-1})$, so
\cref{eq:lower-strip-all-face-density} applies for all sufficiently large
$t$.  Changing variables $z_i=w_iY_i/t$ and applying
\cref{lem:product-corner-face-envelope} gives
\[
 \E|G_I(\bY_I/t)|
 \le K_I t^{-k}
 \left(\frac{t}{\min_{i\in I}w_i}\right)^{k-2}
 =O(t^{-2}).
\]
There are only finitely many faces.  Summing the singleton, pair, and
higher-face contributions proves
\cref{eq:pareto-local-corner-expansion,eq:pareto-local-corner-sharp}.

Finally, \cref{lem:bounded} gives a deterministic constant $K$ such that
\[
 \left|\sum_iw_iH_i-c\sum_iw_iY_i\right|\le K.
\]
Sandwich the Half-Cauchy tail between the Pareto-sum tails evaluated at
$(t-K)/c$ and $(t+K)/c$.  Substitution in
\cref{eq:pareto-local-corner-expansion} gives
\cref{eq:hc-local-corner-expansion}; under \cref{eq:corner-dini}, a
bounded threshold displacement changes the tail by only $O(t^{-2})$.
\end{proof}

\phantomsection\label{proof:cor:critical-corner-calibration}
\begin{proof}[Proof of \cref{cor:critical-corner-calibration}]
Under independence, $\lambda_{ij}=1$.  Subtract
\cref{eq:independent-pareto-m,eq:independent-hc-m} from the corresponding
expansions in
\cref{eq:pareto-local-corner-expansion,eq:hc-local-corner-expansion}, and apply
\cref{thm:calibration-inversion} with $\kappa=1$ and $L(t)=\log t$.
\end{proof}

\phantomsection\label{proof:thm:root-inversion}
\begin{proof}[Proof of \cref{thm:root-inversion}]
By \cref{eq:common-threshold-event} and independence,
\[
 \Pp\{S(Q,\Theta)>t\}=\E\{\overline F_Q(q_t)\}
   +o(t^{-1-\delta}).
\]
Insert \cref{eq:common-radial-tail}.  Condition
\cref{eq:common-inverse-L1} supplies the leading term and every
score-map term at exponent $\delta$.  Conditions
\cref{eq:common-radial-L1,eq:common-remainder-L1} show that the
radial term contributes
$-ab\E(A^{1+\delta})t^{-1-\delta}$ when $\rho=\delta$ and is
$o(t^{-1-\delta})$ when $\rho>\delta$.  This is
\cref{eq:common-factor-result,eq:common-factor-coefficient}.
\end{proof}

\phantomsection\label{proof:cor:common-reciprocal-phase}
\begin{proof}[Proof of \cref{cor:common-reciprocal-phase}]
In \cref{eq:common-stat-expansion}, take the relative coefficients
$B/A$ at exponent $\kappa$ and $C/A$ at exponent one, and apply
\cref{thm:root-inversion}.  Under
\cref{eq:common-reciprocal-zero-C}, omit the identically zero exponent-one
coefficient and apply the same theorem with the sole score exponent
$\kappa$.
\end{proof}

\phantomsection\label{proof:cor:common-hc-phase}
\begin{proof}[Proof of \cref{cor:common-hc-phase}]
The relative score coefficient at exponent $\kappa$ is $B/A$ and that at
exponent two is $-dD/(cA)$.  Apply
\cref{thm:root-inversion} with leading angular term $cA$.
\end{proof}

\subsection{The second-order landscape inside index-one MRV}

\phantomsection\label{proof:prop:exact-margin-axis-second}
\begin{proof}[Proof of \cref{prop:exact-margin-axis-second}]
For $tx\ge1$, exact Pareto margins give
\[
 t\Pp(Y_i>tx)=x^{-1}
 =\mu\{\boldsymbol y:y_i>x\}.
\]
Substitution in \cref{eq:exact-margin-second-mrv} proves
\cref{eq:second-measure-zero-margins}.  If $\nu_2$ is axis-supported,
the upper tails in \cref{eq:second-measure-zero-margins} determine its
restriction to each axis and make every such restriction zero.  The
Half-Cauchy assertion follows by expanding
$c\arctan\{(tx)^{-1}\}$.
\end{proof}

\phantomsection\label{proof:prop:product-power-trichotomy}
\begin{proof}[Proof of \cref{prop:product-power-trichotomy}]
Fix $x_2=y$ small.  In the positive cooperative part of $G_{12}$,
$x_1$ lies between $(1-w_2y)/w_1$ and $1/w_1$, an interval of width
proportional to $y$ on which $x_1^{-a-1}$ is bounded above and below.
After integrating in $x_1$, the boundary contribution is therefore
comparable to
\[
 \int_\varepsilon^1y\,y^{-a-1}\,dy
 =\int_\varepsilon^1y^{-a}\,dy.
\]
This is bounded, logarithmic, or polynomial according as $a<1$, $a=1$,
or $a>1$.  Symmetry handles the other axis, and all remaining regions are
integrable for $a>0$.
\end{proof}

\phantomsection\label{proof:prop:interior-no-hidden-subset}
\begin{proof}[Proof of \cref{prop:interior-no-hidden-subset}]
Along direction $\boldsymbol s$, all inequalities $rs_i>1$, $i\in I$,
hold exactly when $r>1/\min_{i\in I}s_i$.  Integrating $r^{-2}dr$ gives
\cref{eq:interior-subset-mass}.  The integrand is strictly positive
$S$-almost everywhere, so its integral is positive.  MRV then proves
\cref{eq:interior-subset-tail}.
\end{proof}

\phantomsection\label{proof:lem:common-radial-index-one}
\begin{proof}[Proof of \cref{lem:common-radial-index-one}]
Substitute $r=q^{1/\alpha}$ in
\cref{eq:raw-common-radial-alpha}.
\end{proof}

\phantomsection\label{proof:lem:three-edge-critical-integral}
\begin{proof}[Proof of \cref{lem:three-edge-critical-integral}]
For an edge $e=\{i,j\}$, let $k$ be the omitted vertex and write
\[
 b_e=w_i+w_j=1-w_k,
 \qquad
 d_e=w_k.
\]
Ties among the edge shocks have probability zero.  Conditional on
$R_e=x$ being the largest edge shock, the two coordinates incident to
$e$ equal $x$, while the remaining coordinate equals
$N_e=\max(R_f:f\ne e)$.  Consequently,
$G_{\bw}=b_ex+d_eN_e$.  Since $G_{\bw}\le x$ on this event, an exceedance
requires $x>t$.  Write $h_e(x)=(t-b_ex)/d_e$ and extend $F_R$ by zero
below its support.  The contribution from the event that $R_e$ is largest
is exactly
\begin{align}
 P_e(t)
 &=\int_t^{t/b_e}
     f_R(x)\{F_R(x)^2-F_R(h_e(x))^2\}\,dx\notag\\
 &\quad+
   \int_{t/b_e}^{\infty}f_R(x)F_R(x)^2\,dx.
 \label{eq:pair-shock-largest-edge-integral}
\end{align}
The second integral is $ab_e/t+O(t^{-2})$.  In the first integral,
change variables from $x$ to $h=(t-b_ex)/d_e$.  The assumptions give
\[
 1-F_R(h)^2=\frac{2a}{h}+O(h^{-2})
\]
for large $h$.  Splitting the $h$-integral at a fixed constant, and using
the density expansion uniformly when both arguments are large, yields
\[
 \frac{d_e}{b_e}\int_0^t
 f_R\!\left(\frac{t-d_eh}{b_e}\right)
 \{1-F_R(h)^2\}\,dh
 =2a^2b_ed_e\frac{\log t}{t^2}+O(t^{-2}).
\]
The term involving
$1-F_R\{(t-d_eh)/b_e\}^2$ is $O(t^{-2})$.  Therefore
\[
 P_e(t)=\frac{ab_e}{t}
       +2a^2b_ed_e\frac{\log t}{t^2}+O(t^{-2}).
\]
Finally,
\[
 \sum_e b_e=2,
 \qquad
 \sum_e b_ed_e
 =\sum_{k=1}^3w_k(1-w_k)
 =2\sigma_2(\bw),
\]
which proves \cref{eq:three-edge-critical-expansion}.
\end{proof}

\phantomsection\label{proof:thm:pair-shock-second-order}
\begin{proof}[Proof of \cref{thm:pair-shock-second-order}]
For $e\in\{12,13,23\}$ define
\[
 W_e=1-\exp(-2E_e),
 \qquad
 Q_e=W_e^{-1}.
\]
Then
\[
 U_1=\min(W_{12},W_{13}),\quad
 U_2=\min(W_{12},W_{23}),\quad
 U_3=\min(W_{13},W_{23}),
\]
and hence
\[
 Y_1=\max(Q_{12},Q_{13}),\quad
 Y_2=\max(Q_{12},Q_{23}),\quad
 Y_3=\max(Q_{13},Q_{23}).
\]
The edge-shock tail and density satisfy
\begin{equation}\label{eq:pair-shock-q-tail}
 \Pp(Q_e>x)
 =1-\sqrt{1-x^{-1}}
 =\frac{1}{2x}+\frac{1}{8x^2}+O(x^{-3}),
 \qquad
 f_Q(x)=\frac{1}{2x^2}+O(x^{-3}).
\end{equation}
At first order only one edge shock can be large.  Such a shock produces
the direction $(\boldsymbol e_i+\boldsymbol e_j)/2$, and
$t\Pp(2Q_e>tr)\to r^{-1}$.  Events containing two large edge shocks have
order $t^{-2}$.  This proves \cref{eq:pair-shock-spectral-sharp}.
Applying \cref{lem:three-edge-critical-integral} with $a=1/2$ proves
\cref{eq:pair-shock-pareto-second-order}.

Similarly, put $K_e=\cot(\pi W_e/2)$.  Monotonicity of the cotangent
gives
\[
 H_1=\max(K_{12},K_{13}),\quad
 H_2=\max(K_{12},K_{23}),\quad
 H_3=\max(K_{13},K_{23}).
\]
Since
\begin{equation}\label{eq:pair-shock-k-tail}
 \Pp(K_e>x)
 =1-\sqrt{1-\frac2\pi\arctan(x^{-1})}
 =\frac{c}{2x}+\frac{c^2}{8x^2}+O(x^{-3}),
 \qquad
 f_K(x)=\frac{c}{2x^2}+O(x^{-3}),
\end{equation}
another application of \cref{lem:three-edge-critical-integral}, now with
$a=c/2$, proves \cref{eq:pair-shock-hc-second-order}.
\end{proof}

\phantomsection\label{proof:cor:pair-shock-calibration}
\begin{proof}[Proof of \cref{cor:pair-shock-calibration}]
Subtract \cref{eq:independent-pareto-m,eq:independent-hc-m} from the
corresponding expansions in
\cref{eq:pair-shock-pareto-second-order,eq:pair-shock-hc-second-order}.
The independent quantile satisfies
$q_{\Pi,S,\boldsymbol w}(\alpha)\sim c_S/\alpha$, so substitution into
\cref{eq:pair-shock-independent-difference} gives
\cref{eq:pair-shock-size-distortion}.  Direct substitution in
the corresponding pair-shock expansion gives
\cref{eq:pair-shock-first-order-threshold}.
\end{proof}

\subsection{Gaussian copulas}

\phantomsection\label{proof:thm:gaussian}
\begin{proof}[Proof of \cref{thm:gaussian}]
Fix $i\ne j$ and $x,y>0$.  Let
\[
  a_s=\Phi^{-1}(1-sx),
  \qquad
  b_s=\Phi^{-1}(1-sy).
\]
Both thresholds satisfy
\[
  a_s\sim b_s\sim\sqrt{2\log(1/s)}.
\]
The simultaneous small-$p$ event is
\[
  \{U_i\leq sx,U_j\leq sy\}
  =\{Z_i\geq a_s,Z_j\geq b_s\}.
\]
On this event, $Z_i+Z_j\geq a_s+b_s$.  Since
\[
  \operatorname{var}(Z_i+Z_j)=2(1+\rho_{ij})
  \leq2(1+|\rho_{ij}|),
\]
the univariate Gaussian tail bound gives
\begin{align}
  \Pp(U_i\leq sx,U_j\leq sy)
  &\leq
  \overline\Phi\!\left(
    \frac{a_s+b_s}{\sqrt{2(1+|\rho_{ij}|)}}
  \right)\notag\\
  &=s^{\,2/(1+|\rho_{ij}|)+o(1)}=o(s),\label{eq:gauss-pair}
\end{align}
because $2/(1+|\rho_{ij}|)>1$.  Fixed $m$ makes the conclusion uniform
over the finitely many pairs.
\cref{prop:pairwise,prop:equiv} proves the result.
\end{proof}

\phantomsection\label{proof:thm:gaussian-second-order-m}
\begin{proof}[Proof of \cref{thm:gaussian-second-order-m}]
For every $(i,j)\in E_*$, the bivariate normal tail expansion, uniformly
when the two thresholds differ by $O(1)$ on the log-probability scale, gives
\[
 \Pp(Y_i>tx_i,Y_j>tx_j)
 \sim K_{\rho_*}t^{-q_{\rho_*}}
       (\log t)^{-\beta_{\rho_*}}(x_ix_j)^{-a_{\rho_*}}.
\]
Thus the pair hidden measure has the product-power density in
\cref{eq:product-power-measure}.  \cref{prop:product-power-face} gives its
weighted shell integral as
$K_{\rho_*}C(a_{\rho_*})(w_iw_j)^{a_{\rho_*}}$.  All other pair
increments are covered directly by
\cref{eq:gaussian-face-remainder-condition}; hence only pairs in $E_*$
remain at scale $r_*$.  Given that a
maximal pair is simultaneously at its Gaussian extreme level, the
conditional regression coefficient of a third coordinate is
\[
 \frac{\rho_{ki}+\rho_{kj}}{1+\rho_*}
 \le \frac{2\rho_*}{1+\rho_*}<1.
\]
Gaussian concentration, with strictness supplied by positive
definiteness, makes comparable triple extremes smaller than the dominant
pair scale.  It does not, however, control scale-degenerate regions near
proper subfaces.  Those regions are exactly what the stated
uniform-integrability and face-remainder conditions cover.
\cref{thm:hidden-face-transfer,lem:face-decomposition} now prove
\cref{eq:gaussian-m-pareto}; no passage from a comparable-threshold bound
to a noncompact half-space integral is being assumed.

Since $q_{\rho_*}<2$, the bounded Half-Cauchy/Pareto comparison in
\cref{lem:bounded}, applied at thresholds differing by a fixed constant,
transfers the regularly varying second term and multiplies its coefficient
by $c^{q_{\rho_*}}$.  In the two-sided case the two
dominant sign quadrants contribute equally, while the marginal threshold
is halved; this changes $K_\rho$ to $2^{1-q_\rho}K_\rho$.
\end{proof}

\subsection{Common-factor copulas}

\phantomsection\label{proof:lem:t-mrv}
\begin{proof}[Proof of \cref{lem:t-mrv}]
As $r\to\infty$,
\begin{align*}
  \Pp(R_0>r)
  &=\Pp(W<\nu/r^2)\\
  &\sim
  \frac{\nu^{\nu/2}}{2^{\nu/2}\Gamma(\nu/2+1)}r^{-\nu}.
\end{align*}
Thus the common radial multiplier $R_0$ is regularly varying with index
$\nu$.  The Gaussian direction $\boldsymbol G$ has moments of every order.
The multivariate version of Breiman's lemma therefore implies that
$R_0\boldsymbol G$ is MRV with index $\nu$; see
Hult and Lindskog~\cite{HultLindskog2002} or
Basrak, Davis and Mikosch~\cite{BasrakDavisMikosch2002}.
\end{proof}

\phantomsection\label{proof:thm:t-copula}
\begin{proof}[Proof of \cref{thm:t-copula}]
For the one-sided case, \cref{eq:t-tail} gives
\begin{equation}\label{eq:y-t-asymp}
  \frac{1}{U_i}
  \sim\frac{(Z_i^+)^\nu}{k_\nu}
  \qquad\text{along positive extremes},
\end{equation}
where $z^+=\max(z,0)$.  Define
\[
  g_i(\boldsymbol z)=\frac{(z_i^+)^\nu}{k_\nu}.
\]
The map $\boldsymbol g$ is continuous and homogeneous of degree $\nu$:
\[
  \boldsymbol g(a\boldsymbol z)=a^\nu\boldsymbol g(\boldsymbol z),
  \qquad a>0.
\]
Applying the mapping theorem to the index-$\nu$ MRV convergence from
\cref{lem:t-mrv}, with the radial scaling raised to the power $\nu$,
shows that $\boldsymbol g(\boldsymbol Z)$ is MRV with index one.  The
qualification ``along positive extremes'' is essential: if $z\leq0$, then
$1\leq\{1-T_\nu(z)\}^{-1}\leq2$.  Thus negative directions contribute
only a bounded score and disappear under score normalization.  More
precisely,
\[
 r^{-\nu}\{1-T_\nu(rz)\}^{-1}
 \longrightarrow \frac{(z^+)^\nu}{k_\nu},
\]
locally uniformly for $z$ in compact subsets of $\mathbb R$.  The
asymptotically homogeneous
mapping theorem therefore transfers the same tail measure to
$(1/U_1,\ldots,1/U_m)$.  \cref{prop:equiv} then
transfers MRV to the Half-Cauchy scores.

For two-sided $p$-values,
\[
  \frac{1}{U_i}\sim\frac{|Z_i|^\nu}{2k_\nu}.
\]
The continuous degree-$\nu$ map
$\boldsymbol z\mapsto(|z_1|^\nu,\ldots,|z_m|^\nu)/(2k_\nu)$ gives the same
result.
\end{proof}

\phantomsection\label{proof:thm:t-second-order-formal}
\begin{proof}[Proof of \cref{thm:t-second-order-formal}]
Besides \cref{eq:t-radial-second-order}, direct expansion of the univariate
$t$ tail gives
\[
 \overline T_\nu(z)=k_\nu z^{-\nu}
 \{1-d_\nu z^{-2}+O(z^{-4})\}.
\]
Put $Q=R_0^\nu$.  \cref{eq:t-radial-second-order} becomes
\[
 \Pp(Q>q)=a_\nu q^{-1}
 \{1-b_\nu q^{-2/\nu}+O(q^{-4/\nu})\}.
\]
On a fixed Gaussian direction, substitution of $Z_i=R_0G_i$ gives the
reciprocal expansion
\[
 S_Y(q,\boldsymbol G)
 =qA_s+q^{1-2/\nu}B_s+C_s+o(q^{1-2/\nu}+1),
\]
where $C_s=C_-$ in the one-sided case and $C_s=0$ in the two-sided case.
Gaussian truncation verifies
\cref{eq:common-inverse-L1,eq:common-remainder-L1}; near a
coordinate hyperplane the worst integrand is $|G_i|^{\nu-2}$, which is
locally integrable exactly when $\nu>1$.

For the one-sided statistic, apply
\cref{cor:common-reciprocal-phase} with the bounded term $C_-$.  For the
two-sided statistic, no exponent-one angular term is present.  On every
Gaussian angular truncation its sharper expansion is
\[
 S_Y(q,\boldsymbol G)
 =qA_{|\cdot|}+q^{1-2/\nu}B_{|\cdot|}
   +O(q^{1-4/\nu}),
\]
and the same Gaussian bounds remove the truncation at
$\delta_0=2/\nu$.  The zero-$C$ refinement
\cref{eq:common-reciprocal-zero-C-result} therefore applies even when
$2/\nu>1$.  These two applications, followed when necessary by one
further analytic term, yield
\cref{eq:t-one-sided-pareto-formal,eq:t-two-sided-pareto-formal}.  In
particular, the radial correction
dominates the bounded angular term for $\nu>2$, the terms meet at $\nu=2$,
and the angular term dominates for $1<\nu<2$.

For the positive Half-Cauchy aggregate $T_{\bw}$, a positive diverging
score satisfies
$\cot(\pi U/2)=cU^{-1}+O(U)$, whereas a negative one-sided direction tends
to zero.  Hence the common-scale expansion has leading angular term
$cA_s$, radial correction $cB_s$, and no $C_-$ term.  The first omitted
Half-Cauchy term is of tail order $t^{-3}$; because
$1+2/\nu<3$ for $\nu>1$, \cref{cor:common-hc-phase} proves
\cref{eq:t-hc-formal}.
\end{proof}

\phantomsection\label{proof:prop:t-coefficient-uniform}
\begin{proof}[Proof of \cref{prop:t-coefficient-uniform}]
Ignore the fixed constants in the definitions of $A_s$ and $B_s$, and put
$x_i=G_i^+$ or $x_i=|G_i|$ as appropriate.  At $\nu=2$, the one-sided
zero-power factor in $B_+$ is $\mathbf1\{G_i>0\}$.  Thus, with
$S_2=\sum_iw_ix_i^2$ and
$S_0=\sum_iw_i\mathbf1\{G_i>0\}\leq1$, one has
$S_2S_0\leq S_2$; in the absolute-value case the corresponding factor is
one almost surely.  Also $S_2^2\leq\sum_iw_ix_i^4$ by Jensen's inequality.
For $\nu>2$, weighted power means give the two pointwise inequalities
\begin{align*}
 \left(\sum_iw_ix_i^\nu\right)^{2/\nu}
 \left(\sum_iw_ix_i^{\nu-2}\right)
 &\leq\sum_iw_ix_i^\nu,\\
 \left(\sum_iw_ix_i^\nu\right)^{1+2/\nu}
 &\leq\sum_iw_ix_i^{\nu+2}.
\end{align*}
Their expectations are bounded by univariate Gaussian moments, uniformly
in the dimension, correlation matrix, and simplex weights.

For $1<\nu<2$,
\[
 \left(\sum_iw_ix_i^\nu\right)^{2/\nu}
 \leq\sum_iw_ix_i^2.
\]
After expanding the resulting double sum, Gaussian regression gives,
uniformly in $|\rho|\le1$,
\[
 \E\{|G_i|^2|G_j|^{\nu-2}\}
 =(1-\rho_{ij}^2)\E|N|^{\nu-2}
   +\rho_{ij}^2\E|N|^\nu.
\]
This proves the asserted bounds for $D_{\nu,\boldsymbol w}^{s}$.  Finally,
$0\leq C_-\leq1$ gives
$\E(A_+C_-)\leq\E A_+=k_\nu^{-1}\E(N^+)^\nu$ and completes the proof.
\end{proof}

\phantomsection\label{proof:thm:clayton-second-order}
\begin{proof}[Proof of \cref{thm:clayton-second-order}]
For fixed positive exponential directions,
\[
 \sum_iw_iY_i
 =RA_{\boldsymbol w}+R^{1-\theta}B_{\boldsymbol w}
   +o(R^{1-\theta}).
\]
Truncation at $E_i\in[\varepsilon,M]$, followed by exponential moment
bounds near zero and infinity, verifies
\cref{eq:common-inverse-L1,eq:common-remainder-L1} for every
$\theta>0$.  The zero-$C$ refinement
\cref{eq:common-reciprocal-zero-C-result}, with
$Q=R$ and $\rho=\kappa=\theta$, gives
\cref{eq:clayton-pareto-second}; exact marginal uniformity gives
$a_\theta\E A_{\boldsymbol w}=1$.

For an exponential variable, integration by parts gives
$\E f'(E)=\E f(E)-f(0)$.  Apply this to
$f=A_{\boldsymbol w}^{\theta+1}$ in every coordinate to obtain
\[
 \E(A_{\boldsymbol w}^{\theta}B_{\boldsymbol w})
 =\frac1{\theta+1}\left\{
 m\E A_{\boldsymbol w}^{\theta+1}
 -\sum_i\E(A_{\boldsymbol w}-w_iE_i^p)^{\theta+1}\right\}.
\]
Together with $b_\theta=1/(1+\theta)$ this proves
\cref{eq:clayton-D-alternative}.  Finally, if
$x_i=w_iE_i^p$ and $A=\sum_i x_i$, then
$(1-x_i/A)^{\theta+1}\le1-x_i/A$.  Summing proves nonnegativity, with
strict inequality whenever two $x_i$ are positive.
\end{proof}

\phantomsection\label{proof:thm:clayton-hc-phase}
\begin{proof}[Proof of \cref{thm:clayton-hc-phase}]
Insert \cref{eq:clayton-Y-representation} into
\cref{eq:hc-third-expansion}.  The common-factor statistic has a Clayton
correction at score-scale exponent $\kappa=\theta$ and a transform
correction $-dR^{-1}Q_{\boldsymbol w}$ at exponent $\kappa=2$.
For $1<\theta<2$, and for the cases $\theta\geq2$,
\cref{cor:common-hc-phase} gives the asserted result and adds the tied
terms at $\theta=2$.  The required inverse exponential moment exists in
these regimes.  Indeed, weighted Jensen and expansion of the two weighted
sums bound
\[
 \E(A_{\boldsymbol w}^2Q_{\boldsymbol w})
 \le \max\{\Gamma(1+p),
       \Gamma(1+2p)\Gamma(1-p)\}<\infty,
 \qquad p=1/\theta<1.
\]

It remains to justify (i) without this inverse moment.  If
$0<\theta<1$, \cref{lem:bounded} and the simplex weights give
\[
 cR_{\boldsymbol w}-c\leq T_{\boldsymbol w}
 \leq cR_{\boldsymbol w}.
\]
The fixed threshold displacement changes the leading $c/t$ tail by
$O(t^{-2})=o(t^{-1-\theta})$.  Applying
\cref{eq:clayton-pareto-second} at $t/c$ therefore transfers its
$c^{1+\theta}D_{\theta,\boldsymbol w}$ coefficient to
$T_{\boldsymbol w}$.

At $\theta=1$, use the sharper scalar bound
\begin{equation}\label{eq:hc-reciprocal-inverse-bound}
 0\leq cy-\cot\!\left(\frac{\pi}{2y}\right)\leq \frac{C}{y},
 \qquad y\geq1,
\end{equation}
which follows from \cref{eq:hc-third-expansion} and continuity on compact
intervals.  Here $Y_i=1+RE_i$, $R=V^{-1}$ with $V$ standard exponential,
and $A=\sum_iw_iE_i$.  Put
$D_E(r)=\sum_iw_i/(1+rE_i)$.  Conditional on the exponentials, the
reciprocal threshold is $r_t=(t/c-1)/A$.  On the discrepancy event
$\{cR_{\boldsymbol w}>t\geq T_{\boldsymbol w}\}$,
\cref{eq:hc-reciprocal-inverse-bound} and monotonicity of $D_E$ imply
\[
 0<R-r_t\leq \frac{C D_E(r_t)}{cA}.
\]
Since the density of $R$ is
$e^{-1/r}r^{-2}\leq r^{-2}$, the probability lost when replacing
$cR_{\boldsymbol w}$ by $T_{\boldsymbol w}$ is at most a constant times
\[
 \E\left\{
   \frac{1}{Ar_t^2}
   \sum_i\frac{w_i}{1+r_tE_i}
 \right\}
 =O\!\left(
   t^{-3}\sum_iw_i
   \E\frac{A^2}{E_i+A/t}
 \right)
 =O(t^{-3}\log t).
\]
To justify the final bound, fix an active $i$ and write
$B_i=\sum_{j\ne i}w_jE_j$, so $A=w_iE_i+B_i$.  Conditional on $B_i=b$,
for all sufficiently large $t$,
\[
 \frac{A^2}{E_i+A/t}
 \leq C_{\boldsymbol w}\left\{E_i+
       \frac{b^2}{E_i+b/t}\right\}.
\]
For $0<b\leq t$,
\[
 \int_0^\infty\frac{e^{-x}}{x+b/t}\,dx
 \leq C\{1+\log(t/b)\},
\]
whereas for $b>t$ the integral is at most $t/b$.  Exponential tails and
$\E[B_i^2\{1+|\log B_i|\}]<\infty$ therefore give the required
$O(\log t)$ expectation.  If only one weight is active, $B_i=0$ and the
expectation is $O(1)$.  Thus the probability loss is $o(t^{-2})$, so the
$\theta=1$ reciprocal coefficient also transfers, proving (i) throughout
$0<\theta<2$.

The one-coordinate identity follows from gamma moments or directly from
the Half-Cauchy tail.
\end{proof}

\phantomsection\label{proof:prop:clayton-coefficient-uniform}
\begin{proof}[Proof of \cref{prop:clayton-coefficient-uniform}]
The lower bound was proved above.  The integration-by-parts identity and
$A^r-(A-x)^r\le rA^{r-1}x$ give
$\E(A^\theta B)\le\E A^{\theta+1}$.  Weighted Jensen then gives
\[
 \E A^{\theta+1}
 \le\E E_1^{(\theta+1)/\theta}=\Gamma(2+1/\theta).
\]
Retaining the negative radial term gives
\[
 D_{\theta,\boldsymbol w}
 \leq a_\theta(1-b_\theta)\E A^{\theta+1}
 \leq a_\theta\frac{\theta}{1+\theta}
       \Gamma(2+1/\theta)=1.
\]
The final claim is the estimate in the proof of
\cref{thm:clayton-hc-phase}.
\end{proof}

\subsection{Growing-dimensional results}

\phantomsection\label{proof:thm:uniform-mrv}
\begin{proof}[Proof of \cref{thm:uniform-mrv}]
For every $n$, the spectral representation and
\cref{eq:uniform-margins} give
\[
  \mu_n(A_n)
  =\sum_{i=1}^{m_n}w_{i,n}
    \mu_n\{\boldsymbol x:x_i>1\}
  =c_X.
\]
Substitution into \cref{eq:uniform-mrv-assumption} proves
\cref{eq:uniform-mrv-conclusion}.
\end{proof}

\phantomsection\label{proof:thm:bl-moving-halfspace}
\begin{proof}[Proof of \cref{thm:bl-moving-halfspace}]
Sandwich the half-space indicator between linear ramps of width $\eta$ in
$\boldsymbol w^{\mathsf T}\boldsymbol x$.  Because simplex weights have
Euclidean norm at most one, the ramps have Lipschitz constant at most
$\eta^{-1}$ and are supported away from the origin.  Homogeneity and the
marginal normalization give
$\mu_n\{\boldsymbol w^{\mathsf T}\boldsymbol x>s\}=c/s$.
The lower ramp loses at most
\[
 c-\frac{c}{1+\eta}=\frac{c\eta}{1+\eta},
\]
whereas the upper ramp gains at most
\[
 \frac{c}{1-\eta}-c=\frac{c\eta}{1-\eta}.
\]
Both one-sided errors are bounded by $c\eta/(1-\eta)$, which proves
\cref{eq:bl-halfspace-bound}.  The larger mass of the full two-sided shell
is not needed.
\end{proof}

\phantomsection\label{proof:thm:radial-angular-uniform}
\begin{proof}[Proof of \cref{thm:radial-angular-uniform}]
Condition on the angle and substitute the radial tail formula at
$t/L_{n,\boldsymbol w}$.  Since $0\le L_{n,\boldsymbol w}\le1$ and
$t/L_{n,\boldsymbol w}\geq t$ on $\{L_{n,\boldsymbol w}>0\}$, the assumed
radial expansion is valid simultaneously for all angles once $t$ is
sufficiently large.  Since
$C_n\E L_{n,\boldsymbol w}=c$, the leading term is exactly $c$ and the
remainder is \cref{eq:radial-angular-error}.  The conditional version is
identical.
\end{proof}

\phantomsection\label{proof:lem:uniform-breiman-one}
\begin{proof}[Proof of \cref{lem:uniform-breiman-one}]
Put $h(q)=q\Pp(Q>q)$, so $h(q)\to a$.  Fix $q_0$ large and split at
$S_n=t_n/q_0$.  On $S_n\le t_n/q_0$,
\[
 t_n\Pp(Q>t_n/S_n\mid S_n)
 =S_n h(t_n/S_n),
\]
with the expression defined as zero when $S_n=0$.  The difference from
$aS_n$ is at most
$\sup_{q\ge q_0}|h(q)-a|S_n$.  On the complementary event, Markov's
inequality and the uniform $(1+\delta)$ moment bound give
\[
 t_n\Pp(S_n>t_n/q_0)=O(t_n^{-\delta}),
 \qquad
 \E\{S_n\mathbf1(S_n>t_n/q_0)\}=O(t_n^{-\delta}).
\]
First let $n\to\infty$ and then $q_0\to\infty$.
\end{proof}

\phantomsection\label{proof:thm:t-growing-arbitrary}
\begin{proof}[Proof of \cref{thm:t-growing-arbitrary}]
Write $\boldsymbol Z_n=R_0\boldsymbol G_n$ as in
\cref{eq:t-representation}.  Scalar regular variation of the exact
$t_\nu$ score gives, for every $\varepsilon>0$, upper and lower envelopes
of the form
\[
 (1-\varepsilon)R_0^\nu S_n-C_\varepsilon
 \le\sum_iw_{i,n}Y_{i,n}
 \le(1+\varepsilon)R_0^\nu S_n+C_\varepsilon,
\]
where $S_n$ is $A_+$ or $A_{|\cdot|}$ from
\cref{eq:t-Aplus,eq:t-AabsBabs}.  The constants do not depend on
$m_n$, because the weights sum to one.  Also
$a_\nu\E S_n=1$ and, for some fixed $\delta>0$,
\[
 \sup_n\E S_n^{1+\delta}<\infty
\]
by weighted Jensen and the univariate Gaussian moments.  Applying
\cref{lem:uniform-breiman-one} to the fixed regularly varying factor
$R_0^\nu$, and then letting the envelope error tend to zero, proves
\cref{eq:t-growing-Y}.  The deterministic uniform bound
between the weighted Half-Cauchy and scaled reciprocal sums proves
\cref{eq:t-growing-H}.
\end{proof}

\phantomsection\label{proof:thm:clayton-growing-arbitrary}
\begin{proof}[Proof of \cref{thm:clayton-growing-arbitrary}]
In the frailty representation \cref{eq:clayton-Y-representation}, put
$S_n=\sum_iw_{i,n}E_i^{1/\theta}$.  For $1/\theta\le1$, subadditivity
gives
\[
 R S_n\le\sum_iw_{i,n}Y_i\le RS_n+1.
\]
For $p=1/\theta>1$, use the elementary convexity bound
\[
 (a+b)^p
 \leq(1+\eta)^{p-1}a^p
 +(1+\eta^{-1})^{p-1}b^p,
 \qquad a,b\geq0.
\]
Choosing $\eta$ so that $(1+\eta)^{p-1}=1+\varepsilon$ gives the same
sandwich with $(1+\varepsilon)RS_n+C_{\varepsilon,\theta}$ on the upper
side.  Furthermore,
$a_\theta\E S_n=1$ and weighted Jensen yields
\[
 \sup_n\E S_n^{1+\delta}
 \le\E E_1^{(1+\delta)/\theta}<\infty
\]
for every fixed $\delta>0$.  Uniform Breiman convergence for the fixed
index-one factor $R$, now justified by
\cref{lem:uniform-breiman-one}, proves the reciprocal result.  The weighted
Half-Cauchy/Pareto difference is bounded independently of $m_n$, proving
the positive Half-Cauchy result.
\end{proof}

\end{document}